\documentclass[a4paper,10pt,leqno]{amsart}
        \title[A twisted Bass-Heller-Swan decomposition \dots]
{A twisted Bass-Heller-Swan decomposition for the \change{algebraic} $K$-theory of additive categories}

\author{Wolfgang L\"uck}
\email{wolfgang.lueck@him.uni-bonn.de}
       \urladdr{http://www.him.uni-bonn.de/lueck/}
\author{Wolfgang Steimle}
\email{steimle@math.uni-bonn.de}
       \urladdr{http://www.math.uni-bonn.de/people/steimle}   
\address{Universit\"at Bonn\\
               Mathematisches Institut\\
               Endenicher Allee 60,
               D-53115 Bonn, Germany}

       \date{May 2014}
  
       \keywords{twisted Bass-Heller-Swan decomposition, connective and non-connective $K$-theory, additive categories}       
       \subjclass[2010]{19D35}

\usepackage{pdfsync}
\usepackage{color}
\usepackage{hyperref}
\usepackage{calc}
\usepackage{enumerate,amssymb}
\usepackage{graphicx}
\usepackage[arrow,curve,matrix,tips,2cell]{xy}
  \SelectTips{eu}{10} \UseTips
  \UseAllTwocells
\usepackage{tikz}
\usetikzlibrary{decorations.pathmorphing,snakes}

\DeclareMathAlphabet{\matheurm}{U}{eur}{m}{n}

\newcommand{\change}[1]{#1}

\newcommand{\addcat}{\matheurm{Add\text{-}Cat}}
\newcommand{\addcatic}{\matheurm{Add\text{-}Cat}_{ic}}

\newcommand{\Chcat}{\matheurm{Ch}}
\newcommand{\Chcathf}{\matheurm{Ch}^{\operatorname{hf}}}

\newcommand{\Spectra}{\matheurm{Spectra}}

\DeclareMathOperator{\Ch}{\Chcat}
\DeclareMathOperator{\Chhf}{\Chcat^{hf}}

\DeclareMathOperator{\colim}{colim}

\DeclareMathOperator{\cone}{cone}
\DeclareMathOperator{\cyl}{cyl}
\DeclareMathOperator{\End}{End}
\DeclareMathOperator{\el}{el}
\DeclareMathOperator{\ev}{ev}

\DeclareMathOperator{\HNil}{HNil}

\DeclareMathOperator{\hofib}{hofib}

\DeclareMathOperator{\id}{id}

\DeclareMathOperator{\Idem}{Idem}

\DeclareMathOperator{\mor}{mor}
\DeclareMathOperator{\Nil}{Nil}

\DeclareMathOperator{\pt}{pt}
\DeclareMathOperator{\pr}{pr}

\DeclareMathOperator{\sh}{sh}

  \newcommand{\IZ}{\mathbb{Z}}

   \newcommand{\Ab}{\matheurm{Ab}}

  \newcommand{\cala}{\mathcal{A}}
  \newcommand{\calb}{\mathcal{B}}
  \newcommand{\calc}{\mathcal{C}}
  
  \newcommand{\cale}{\mathcal{E}}
  
  \newcommand{\calg}{\mathcal{G}}

  \newcommand{\calp}{\mathcal{P}}
  
  \newcommand{\calr}{\mathcal{R}}

  \newcommand{\calu}{\mathcal{U}}

  \newcommand{\calx}{\mathcal{X}}
  \newcommand{\caly}{\mathcal{Y}}

  \newcommand{\bfa}{{\mathbf a}}
  
  \newcommand{\bfb}{{\mathbf b}}

  \newcommand{\bfE}{{\mathbf E}}
  \newcommand{\bfe}{{\mathbf e}}
  \newcommand{\bfEinfty}{{\mathbf E}^{\infty}}
  
  \newcommand{\bff}{{\mathbf f}}
  
  \newcommand{\bfg}{{\mathbf g}}

  \newcommand{\bfi}{{\mathbf i}}

  \newcommand{\bfK}{{\mathbf K}}
  \newcommand{\bfKinfty}{{\mathbf K}^{\infty}}

  \newcommand{\bfKNilinfty}{{\mathbf K}_{\Nil}^{\infty}}

  \newcommand{\bfm}{{\mathbf m}}
  \newcommand{\bfN}{{\mathbf N}}
  \newcommand{\bfn}{{\mathbf n}}

  \newcommand{\bfT}{{\mathbf T}}

  \newcommand{\bfu}{{\mathbf u}}

  \newcommand{\bfw}{{\mathbf w}}
  
  \newcommand{\bfx}{{\mathbf x}}
  
  \newcommand{\bfy}{{\mathbf y}}
  
  \newcommand{\bfz}{{\mathbf z}}

\newcommand{\NK}{N\!K}

\newcommand{\bfNK}{{\mathbf N}{\mathbf K}}
\newcommand{\bfNKinfty}{{\mathbf N}{\mathbf K}^{\infty}}

\newcommand{\inv}{^{-1}}

\newcommand{\bfev}{{\mathbf e}{\mathbf v}}

\newcommand{\trun}{[\,]}
\newcommand{\hatxf}{\widehat{\mathcal{X}}^{\operatorname{f}}}
\newcommand{\hatxhf}{\widehat{\mathcal{X}}^{\operatorname{hf}}}

%
%
%
%
%


\theoremstyle{plain}
\newtheorem{theorem}{Theorem}[section]

\newtheorem{lemma}[theorem]{Lemma}

\newtheorem{proposition}[theorem]{Proposition}

\theoremstyle{definition}
\newtheorem{definition}[theorem]{Definition}

\newtheorem{example}[theorem]{Example}

\newtheorem{remark}[theorem]{Remark}
\newtheorem{notation}[theorem]{Notation}

\theoremstyle{remark}

\makeatletter\let\c@equation=\c@theorem\makeatother

\hyphenation{equi-variant}

\newcommand{\version}[1]                       
{\begin{center} Last edited on #1\\
Last compiled on \today
\\
file name: \jobname
\end{center}
}


\newcommand{\squarematrix}[4]
{\left( \begin{array}{cc} #1 & #2 \\ #3 &
#4
\end{array} \right)
}

\newcommand{\xycomsquare}[8]                
{$$\xymatrix{#1 \ar[r]^-{#2} \ar[d]^{#4} &
    #3 \ar[d]^{#5}  \\
    #6\ar[r]^-{#7} & #8 }$$}


\begin{document}

\begin{abstract}
We prove a twisted Bass-Heller-Swan decomposition for both the connective
and  the non-connective  $K$-theory spectrum of  additive categories. 
\end{abstract}

\maketitle


\section*{Introduction}

\subsection*{Statement of the main results}

Let $\cala$ be a (small) additive category together with an automorphism 
$\Phi \colon \cala \xrightarrow{\cong} \cala$ of additive categories. Let $\cala_{\Phi}[t,t^{-1}]$ be
the associated \emph{twisted finite Laurent category} (see Definition~\ref{def:A_phi[t,t(-1)]}) and denote by
$\cala_{\Phi}[t]$ and $\cala_\Phi[t\inv]$ the obvious additive subcategories of
$\cala_{\Phi}[t,t^{-1}]$ (see Definition~\ref{def:A_phi(t_and_t(-1))}). 
Denote by $\bfKinfty(\cala)$ the \emph{non-connective $K$-theory spectrum} 
of the additive category $\cala$.  Denote by $\bfT_{\bfKinfty(\Phi^{-1})}$ the
\emph{mapping torus} of the map of spectra 
$\bfKinfty(\Phi^{-1}) \colon \bfKinfty(\cala) \to \bfKinfty(\cala)$.  
Define $\bfNKinfty(\cala_{\Phi}[t^{\pm 1}])$ to be the homotopy fiber of the map of spectra 
$\bfKinfty(\ev_0^{\pm}) \colon \bfKinfty(\cala_{\Phi}[t^{\pm 1}]) \to \bfKinfty(\cala)$
induced by the functor of additive categories $\ev_0^{\pm} \colon \cala_{\Phi}[t^{\pm 1}] \to \cala$ 
obtained by evaluating at $t=0$. There is a certain \emph{Nil-category} $\Nil(\cala,\Phi)$ for which its
\emph{non-connective $K$-theory} $\bfKinfty_{\Nil}(\cala,\Phi)$ is a certain delooping
of the  connective $K$-theory  $\bfK\bigl(\Nil(\cala,\Phi)\bigr)$.

To talk about functoriality, denote by $\addcat$ the category of small additive categories
and additive functors. Let us consider the group $\IZ$ as a category and denote by
$\addcat^\IZ$ the category of functors $\IZ\to\addcat$, with natural transformations as
morphisms. Note that an object of this category is precisely described by a pair
$(\cala,\Phi)$ as above.

The main theorem of this paper is:

\begin{theorem}[The Bass-Heller-Swan decomposition for non-connective $K$-theory of
  additive categories]
  \label{the:BHS_decomposition_for_non-connective_K-theory}
  Let $\cala$ be an additive category. Let $\Phi \colon \cala \to \cala$ be an
  automorphism of additive categories.

  \begin{enumerate}
  \item \label{the:BHS_decomposition_for_non-connective_K-theory:BHS-iso}

  There exists a weak homotopy equivalence of
  spectra, natural in $(\cala,\Phi)$, 
  \[
  \bfa^{\infty} \vee \bfb_+^{\infty}\vee \bfb_-^{\infty} \colon \bfT_{\bfKinfty(\Phi^{-1})} \vee
  \bfNKinfty(\cala_{\Phi}[t]) \vee \bfNKinfty(\cala_{\Phi}[t^{-1}]) \xrightarrow{\simeq}
  \bfKinfty(\cala_{\Phi}[t,t^{-1}]);
  \]
  
\item \label{the:BHS_decomposition_for_non-connective_K-theory:Nil} There exist a functor
  $\bfEinfty\colon \addcat^\IZ\to\Spectra$ and weak homotopy equivalences of spectra,
  natural in $(\cala,\Phi)$,
  \begin{eqnarray*}
   \Omega \bfNKinfty(\cala_{\Phi}[t]) & \xleftarrow{\simeq} & \bfE^{\infty}(\cala,\Phi);
   \\
   \bfKinfty(\cala) \vee  \bfE^{\infty} (\cala,\Phi)  & \xrightarrow{\simeq} & \bfKNilinfty(\cala,\Phi).
 \end{eqnarray*}
  \end{enumerate}
\end{theorem}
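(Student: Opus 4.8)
The plan is to work throughout with non-connective $K$-theory, using that it satisfies Bass--Thomason descent (equivalently, that $\bfKinfty$ arises from connective $\bfK$ by the Pedersen--Weibel delooping), and to deduce the decomposition from a Mayer--Vietoris homotopy pullback together with an elementary manipulation of its connecting map. First I would exhibit a square of additive categories, with $\cala_\Phi[t,t^{-1}]$ in one corner, $\cala_\Phi[t]$ and $\cala_\Phi[t^{-1}]$ in two others, and a fourth corner $\calc$ with $\bfKinfty(\calc)\simeq\bfKinfty(\cala)\vee\bfKinfty(\cala)$ (a twisted $\IP^1$-bundle over $\cala$), and show that it induces a homotopy pullback of non-connective $K$-theory spectra, natural in $(\cala,\Phi)$. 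This can be obtained from a Karoubi localization fiber sequence for the inclusion $\cala_\Phi[t]\hookrightarrow\cala_\Phi[t,t^{-1}]$ given by inverting $t$ — one identifies its ``torsion'' term and uses a flasqueness/Eilenberg-swindle argument — together with the analogous statement for $t^{-1}$. Since a homotopy pullback square of spectra is a homotopy pushout, this yields a homotopy cofiber sequence
\[
\bfKinfty(\cala)\vee\bfKinfty(\cala)\;\xrightarrow{\;M\;}\;\bfKinfty(\cala_\Phi[t])\vee\bfKinfty(\cala_\Phi[t^{-1}])\;\longrightarrow\;\bfKinfty(\cala_\Phi[t,t^{-1}]),
\]
in which the component of $M$ landing in $\bfKinfty(\cala_\Phi[t^{-1}])$ carries a twist by $\bfKinfty(\Phi)$, coming from the transition isomorphism ``multiplication by $t$.''

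Next I would compute $\cofib(M)$. Since $\bfKinfty(\ev_0^{\pm})$ is split by the inclusion $\cala\hookrightarrow\cala_\Phi[t^{\pm 1}]$, write $\bfKinfty(\cala_\Phi[t^{\pm 1}])\simeq\bfKinfty(\cala)\vee\bfNKinfty(\cala_\Phi[t^{\pm 1}])$; as $\ev_0^{\pm}$ precomposed with each inclusion is the identity, $M$ factors through the $\bfKinfty(\cala)^{\vee 2}$-summand of the target. Hence
\[
\cofib(M)\;\simeq\;\cofib(N)\,\vee\,\bfNKinfty(\cala_\Phi[t])\,\vee\,\bfNKinfty(\cala_\Phi[t^{-1}]),
\]
where $N\colon\bfKinfty(\cala)^{\vee 2}\to\bfKinfty(\cala)^{\vee 2}$ has, up to equivalences, ``matrix'' with first row $(\id,\id)$ and second row $(\id,\bfKinfty(\Phi))$. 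Row and column operations, each realized by a natural self-equivalence of $\bfKinfty(\cala)^{\vee 2}$, bring $N$ to the diagonal form with entries $\id$ and $\bfKinfty(\Phi)-\id$, whence $\cofib(N)\simeq\cofib\bigl(\bfKinfty(\Phi)-\id\bigr)\simeq\bfT_{\bfKinfty(\Phi^{-1})}$, the mapping torus of a self-equivalence being unchanged under passage to its inverse. This proves the wedge decomposition of part~(i); tracing the identifications backwards shows that the inverse equivalence is exactly $\bfa^{\infty}\vee\bfb_+^{\infty}\vee\bfb_-^{\infty}$. Carrying out every splitting, null-homotopy, and matrix operation functorially in $(\cala,\Phi)$ supplies the naturality clause.

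For part~(ii) I would first define $\bfEinfty\colon\addcat^\IZ\to\Spectra$. The inclusion $\cala\hookrightarrow\Nil(\cala,\Phi)$ sending an object to the zero nilpotent endomorphism, together with the forgetful functor, splits $\bfK(\Nil(\cala,\Phi))\simeq\bfK(\cala)\vee\widetilde{\bfK}(\Nil(\cala,\Phi))$ naturally in $(\cala,\Phi)$; I define $\bfEinfty(\cala,\Phi)$ to be the Pedersen--Weibel delooping of the reduced summand $\widetilde{\bfK}(\Nil(\cala,\Phi))$, which relies on the lemma that $\Nil$ commutes up to $K$-equivalence with the suspension of additive categories and makes $\bfKinfty(\cala)\vee\bfEinfty(\cala,\Phi)\simeq\bfKNilinfty(\cala,\Phi)$ hold essentially by construction. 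For the remaining equivalence I would establish, connectively and naturally, an identification $\Omega\bfNK(\cala_\Phi[t])\simeq\widetilde{\bfK}(\Nil(\cala,\Phi))$: by the classical argument — set up as an approximation/cofinality comparison of the pertinent Waldhausen categories — a finitely generated free object of $\cala_\Phi[t]$ is the same datum as an object of $\cala$ equipped with a $\Phi$-twisted nilpotent endomorphism, and this comparison carries precisely the degree shift responsible for the loop. Delooping via Pedersen--Weibel and combining with the suspension-compatibility lemmas then yields both weak equivalences of part~(ii) and in particular exhibits $\bfEinfty$ as a functor on $\addcat^\IZ$.

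I expect the main obstacle to be the first step: producing the Mayer--Vietoris homotopy-pullback square of non-connective $K$-theory spectra of additive categories, identifying its fourth corner, and pinning down the twist by $\bfKinfty(\Phi)$, all strictly functorially in $(\cala,\Phi)$; the construction of the relevant localization and flasqueness statements for additive categories (and their idempotent completions) is where the real work lies. The secondary difficulty is purely homotopy-theoretic bookkeeping: none of the intermediate statements admits a clean \emph{connective} wedge decomposition — negative $K$-groups genuinely enter, as one already sees for $K_0(\cala_\Phi[t,t^{-1}])$ — so every splitting must be assembled at the spectrum level and shown compatible with the deloopings.
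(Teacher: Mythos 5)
Your outline reproduces the global pattern of the argument (a twisted projective line whose $K$-theory is $\bfK(\cala)\vee\bfK(\cala)$, a Mayer--Vietoris square with $\cala_\Phi[t]$, $\cala_\Phi[t^{-1}]$, $\cala_\Phi[t,t^{-1}]$, and a matrix manipulation of the resulting cofiber sequence), but the steps you treat as available inputs are exactly where the content of the theorem lies, and the routes you propose for them do not work as stated. First, the homotopy pullback square: for additive categories there is no Quillen/Karoubi localization sequence computing the fiber of $\bfK(\cala_\Phi[t^{\pm 1}])\to\bfK(\cala_\Phi[t,t^{-1}])$, and no flasqueness/swindle argument that identifies its ``torsion term''; this is precisely why the proof has to pass to Waldhausen categories of bounded chain complexes, set up a canonical Waldhausen structure on $\Ch$ of an exact category (the naive ``acyclic mapping cone'' notion fails, since contractible complexes in $\cala$ need not be exact), and then combine the Fibration Theorem with two applications of the Approximation Theorem to compare the relative terms for $k^-$ and for $j_-$. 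The same remark applies to your part~(ii): the assertion that ``a finitely generated free object of $\cala_\Phi[t]$ is the same datum as an object of $\cala$ with a $\Phi$-twisted nilpotent endomorphism'' is false ($\cala_\Phi[t]$ has the same objects as $\cala$); the correct statement identifies $\bfK(\Nil(\cala,\Phi))$ with the $K$-theory of bounded $\cala_\Phi[t^{-1}]$-chain complexes that become contractible over $\cala_\Phi[t,t^{-1}]$, and proving it requires the characteristic sequence, a finite-domination argument (this is where idempotent completeness of $\cala$ enters, a hypothesis your proposal never uses even though the connective statement fails without it), and the rigidification from homotopy-nilpotent chain-complex endomorphisms back to strictly nilpotent endomorphisms in $\cala$. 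None of these admits the one-line ``approximation/cofinality'' treatment you indicate.

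Second, your non-connective bookkeeping has a genuine gap: you define $\bfEinfty(\cala,\Phi)$ as the Pedersen--Weibel delooping of the reduced Nil summand, but Pedersen--Weibel deloops the split-exact $K$-theory of an additive category, whereas $\Nil(\cala,\Phi)$ carries a non-split exact structure (exactness is detected on underlying objects), so that machine does not apply and your auxiliary lemma that ``$\Nil$ commutes with suspension of additive categories up to $K$-equivalence'' is unproven and is essentially the point at issue. The intended route is the opposite of yours: one proves the decomposition connectively (for idempotent complete $\cala$) and then feeds the result into a Bass-type delooping machine with a universal property, which simultaneously defines $\bfKinfty$, $\bfNKinfty$ and $\bfKNilinfty$ and propagates the splitting to negative degrees; the statement for arbitrary $\cala$ then follows because idempotent completion does not change non-connective $K$-theory. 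Working ``directly non-connectively'' as you propose presupposes descent/localization properties of $\bfKinfty$ on additive categories and a non-connective Nil spectrum that are not available before this work is done. The matrix reduction identifying $\cofib(N)$ with the mapping torus, and the claim that the resulting equivalence agrees with $\bfa^{\infty}\vee\bfb_+^{\infty}\vee\bfb_-^{\infty}$ naturally in $(\cala,\Phi)$, are fine in spirit but would also need the careful cylinder/pushout bookkeeping that the actual proof spells out.
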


Next we state what we get  after applying homotopy groups. 

\begin{remark}[Wang sequence] \label{rem:Wang_sequence}
We obtain for all $n \in \IZ$ a natural splitting
\[
K_n(\cala_{\Phi}[t,t^{-1}]) \xrightarrow{\cong} C_n(\cala_{\Phi}[t,t^{-1}]) \oplus \NK_n(\cala) \oplus \NK_n(\cala),
\]
if we define $C_n(\cala_{\Phi}[t,t^{-1}])$ to be the cokernel of the split injective homomorphism
$ K_n(\bfb^{\infty}_+) \oplus K_n(\bfb^{\infty}_-) \colon \NK_n(\cala) \oplus \NK_n(\cala) \to K_n(\cala_{\Phi}[t,t^{-1}])$, 
and get a long exact Wang sequence, infinite to both sides,
\begin{multline*}
\ldots \xrightarrow{\partial_{n+1}} K_n(\cala) \xrightarrow{K_n(\Phi) - \id} K_n(\cala) \xrightarrow{K_n(i_0)} C_n(\cala_{\Phi}[t,t^{-1}])
\\
\xrightarrow{\partial_n} K_{n-1}(\cala) \xrightarrow{K_{n-1}(\Phi) - \id} K_{n-1}(\cala)  \xrightarrow{K_{n-1}(i_0)} \cdots.
\end{multline*}
\end{remark}

\begin{example}[Finitely generated free $R$-modules]\label{exa:Finitely_generated_free_R-modules}    
  Let $R$ be an (associative) ring (with unit).
  Let $\calr$ be the category whose objects consist of natural numbers $m =
  0,1,2 \ldots$ and whose morphisms from $m$ to $n$ are given by the abelian
  group of \change{$n$-by-$m$}-matrices with entries in $R$.  The composition is given by
  matrix multiplication. The (categorical) direct sum of $m$ and $n$ is $m+n$ and on morphisms
  given by taking block matrices. 

  Then $\calr$ is a skeleton of the category of  finitely generated free right $R$-modules,
  and $\calr[t,t^{-1}] = \calr_{\id}[t,t^{-1}]$ is a skeleton for the category
  of finitely generated free  modules over the group ring $R[t,t^{-1}]$. 
  In this situation Theorem~\ref{the:BHS_decomposition_for_non-connective_K-theory}~\ref{the:BHS_decomposition_for_non-connective_K-theory:BHS-iso} 
  reduces for $\cala = \calr$ to the classical Bass-Heller Swan isomorphism
  \[
  K_n(R) \oplus K_{n-1}(R) \oplus \NK_n(R) \oplus \NK_n(R) \xrightarrow{\cong} K_n(R[t,t^{-1}]) \quad \text{for}\; n \in \IZ,
  \]
  and Theorem~\ref{the:BHS_decomposition_for_non-connective_K-theory}~\ref{the:BHS_decomposition_for_non-connective_K-theory:Nil}
  reduces for $\cala = \calr$ and $n \ge 0$ to the classical isomorphism
   \[
   K_{n}(\Nil(R)) \xrightarrow{\cong} K_n(R) \oplus \NK_{n+1}(R).
   \]
If $R$ comes with a ring automorphism $\phi \colon R \to R$ and we equip $\calr$
with
the induced automorphism $\Phi \colon \calr \xrightarrow{\cong} \calr$, then
$\calr_\Phi[t,t\inv]$ is equivalent to the category of finitely generated free
modules over the twisted group ring $R_\phi[t,t\inv]$. Hence
Theorem~\ref{the:BHS_decomposition_for_non-connective_K-theory}~\ref{the:BHS_decomposition_for_non-connective_K-theory:BHS-iso}
   provides, after applying $\pi_n$, a twisted
   Bass-Heller-Swan decomposition of the twisted group ring $R_\phi[t,t\inv]$.

 \end{example}

 There is also a version for the \emph{connective $K$-theory spectrum} $\bfK$. Denote by
 $\addcatic\subset\addcat$ the full subcategory on idempotent complete categories.

\begin{theorem}[The Bass-Heller-Swan decomposition for connective $K$-theory of
  additive categories]
  \label{the:BHS_decomposition_for_connective_K-theory}
  Let $\cala$ be an additive category which is idempotent complete. Let $\Phi \colon \cala \to \cala$ be an
  automorphism of additive categories. 

  \begin{enumerate}
  \item \label{the:BHS_decomposition_for_connective_K-theory:BHS-iso}

  Then there is a weak equivalence of  spectra, natural in $(\cala,\Phi)$,
  \[
  \bfa \vee \bfb_+\vee \bfb_- \colon \bfT_{\bfK(\Phi^{-1})} \vee
  \bfNK(\cala_{\Phi}[t]) \vee \bfN\bfK(\cala_{\Phi}[t^{-1}]) \xrightarrow{\simeq}
  \bfK(\cala_{\Phi}[t,t^{-1}]);
  \]

  \item \label{the:BHS_decomposition_for_connective_K-theory:Nil}
  There exist a functor $\bfE\colon (\addcatic)^\IZ\to\Spectra$ and weak homotopy equivalences of spectra, natural in $(\cala,\Phi)$,
  \begin{eqnarray*}
   \Omega \bfN\bfK(\cala_{\Phi}[t]) & \xleftarrow{\simeq} & \bfE(\cala,\Phi);
   \\
   \bfK(\cala) \vee  \bfE (\cala,\Phi)  & \xrightarrow{\simeq} & \bfK(\Nil(\cala;\Phi)).
 \end{eqnarray*}
   \end{enumerate}
\end{theorem}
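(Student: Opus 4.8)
The plan is to deduce the theorem from its non-connective counterpart Theorem~\ref{the:BHS_decomposition_for_non-connective_K-theory} by a connective-cover argument, supplemented by a hands-on computation in degree~$0$. Recall that for every additive category $\calb$ the canonical map $\bfj_{\calb}\colon\bfK(\calb)\to\bfKinfty(\calb)$ is an isomorphism on $\pi_n$ for $n\ge 1$ and is on $\pi_0$ the inclusion of $K_0(\calb)$ into $\pi_0\bfKinfty(\calb)$, the latter being $K_0$ of the idempotent completion of $\calb$; in particular, when $\calb$ is idempotent complete, $\bfj_{\calb}$ identifies $\bfK(\calb)$ with the connective cover $\tau_{\ge 0}\bfKinfty(\calb)$. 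Since $\bfa$, $\bfb_{\pm}$ and the functor $\bfE$ are by construction the connective-level incarnations of $\bfa^{\infty}$, $\bfb_{\pm}^{\infty}$, $\bfEinfty$ and are compatible with the maps $\bfj$, on $\pi_n$ for $n\ge 1$ all assertions reduce at once to Theorem~\ref{the:BHS_decomposition_for_non-connective_K-theory}; the remaining work is in degree~$0$ and in recognising certain connective spectra as connective covers. As a first step I would record that $\NK_0(\cala_{\Phi}[t^{\pm 1}])=0$: the functor $\ev_0^{\pm}\colon\cala_{\Phi}[t^{\pm 1}]\to\cala$ is split by the inclusion of the constant morphisms, so $\NK_0(\cala_{\Phi}[t^{\pm 1}])=\coker(K_0(\cala)\to K_0(\cala_{\Phi}[t^{\pm 1}]))$, and the latter map is an isomorphism because $\cala\to\cala_{\Phi}[t^{\pm 1}]$ is a bijection on isomorphism classes of objects (applying $\ev_0^{\pm}$ turns a polynomial isomorphism into an isomorphism in $\cala$). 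Hence $\bfNK(\cala_{\Phi}[t])$ is connective with vanishing $\pi_0$, so $\Omega\bfNK(\cala_{\Phi}[t])$ is connective and the comparison map $\Omega\bfNK(\cala_{\Phi}[t])\to\Omega\bfNKinfty(\cala_{\Phi}[t])$ exhibits its source as $\tau_{\ge 0}\Omega\bfNKinfty(\cala_{\Phi}[t])$.

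For part~\ref{the:BHS_decomposition_for_connective_K-theory:Nil} I would define $\bfE\colon(\addcatic)^{\IZ}\to\Spectra$ by letting $\bfE(\cala,\Phi)$ be the complementary wedge summand of $\bfK(\cala)$ in $\bfK(\Nil(\cala;\Phi))$, where $\bfK(\cala)$ splits off because the inclusion $\cala\hookrightarrow\Nil(\cala;\Phi)$, $A\mapsto(A,0)$, is split by the functor forgetting the nilpotent endomorphism; then the second displayed equivalence holds tautologically. Since $\cala$ is idempotent complete, so is $\Nil(\cala;\Phi)$ (an idempotent endomorphism of an object $(A,\nu)$ splits $A$ compatibly with $\nu$), and therefore $\bfK(\Nil(\cala;\Phi))=\tau_{\ge 0}\bfKNilinfty(\cala,\Phi)$; comparing with the splitting of $\bfKNilinfty(\cala,\Phi)$ provided by Theorem~\ref{the:BHS_decomposition_for_non-connective_K-theory}~\ref{the:BHS_decomposition_for_non-connective_K-theory:Nil} identifies $\bfE(\cala,\Phi)$ with $\tau_{\ge 0}\bfEinfty(\cala,\Phi)$. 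Applying $\tau_{\ge 0}$ to the natural equivalence $\bfEinfty(\cala,\Phi)\xrightarrow{\simeq}\Omega\bfNKinfty(\cala_{\Phi}[t])$ of that theorem, and invoking the last sentence of the previous paragraph, then yields the first displayed equivalence, naturally in $(\cala,\Phi)$.

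For part~\ref{the:BHS_decomposition_for_connective_K-theory:BHS-iso}: on $\pi_n$ with $n\ge 1$ the comparison maps $\bfj$ on the three wedge summands and on $\bfK(\cala_{\Phi}[t,t^{-1}])$ are all isomorphisms, so $\bfa\vee\bfb_+\vee\bfb_-$ agrees with $\bfa^{\infty}\vee\bfb_+^{\infty}\vee\bfb_-^{\infty}$ and is an isomorphism by Theorem~\ref{the:BHS_decomposition_for_non-connective_K-theory}~\ref{the:BHS_decomposition_for_non-connective_K-theory:BHS-iso}. In degree~$0$ the summands $\bfNK(\cala_{\Phi}[t^{\pm 1}])$ contribute nothing by the first paragraph; the mapping-torus cofibre sequence together with $\pi_{-1}\bfK(\cala)=0$ gives $\pi_0\bfT_{\bfK(\Phi^{-1})}=\coker(K_0(\Phi^{-1})-\id)$; and $K_0(\cala_{\Phi}[t,t^{-1}])$, which is generated by the classes of objects of $\cala$ and embeds via $\bfj$ into $\pi_0\bfKinfty(\cala_{\Phi}[t,t^{-1}])$, is identified with $\coker(K_0(\Phi)-\id)$ because by construction the canonical map $\bfKinfty(\cala)\to\bfKinfty(\cala_{\Phi}[t,t^{-1}])$ factors as $\bfKinfty(\cala)\to\bfT_{\bfKinfty(\Phi^{-1})}\xrightarrow{\bfa^{\infty}}\bfKinfty(\cala_{\Phi}[t,t^{-1}])$ with $\bfa^{\infty}$ injective on $\pi_0$, while $K_0(\cala)=\pi_0\bfKinfty(\cala)$ by idempotent completeness. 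A short diagram chase with the maps $\bfj$ now shows that on $\pi_0$ the map $\bfa$ induces an isomorphism $\coker(K_0(\Phi^{-1})-\id)\xrightarrow{\cong}\coker(K_0(\Phi)-\id)$, whence $\bfa\vee\bfb_+\vee\bfb_-$ is an isomorphism on every homotopy group and thus a weak equivalence.

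The main difficulty is the degree-$0$ analysis in part~\ref{the:BHS_decomposition_for_connective_K-theory:BHS-iso}: connective and non-connective $K$-theory genuinely disagree there, and the target $\bfK(\cala_{\Phi}[t,t^{-1}])$ is in general \emph{not} the connective cover of $\bfKinfty(\cala_{\Phi}[t,t^{-1}])$ because $\cala_{\Phi}[t,t^{-1}]$ need not be idempotent complete even when $\cala$ is. One must verify that all the negative $K$-theory and lower Nil contributions present non-connectively cancel against one another, and this cancellation is exactly what the hypothesis that $\cala$ be idempotent complete provides, through $K_0(\cala)=\pi_0\bfKinfty(\cala)$ and the triangular structure of the Bass-Heller-Swan isomorphism on homotopy groups. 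A more self-contained route, which also underlies the proof of Theorem~\ref{the:BHS_decomposition_for_non-connective_K-theory} itself, is to establish the connective decomposition directly by passing to the categories $\Chcathf(-)$ of homotopy-finite chain complexes, where a Waldhausen fibration sequence for the twisted Laurent construction and a d\'evissage for the Nil-term are available, and then to transport the result back along the equivalence $\bfK(\calb)\simeq\bfK(\Chcathf(\calb))$, which holds precisely because the categories in question are idempotent complete.
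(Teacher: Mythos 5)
Your proposal is circular in the context of this paper: you take the non-connective Theorem~\ref{the:BHS_decomposition_for_non-connective_K-theory} as an available input and deduce the connective statement from it by passing to connective covers, but the logical order here is the opposite. The non-connective spectra $\bfKinfty$, $\bfNKinfty$, $\bfKNilinfty$ and the maps $\bfa^{\infty}$, $\bfb_{\pm}^{\infty}$, $\bfEinfty$ are themselves manufactured from the connective ones by the delooping machine of the companion paper \cite{Lueck-Steimle(2013delooping)}, and Theorem~\ref{the:BHS_decomposition_for_non-connective_K-theory} is \emph{deduced from} Theorem~\ref{the:BHS_decomposition_for_connective_K-theory} (see Section~\ref{sec:Passing_to_non-connective_algebraic_K-theory} and the introduction). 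So the reduction you propose has no independent source for its main input, and the actual mathematical content of the theorem is never addressed. Your degree-zero bookkeeping (vanishing of $\pi_0\bfNK(\cala_{\Phi}[t^{\pm 1}])$, identification of $K_0(\cala_{\Phi}[t,t^{-1}])$ with $\coker(K_0(\Phi)-\id)$, idempotent completeness of $\Nil(\cala,\Phi)$) is fine as far as it goes, and is consistent with the remarks after the theorem in the paper, but it is exactly the easy part.

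What is missing is the substance of the paper's argument, which proves the connective decomposition directly with Waldhausen-theoretic tools: for part~\ref{the:BHS_decomposition_for_connective_K-theory:BHS-iso}, the twisted projective line $\calx$, the weak homotopy pullback Theorem~\ref{the:homotopy_pull_back_of_calx} (proved via the auxiliary category $\caly$, the Gillet--Waldhausen Theorem, the Fibration and Approximation Theorems), and the computation $\bfK(l_0)\vee\bfK(l_1)\colon\bfK(\cala)\vee\bfK(\cala)\xrightarrow{\simeq}\bfK(\calx)$ of Theorem~\ref{the:computing_K(calx)} (global section functor $\Gamma$, truncations, the weak equivalences $w_0,w_1$); for part~\ref{the:BHS_decomposition_for_connective_K-theory:Nil}, the fiber sequence of Theorem~\ref{the:fiber_sequence}, whose proof (Theorem~\ref{the:relative_term_is_Nil}) requires the characteristic sequence, the equivalence between $\Phi$-twisted endomorphism categories and chain complexes over $\cala_{\Phi}[t^{-1}]$, and the delicate comparison of homotopy nilpotent with strictly nilpotent endomorphisms. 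Your closing sentence correctly guesses that such a chain-complex-level argument is the real route, but a one-sentence gesture at ``a Waldhausen fibration sequence and a d\'evissage for the Nil-term'' does not supply it; as written, the proposal proves nothing that is not already assumed.
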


We emphasize that for the connective version some care is necessary concerning the
interpretation after applying $\pi_n$ in the case $n = 0$, since in contrast to the
non-connective $K$-theory spectrum the passage from an additive category to its idempotent
completion does change the zeroth $K$-group and the assumption that $\cala$ is idempotent
complete does not imply that $\cala_{\Phi}[t]$, $\cala_{\Phi}[t^{-1}]$, or
$\cala_{\Phi}[t,t^{-1}]$ is idempotent complete.  At least the canonical inclusion of an
additive category in its idempotent completion yields a map on the connected $K$-theory
spectra which induces isomorphisms on $\pi_n$ for $n \ge 1$. 

Recall that $K_0(\cala)$ is obtained as the Grothendieck construction of the abelian monoid
of  stable isomorphism classes of objects in $\cala$ under direct sum. 
(Two objects $A_0$ and $A_1$ are stably isomorphic if there exists an object $B$ such that
$A_0 \oplus B$ and $A_1 \oplus B$ are isomorphic.) We get for the connective version in degree $0$
\[
\pi_0\bigl( \bfNK(\cala_{\Phi}[t]) \bigr) =  \pi_0\bigl( \bfNK(\cala_{\Phi}[t^{-1}]) \bigr) = 0,
\]
since $i_{\pm} \colon \cala \to \cala_{\Phi}[t^{\pm 1}]$ is  bijective on objects and
$\ev_0^{\pm} \circ i_{\pm} = \id_{\cala}$, 
and therefore $\pi_0(\bfK(\ev_0^{\pm})) \colon  K_0(\cala_{\Phi}[t^{\pm 1}]) \to K_0(\cala)$ is bijective.
The Wang sequence associated to 
Theorem~\ref{the:BHS_decomposition_for_connective_K-theory}~%
\ref{the:BHS_decomposition_for_connective_K-theory:BHS-iso}
agrees with the one in Remark~\ref{rem:Wang_sequence} in degree $n \ge 1$ and ends in degree zero by
\[
\ldots \xrightarrow{\partial_{1}} K_0(\cala) \xrightarrow{K_0(\Phi) - \id} K_0(\cala) 
\xrightarrow{K_0(i_0)} K_0(\cala_{\Phi}[t,t^{-1}]) \to 0.
\]

\subsection*{Relation to other work}
We start by giving a (incomplete) list of previous work on the Bass-Heller-Swan decomposition.
In~\cite{Bass-Heller-Swan(1964)}, Bass-Heller-Swan proved a decomposition of
$K_1(R[t,t\inv])$ for regular rings $R$. Original sources for the Bass-Heller-Swan
decomposition of $K_1(R[t,t^{-1}])$ for an arbitrary ring $R$ are
Bass~\cite[Chapter~XII]{Bass(1968)} and Swan~\cite[Chapter~16]{Swan(1968)}. Bass
used the decomposition to define negative $K$-groups and to extend the
Bass-Heller-Swan decomposition
in this range. Ranicki~\cite[Chapter~10]{Ranicki(1992a)} extended this decomposition
of middle and lower $K$-groups to additive categories.
Farrell-Hsiang~\cite{Farrell-Hsiang(1970)} gave a decomposition of $K_1$ of twisted
group rings. More treatments of the classical Bass-Heller-Swan decomposition can be
found e.g.,~in~\cite[Theorem~3.2.22 on page~149, Theorem~3.3.3 on page~155,
Theorem~5.3.30 on page~295]{Rosenberg(1994)} and~\cite[Theorem~9.8 on
page~207]{Srinivas(1991)}.

Grayson~\cite{Grayson(1976)} proved a Bass-Heller-Swan decomposition on the
level of higher algebraic $K$-groups, restricting to the case of a ring. In later
work~\cite{Grayson(1988)} he generalized this result to the case of a twisted group
ring. The connective $K$-theory of generalized Laurent extensions of rings is treated
in Waldhausen~\cite{Waldhausen(1978genfreeI+II),Waldhausen(1978genfreeIII+IV)}.
H\"uttemann-Klein-Vogell-Waldhausen-Williams~\cite{Huettemann-Klein-Waldhausen-Williams(2001)}
proved a Bass-Heller-Swan decomposition for connective algebraic $K$-theory of
spaces on the spectrum level; Klein-Williams~\cite{Klein-Williams(2008)}
identified the relative terms with the $K$-theory spectrum of homotopy-nilpotent
endomorphisms.

In a companion paper~\cite{Lueck-Steimle(2013delooping)} to the present work we will, 
building on Bass's approach, develop a non-connective delooping
machine for functors from additive categories to spectra which are $n$-contracting, which
roughly speaking means that the (untwisted) Bass-Heller-Swan map is bijective on $\pi_i$
for $i \ge n+1$ and its reduced version is split injective on $\pi_i$ for $i \le n$.  It
will come with a universal property. This will enable us to make sense of
$\bfK^{\infty}(\cala)$ and $\bfKinfty(\Nil(\cala;\Phi))$ and to deduce
Theorem~\ref{the:BHS_decomposition_for_non-connective_K-theory} from
Theorem~\ref{the:BHS_decomposition_for_connective_K-theory}.  This delooping machine is of
interest in its own right since it is rather elementary and comes with a universal
property.

A definition of $\bfK^{\infty}(\cala)$ for an additive category $\cala$ has also been
given by Pedersen-Weibel using controlled topology in~\cite{Pedersen-Weibel(1985)}.  It
can be identified with our approach using the universal property. It is also obvious from
the construction of our approach that $K_n(\cala)$ agrees with the original definition of
Bass using contracting functors.  Notice that we cannot define
$\bfKinfty(\Nil(\cala;\Phi))$ using Pedersen-Weibel~\cite{Pedersen-Weibel(1985)} since we
use a different exact structure than the one coming from split exact sequences. There is a
definition of negative $K$-groups for exact categories presented by
Schlichting~\cite{Schlichting(2006)} \change{which has not} yet been identified with our
approach for $\Nil(\cala,\Phi)$.

The result of this paper will play a key role in a forthcoming paper by the same 
authors~\cite{Lueck-Steimle(2013splitasmb)}
where an explicit splitting on spectrum level of the relative Farrell-Jones assembly map
from the family of finite subgroups to the family of virtually subgroups is given and the
involution on the relative term is analyzed.  Such a splitting, but without identifying
the relative term, has already been constructed by Bartels~\cite{Bartels(2003b)} using
controlled topology.

We try to keep the presentation of the present paper as self-contained as possible,
relying just on some fundamental results in algebraic $K$-theory
\cite{Cisinki(2010inv),Thomason-Trobaugh(1990),Waldhausen(1985)}, the companion
paper~\cite{Lueck-Steimle(2013delooping)}, and some very basic stable homotopy
theory and category theory.
While Quillen's setting for algebraic $K$-theory is very
well adapted to proving the Bass-Heller-Swan decomposition for rings, it is not for
the more general setup of additive categories, as the necessary localization
sequences are not available. The remedy is to pass to the category of chain
complexes over $\cala$, which is a Waldhausen category, i.e., categories with weak
equivalences and cofibrations in the sense of Waldhausen~\cite{Waldhausen(1985)},
and to use Waldhausen's approach to algebraic $K$-theory. Thus it is not surprising
that our proof of Theorem~\ref{the:BHS_decomposition_for_connective_K-theory}
follows the same global pattern as the one given in the non-linear setting by
H\"uttemann-Klein-Vogell-Waldhausen-Williams~\cite{Huettemann-Klein-Waldhausen-Williams(2001)}
and Klein-Williams~\cite{Klein-Williams(2008)}. Some extra work is necessary to pass back from
the category of homotopy-nilpotent endomorphisms of chain complexes over $\cala$
(``Waldhausen setting'') to the category of nilpotent endomorphisms in $\cala$
(``Quillen setting''). Such a reduction was carried out by
Ranicki~\cite[Chapter~9]{Ranicki(1992a)} on the level of path-components.

\subsection*{Acknowledgement}
This paper has been financially supported by the Leibniz-Award, granted by the Deutsche
Forschungsgemeinschaft, of the first author. The second author was also supported by the
ERC Advanced Grant 288082 and by the Danish National Research Foundation through the
Centre for Symmetry and Deformation (DNRF92). We thank the referee for his detailed report
and helpful suggestions.

\setcounter{tocdepth}{1}
\tableofcontents


\section{Preliminaries  about additive categories}
\label{sec:Preliminaries_about_additive_categories}

In this section we present some basics about additive categories


\subsection{The twisted finite Laurent category}
\label{subsec:The_twisted_finite_Laurent_category}

Let $\cala$ be an additive category. Let $\Phi \colon \cala \to \cala$ be an
automorphism of additive categories. 

\begin{definition}[{Twisted finite Laurent category $\cala_{\Phi}[t,t^{-1}]$}] \label{def:A_phi[t,t(-1)]}
  Define the \emph{$\Phi$-twisted finite Laurent category}  $\cala_{\Phi}[t,t^{-1}]$ as follows. It has the
  same objects as $\cala$.  Given two objects $A$ and $B$, a morphism 
  $f  \colon A \to B$ in $\cala_{\Phi}[t,t^{-1}]$ is a formal sum 
  $f = \sum_{i \in    \IZ} f_i \cdot t^i$, where $f_i \colon \Phi^i(A) \to B$ is a morphism in
  $\cala$ from $\Phi^i(A)$ to $B$ and only finitely many of the morphisms $f_i$ are non-trivial. If 
  $g   = \sum_{j \in \IZ} g_j \cdot t^j$ is a morphism in $\cala_{\Phi}[t,t^{-1}]$
  from $B$ to $C$, we define the composite $g \circ f \colon A \to C$ by
  \[
  g \circ f := \sum_{k \in \IZ} \left( \sum_{\substack{i,j \in \IZ,\\i+j = k}}
    g_j \circ \Phi^j(f_i)\right) \cdot t^k.
  \]  
  The  direct sum and the structure of an abelian
  group on the set of morphism from $A$ to $B$ in $\cala_{\Phi}[t,t^{-1}]$ are
  defined in the obvious way using the corresponding structures in
  $\cala$.
\end{definition}

  So the decisive relation is for a morphism $f \colon A \to B$ in $\cala$
  \[
   (\id_{\Phi(B)} \cdot t) \circ (f \cdot t^0) = \Phi(f) \cdot t.
  \]

  We have already explained in
  Example~\ref{exa:Finitely_generated_free_R-modules}     that for a ring
  $R$ with automorphism $\phi$ the passage from $\calr$ to $\calr_{\Phi} [t,t^{-1}]$
  corresponds to the passage of finitely generated free modules over
  $R$ to finitely generated free modules over the twisted group ring
  $R_{\phi}[t,t^{-1}]$.

  \begin{definition}[{$\cala_{\Phi}[t]$} and {$\cala_{\Phi}[t^{-1}]$}] 
   \label{def:A_phi(t_and_t(-1))} 
    Let $\cala_{\Phi}[t]$
    and $\cala_{\Phi}[t^{-1}]$ respectively be the additive subcategory of
    $\cala_{\Phi}[t,t^{-1}]$ whose set of objects is the set of objects in
    $\cala$ and whose morphisms from $A$ to $B$ are the formal sums 
    $\sum_{i \in \IZ} f_i \cdot t^i$ with $f_i = 0$ for $i < 0$
    and $i > 0$ respectively (in other words, polynomials in $t$ and $t\inv$, 
    respectively).
  \end{definition}
  
  In the setting of Example~\ref{exa:Finitely_generated_free_R-modules} the additive
  subcategories $\calr_{\Phi}[t]$ and $\calr_{\Phi}[t^{-1}]$ of
  $\calr_{\Phi}[t,t^{-1}]$ correspond to the category of finitely
  generated free modules over the subrings $R_{\phi}[t]$ and
  $R_{\phi}[t^{-1}]$ of $R_{\phi}[t,t^{-1}]$.


\subsection{Idempotent completion}
\label{subsec:Idempotent_completion}

Given an additive category $\cala$, its \emph{idempotent completion}
$\Idem(\cala)$ is defined to be the following additive
category. Objects are morphisms $p \colon A \to A$ in $\cala$
satisfying $p \circ p = p$.  A morphism $f$ from $p_1 \colon A_1 \to
A_1$ to $p_2 \colon A_2 \to A_2$ is a morphism $f \colon A_1 \to A_2$
in $\cala$ satisfying $p_2 \circ f \circ p_1 = f$.  If $\cala$ has the
structure of an additive category then $\Idem(\cala)$ inherits such a
structure, and if $\cala$ has a preferred choice of finite or
countable direct sums then so does $\Idem(\cala)$. Obviously a functor of additive categories $F \colon \cala\to \calb$ 
induces a functor $\Idem(F) \colon \Idem(\cala) \to \Idem(\calb)$ 
of additive categories. There is a obvious embedding
\[
\eta(\cala)\colon\cala \to \Idem(\cala)
\]
 sending an objects $A$ to
$\id_A \colon A \to A$ and a morphism $f \colon A \to B$ to the
morphisms given by $f$ again.  An additive category $\cala$ is called
\emph{idempotent complete} if $\eta(\cala)\colon \cala \to \Idem(\cala)$ is an
equivalence of additive categories, or, equivalently, 
if for every idempotent $p \colon A \to A$ in $A$
there exists objects $B$ and $C$ and an isomorphism 
$f \colon A \xrightarrow{\cong} B \oplus C$ in $\cala$
such that $f \circ p \circ f^{-1} \colon B \oplus C \to B \oplus C$ is given by 
$\squarematrix{\id_B}{0}{0}{0}$.  The idempotent completion $\Idem(\cala)$ 
is idempotent complete.

Given a ring $R$, then $\Idem(\calr)$ of the additive category $\calr$
defined in Example~\ref{exa:Finitely_generated_free_R-modules} is a
skeleton of the additive category of finitely generated projective
$R$-modules.

\begin{theorem}[Passage to the idempotent completion]
\label{the:Passage_to_the_idempotent_completion}
Let $\cala$ be an additive category and let $\eta(\cala) \colon \cala \to \Idem(\cala)$
be the canonical embedding into its idempotent completion.

\begin{enumerate}

\item \label{the:Passage_to_the_idempotent_completion:connective}
The map of connective spectra  $\bfK(\eta(\cala)) \colon \bfK(\cala) \to \bfK(\Idem(\cala))$
induces an isomorphism on $\pi_n$ for $n \ge 1$ and an injection for $n  = 0$;

\item \label{the:Passage_to_the_idempotent_completion:non-connective}
The map of non-connective spectra  $\bfKinfty(\eta(\cala)) \colon \bfKinfty(\cala) \to \bfKinfty(\Idem(\cala))$
is a weak homotopy equivalence.

\end{enumerate}
\end{theorem}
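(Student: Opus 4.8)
The plan is to reduce both statements to well-known facts about how connective and non-connective $K$-theory interact with the Grothendieck-style idempotent completion, and then to organize the argument around the cofiber (or fiber) sequence comparing $\cala$ with $\Idem(\cala)$. First I would recall that for any additive category $\cala$, regarded as an exact category with the split exact structure, the connective $K$-theory spectrum $\bfK(\cala)$ may be computed either via Quillen's $Q$-construction, via Waldhausen's $S_\bullet$-construction applied to bounded chain complexes $\Chcat^{\operatorname{hf}}(\cala)$, or via the group completion $\Omega B(\coprod_A \aut(A))$; all of these agree, and $\bfK(\eta(\cala))$ is induced by the exact inclusion $\eta(\cala)$. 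The key input for part~\ref{the:Passage_to_the_idempotent_completion:connective} is the classical cofinality theorem (in the form used by Grayson and by Thomason--Trobaugh): since every object of $\Idem(\cala)$ is a direct summand of an object in the image of $\eta(\cala)$, the inclusion $\eta(\cala)$ is cofinal, and cofinality implies that $\bfK(\eta(\cala))$ is an isomorphism on $\pi_n$ for $n\ge 1$ and a (split) injection on $\pi_0$, with cokernel on $\pi_0$ the Grothendieck group $K_0(\Idem(\cala))/K_0(\cala)$, i.e.\ the group of "stable isomorphism defects''. I would state this precisely and cite the cofinality theorem rather than reprove it.

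For part~\ref{the:Passage_to_the_idempotent_completion:non-connective}, the idea is that the non-connective $K$-theory spectrum is by construction the universal delooping of $\bfK$ that makes the (untwisted) Bass--Heller--Swan map an equivalence, and one knows (by Bass's original argument, and again by cofinality applied to all the Laurent/polynomial extensions simultaneously) that passage to the idempotent completion does not affect negative $K$-groups. Concretely: the negative $K$-groups $K_{-n}(\cala)$ for $n\ge 1$ are defined iteratively as cokernels in the Bass--Heller--Swan sequence for $\cala[t^{\pm 1}]$, and since $\Idem(\cala)_{\id}[t^{\pm 1}]$ has the same $K_0$-modulo-$K_0(\cala[t^{\pm 1}])$ structure — indeed $\Idem$ commutes with forming $-[t]$, $-[t^{-1}]$, $-[t^{\pm1}]$ up to equivalence of additive categories, or at least the relevant $K_0$ and $K_1$ groups are unchanged by cofinality — the negative homotopy groups of $\bfKinfty(\eta(\cala))$ are isomorphisms. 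Combined with part~\ref{the:Passage_to_the_idempotent_completion:connective} this shows $\bfKinfty(\eta(\cala))$ is an isomorphism on $\pi_n$ for all $n\ne 0$, and on $\pi_0$ one uses that $K_0^{\langle-\infty\rangle}(\cala) = K_0(\Idem(\cala))$ by the very definition of non-connective $K$-theory (the defect in $\pi_0$ of the connective map is exactly absorbed into the definition), so $\pi_0$ is an isomorphism as well.

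In writing this up I would first establish cofinality of $\eta(\cala)$ as a short lemma, then deduce~\ref{the:Passage_to_the_idempotent_completion:connective} immediately, and then for~\ref{the:Passage_to_the_idempotent_completion:non-connective} set up the diagram
\[
\xymatrix{
\bfK(\cala) \ar[r] \ar[d] & \bfK(\cala_{\id}[t^{-1}]) \vee \bfK(\cala_{\id}[t]) \ar[r] \ar[d] & \bfK(\cala_{\id}[t,t^{-1}]) \ar[d] \\
\bfK(\Idem\cala) \ar[r] & \bfK(\Idem(\cala)_{\id}[t^{-1}]) \vee \bfK(\Idem(\cala)_{\id}[t]) \ar[r] & \bfK(\Idem(\cala)_{\id}[t,t^{-1}])
}
\]
and run the five-lemma degree by degree, starting from the known isomorphisms in high degrees and propagating downward through the Bass--Heller--Swan delooping. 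The main obstacle I anticipate is the bookkeeping at $\pi_0$ and $\pi_{-1}$: one must check carefully that the cokernel of $K_0(\eta)$ does \emph{not} introduce a spurious contribution to $K_{-1}(\Idem(\cala))$ — equivalently, that $\widetilde{\NK}$-type terms and the $K_0$-defect interact correctly — which is precisely the point where one invokes that $\Idem$ applied to a polynomial extension is again idempotent complete and that $K_0$ of an idempotent complete additive category already sees all summands, so the iterated cokernels stabilize. This is exactly the phenomenon the paper's introduction flags in the remark following Theorem~\ref{the:BHS_decomposition_for_connective_K-theory}, and handling it cleanly is the crux of the proof.
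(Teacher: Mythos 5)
For part~\ref{the:Passage_to_the_idempotent_completion:connective} your approach coincides with the paper's: both invoke the cofinality theorem for exact categories, and the paper simply cites Thomason--Trobaugh~A.9.1, which is exactly the result you describe.

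For part~\ref{the:Passage_to_the_idempotent_completion:non-connective} the paper's actual proof is very short: it deduces the statement from part~\ref{the:Passage_to_the_idempotent_completion:connective} together with a cited result (Corollary~3.7) in the companion paper~\cite{Lueck-Steimle(2013delooping)}, where the non-connective spectrum $\bfKinfty$ is constructed via a Bass-style contracting-functor delooping with a universal property. That machinery is built precisely so that facts like ``idempotent completion does not change negative $K$-groups'' fall out of the universal property, rather than having to be re-derived by hand via the Bass iteration. Your proposal is to unfold the delooping and run a five-lemma on the iterated Bass--Heller--Swan diagram; this is a legitimate alternative route, and is close to Bass's original argument for rings.

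Two points in your sketch need repair, though. First, the claim that ``$\Idem$ commutes with forming $-[t]$, $-[t^{-1}]$, $-[t^{\pm1}]$ up to equivalence of additive categories'' is false: $\Idem(\cala)[t]$ is only a full subcategory of $\Idem(\cala[t])$, and in general an idempotent in $\cala[t]$ (a polynomial idempotent) need not be isomorphic to a constant one --- already for $\cala=\calr$ this amounts to asking every f.g.\ projective $R[t]$-module to be extended from $R$. What is true, and what your argument actually needs, is that the inclusions $\cala[t]\hookrightarrow\Idem(\cala)[t]\hookrightarrow\Idem(\cala[t])$ are cofinal; this gives isomorphisms on $K_n$ for $n\ge 1$ and only an injection on $K_0$. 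Second, that injection-not-isomorphism on $K_0$ is precisely the delicate point when you pass to cokernels to define $K_{-1}$: a naive five-lemma does not apply when some vertical maps are only injective, so you must use the splitting coming from the BHS decomposition (i.e.\ naturality of the split injections $\bfb^\infty_\pm$) to see that the $K_0$-defects in the middle and right columns cancel. Your closing paragraph correctly flags this as ``the crux'' but leaves it as an assertion; this is exactly the content that the companion paper's Corollary~3.7 packages. So the proposal is in the right spirit and genuinely different in execution from the paper's citation, but as written it contains one incorrect statement and one genuine gap where the hard work is deferred rather than done.
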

\begin{proof}~\ref{the:Passage_to_the_idempotent_completion:connective}
This is proved in~\cite[Theorem~A.9.1.]{Thomason-Trobaugh(1990)}. 
\\[2mm]~\ref{the:Passage_to_the_idempotent_completion:non-connective}
This follows from assertion~\ref{the:Passage_to_the_idempotent_completion:connective}
and~\cite[Corollary~3.7]{Lueck-Steimle(2013delooping)}.
\end{proof}


\subsection{Infinite direct sums}
\label{subsec:finite_direct_sum}

There is a functorial way of adjoining countable direct sums to a $\IZ$-category
$\cala$ (compare~\cite[Lemma~9.2]{Bartels-Reich(2007coeff)}).  We go carefully through
such a  construction. In order to avoid set theoretic problems, we fix for once and all a
\emph{universe}, i.e., an infinite set $\calu$ with base point $u \in \calu$ and a
bijection 
\begin{align}
\tau \colon \calu \times \calu & \xrightarrow{\cong} \calu
\label{bijection_tau}
\end{align}
with $\tau(u,u) = u$.   In the sequel all index sets will be subsets of $\calu$.  Since we will essentially only deal 
with countable index sets, we could take $\calu$ to be the set of integers $\IZ$ and $u$ to be $0 \in \IZ$.  
Given a subset $J \subseteq \calu$ and for every $j \in J$ a subset $I_j \subseteq \calu$,
it is not clear how the disjoint union of the $I_j$-s can be considered as subset of $\calu$, but we can consider
instead $\tau(\calg(I_j, j \in J))$ for the \emph{graph}
$\calg(I_j, j \in J) := \{(i,j) \in \calu \times \calu \mid j \in J, i \in I_j\} \subseteq \calu \times \calu$.

Choose an infinite  cardinal $\kappa$ such that the cardinality of $\calu$ is greater or equal to
$\kappa$.  Consider a $\IZ$-category $\cala$, i.e, a small category $\cala$ enriched over abelian
groups.  Next we define another $\IZ$-category $\cala^{\kappa}$ with preferred $\kappa$-direct sums,
i.e., for each subset
$I\subseteq \calu$ of cardinality less or equal to $\kappa$ and a collection $(A_i)_{i \in  I}$ 
of objects in $\cala^{\kappa}$, there is a preferred object $\bigoplus_{i \in I} A_i$ which is a direct sum of the
collection $(A_i)_{i \in I}$, i.e., for each $i \in I$ there exists a preferred morphism 
$\iota_i \colon A_i \to\bigoplus_{i \in I} A_i$ such that for any object $B$ the map
\[
\mor_{\cala^{\kappa}}\left(\bigoplus_{i \in I} A_i,B\right) \xrightarrow{\cong} 
\prod_{i \in I} \mor_{\cala^{\kappa}}(A_i,B), \quad f \mapsto (f \circ \iota_i)_{i \in I}
\]
is an isomorphism of abelian groups.

An object $A$ in $\cala^{\kappa}$ is given by a subset $I \subseteq \calu$ of cardinality
less or equal to $\kappa$ and a map from $I$ to the set of objects of $\cala$, in other
words, by a collection of objects $(A_i)_{i \in I}$.  A morphism $f = (f_{i,j})_{(i,j) \in
  I \times J} \colon A = (A_i)_{i \in I} \to B = (B_j)_{j \in J}$ is given by a collection
of morphisms $f_{i,j} \colon A_i \to B_j$ indexed by $(i,j) \in I \times J$ such that for
every $i \in I$ the set $\{j \in J \mid f_{i,j} \not= 0\}$ is finite. The composite of the
morphism above with the morphism $g = (g_{j,k})_{(j,k) \in J \times K} \colon B = (B_j)_{j
  \in J} \to C = (C_k)_{k \in K}$ is the morphism $g \circ f = (g \circ f)_{i,k})_{(i,k)
  \in I \times K} \colon A = (A_i)_{i \in I} \to C = (C_k)_{k \in K}$, where
\[
(g \circ f)_{i,k} := \sum_{j \in J} g_{j,k} \circ f_{i,j} \colon A_i \to C_k.
\]
The composition is well-defined and for given $i \in I$ the set $\{k \in K \mid (g \circ f)_{i,k}\}$ 
is finite, since the set $\{(j,k) \in J \times K \mid f_{i,j} \not= 0\;  \text{and} \; g_{j,k} \not= 0\}$ is finite 
for each $i \in I$.  For  two morphism 
$f =(f_{i,j})_{(i,j) \in I \times J}$ and $(f'_{i,j})_{(i,j) \in I \times J}$ from $A = (A_i)_{i  \in I}$ to $B = (B_j)_{j \in J}$ 
define $f + f' \colon A \to B$ by $(f_{i,j} + f'_{i,j})_{(i,j) \in   I \times J}$.

Consider a subset $J \subseteq \calu$ of cardinality less or equal to $\kappa$ and a
collection of objects $(A[j])_{j \in J}$ in $\cala^{\kappa}$. We want to define a
model for the direct sum $\bigoplus_{j \in J} A[j]$.  Each object $A[j]$ is given by a
subset $I[j] \subseteq \calu$ of cardinality less or equal to $\kappa$ and a collection
$(A[j])_{i[j]})_{i[j] \in I[j]}$ of objects in $\cala$. The object $\bigoplus_{j \in J} A[j]$ 
is defined by the subset $\tau(\calg(I_j,j \in J)) \subseteq \calu$, which indeed
has cardinality less or equal to $\kappa$, and the collection of objects
$(A[\tau^{-1}_2(k)]_{\tau^{-1}_1(k)})_{k \in \tau(\calg(I_j,j \in J))}$, where
$\tau^{-1}_i(k) $ denotes the $i$-th component of $\tau^{-1}(k)$ in $\calu \times \calu$
for $i = 1,2$. Consider an object $B = (B_l)_{l \in L}$ in $\cala^{\kappa}$ and a
collection of morphism $(f[j] \colon A[j] \to B)_{j \in J}$ of morphisms in
$\cala^{\kappa}$. We want to define their direct sum
\[
\bigoplus_{j \in J} f[j] \colon \bigoplus_{j \in J} A[j] \to B.
\]
Hence for every $k \in \tau(\calg(I_j,j \in J))$ and $l \in L$ we have to specify a
morphism from $A[\tau^{-1}_2(k)]_{\tau^{-1}_1(k)}$ to $B_l$. We just take
$f[\tau^{-1}_2(k)]_{\tau^{-1}_1(k),l} \colon A[\tau^{-1}_2(k)]_{\tau^{-1}_1(k)} \to B_l$.
One easily checks that for given $k \in \tau(\calg(I_j,j \in J))$ the set 
$\{l \in L \mid f[\tau^{-1}_2(k)]_{\tau^{-1}_1(k),l} \not= 0\}$ is finite and that we just 
have defined a preferred $\kappa$-direct sum for $\cala^{\kappa}$.

Consider a (small) $\IZ$-category $\calb$ with a preferred $\kappa$-direct sum.  
Then the forgetful functor sending a $\IZ$-category $\calb$ with preferred $\kappa$-direct sum
to the \change{underlying} $\IZ$-category $\widehat{\calb}$ has a left adjoint, namely, $\cala \mapsto \cala^{\kappa}$.
Hence any functor of $\IZ$ categories $F \colon \cala \to \widehat{\calb}$ extends in unique way to a functor
$F^{\kappa} \colon \cala^{\kappa} \to \calb$ respecting the preferred $\kappa$-direct
sums.  In particular there is a well-defined functor of $\IZ$-categories with preferred
$\kappa$-direct sums extending $\id \colon \widehat{\cala^{\kappa}} \to \widehat{\cala^{\kappa}}$
\begin{eqnarray}
\zeta^{\kappa}   \colon (\widehat{\cala^{\kappa}})^{\kappa} & \to & \cala^{\kappa}.
\label{zetakappa}
\end{eqnarray}

All the constructions above go through also in the case, where one replaces the condition
of cardinality less or equal $\kappa$ by the condition being finite. We denote the
resulting $\IZ$-category with preferred finite  direct sums (over finite index sets of
$\calu$) by $\cala^f$. Thus one can extend a $\IZ$-category $\cala$ to an additive
category with preferred finite  sums. The construction of
$\zeta^{\kappa}$ carries over to $\zeta^f$ in the obvious way. The analogue of \eqref{zetakappa} in this case is an isomorphism of categories
\begin{eqnarray}
\zeta^{f}   \colon (\widehat{\cala^{f}})^{f} & \xrightarrow{\cong} & \cala^{f}.
\label{zetaf}
\end{eqnarray}
There is a canonical inclusion $\cala^f \to \cala^{\kappa}$ respecting the preferred finite direct sums.

Let $\cala$ be an additive category. Recall that this is a small $\IZ$-category with the
property that for two objects their direct sum exists, but there is no preferred model for
the direct sum required.  Let $\widehat{\cala}$ be the $\IZ$-category obtained from
$\cala$ by forgetting the existence of the direct sums.  Then $\widehat{\cala}^f$ is an
additive category, but now with preferred finite sums.  There is an equivalence of
additive categories
\[
F \colon \cala \xrightarrow{\simeq} \widehat{\cala}^f.
\]
It sends an object $A$ in $\cala$ to the object in $\widehat{\cala}^f$ given by the subset
$\{u\}$ of $\calu$ and the collection of objects indexed by $\{u\}$ whose only member is
$A$. The definition of $F$ on morphisms is now obvious.  If we choose the structure of a preferred 
finite direct sums on $\cala$, 
we obtain in the obvious way an equivalence
of additive categories with preferred finite direct sums
\[
\widehat{\cala}^f \xrightarrow{\sim} \cala.
\]
So if $\cala$ is already an additive category, we can replace $\cala$ by $\widehat{\cala}^f$ without harm.

\begin{example}
\label{exa:kappa-generated_free_modules}
Let $R$ be an (associative) ring (with unit). Let $\cala_R$ be the $\IZ$-category with one object
$\ast$ and set of morphisms $\mor_{\cala_R}(\ast,\ast) = R$. Composition is given by the multiplication
in $R$ and the $\IZ$-structure comes from the addition in $R$. Then is $\cala_R^f$ is another model
for the additive category $\calr$ of Example~\ref{exa:Finitely_generated_free_R-modules}
and in particular a skeleton for the additive category of finitely generated free $R$-modules.
The category $\cala_R^{\kappa}$ is a skeleton of the category of free $R$-modules
which have $R$-basis of cardinality less or equal to $\kappa$.
\end{example}

\begin{notation} In the sequel we will often write for  $\widehat{\cala}$ just $\cala$ again.
In particular $(\widehat{\cala})^{\kappa}$ will be written as $\cala^{\kappa}$. Moreover we think of
$\cala$ as sitting in $\cala^{\kappa}$ by interpreting $\cala$ as $\widehat{\cala}^f$.
\end{notation}


\subsection{Induction}
\label{subsec:induction}

Define functors of additive categories
\begin{align}
i_0 \colon  \cala  & \to   \cala_{\Phi}[t,t^{-1}];
\label{def_of_i_0_Lueck}
\\
i_{\pm}  \colon  \cala  & \to    \cala_{\Phi}[t^{\pm 1}];
\label{def_of_i_pm0_Lueck}
\\
 j_{\pm}  \colon  \cala_{\Phi}[t^{\pm 1}]  &\to   \cala_{\Phi}[t,t^{-1}];
\label{def_of_j_pm}
\\
 \ev_0^{\pm} \colon  \cala_{\Phi}[t^{\pm 1}]  & \to  \cala
\label{def_of_ev_opm0_Lueck}
\end{align}
as follows. The functors $i_0$, $i_+$ and $i_-$ send a morphism $f \colon A \to B$ in
$\cala$ to the morphism $f \cdot t^0 \colon A \to B$.  The functors
$j_{\pm}$ are just the inclusions.  The functor $\ev_0^{\pm} \colon\cala_{\Phi}[t^{\pm 1}] \to \cala$ 
is given by evaluation at $t^0$, i.e., it sends a morphism $\sum_{i \ge 0} f_i \cdot t^i$ in $\cala_{\Phi}[t]$
or $\sum_{i \le 0} f_i \cdot t^i$ in $\cala_{\Phi}[t^{-1}]$
respectively to $f_0$.  Notice that $\ev_0^{\pm} \circ i_{\pm}$ is the
identity $\id_{\cala}$ and $i_0 = j_+ \circ i_+ = j_- \circ i_-$.

These functors extend (by applying the functor $\cala \mapsto \cala^{\kappa}$) to the functors
denoted by the same symbols

\begin{align}
i_0 \colon  \cala^{\kappa}  & \to   \cala_{\Phi}[t,t^{-1}]^{\kappa};
\label{def_of_i_0_kappa}
\\
i_{\pm}  \colon  \cala^{\kappa} & \to    \cala_{\Phi}[t^{\pm 1}]^{\kappa};
\label{def_of_i_pm_kappa}
\\
 j_{\pm} \colon  \cala_{\Phi}[t^{\pm 1}]^{\kappa}  &\to   \cala_{\Phi}[t,t^{-1}]^{\kappa};
\label{def_of_j_pm_kappa}
\\
\ev_0^{\pm} \colon  \cala_{\Phi}[t^{\pm 1}]^{\kappa}  & \to  \cala^{\kappa}.
\label{def_of_ev_opm_kappa}
\end{align}


\subsection{Restriction}
\label{subsec:restriction}

In the setting of Example~\ref{exa:Finitely_generated_free_R-modules} the additive
subcategories $\calr_{\Phi}[t]$ and $\calr_{\Phi}[t^{-1}]$ of $\calr_{\Phi}[t,t^{-1}]$
correspond to the categories of finitely generated free modules over the subrings
$R_{\phi}[t]$ and $R_{\phi}[t^{-1}]$ of $R_{\phi}[t,t^{-1}]$, respectively, and the
functors $i_0$, $i_+$ and $i_-$ corresponds to induction. If we allow countably generated free
modules, it is well known that all the three functors have right adjoints, given by restriction.
Next we  extend this construction to additive categories.

To define restriction, we need to fix an embedding 
\begin{align}
\sigma \colon \IZ &\to \calu
\label{injection_sigma}
\end{align}
satisfying $\sigma(0) = u$. Actually we will suppress in the sequel $\sigma$ in the notation 
and think of $\IZ$ as a subset of $\calu$ with $0 = u$.

Define functors
\begin{align}
i^0\colon \cala_\Phi[t,t\inv]^{\kappa} & \to \cala^\kappa;
\label{def_of_i_upper_0_kappa_Lueck}
\\
i^\pm\colon \cala_\Phi[t^{\pm1}]^{\kappa} & \to \cala^\kappa,
\label{def_of_i_upper_pm_kappa_Lueck}
\end{align}
as follows: Consider an object $B$ in $\cala_\Phi[t,t\inv]^{\kappa}$. It is given by a
subset $J \subseteq \calu$ and a collection $(B_j)_{j \in J}$ of objects in
$\cala_\Phi[t^{\pm1}]$. Since $\cala_\Phi[t^{\pm1}]$ and $\cala$ have the same set of
objects, this is the same as a collection $(B_j)_{j \in J}$ of objects in $\cala$ indexed
by $J$. The image $i^0(B)$ is the object in $\cala^{\kappa}$ given by the set
$\tau(\IZ \times J) \subseteq \calu$ and the collection of objects in $\cala$ given
by $\bigl(\phi^{\tau_1^{-1}(k)} B_{\tau_2^{-1}(k)}\bigr)_{k \in \tau(\IZ \times  J)}$.  
Consider another object $B'$ in $\cala_\Phi[t,t\inv]^{\kappa}$ given by a subset
$J' \subseteq \calu$ and a collection $(B_{j'})_{j' \in J'}$ of objects in
$\cala_\Phi[t^{\pm1}]$. Let $f \colon B \to B'$ be a morphisms in
$\cala_\Phi[t,t\inv]^{\kappa}$ which is given by a collection $\bigl(f_{j,j'}
\colon B_j \to B'_{j'}\bigr)_{(j,j') \in J \times J'}$ of morphisms in
$\cala_\Phi[t^{\pm1}]$ such that for every $j \in J$ the set $\{j' \in J' \mid f_{j,j'}
\not= 0\}$ is finite.  Each $f_{j,j'} $ is given by a finite formal sum 
$\sum_{k[j,j'] \in \IZ} f_{j,j',k[j,j']} \cdot t^{k[j,j',k]}$, where $f_{j,j',k[j,j']} \colon \Phi^{k[j,j']}(B_j) \to B'_{j'}$ 
is a morphism in $\cala$.  Then $j^0(f) \colon j^0(B) \to j^0(B')$
is given by the collection of morphisms $\bigl(j^0(f)_{k,k'} \bigr)_{(k,k') \in
  \tau(\IZ \times J) \times\tau(\IZ \times J')}$ in $\cala$, where
$j^0(f)_{k,k'}$ is the morphism $\Phi^{ \tau_1^{-1}(k')} f_{\tau_2^{-1}(k),\tau_2^{-1}(k'),\tau_1^{-1}(k) -
  \tau_1^{-1}(k')} \colon \Phi^{\tau_1^{-1}(k)} B_{\tau_2^{-1}(k)} \to
\Phi^{\tau_1^{-1}(k')} B'_{\tau_2^{-1}(k')}$.  We have to check that for each  $k  \in \tau(\IZ \times J)$
the set $\{k' \in \tau(\IZ \times J') \mid j^0(f)_{k,k'} \not= 0\}$ is finite.
This follows from the fact that the set 
$\{j'\in  J' \mid   f_{\tau_2^{-1}(k),j'} \not= 0\}$ and hence the set 
$\{(l,j') \in \IZ \times J' \mid   f_{\tau_2^{-1}(k),j',l} \not= 0\}$
are finite. One easily checks that $i^0$ respects composition, the abelian group 
structure on the set of morphisms and is compatible with the preferred $\kappa$-direct sums.

Here is a second description of $i^0$.  Define a functor of
$\IZ$-categories $\widehat{i^0} \colon \cala_{\Phi}[t,t^{-1}] \to \cala^{\kappa}$ by
sending an object $A$ in $\cala_{\Phi}[t,t^{-1}]$, which is just an object in $\cala$, to
the object in $\cala^{\kappa}$ given by the subset $\IZ \subseteq \calu$ and the
collection of objects $(\Phi^k (B)_{k \in \IZ}$. This is the preferred direct sum
$\bigoplus_{k \in \IZ} \Phi^k(B)$, if we denote by abuse of notation the object in
$\cala^{\kappa}$ given the set $\{k\}$ and the collection of objects indexed by $\{k\}
\subseteq \calu$ whose only member is $\Phi^k(B)$, just by $\Phi^k(B)$ again.  A morphism
in $\cala_{\Phi}[t,t^{-1}]$ of the shape $f \cdot t^0 \colon A\to B$ for a morphism $f
\colon A \to B$ in $\cala$ is sent to
\[
\bigoplus_{k } \Phi^{-k}(f) \colon \bigoplus_{k = -\infty}^{\infty} \Phi^{-k}(A) 
\to \bigoplus_{k = -\infty}^{\infty} \Phi^{-k}(A).
\]
A morphism in $\cala_{\Phi}[t,t^{-1}]$ of the shape $\id_{A} \cdot t \colon \Phi^{-1}(A)
\to A $ is sent to the shift automorphism
\[
\sh \colon  \bigoplus_{k = -\infty}^{\infty} \Phi^{-k}(\Phi^{-1}(A))
\to \bigoplus_{k = -\infty}^{\infty} \Phi^{-k}(A)
\]
which sends the $k$-th summand $\Phi^{-k}(\Phi^{-1}(A)) = \Phi^{-(k+1)}(A)$ of the source
identically to the $(k+1)$-summand of the target.  Since any morphism in
$\cala_{\Phi}[t,t^{-1}]$ is a finite sum of composites of such morphisms, this specifies
the desired functor $\widehat{i^0} \colon \cala_{\Phi}[t,t^{-1}] \to \cala^{\kappa}$.
Then $i^0$ is the composite
\[
\cala_{\Phi}[t,t^{-1}]^{\kappa} \xrightarrow{\widehat{i^0}^{\kappa}} (\cala^{\kappa})^{\kappa} 
\xrightarrow{\zeta^{\kappa}} \cala^{\kappa}.
\]
where the functor $\zeta^{\kappa}$ has been defined in~\eqref{zetakappa}.

The construction of $i^\pm$ is analogous and left to the reader.


\subsection{Adjunction between induction and restriction}
\label{subsec:Adjunction_between_induction_and_restriction}

\begin{lemma}\label{lem:Adjunction_between_induction_and_restriction}
The pairs $(i_0,i^0)$, $(i_+,i^+)$ and $(i_-,i^-)$ are adjoint pairs, i.e., 
for objects $A$ in $\cala^\kappa$, $B^\pm$ in $\cala_\Phi[t^{\pm1}]^{\kappa}$ and $B$ in $\cala_\Phi[t,t\inv]^\kappa$
there are isomorphisms, natural in $A$ and $B$ and compatible with the preferred $\kappa$-direct sum in the first variable
\begin{eqnarray*}
\mor_{\cala_{\Phi}[t,t^{-1}]^\kappa}(i_0A,B) 
& \xrightarrow{\cong} &
\mor_{\cala^\kappa}(A,i^0B);
\\
\mor_{\cala_{\Phi}[t]^\kappa}(i_+A,B) 
& \xrightarrow{\cong} &
\mor_{\cala^\kappa}(A,i^+B);
\\
\mor_{\cala≈_{\Phi}[t^{-1}]^\kappa}(i_-A,B) 
& \xrightarrow{\cong} &
\mor_{\cala^\kappa}(A,i^-B).
\end{eqnarray*}
\end{lemma}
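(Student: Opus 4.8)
The plan is to verify the first adjunction $(i_0, i^0)$ in detail and then indicate that the other two are proved by the same argument. The key observation is that by the second description of $i^0$ as the composite $\zeta^\kappa \circ \widehat{i^0}^\kappa$, and since $\zeta^\kappa$ is the counit-type extension associated with adjoining preferred $\kappa$-direct sums, it suffices to establish the adjunction at the level of the underlying $\IZ$-categories before passing to $\cala^\kappa$; the $\kappa$-direct-sum compatibility and naturality then follow formally from the universal property of $\cala \mapsto \cala^\kappa$ stated in Subsection~\ref{subsec:finite_direct_sum}. So the first step is to reduce to the following claim: for $A$ an object of $\cala$ and $B$ an object of $\cala_\Phi[t,t\inv]$ (hence an object of $\cala$), there is a natural isomorphism
\[
\mor_{\cala_{\Phi}[t,t^{-1}]}(i_0 A, B) \xrightarrow{\cong} \mor_{\cala^\kappa}\bigl(A, \textstyle\bigoplus_{k \in \IZ} \Phi^k(B)\bigr).
\]

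Next I would write down the isomorphism explicitly. A morphism $i_0 A \to B$ in $\cala_\Phi[t,t^{-1}]$ is a finite formal sum $f = \sum_{i \in \IZ} f_i \cdot t^i$ with $f_i \colon \Phi^i(A) \to B$ in $\cala$. On the other side, by the universal property of the preferred direct sum $\bigoplus_{k \in \IZ}\Phi^k(B)$, a morphism $A \to \bigoplus_{k \in \IZ}\Phi^k(B)$ in $\cala^\kappa$ is the same as a collection $(g_k \colon A \to \Phi^k(B))_{k \in \IZ}$ with $g_k = 0$ for all but finitely many $k$. The assignment is then $f \mapsto (g_k)_{k\in\IZ}$ where $g_k := \Phi^{k}(f_{-k}) \colon A \to \Phi^{k}(B)$ (using $\Phi^k(f_{-k}) \colon \Phi^k\Phi^{-k}(A) = A \to \Phi^k(B)$); equivalently, matching the shift description of $\widehat{i^0}$, the component $f_i \cdot t^i$ corresponds under $\widehat{i^0}$ to the shifted sum $\bigoplus_k \Phi^{-k}(f_i) \circ (\text{shift by } i)$, and one reads off the component landing in the summand indexed appropriately. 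This is visibly a bijection of sets with inverse $(g_k) \mapsto \sum_k \Phi^{-k}(g_k)\cdot t^{-k}$, and it is additive because all the operations in sight (applying $\Phi^k$, precomposing with shift isomorphisms, summing components) are additive. Naturality in $B$ is immediate since a morphism $B \to B'$ in $\cala_\Phi[t,t^{-1}]$ acts compatibly on both sides via its formal-sum description and the induced shift; naturality in $A$ (and $\kappa$-additivity in $A$) I would deduce from the fact that $i_0$ and $i^0$ are functors together with the universal property of $\cala^\kappa$, rather than by a direct computation.

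The main obstacle — and the only place where real care is needed — is bookkeeping of the twisting. One must be careful that the correct power of $\Phi$ is applied and that composition in $\cala_\Phi[t,t^{-1}]$, which by Definition~\ref{def:A_phi[t,t(-1)]} involves the term $g_j \circ \Phi^j(f_i)$, is matched with composition in $\cala^\kappa$ after applying $\widehat{i^0}$; concretely, one needs the functoriality identity $\widehat{i^0}(g \circ f) = \widehat{i^0}(g) \circ \widehat{i^0}(f)$, which forces the shift automorphism and the maps $\bigoplus_k \Phi^{-k}(f_i)$ to interact exactly as the Laurent-composition formula prescribes. Once the untwisted case $\Phi = \id$ is understood (where this is the classical restriction/induction adjunction for $\cala^\kappa \leftrightarrow \cala_\Phi[t,t^{-1}]^\kappa$ modelling countably generated free $R$-modules over $R[t,t^{-1}]$), the twisted case is obtained by inserting the powers of $\Phi$ dictated by the relation $(\id_{\Phi(B)}\cdot t)\circ(f\cdot t^0) = \Phi(f)\cdot t$. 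For the pairs $(i_+, i^+)$ and $(i_-, i^-)$ the argument is identical, replacing the index set $\IZ$ by $\IZ_{\ge 0}$ and $\IZ_{\le 0}$ respectively in the description of $\widehat{i^\pm}$; since the text has already delegated the construction of $i^\pm$ to the reader, I would likewise state that the adjunction isomorphisms and their verification are completely analogous and omit the details.
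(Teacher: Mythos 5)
Your proof is correct and arrives at the same explicit bijection as the paper, but organizes the argument differently. The paper works head-on with the component descriptions of morphisms in $\cala^\kappa$ and $\cala_\Phi[t,t^{-1}]^\kappa$: a morphism on either side is unpacked into a collection $(f_{i,j,k})_{(i,j,k)\in I\times J\times\IZ}$ subject to a finiteness condition, the two conditions are seen to coincide, and the identification is read off with the $\kappa$-direct-sum compatibility built in. You instead invoke the second description of $i^0$ as $\zeta^\kappa\circ\widehat{i^0}^\kappa$ together with the universal property of $\cala\mapsto\cala^\kappa$ to reduce to $A\in\cala$ and $B\in\cala_\Phi[t,t^{-1}]$, and then establish the bijection of finitely supported collections directly. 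Both are fine; the paper's version is more hands-on and makes the $\kappa$-sum compatibility manifest, while yours is notationally lighter at the cost of a reduction step. One small point in your reduction that deserves to be spelled out: the stated compatibility with preferred $\kappa$-direct sums is only in the first variable, so passing from $B\in\cala_\Phi[t,t^{-1}]$ to general $B\in\cala_\Phi[t,t^{-1}]^\kappa$ needs the separate observation that, when $A$ is singleton-indexed, both $\mor_{\cala_\Phi[t,t^{-1}]^\kappa}(i_0A,-)$ and $\mor_{\cala^\kappa}(A,i^0(-))$ turn a $\kappa$-indexed object $B=(B_j)_{j\in J}$ into the direct sum (not the product) of the hom-groups at the $B_j$; this holds precisely because of the finiteness condition in the definition of morphisms in $\cala^\kappa$, and then naturality in $B$ lets the isomorphism assemble. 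Your explicit formula $f\mapsto(g_k)$ with $g_k=\Phi^k(f_{-k})$, and inverse $(g_k)\mapsto\sum_k\Phi^{-k}(g_k)\cdot t^{-k}$, matches the paper's intent (the paper phrases it as ``the same collection,'' implicitly identifying $\Phi^k(A)\to B$ with $A\to\Phi^{-k}(B)$, so your version is in fact more careful about the twist).
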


\begin{proof}
  We only treat the first isomorphism, the proof for the other ones is analogous.  The
  object $A$ in $\cala^{\kappa}$ is given by a subset $I \subseteq \calu$ and a collection
  of objects $(A_i)_{i \in I}$ of $\cala$. The object $B$ in
  $\cala_\Phi[t,t\inv]^{\kappa}$ is given by a subset $J \subseteq \calu$ and a collection
  of objects $(B_j)_{j\in IJ}$ of $\cala$.  Then $i_0 A$ is the object in
  $\cala_\Phi[t,t\inv]^{\kappa}$ given again by a subset $I \subseteq \calu$ and a
  collection of objects $(A_i)_{i \in I}$ of $\cala$.  The object $i^0 B$ in
  $\cala^{\kappa}$ is given by the set $\tau(\IZ \times J)$ and the collection of objects
  $(\Phi^{\tau^{-1}_1{j'}}(B_{\tau^{-1}_2(j')})_{j' \in \tau(\IZ \times J)}$.

  A morphism $f \colon i_0 (A) \to B$ is given by a collection $\bigl(f_{i,j}
  \colon A_i \to B_j\bigr)_{(i,j) \in I \times J}$, where $f_{i,j} \colon A_i \to B_j$ is
  a morphism in $\cala_{\Phi}[t,t^{-1}]$ such that for every $i \in I$ the set $\{j \in J
  \mid f_{i,j} \not= 0\}$ is finite.  Each $f_{i,j}$ is a finite  sum 
  $\sum_{k[i,j]   \in \IZ} f_{i,j,k[i,j]} \cdot t^{k[i,j]}$, where $f_{j,j,k[i,j]} \colon \phi^{k[i,j}(A_j) \to  B_j$ 
  is a morphism in $\cala$.  So $f$ is given by a collection of morphisms $f_{i,j,k}
  \colon \phi^k(A_j) \to B_j$ in $\cala$ indexed by $(i,j,k) \in I \times J \times \IZ$
  which satisfies condition (C'): For each $i \in I$ the set $\{j \in J \mid \exists k \in
  \IZ\; \text{with} f_{i,j,k} \not= 0\}$ is finite and for each $(i,j) \in I \times J$ the
  set $\{k \in \IZ \mid f_{i,j,k} \not= 0\}$ is finite.

  A morphism $g \colon A \to j^0 B$ in $\cala^{\kappa}$ is given by a collection
  of morphisms $\bigl(g_{i,j'} \colon A_i \to
  \Phi^{\tau^{-1}(j')}(B_{\tau_2^{-1}(j')})\bigr)_{(i,j') \in I \times \tau(\IZ \times
    J)}$ such that for each $i \in I$ the set $\{j' \in \tau(\IZ \times J) \mid g_{i,j'}
  \not= 0\}$ is finite.  This is the same as a collection of morphisms $\bigl(g_{i,j,k}
  \colon A_i \to \Phi^k(B_j)\bigr)_{(i,j,k) \in I \times J \times \IZ}$ in $\cala$ which
  satisfies condition (C''): For each $i \in I$ the set $\{(j,k) \in J \times \IZ \mid
  g_{i,j,k} \not= 0\}$ is finite.

  Now we can define the desired isomorphism of abelian groups by sending a collection
  $\bigl(f_{i,j} \colon A_i \to B_j\bigr)_{(i,j) \in I \times J}$ to the same collection
  $\bigl(f_{i,j} \colon A_i \to B_j\bigr)_{(i,j) \in I \times J}$ since the conditions
  (C') and (C'') are equivalent.

  One easily checks that this isomorphism is natural in $A$ and $B$.
\end{proof}


\section{Strategy of proof for 
Theorem~\ref{the:BHS_decomposition_for_connective_K-theory}%
~\textnormal{\ref{the:BHS_decomposition_for_connective_K-theory:BHS-iso}}}
\label{sec:Strategy_of_proof_for_the_connective}

In this section we present the details of the formulation and then the basic strategy of
proof of Theorem~\ref{the:BHS_decomposition_for_connective_K-theory}%
~\ref{the:BHS_decomposition_for_connective_K-theory:BHS-iso}.

In the sequel $\bfK(\calc)$ denotes the connective $K$-theory spectrum of a \emph{Waldhausen category}
$\calc$, i.e.,  a category with cofibrations and weak equivalences $\calc$, in the sense of 
Waldhausen~\cite{Waldhausen(1985)}. 

\begin{remark}[Exact categories as Waldhausen categories]
\label{rem:Additive_categories_as_Waldhausen_categories}
Any additive (in fact, any exact) category has a canonical Waldhausen
 structure where the cofibrations are the admissible monomorphisms and the weak equivalences are the isomorphisms.

In the situation of Example~\ref{exa:Finitely_generated_free_R-modules} we get
that $\pi_n(\bfK(\calr)) = K_n(R)$ for $n \ge 1$, 
the map $\IZ \to K_0(\calr)$ sending $n$ to $[R^n]$ is surjective and even bijective if
$R^n \cong R^m$ implies $m = n$, and $\pi_n(\bfK(\calr)) = 0$ for
$n \le -1$.  If we pass to the idempotent completion $\Idem(\calr)$, then we obtain
$\pi_n(\bfK(\Idem(\calr)) = K_n(R)$ for $n \ge 0$, where $K_0(R)$ is the projective class group, 
and \change{$\pi_n(\bfK(\Idem(\calr)) = 0$} for $n \le -1$.
\end{remark}


\subsection{The NK-terms and the maps $\bfa$ and $\bfb$}
\label{subsec:The_NK-terms_and_the_maps_bfa_and_bfb}

\begin{definition}[{$\bfNK(\cala_{\Phi}[t])$} and
  {$\bfNK(\cala_{\Phi}[t^{-1}])$}]
  \label{def:NK(cala_Phi[t(pm1]}
  Define $\bfNK(\cala_{\Phi}[t^{\pm 1}])$ to be the homotopy fiber of the map of
  spectra $\bfK(\ev_0^{\pm}) \colon \bfK(\cala_{\Phi}[t^{\pm 1}]) \to \bfK(\cala)$.

  Let $\bfb^{\pm} \colon \bfNK(\cala_{\Phi}[t^{\pm 1}]) \to
  \bfK(\cala_{\Phi}[t^{\pm 1}])$ be the canonical map of spectra.
\end{definition}

Let $S \colon i_0 \circ \Phi^{-1} \to i_0$ be the natural transformation of functors
of additive categories $\cala \to \cala_{\Phi}[t,t^{-1}]$ which is given on an
object $A$ in $\cala$ by the isomorphism $\id_A \cdot t \colon \Phi^{-1}(A) \to A$. 
It induces a (preferred) homotopy 
\begin{eqnarray}
\bfK(S) \colon \bfK(\cala) \wedge I_+ & \to & \bfK(\cala_{\Phi}[t,t^{-1}])
\label{homotopy_bfK(S)}
\end{eqnarray}
from $\bfK(i_0) \circ \bfK(\Phi^{-1})$ to $\bfK(i_0)$.
Recall that the mapping torus of $\bfK(\Phi^{-1})$ is by definition the pushout
\[
\xymatrix{\bfK(\cala) \vee \bfK(\cala) = \bfK(\cala) \wedge \partial I_+ 
\ar[r]^-{\bfn} \ar[d]^{\bfK(\Phi^{-1}) \vee \id_{\bfK(\cala)}}
& \bfK(\cala)\wedge I_+ \ar[d]
\\
\bfK(\cala) \ar[r]
& \bfT_{\bfK(\Phi^{-1})}
}
\]
where the upper horizontal map $\bfn$ is given by the inclusion $\partial I \to I$.
Hence $S$ yields a map of spectra
\[
\bfa \colon \bfT_{\bfK(\Phi^{-1})} \to \bfK(\cala_{\Phi}[t,t^{-1}]).
\]

Thus we have explained all terms appearing Theorem~\ref{the:BHS_decomposition_for_connective_K-theory}%
~\ref{the:BHS_decomposition_for_connective_K-theory:BHS-iso}.
Next  we explain the strategy of its proof.


\subsection{The twisted projective line}
\label{subsec:The_twisted_projective_line}

We define the \emph{twisted projective line} to be the following 
additive category $\calx= \calx(\cala,\Phi)$. Objects are
triples $(A^+,f,A^-)$ consisting of objects $A^+$ in $\cala_\Phi[t]$ and $A^-$ in $\cala_\Phi[t\inv]$ and an
isomorphism $f \colon j_+A^+ \to j_-A^-$ in $\cala_{\Phi}[t,t^{-1}]$. A morphism 
$(u^+,u^-) \colon (A^+,f,A^-) \to (B^+,g,B^-)$ in $\calx$ consists of morphisms 
$u^+ \colon A^+ \to B^+$ in $\cala_{\Phi}[t]$ and a morphism $u^- \colon A^- \to B^-$ in
$\cala_{\Phi}[t^{-1}]$ such that the following diagram commutes in $\cala_{\Phi}[t,t^{-1}]$
\[
\xymatrix{j_+A^+ \ar[r]^{f} \ar[d]_{u^+} 
& j_-A^- \ar[d]^{u^-}
\\
j_+B^+ \ar[r]_{g} 
& j_-B^-
}
\]
Let 
\begin{eqnarray}
k^{\pm} \colon \calx & \to & \cala_{\Phi}[t^{\pm 1}]
\label{def_f_kpm}
\end{eqnarray}
be the functor sending $(A^+,f,A^-)$ to $A^{\pm}$.

The category $\calx$ is naturally an exact category by declaring a sequence to be 
exact if and only if becomes (split) exact both after applying $k^+$ and $k^-$.

The proof of the next result is deferred to 
Section~\ref{sec:Proof_of_Theorem_ref(the:homotopy_pull_back_of_calx)}.

\begin{theorem} \label{the:homotopy_pull_back_of_calx}
Consider the following (not necessarily commutative) diagram of spectra
\[
\xymatrix@!C=8em{\bfK(\calx) \ar[r]^-{\bfK(k^-)} 
\ar[d]_{\bf\bfK(k^+)} 
& \bfK(\cala_{\Phi}[t^{-1}])\ar[d]^{\bfK(j_-)}
\\
\bfK(\cala_{\Phi}[t]) \ar[r]_-{\bfK(j_+)}
&
\bfK(\cala_{\Phi}[t,t^{-1}]) 
}
\]
There is a natural equivalence of functors
$T \colon j_+ \circ k^+ \xrightarrow{\cong} j_- \circ k^-$ which is given on an object $(A^+,f,A^-)$ by $f$.
It induces a preferred homotopy 
$\bfK(j_+) \circ \bfK(k^+) \simeq \bfK(j_-)  \circ \bfK(k^-)$.

If $\cala$ is idempotent complete, then the diagram above is a weak homotopy pullback, i.e.,
the canonical map from $\bfK(\calx)$ to the homotopy pullback of
\[\bfK(\cala_{\Phi}[t]) \xrightarrow{\bfK(j_+)} \bfK(\cala_{\Phi}[t,t^{-1}])  \xleftarrow{\bfK(j_-)}  \bfK(\cala_{\Phi}[t^{-1}])
\]
is a weak homotopy equivalence.
\end{theorem}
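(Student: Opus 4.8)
The plan is to identify $\calx=\calx(\cala,\Phi)$ with the algebraic analogue of a "Mayer–Vietoris patching" category and to exhibit the square as a homotopy pullback by a localization-sequence argument carried out in the category of chain complexes, rather than in $\cala$ itself. Concretely, I would first pass from the exact category $\calx$ to its bounded chain complex category $\Chcat^b(\calx)$ (equivalently, the Waldhausen category of finite complexes), which by the Gillet–Waldhausen theorem computes the same $K$-theory; the same applies to $\cala_\Phi[t]$, $\cala_\Phi[t^{-1}]$, and $\cala_\Phi[t,t^{-1}]$. The existence of the natural isomorphism $T\colon j_+\circ k^+\xrightarrow{\cong} j_-\circ k^-$ given by $f$ is immediate from the definition of a morphism in $\calx$ and of the structure maps, and it induces the asserted preferred homotopy $\bfK(j_+)\circ\bfK(k^+)\simeq\bfK(j_-)\circ\bfK(k^-)$ by naturality of $\bfK$; so the only real content is the homotopy-pullback assertion.

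The main step is to set up a commuting diagram of localization sequences. One introduces the full subcategory $\calx_{\ge 0}\subseteq \Chcat^b(\calx)$ (or rather a suitable thick subcategory) consisting of those complexes whose image under $k^-$ is acyclic, and dually $\calx_{\le 0}$ where $k^+$ becomes acyclic. The functor $k^+$ should then induce an equivalence (on $K$-theory, using idempotent completeness to handle the Eilenberg-swindle/cofinality issue at $K_0$) between $\Chcat^b(\calx)/\calx_{\ge 0}$ and $\Chcat^b(\cala_\Phi[t])$, and similarly $k^-$ between $\Chcat^b(\calx)/\calx_{\le 0}$ and $\Chcat^b(\cala_\Phi[t^{-1}])$; meanwhile both quotient-by-everything should yield $\Chcat^b(\cala_\Phi[t,t^{-1}])$. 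Waldhausen's fibration and localization theorems then produce two fibration sequences
\[
\bfK(\calx_{\ge 0})\to\bfK(\calx)\to\bfK(\cala_\Phi[t])
\quad\text{and}\quad
\bfK(\calx_{\le 0})\to\bfK(\calx)\to\bfK(\cala_\Phi[t^{-1}]),
\]
and a compatible identification showing that the two "fiber" terms $\bfK(\calx_{\ge 0})$ and $\bfK(\calx_{\le 0})$ map by equivalences onto the homotopy fiber of $\bfK(j_\mp)\colon\bfK(\cala_\Phi[t^{\mp1}])\to\bfK(\cala_\Phi[t,t^{-1}])$. Feeding this into the comparison map from $\bfK(\calx)$ to the homotopy pullback and chasing the resulting map of long exact sequences (or, more cleanly, comparing the two fibration sequences over the common base $\bfK(\cala_\Phi[t,t^{-1}])$) shows the comparison map is a weak equivalence. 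This is the standard "two-out-of-three for homotopy pullbacks via matching localization sequences" pattern, exactly as in Huettemann–Klein–Vogell–Waldhausen–Williams.

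The hard part will be the precise identification of the subcategories $\calx_{\ge 0}$, $\calx_{\le 0}$ and the verification that the quotient categories are the ones claimed — in other words, proving that a bounded complex over $\calx$ that is "supported on one side" is, up to weak equivalence in the relevant Waldhausen quotient, determined by its restriction to $\cala_\Phi[t]$ (resp.\ $\cala_\Phi[t^{-1}]$). This is where idempotent completeness of $\cala$ is genuinely needed: without it one only gets the statement after idempotent-completing, which is why the connective theorem carries the idempotent-complete hypothesis while the non-connective one does not. I would expect to spend most of the effort checking that the gluing datum $f$ can be "absorbed" into a single-sided complex after passing to complexes (a finite filtration/mapping-cone argument on the Laurent polynomial degree), and that the resulting functors are exact and cofinal in Waldhausen's sense so that the fibration theorem applies. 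The remaining compatibilities (that the homotopy furnished by $T$ matches the one coming from the localization sequences) are formal but need to be stated carefully, and their verification is deferred, along with the whole proof, to Section~\ref{sec:Proof_of_Theorem_ref(the:homotopy_pull_back_of_calx)}.
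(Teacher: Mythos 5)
Your proposal follows the same basic strategy as the paper: pass to chain complexes via Gillet--Waldhausen, and then compare the two horizontal homotopy fibers of the square using Waldhausen's Fibration Theorem together with the Approximation Theorem. The paper's Section~\ref{sec:Proof_of_Theorem_ref(the:homotopy_pull_back_of_calx)} does essentially this, but with one structural device you have not anticipated: it introduces an auxiliary exact category $\caly$ (triples $(A^+,f,A^-)$ with $A^+\in\cala_\Phi[t]$, $A^-\in\cala_\Phi[t,t^{-1}]$ and $f\colon j_+A^+\xrightarrow{\cong} A^-$), shows $\bfK(\caly)\simeq\bfK(\cala_\Phi[t])$, and replaces the bottom-left corner of the square by $\bfK(\caly)$ so that the square of chain-complex categories becomes strictly commutative. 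The two vertical arrows ($k'\colon\Chcat(\calx)\to\Chcat(\caly)$ and $\Chcat(j_-)$) then have literally comparable fibers via the Fibration Theorem, and Lemma~\ref{k'_is_homotopy_equivalence} uses the Approximation Theorem to show $k'$ induces an equivalence on these fibers. Your ``direct'' plan of identifying $\bfK$ of the $w$-acyclic subcategory of $\Chcat(\calx)$ with $\hofib(\bfK(j_\pm))$ would have to confront the same approximation issue (absorbing the gluing datum and pushing Laurent degrees to one side), but without the rigidification provided by $\caly$; that rigidification is what lets the paper phrase the key comparison cleanly.

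Two smaller points. First, your two displayed fibration sequences have the wrong targets: if $\calx_{\ge 0}$ consists of complexes where $k^-$ becomes acyclic, then the Fibration Theorem localization inverts $w$-equivalences detected by $k^-$ and the resulting quotient has the $K$-theory of $\cala_\Phi[t^{-1}]$, not of $\cala_\Phi[t]$ (and symmetrically for $\calx_{\le 0}$); the subscripts and the $\pm$ need to be swapped throughout. Second, your explanation of where the idempotent-completeness hypothesis enters does not match the paper: the actual proof of this theorem --- Lemma~\ref{lem:caly_and_cala}, Theorem~\ref{the:relative_fibration_sequences}, Lemma~\ref{k'_is_homotopy_equivalence} --- makes no use of that hypothesis. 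Idempotent completeness is invoked later, in the proof of Theorem~\ref{the:computing_K(calx)} (via Lemma~\ref{lem:criterion_for_finitely_dominated}), so attributing a ``genuine'' role to it here would be misleading.
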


Let
\begin{eqnarray}
l_i \colon \cala & \to & \calx \quad \text{for} \; i = 0,1
\label{def_of_l_i}
\end{eqnarray}
be the functor which sends an object $A$ to $(A,\id,A)$ for $i = 0$ and to the object  $(\Phi^{-1}(A),
\id_A \cdot t, A)$ for $i = 1$, and a morphism $f \colon A \to B$ in $\cala$ to
the morphism $\bigl(i_+(f),i_-(f)\bigr)$ for $i = 0$ and
$\bigl(i_+(\Phi^{-1}(f)),i_-(f)\bigr)$ for $i = 1$.

The proof of the next result is deferred to  Section~\ref{sec:Proof_of_Theorem_ref(computing_K(calx)}

\begin{theorem} \label{the:computing_K(calx)}
Suppose that $\cala$ is idempotent complete.  Then the map of spectra
\[
\bfK(l_0) \vee \bfK(l_1) \colon \bfK(\cala) \vee \bfK(\cala)  \xrightarrow{\simeq} \bfK(\calx).
\]
is a  weak homotopy equivalence.
\end{theorem}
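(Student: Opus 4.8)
\textbf{Proof proposal for Theorem~\ref{the:computing_K(calx)}.}
The plan is to analyze the exact category $\calx$ directly via its exact structure and show it has an admissible filtration whose subquotients recover two copies of $\cala$. Every object $(A^+,f,A^-)$ comes with the subobject $l_0(\ev_0^+(A^+)) = (A^+_0, \id, A^+_0)$, where $A^+_0 = \ev_0^+(A^+)$ sits inside $A^+$ via $i_+$; one checks that the canonical map $l_0(\ev_0^+(k^+(-))) \to \id_{\calx}$ is an admissible monomorphism, with cokernel an object whose $k^+$-part lies in the ``augmentation ideal'' $\cala_\Phi[t]$ with vanishing $t^0$-coefficient. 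Iterating this with the $t^1$-truncation (the object $l_1$ captures the first ``twist''), one gets a finite admissible filtration of any object, so by the Additivity Theorem and a cofinality/devissage argument $\bfK(\calx)$ splits. The first step is therefore to make precise the filtration: write $\calx_{\le n}$ for the full subcategory of objects $(A^+,f,A^-)$ such that $A^+$ is a polynomial of degree $\le n$ after a suitable normalization of $f$, show each inclusion $\calx_{\le n-1} \to \calx_{\le n}$ fits into an exact sequence of exact functors with the ``degree-$n$ part'' functor valued in $\cala$ (via $\Phi^{-n}$), and conclude by Waldhausen's Additivity that $\bfK(\calx_{\le n}) \simeq \bfK(\calx_{\le n-1}) \vee \bfK(\cala)$; but since the twisting shifts parity, only the two functors $l_0$ (degree $0$) and $l_1$ (degree $1$) survive in the colimit, and everything stabilizes after one step.

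An alternative and probably cleaner route, which I would actually pursue, is to identify $\calx$ up to $K$-theory equivalence with a category of ``sheaves on the twisted projective line'' à la Quillen's computation of $K_*(\IP^1)$. Concretely, there is a pair of adjoint functors: $l_0, l_1 \colon \cala \to \calx$ on one side, and on the other side the functors $k^+ \colon \calx \to \cala_\Phi[t]$ and $k^- \colon \calx \to \cala_\Phi[t\inv]$ whose $K$-theory we already understand. The key input is the localization/homotopy-pullback statement of Theorem~\ref{the:homotopy_pull_back_of_calx}, which expresses $\bfK(\calx)$ as the homotopy pullback of $\bfK(\cala_\Phi[t]) \to \bfK(\cala_\Phi[t,t\inv]) \leftarrow \bfK(\cala_\Phi[t\inv])$. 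Feeding in the (untwisted-in-$t^0$, but $\Phi$-twisted) fundamental theorem for the polynomial extensions — namely that $\bfK(\ev_0^{\pm}) \colon \bfK(\cala_\Phi[t^{\pm1}]) \to \bfK(\cala)$ admits a section $\bfK(i_\pm)$ so that $\bfK(\cala_\Phi[t^{\pm1}]) \simeq \bfK(\cala) \vee \bfNK(\cala_\Phi[t^{\pm1}])$ — one computes the homotopy pullback. The three corners contribute $\bfK(\cala) \vee \bfNK^+$, $\bfK(\cala) \vee \bfNK^-$, and, for the twisted Laurent corner, by a Mayer–Vietoris input the relevant maps on the $\bfK(\cala)$-summands are $\id$ and $\bfK(\Phi^{-1})$ respectively, and on the $\bfNK^{\pm}$-summands they are the split inclusions $\bfb^{\pm}$; the homotopy pullback of this explicit diagram is $\bfT_{\bfK(\Phi^{-1})} \vee \bfNK^+ \vee \bfNK^-$ — but that computation is precisely the content of Theorem~\ref{the:BHS_decomposition_for_connective_K-theory}, so to avoid circularity one must instead only extract the part of the homotopy pullback detected by the ``diagonal'' maps $l_0$ and $l_1$.

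So the honest argument is the first one: produce an explicit finite admissible filtration of $\calx$. I would proceed as follows. First, normalize: given $(A^+, f, A^-)$, use the isomorphism $f$ to assume $A^-$ is in ``reduced form'', and measure the obstruction to $f$ being $i_0$ of an isomorphism in $\cala$ by its ``polynomial degree spread'' $[-p, q]$. Second, show the subobject determined by the top-degree coefficient is admissible (it is split mono after applying both $k^+$ and $k^-$ because of the shape of $f$), with subquotient of strictly smaller spread. Third, apply Waldhausen Additivity to the resulting filtration functors to get a wedge decomposition $\bfK(\calx) \simeq \bigvee \bfK(\cala)$; finally argue, using the twisting, that the normalization is unique up to the two base cases realized by $l_0$ and $l_1$, so the wedge has exactly two terms. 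I expect the main obstacle to be the second step — verifying that the top-coefficient subobject is genuinely an \emph{admissible} sub in the exact structure of $\calx$ (i.e.\ split after both $k^{\pm}$) rather than merely a subobject, which is where idempotent completeness of $\cala$ is used: it guarantees that the relevant summands actually exist inside $\cala$ so that the filtration subquotients land in $\cala$ and not merely in $\Idem(\cala)$. The bookkeeping of the $\Phi$-twists in the shift maps (keeping track of which power $\Phi^{-n}$ appears in the degree-$n$ subquotient, and checking this is compatible with the exact structure) will be the tedious but routine part.
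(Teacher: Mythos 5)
Your first (and, as you say, ``honest'') proposal --- build a finite admissible filtration of every object of $\calx$ with subquotients in the essential image of $l_0$ and $l_1$, then apply Waldhausen Additivity --- cannot succeed, because such filtrations simply do not exist; the issue is not the one you anticipate (admissibility of a candidate subobject, or where the summand lives) but that there is no candidate subobject at all. Take $l_2(A) := s^2 l_0(A) = (\Phi^{-2}(A), t^2, A)$. A morphism $(u^+,u^-)\colon l_0(C) = (C,\id,C) \to l_2(A)$ in $\calx$ must satisfy $t^2 \circ j_+(u^+) = j_-(u^-)$ in $\cala_\Phi[t,t^{-1}]$. But the left-hand side is supported in $t$-degrees $\ge 2$ and the right-hand side in degrees $\le 0$, so both vanish and hence $u^\pm = 0$; the same degree count rules out nonzero morphisms $l_1(C) \to l_2(A)$. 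So $l_2(A)$ has no nonzero subobject from the image of $l_0$ or $l_1$, let alone a filtration of the proposed kind, and the same holds for $l_n(A)$ with $|n|$ larger. This is the twisted-additive-category analogue of the familiar fact that a vector bundle on the projective line over a general base ring need not be, nor be filtered by, twisting sheaves of degrees $0$ and $1$; your proposed ``normalization'' of $f$ to degree spread $[0,1]$ is a Birkhoff factorization of invertible matrices over $\cala_\Phi[t,t^{-1}]$, which fails in general. You correctly diagnosed the second route (compute the homotopy pullback of Theorem~\ref{the:homotopy_pull_back_of_calx} using the Bass--Heller--Swan decomposition) as circular, so that is not available either.

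What the paper actually does is a derived, Waldhausen-category version of Quillen's computation of $K$-theory of the projective line, following H\"uttemann--Klein--Vogell--Waldhausen--Williams. The substitute for the nonexistent filtration of objects is a pair of localizations of the chain-complex category $\Chcat(\calx)$. One introduces the \emph{global section functor} $\Gamma\colon\Chcat(\calx)\to\Chcathf(\cala)$ of Definition~\ref{def:global_section_functor} (the derived pushforward to the base), its twist $\Gamma_1 = \Gamma\circ s^{-1}$, and the two auxiliary classes of weak equivalences $w_0$, $w_1$ on $\Chcat(\calx)$ that they determine. Lemma~\ref{lem:comparison_of_weak_equivalences} shows that $w_0\cap w_1$ detects exactly the ordinary chain homotopy equivalences, and Lemma~\ref{lem:comparison_of_K_theories} shows that $l_i$ induces a $K$-theory equivalence onto $(\Chcat(\calx^{w_{i-1}}), w_i)$, with $\Gamma_i$ as an explicit homotopy inverse; the proof uses the adjunctions from Section~\ref{subsec:Adjunction_between_induction_and_restriction} and the characteristic sequence. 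Waldhausen's Fibration Theorem~\ref{the:Fibration_Theorem} applied to the pair $(w_0\cap w_1, w_0)$ then yields a fibration sequence which splits because $l_0$ already factors through the middle term, giving precisely the wedge decomposition $\bfK(l_0)\vee \bfK(l_1)$. So an additivity-style splitting does appear, but at the level of $K$-theory spectra of localized categories of chain complexes, not at the level of exact sequences of objects in $\calx$.
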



\subsection{Proof of
Theorem~\ref{the:BHS_decomposition_for_connective_K-theory}%
~\ref{the:BHS_decomposition_for_connective_K-theory:BHS-iso}}
\label{subsec:Proof_of_Theorem_ref(the:BHS_decomposition_for_connective_K-theory)_(i)}
In this subsection we finish the proof of 
Theorem~\ref{the:BHS_decomposition_for_connective_K-theory}%
~\ref{the:BHS_decomposition_for_connective_K-theory:BHS-iso}
assuming that Theorem~\ref{the:homotopy_pull_back_of_calx} 
and Theorem~\ref{the:computing_K(calx)} are true.

There is a not necessarily commutative diagram
\begin{eqnarray}
& 
\xymatrix@!C=12em{\bfK(\cala) \vee \bfK(\cala)  
\ar[r]^-{\bfK(i_-) \vee \bfK(i_-)} 
\ar[d]_{\bfK(i_+ \circ \Phi^{-1})\vee \bfK(i_+)} 
& \bfK(\cala_{\Phi}[t^{-1}]) \ar[d]^{\bfK(j_-)}
\\
\bfK(\cala_{\Phi}[t]) \ar[r]_-{\bfK(j_+)}
&
\bfK(\cala_{\Phi}[t,t^{-1}])
}
&
\label{ho-pullback_auxiliary}
\end{eqnarray}
The homotopy 
$\bfK(S) \colon \bfK(\cala) \wedge I_+ \to \bfK(\cala_{\Phi}[t,t^{-1}])$ of~\eqref{homotopy_bfK(S)}
induces a preferred homotopy 
$\bfK(j_+) \circ \bigl(\bfK((i_+ \circ \Phi\inv)\vee \bfK( i_+)\bigr)  
\simeq \bfK(j_-) \circ  \bigl(\bfK(i_-) \vee \bfK(i_-)\bigr)$.

\begin{theorem} \label{the:new_homotopy_pushout_above_degree_ge_1}
Suppose that $\cala$ is idempotent complete. 
With respect to this choice of homotopy, the 
diagram~\eqref{ho-pullback_auxiliary} is a weak homotopy pushout,
i.e., the canonical map from the homotopy pushout of
\[
\bfK(\cala_{\Phi}[t])  \xleftarrow{\bfK(i_+ \circ \Phi^{-1})\vee \bfK(i_+)}  
\bfK(\cala) \vee \bfK(\cala)  
\xrightarrow{\bfK(i_-) \vee \bfK(i_-)} \bfK(\cala_{\Phi}[t^{-1}]) 
\]
to  $\bfK(\cala_{\Phi}[t,t^{-1}])$ is a weak homotopy equivalence.
\end{theorem}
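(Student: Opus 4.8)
The plan is to identify the square~\eqref{ho-pullback_auxiliary} with the square appearing in Theorem~\ref{the:homotopy_pull_back_of_calx} by means of the functors $l_0,l_1$ of~\eqref{def_of_l_i}, and then to feed in Theorem~\ref{the:computing_K(calx)}. Since we work with spectra, a (homotopy-coherent) square is a weak homotopy pushout if and only if it is a weak homotopy pullback, so it is enough to prove that~\eqref{ho-pullback_auxiliary} is a weak homotopy pullback.

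First I would record the identities of functors of additive categories, all immediate from the definitions~\eqref{def_of_l_i} of $l_0,l_1$, the definition~\eqref{def_f_kpm} of $k^{\pm}$, and the relations $i_0 = j_+\circ i_+ = j_-\circ i_-$:
\begin{align*}
k^{+}\circ l_1 &= i_+\circ\Phi^{-1}, & k^{+}\circ l_0 &= i_+,\\
k^{-}\circ l_1 &= i_-, & k^{-}\circ l_0 &= i_-.
\end{align*}
Moreover, whiskering the natural equivalence $T\colon j_+\circ k^+\xrightarrow{\cong} j_-\circ k^-$ of Theorem~\ref{the:homotopy_pull_back_of_calx} with $l_1$ yields precisely the natural transformation $S\colon i_0\circ\Phi^{-1}\to i_0$ used in~\eqref{homotopy_bfK(S)} (both are given on $A$ by $\id_A\cdot t$), while whiskering $T$ with $l_0$ yields the identity transformation of $i_0$. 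Applying $\bfK$, and using that it carries natural equivalences of exact functors to homotopies in a manner compatible with whiskering, these identities assemble into a map of homotopy-coherent squares from~\eqref{ho-pullback_auxiliary} to the square of Theorem~\ref{the:homotopy_pull_back_of_calx}: it is $\bfK(l_1)\vee\bfK(l_0)\colon \bfK(\cala)\vee\bfK(\cala)\to\bfK(\calx)$ on the upper left corner and the identity on the other three corners, and the preferred filling homotopies of the two squares (built respectively from $\bfK(S)$ and from $T$) correspond under it.

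Now I invoke the hypothesis that $\cala$ is idempotent complete. By Theorem~\ref{the:computing_K(calx)} the map $\bfK(l_0)\vee\bfK(l_1)$, hence also $\bfK(l_1)\vee\bfK(l_0)$, which differs from it by the interchange automorphism of $\bfK(\cala)\vee\bfK(\cala)$, is a weak homotopy equivalence; the remaining three corner maps are identities. By Theorem~\ref{the:homotopy_pull_back_of_calx} the target square is a weak homotopy pullback. A map of homotopy-coherent squares which is a weak equivalence on all four corners transports the weak-homotopy-pullback property, so~\eqref{ho-pullback_auxiliary} is a weak homotopy pullback, hence a weak homotopy pushout. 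Naturality in $(\cala,\Phi)$ is inherited from that of $l_0,l_1,k^{\pm},T,S$ and of the two preparatory theorems.

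I expect the only genuinely delicate point to be the matching of the filling homotopies in the second step: one has to make sure that the homotopy in~\eqref{ho-pullback_auxiliary} coming from $\bfK(S)$ and the homotopy in Theorem~\ref{the:homotopy_pull_back_of_calx} coming from $T$ are identified by the comparison map of squares, and not merely identified up to a further homotopy. This is guaranteed by the \emph{strict} equality $T\circ l_1 = S$ of natural transformations of functors into $\cala_{\Phi}[t,t^{-1}]$, holding before one passes to $K$-theory. Granting this, the statement is a formal consequence of Theorems~\ref{the:homotopy_pull_back_of_calx} and~\ref{the:computing_K(calx)} together with the equivalence of homotopy pullbacks and homotopy pushouts of spectra.
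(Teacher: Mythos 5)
Your proof is correct and follows the same route the paper takes: combine Theorem~\ref{the:homotopy_pull_back_of_calx} and Theorem~\ref{the:computing_K(calx)} via the functor identities $k^{\pm}\circ l_i$ (and the whiskering $T\circ l_1=S$, $T\circ l_0=\id$) to recognize \eqref{ho-pullback_auxiliary} as a weak homotopy pullback, then invoke the equivalence of weak homotopy pullbacks and pushouts for spectra (the paper cites \cite[Lemma~2.6]{Lueck-Reich-Varisco(2003)} for this last step). Your write-up simply makes explicit the identification of squares and the matching of filling homotopies that the paper leaves implicit in the phrase ``Combining Theorem~\ref{the:homotopy_pull_back_of_calx} and Theorem~\ref{the:computing_K(calx)}\ldots''.
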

\begin{proof} Combining Theorem~\ref{the:homotopy_pull_back_of_calx} 
and Theorem~\ref{the:computing_K(calx)} shows that the diagram of 
spectra~\eqref{ho-pullback_auxiliary} is a weak homotopy pullback.
This implies that~\eqref{ho-pullback_auxiliary} is a weak homotopy pushout.
The latter claim follows for commutative squares of spectra from~\cite[Lemma~2.6]{Lueck-Reich-Varisco(2003)}
and then follows easily for squares commuting up to a preferred homotopy.
\end{proof}

Consider the following commutative diagram
{\small \begin{eqnarray}
\label{comm_diagram}
& & 
\\
\xymatrix@!C= 11em{
\bfK(\cala) \vee \bfNK(\cala_{\Phi}[t]) \ar[d]_{\bfK(i_+) \vee \bfb_+}
& 
\bfK(\cala) \vee \bfK(\cala) \ar[d]_{\id} 
\ar[l]_-{\bfm_1 \circ (\bfK(\Phi^{-1}) \vee \id)}
\ar[r]^{\bfm_1\circ (\id \vee \id)}
& 
\bfK(\cala)  \vee \bfNK(\cala_{\Phi}[t])  \ar[d]_{\bfK(i^-) \vee \bfb_-}  
\\
\bfK(\cala_{\Phi}[t]) 
&
\bfK(\cala) \vee \bfK(\cala) \ar[l]^-{\bfK((i_+ \circ \Phi^{-1})  \vee \bfK(i_+)} \ar[r]_{\bfK(i_-) \vee \bfK(i_-)} 
&
\bfK(\cala_{\Phi}[t^{-1}])
}
& & 
\nonumber
\end{eqnarray}}
where $\bfm_1$ here and in the sequel denotes the 
inclusion of the first summand.  Let $\bfE_t$ and $\bfE_b$
respectively be the homotopy pushout of the top and of the bottom row of the
diagram~\eqref{comm_diagram} respectively.  One easily checks using the fact
that the composite 
$\bfK(\cala) \xrightarrow{\bfK(i^{\pm})} \bfK(\cala_{\Phi}[t^{\pm 1}]) \xrightarrow{\bfK(\ev_0^{\pm})} \bfK(\cala)$ 
is the identity that all vertical arrows in the diagram~\eqref{comm_diagram} are weak
equivalences. Hence the diagram~\eqref{comm_diagram}  induces a weak homotopy equivalence 
$\bfe   \colon \bfE_t \to \bfE_b$. 

Let $\bff \colon \bfE_b \to \bfK(\cala_{\Phi}[t,t^{-1}])$ be homotopy equivalence coming
from~\eqref{ho-pullback_auxiliary} and
Theorem~\ref{the:new_homotopy_pushout_above_degree_ge_1}.

Next we construct a weak homotopy equivalence 
\[
\bfg \colon \bfE_t \to \bfT_{\bfK(\Phi^{-1})} \vee \bfNK(\cala_{\Phi}[t]) \vee \bfNK(\cala_{\Phi}[t^{-1}]).
\]
Consider the following not necessarily commutative diagram
{\small\[
\xymatrix@!C= 3.5em{
\bfK(\cala) \vee \bfNK(\cala_{\Phi}[t]) \ar[d]_-{\id \vee \id}
&&&
\bfK(\cala) \vee \bfK(\cala) \ar[d]_-{\id} 
\ar[lll]_-{\bfm_1 \circ (\bfK(\Phi^{-1}) \vee \id)}
\ar[rr]^(.44){\bfm_1\circ (\id \vee \id)}
& &
\bfK(\cala)  \vee \bfNK(\cala_{\Phi}[t])  \ar[d]^-{\bfn_0 \vee \id} 
\\
\bfK(\cala) \vee \bfNK(\cala_{\Phi}[t^{-1}]) 
&&&
{\begin{array}{c}
\bfK(\cala) \vee \bfK(\cala) 
\\
= 
\\
\bfK(\cala) \wedge \partial I_+
\end{array}}
\ar[lll]_-{\bfm_1 \circ (\bfK(\Phi^{-1}) \vee \bfK(\id_{\cala}))}
\ar[rr]^{\bfn}
&&
{\begin{array}{ccc}
\bfK(\cala) \wedge I_+  
\\
\vee 
\\
\bfNK(\cala_{\Phi}[t^{-1}])
\end{array}}
}
\]}
where $\bfn_0$ comes from the inclusion $\{0\} \to I$, and $\bfn$ comes from the inclusion
$\partial I \to I$.  The left square commutes. The right square commutes up to a preferred
homotopy coming from the standard homotopy from the inclusion $\partial I \to I$ to the
constant map $\partial I \to I$ with value $0$.  Since the pushout of the lower row is
$\bfT_{\bfK(\Phi^{-1})} \vee \bfNK(\cala_{\Phi}[t]) \vee \bfNK(\cala_{\Phi}[t^{-1}])$, we
obtain a map $\bfg \colon \bfE_t \to \bfT_{\bfK(\Phi^{-1})} \vee \bfNK(\cala_{\Phi}[t]) \vee \bfNK(\cala_{\Phi}[t^{-1}])$.  
Since the horizontal right
arrow in the diagram above is a cofibration and all vertical arrows are weak homotopy
equivalences, the map $\bfg$ is a weak homotopy
equivalence. One easily checks that it fits into the following commutative diagram
\[
\xymatrix@!C= 14em{\bfE_t \ar[r]^-{\bfg}_-{\simeq} \ar[d]_{\bfe}^{\simeq}
& 
\bfT_{\bfK(\Phi^{-1})}  \vee \bfNK(\cala_{\Phi}[t]) \vee \bfNK(\cala_{\Phi}[t^{-1}])
\ar[d]^{\bfa \vee \bfb_+ \vee \bfb_-}
\\
\bfE_b \ar[r]^-{\bff}_-{\simeq}
&
\bfK(\cala_{\Phi}[t,t^{-1}])
}
\]
This finishes the proof of 
Theorem~\ref{the:BHS_decomposition_for_connective_K-theory}%
~\ref{the:BHS_decomposition_for_connective_K-theory:BHS-iso}, i.e., that
the right vertical arrow in the diagram above is a weak homotopy equivalence, provided
that Theorem~\ref{the:homotopy_pull_back_of_calx} and Theorem~\ref{the:computing_K(calx)} are true.


\section{Preliminaries about chain complexes}
\label{sec:Preliminaries_about_chain_complexes}

Consider an additive category $\cala$. The notions of chain complexes over $\cala$, chain
maps, chain homotopies, chain contractions of chain complexes are defined in the same way
as in the category of $R$-modules. A short exact sequence of chain complexes in $\cala$ is
a sequence which is level-wise split exact.

We write all chain complexes homologically. If $C$ is a chain complex in $\cala$, 
we denote its $n$-th object by $C_n$ and its $n$-differential by $c_n\colon C_n\to C_{n-1}$.


\subsection{Mapping cylinders and mapping cones}
\label{subsec:mapping_cylinders_and_mapping_cones}

Let $f \colon C \rightarrow D$ be a chain map. Define
its mapping cylinder $\cyl(f)$ to be the chain complex with $n$-th
differential
\[
C_{n-1} \oplus C_n  \oplus D_n
\xrightarrow{\left(\begin{array}{ccc} -c_{n-1} & 0 & 0 \\ - \id & c_n & 0
\\ f_{n-1} & 0 & d_n \end{array}\right)}
C_{n-2} \oplus C_{n-1} \oplus D_{n-1}.
\]
There are obvious inclusions $i_C \colon C \to \cyl(f)$ and 
$i_D \colon D \to \cyl(f)$ and an obvious projection $p_D \colon \cyl(f) \to D$
such that $p_D \circ i_C = f$, $p_D \circ i_D = \id_D$ and both $p_D$ and
$i_D$ are chain homotopy equivalences.  Define the mapping cone
$\cone(f)$ of $f$ to be the cokernel of $i_C \colon C \to \cyl(f)$.
Hence the $n$-th differential of $\cone(f)$ is
\[
C_{n-1} \oplus D_n
\xrightarrow{\left(\begin{array}{cc} -c_{n-1} & 0 \\
f_{n-1} &  d_n \end{array}\right)}
 C_{n-2}  \oplus D_{n-1}.
\]
We write $\cone(C) := \cone(\id_C)$.  Given a chain complex $C$,
define its suspension $\Sigma C$ to be the cokernel of the obvious
embedding $C\to \cone(C)$, i.e., to be the chain complex with $n$-th
differential
\[
C_{n-1} \xrightarrow{-c_{n-1}} C_{n-2}.
\]

We will call a chain complex  \emph{elementary}
if it is the finite direct sum of chain complexes $\el(X,d)$ for objects $X$ and 
integers $d$, where $\el(X,d)$ is concentrated in dimension $d$ and $d+1$ 
and has as $(d+1)$-th differential $\id_X \colon X \to X$.
Notice that elementary chain complexes are contractible.

We call a chain complex $C$ \emph{concentrated in degrees $[a,b]$} if $C_n = 0$ for $n < a$ 
and for $n > b$. The minimal possible nonnegative number $b-a$ is the \emph{length} of $C$. We call $C$
\emph{bounded} if there are natural numbers $a, b$ such that $C$ is concentrated in
degrees $[a,b]$. For an object $A$ of $\cala$ we denote by $A[n]$ the chain complex
concentrated in degrees $[n,n]$ whose single object is $A$.

We collect the following elementary statements about chain complexes.

\begin{lemma} \label{lem:elementary_facts}
Let $f \colon C \to D$ be a chain map and $E$ be a chain complex.

\begin{enumerate}

\item \label{lem:elementary_facts:short_exact_sequences}

There are obvious short exact sequences of chain complexes
\[
\begin{array}{lclclclcl}
0 & \to & C &\xrightarrow{i(C)} & \cyl(f) &\to &\cone(f) & \to & 0; 
\\
0 & \to & D &\xrightarrow{i(D)}  & \cyl(f) & \to &\cone(C) & \to & 0;
\\
0 & \to & D & \to & \cone(f) & \to & \Sigma C & \to & 0;
\end{array}
\]

\item \label{lem:elementary_facts:projection_cyl_to_D}
The natural projection $\pr(D) \colon \cyl(f) \to D$ is the chain map given by
$\pr(D)_n = (0, f_n, id_{D_n}) \colon C_{n-1} \oplus C_n \oplus D_n \to  D_n$.
Then $\pr(D) \circ i(D) = \id_D$ and there is a chain homotopy 
$h(D) \colon \id_{\cyl(f)} \simeq i(D) \circ \pr(D)$ given by
\[
h(D)_n = \begin{pmatrix} 0 & \id_{C_n}  & 0 \\  0 & 0 & 0 \\ 0 & 0 & 0 \end{pmatrix} 
\colon C_{n-1} \oplus C_n \oplus D_n \to C_{n} \oplus C_{n+1} \oplus D_{n+1};
\]

\item \label{lem:elementary_facts:maps_between_mapping_cones} 
Consider the following (not necessarily commutative) diagram of chain complexes
\xycomsquare{C}{f}{D}{u}{v}{C'}{f'}{D'}
Consider a  chain homotopy $h \colon v \circ f' \simeq f' \circ u$.

Then we obtain a chain map $g \colon \cone(f) \to \cone(f')$ by
\[
g_n = \begin{pmatrix} u_{n-1} & 0 \\ h_{n-1} & v_n \end{pmatrix} \colon C_{n-1} \oplus D_n \to C'_{n-1} \oplus D'_n.
\]

Conversely, a chain map $g \colon \cone(f) \to \cone(f')$ given by
\[
g_n = \begin{pmatrix} u_{n-1} & w_n \\ h_{n-1} & v_n \end{pmatrix} \colon C_{n-1} \oplus D_n \to C'_{n-1} \oplus D'_n
\]
yields such a  diagram and homotopy;

\item \label{lem:elementary_facts:maps_between_mapping_cylinders}
Let $f \colon C \to D$, $u \colon C \to E$, and $v \colon D \to E$ 
be chain maps and let $h \colon v \circ f \simeq  u$ be
  a chain homotopy.  Then we obtain a chain map $F \colon \cyl(f) \to E$ by
\[
F_n:=  (h_{n-1},u_n,v_n)  \colon C_{n-1} \oplus C_n \oplus D_n \to E_n
\]
such that the composite of $F$ with the canonical inclusions of $C$ and $D$ into $\cyl(f)$
are $u$ and $v$.

The converse is also true, i.e., a chain map $F$ yields chain maps $u$, $v$ and a chain homotopy 
$h \colon v \circ f \simeq   u$;

\item \label{lem:elementary_facts:chain_homotopy_equivalence_cone} 
A chain map is a   chain homotopy equivalence if and only if its mapping cone 
is contractible;

\item \label{lem:elementary_facts:exact_sequence_and_contraction} Let
  $0 \to C \xrightarrow{i} D \xrightarrow{p}  E \to 0$ be an exact
  sequence of chain complexes. Suppose that $E$ is contractible. Then
  there exists a chain map $s \colon E \to D$ with $p \circ s =
  \id_C$.  In particular we get a chain isomorphism
  \[
  i \oplus s \colon C \oplus E \xrightarrow{\cong} D;
  \]

\item \label{lem:elementary_facts:chain_homotopy_equivalence_2_of_3}
Consider the following commutative diagram of  chain complexes 
\[
\xymatrix{0 \ar[r] & C \ar[r] \ar[d]_{f} & D \ar[r] \ar[d]_{g} & E \ar[r] \ar[d]_{h} & 0
\\
0 \ar[r] & C' \ar[r] & D' \ar[r] & E' \ar[r] & 0
}
\]
with exact rows. If two of the chain maps $f$, $g$ and $h$ are chain homotopy equivalences,
then all three are;

\item \label{lem:elementary_facts:induction_step_argument} Let $C$ be a chain complex
  concentrated in degrees $[a,b]$ (where $a<b+1$) such that the last differential
  $c_{a+1}$ is split surjective. Then, for any split $\gamma$ of $c_{a+1}$ there is a
  short exact sequence
 \[
  0 \to \el(C_a, a) \xrightarrow{i} C\oplus \el(C_a, a+1) \xrightarrow{p} D \to 0
  \]
 with a chain complex $D$ concentrated in degrees $[a,b+1]$. It is uniquely split and natural in $(C,\gamma)$.

\item \label{lem:elementary_facts:elementary}
Let $f\colon C\to D$ be a map of bounded chain complexes in an additive category
$\cala$.  Then the following statements are equivalent:
\begin{enumerate}
\item \label{lem:elementary_facts:elementary:(a)}
  $f$ is a chain homotopy equivalence;
\item \label{lem:elementary_facts:elementary:(b)}
 There are elementary chain complexes $E$, $E'$ in $\cala$ and a
commutative diagram
\[
\xymatrix{C \ar[d]^f \ar[r] & C\oplus E \ar[d]_\cong
\\
D & D\oplus E' \ar[l]
}
\]
where the horizontal maps are the canonical inclusion and projection and
the right vertical arrow is a chain isomorphism.
\end{enumerate}

\end{enumerate}
\end{lemma}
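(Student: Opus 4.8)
My plan is to prove the two implications $(a)\Rightarrow(b)$ and $(b)\Rightarrow(a)$ separately; essentially all of the content is in $(a)\Rightarrow(b)$.

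\emph{Proof of $(b)\Rightarrow(a)$.} The commutative square expresses $f$ as the composite of the canonical inclusion $\iota\colon C\to C\oplus E$, the given chain isomorphism $C\oplus E\xrightarrow{\cong}D\oplus E'$, and the canonical projection $D\oplus E'\to D$. A chain isomorphism is a chain homotopy equivalence, so it suffices to treat the inclusion and the projection. For the inclusion one computes $\cone(\iota)\cong\cone(\id_C)\oplus E$, which is contractible since $\cone(\id_C)$ is and since elementary complexes are contractible; hence $\iota$ is a chain homotopy equivalence by part~\ref{lem:elementary_facts:chain_homotopy_equivalence_cone}. Dually the cone of the projection $D\oplus E'\to D$ is isomorphic to $\cone(\id_D)\oplus\Sigma E'$, again contractible, so the projection is a chain homotopy equivalence, and therefore so is $f$.

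\emph{Proof of $(a)\Rightarrow(b)$.} I would first pass to the mapping cylinder: $f=\pr(D)\circ i(C)$, where by part~\ref{lem:elementary_facts:projection_cyl_to_D} the map $\pr(D)\colon\cyl(f)\to D$ is a chain homotopy equivalence with section $i(D)$, and by part~\ref{lem:elementary_facts:short_exact_sequences} there are short exact sequences $0\to C\xrightarrow{i(C)}\cyl(f)\to\cone(f)\to 0$ and $0\to D\xrightarrow{i(D)}\cyl(f)\to\cone(\id_C)\to 0$. Both mapping cones are bounded, and both are contractible: the first by part~\ref{lem:elementary_facts:chain_homotopy_equivalence_cone} since $f$ is a chain homotopy equivalence, the second for free. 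By part~\ref{lem:elementary_facts:exact_sequence_and_contraction} the first sequence splits compatibly with $i(C)$, yielding a chain isomorphism $C\oplus\cone(f)\xrightarrow{\cong}\cyl(f)$ that restricts to $i(C)$ on $C$; and I would split the second sequence by a section $s$ of $\cyl(f)\to\cone(\id_C)$ chosen to satisfy in addition $\pr(D)\circ s=0$ (take $s:=s_0-i(D)\circ\pr(D)\circ s_0$ for any section $s_0$), so that the induced chain isomorphism $D\oplus\cone(\id_C)\xrightarrow{\cong}\cyl(f)$ carries the projection onto $D$ to $\pr(D)$. Composing the two isomorphisms produces a chain isomorphism $g\colon C\oplus\cone(f)\xrightarrow{\cong}D\oplus\cone(\id_C)$ for which the composite $C\to C\oplus\cone(f)\xrightarrow{g}D\oplus\cone(\id_C)\to D$ equals $\pr(D)\circ i(C)=f$. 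This would be $(b)$ but for the fact that $\cone(f)$ and $\cone(\id_C)$ are only contractible, not elementary.

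Hence the crux is the following claim, which I would prove by induction on the length: \emph{every bounded contractible complex $A$ admits a chain isomorphism $A\oplus G\cong G'$ with $G$ and $G'$ elementary.} In length $0$, contractibility forces $A\cong 0$. In length $1$, evaluating the contraction at the two extreme degrees shows the unique differential of $A$ is an isomorphism, so $A$ is itself elementary. In length $\ell\ge 2$, say $A$ concentrated in $[a,b]$: contractibility makes the bottom differential $a_{a+1}$ split surjective, so part~\ref{lem:elementary_facts:induction_step_argument} gives, for a choice of splitting, a split short exact sequence exhibiting $A\oplus\el(A_a,a+1)\cong\el(A_a,a)\oplus D$, where $D$ is bounded, contractible by part~\ref{lem:elementary_facts:chain_homotopy_equivalence_2_of_3}, and — because $b\ge a+2$, so the stabilizing summand $\el(A_a,a+1)$ does not reach degree $b+1$ — actually concentrated in $[a+1,b]$, hence of length $\ell-1$. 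Applying the induction hypothesis to $D$ and adding the elementary complex $\el(A_a,a+1)$ to both sides then proves the claim. Finally, applying the claim to $\cone(f)$ and to $\cone(\id_C)$ and adding the two resulting pairs of elementary complexes to both sides of $g$, a short bookkeeping of direct summands upgrades $g$ to a chain isomorphism $C\oplus E\xrightarrow{\cong}D\oplus E'$ with $E,E'$ elementary for which $C\to C\oplus E\xrightarrow{\cong}D\oplus E'\to D$ is still $f$; this is exactly $(b)$.

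I expect the inductive claim on contractible complexes to be the main obstacle, in particular the verification that the complex $D$ produced by part~\ref{lem:elementary_facts:induction_step_argument} is genuinely shorter once $\ell\ge 2$. Everything else is a routine unravelling of the mapping cylinder together with bookkeeping of split exact sequences drawn from the earlier parts of Lemma~\ref{lem:elementary_facts}.
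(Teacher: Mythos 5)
Your proof is correct and follows essentially the same strategy as the paper: reduce via the mapping cylinder and part~\ref{lem:elementary_facts:exact_sequence_and_contraction} to the claim that a bounded contractible complex is stably elementary, then prove that claim by induction on length using part~\ref{lem:elementary_facts:induction_step_argument}. You make explicit a detail the paper leaves implicit, namely the choice of the second section $s$ with $\pr(D)\circ s=0$, which is exactly what forces the composed isomorphism $g\colon C\oplus\cone(f)\to D\oplus\cone(\id_C)$ to satisfy $\mathrm{proj}_D\circ g\circ\mathrm{incl}_C=f$ and hence verifies the commutativity requirement in (b), a point the paper's ``it suffices to show'' glosses over. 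One small gap: your parenthetical about $\el(A_a,a+1)$ not reaching degree $b+1$ only justifies the upper bound on the support of $D$; the fact that $D_a=0$ also needs an argument (it holds because the degree-$a$ component of the inclusion $i$ is an isomorphism, which is visible in the proof of part~\ref{lem:elementary_facts:induction_step_argument} but not in its statement, whose bound $[a,b+1]$ as written is too weak for the induction — a shared issue, since the paper's own proof also cites that part for the sharper bound).
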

\begin{proof}~\ref{lem:elementary_facts:short_exact_sequences} This is obvious.
  \\[1mm]~\ref{lem:elementary_facts:projection_cyl_to_D} This follows from a direct calculation.
  \\[1mm]~\ref{lem:elementary_facts:maps_between_mapping_cones} This
  is obvious.
   \\[1mm]~\ref{lem:elementary_facts:maps_between_mapping_cylinders}  This is 
   obvious.
  \\[1mm]~\ref{lem:elementary_facts:chain_homotopy_equivalence_cone}
  See for instance~\cite[Lemma~11.5~a) on page~214]{Lueck(1989)}.
  \\[1mm]~\ref{lem:elementary_facts:exact_sequence_and_contraction}
  For each $n$ there exists a morphism $t_n \colon E_n \to D_n$
  with $p_n \circ t_n = \id_{D_n}$.  Let $\gamma$ be a chain
  contraction for $E$. Define $s_n \colon E_n \to D_n$ by 
  $d_{n+1} \circ  t_{n+1} \circ \gamma_n  + t_n \circ \gamma_{n-1} \circ e_n$.  Then the
  collection $s = (s_n)$ is a chain map $s \colon E \to D$ with $p \circ s = \id_E$.  
   \\[1mm]~\ref{lem:elementary_facts:chain_homotopy_equivalence_2_of_3} 
  The commutative diagram induces a short exact
  sequence of chain complexes $0 \to \cone(f) \to \cone(g) \to \cone(h) \to 0$. Because
  of
  assertion~\ref{lem:elementary_facts:chain_homotopy_equivalence_cone}
  it remains to show for any short exact sequence $0 \to C
  \xrightarrow{i} D \xrightarrow{p} E \to 0$ that all three chain
  complexes are contractible if two of them are.

   If $C$ and  $E$ are contractible, then $D$ is contractible by 
   assertion~\ref{lem:elementary_facts:exact_sequence_and_contraction}.
   In the sequel we will use that we have already taken care of this case.
  
   Now suppose that $C$ and $D$ are known to be contractible. Because
   of the short exact sequence $0 \to \Sigma C \to \cone(p) \to
   \cone(E) \to 0$ and the conclusion from
   assertion~\ref{lem:elementary_facts:chain_homotopy_equivalence_cone}
   that $\cone(E)$ is contractible, we see that $\cone(p)$ is
   contractible.  Because of the short exact sequence $0 \to D \to
   \cyl(p) \to \cone(p) \to 0$, the mapping cylinder $\cyl(p)$ is
   contractible.  Since $E$ is chain homotopy equivalent to $\cyl(p)$,
   we conclude that $E$ is contractible.

   If $D$ and $E$ are contractible, we get from
   assertion~\ref{lem:elementary_facts:exact_sequence_and_contraction} a short exact
   sequence $0 \to E \to D \to C \to 0$ and conclude from the previous case that $C$ is
   contractible. 
   \\[1mm]~\ref{lem:elementary_facts:induction_step_argument}
   Again we assume that $C$ is concentrated in degrees $[0,d]$. The splitting of the last
   differential induces a chain map $\Gamma\colon \el(C_0,0)\to C$.

The commutative diagram
\[
\xymatrix{C_0[0] \ar@{>->}[rr] \ar[d]^\id && \el(C_0,0) \ar[d]^\Gamma \\
 C_0[0]   \ar@{>->}[rr] && C
}\]
induces a map
\[
i\colon \el(C_0,0)\to \cone(\Gamma)
\] 
on the vertical cones. Here the symbol
``$\rightarrowtail$'' denotes the inclusion into a direct summand. It follows that $i$ is
also the inclusion into a direct summand, so it extends to a short exact sequence
\[
0\to \el(C_0,0)\xrightarrow{i} \cone(\Gamma) \xrightarrow{p} D \to 0
\] 
in $\cala$. But the 0-th object of $\cone(\Gamma)$ is just $C_0$, so $D$ concentrated in degrees
$[1,d]$. Moreover the map $i$ is (uniquely) split on the 0-th level; as the domain of $i$
is elementary, it follows $i$ has a (unique) splitting.

Finally, as $\el(C_0,0)$ is canonically contractible, the map $\Gamma$ is canonically
null-homotopic. It follows that
\[
\cone(\Gamma)\cong \cone\bigl(0\colon \el(C_0,0)\to C\bigr) \cong \el(C_0,1)\oplus C.
\]
(An explicit isomorphism is given by
\[
\begin{pmatrix}-1 & 0\\  \gamma & 1 \end{pmatrix}\colon C_1\oplus C_0\to C_1\oplus C_0
\]
in degree 1 and by the identity in all other degrees.)
\\[1mm]~\ref{lem:elementary_facts:elementary} The
   implication~$(b)\implies (a)$ is obvious, it remains to prove the implication~$(a)
   \implies (b)$. We have the exact sequences $0 \to C \to \cyl(f) \to \cone(f) \to 0$ and
   $0 \to D \to \cyl(f) \to \cone(C) \to 0$.  The chain complexes $\cone(f)$ and
   $\cone(C)$ are contractible by
   assertion~\ref{lem:elementary_facts:chain_homotopy_equivalence_cone}. Because of
   assertion~\ref{lem:elementary_facts:exact_sequence_and_contraction} it suffices to show
   for a bounded contractible chain complex $C$ that there are elementary chain complexes
   $X$ and $X'$ together with chain isomorphisms $C \oplus X' \xrightarrow{\cong} X$.  We
   use induction over the length of $C$. The induction beginning $d = 1$ is obvious since
   then $C$ looks like $\cdots \to 0 \to C_{n+1} \xrightarrow{c_{n+1}} C_n \to 0 \to
   \cdots$ and $c_{n+1}$ is an isomorphism.  The induction step from $(d-1)$ to $d \ge 2$
   is done as follows.

   We assume for simplicity that $C$ is concentrated in degrees $[0,d]$. Choose a chain
   contraction $\gamma$ for $C$. Now by
   part~\ref{lem:elementary_facts:induction_step_argument}, there is an isomorphism
   \[
    \el(C_0,0)\oplus D \cong C\oplus \el(C_0, 1)
    \]
   where $D$ is concentrated in degrees $[1, d]$. Since the induction hypothesis applies
   to $D$, the claim follows.

This completes the proof of Lemma~\ref{lem:elementary_facts}.
\end{proof}
 

\subsection{Homotopy fiber sequences}
\label{subsec:Homotopy_fiber_sequences}

 A sequence $A\xrightarrow{f} B\xrightarrow{g} C$ of chain complexes together with a
 null-homotopy $g\circ f\simeq 0$ is called a \emph{homotopy fiber sequence} if
 the induced map $\cone(f)\to C$ (see
 Lemma~\ref{lem:elementary_facts}~\ref{lem:elementary_facts:maps_between_mapping_cones})
 is a chain homotopy equivalence. In particular any short exact
 sequence of chain complexes $0\to C\xrightarrow{i} D\to E \to 0$ is a homotopy fiber sequence since it induces a
 short exact sequence $0 \to \cone(C) \to \cone(i) \to E \to 0$ and we
 can apply
 Lemma~\ref{lem:elementary_facts}~\ref{lem:elementary_facts:chain_homotopy_equivalence_2_of_3}.
 
Let 
\xycomsquare{A}{f}{B}{k}{l}{C}{g}{D}
be a square in $\Chcat(\cala)$ which commutes up to a homotopy $h\colon g\circ k\simeq l\circ f$. 
We call this square \emph{homotopy cartesian} if one of the following equivalent conditions \change{holds:}
\begin{enumerate}
\item The induced map $\cone(f)\to \cone(g)$ is a homotopy equivalence.
\item The induced map $\cone(k)\to\cone(l)$ is a homotopy equivalence.
\item The induced sequence
\[
A\xrightarrow{(f,k)}  B\oplus C \xrightarrow{g-l} D
\]
together with the null-homotopy induced by $h$ is a fiber sequence.
\end{enumerate}

We conclude from
Lemma~\ref{lem:elementary_facts}~\ref{lem:elementary_facts:chain_homotopy_equivalence_cone}
and the fact that the mapping cones of the maps $\cone(f) \to \cone(g)$,
$\cone(k) \to \cone(l)$ and $\cone(f,k) \to D$ are isomorphic
that these three conditions above are indeed equivalent.


\subsection{Detecting contractibility by restriction}
\label{subsec:Detecting_contractibility_by_restriction}

If $S$ is a subring of $R$ and $C$ is a bounded $R$-chain complex, such that each
$R$-module $C_n$ is of the shape $R \otimes_S C_n'$ for some $S$-module $C_n'$ and $C$
considered as $S$-chain complex is contractible, then $C$ is contractible as $R$-chain
complex.  We will later need the following version of this fact for $\cala \subseteq
\cala_{\Phi}[t,t^{-1}]$. (The proof of the fact for rings follows the same lines but will
not be needed in this paper.)

\begin{lemma} \label{lem:contractible_over_cala_Phi[t,t(-1)]_versus_over_cala}
  Let $f \colon C \to D$ be an  $\cala_{\Phi}[t]^\kappa$-chain map of
  bounded $\cala_{\Phi}[t]^\kappa$-chain complexes. Then $f$ is an
  $\cala_{\Phi}[t]^\kappa$-chain homotopy equivalence if and only if
  its restriction $i^+ f \colon i^+ C \to i^+ D$ is an
  $\cala^{\kappa}$-chain homotopy equivalence.
\end{lemma}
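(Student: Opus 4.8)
The plan is to reduce to showing that a single bounded complex is contractible, and then to induct on its length, the essential point being that the adjunction $(i_+,i^+)$ lets one descend splittings from $\cala^\kappa$ to $\cala_\Phi[t]^\kappa$.

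The ``only if'' direction is immediate, since $i^+$ is an additive functor and hence preserves chain homotopies. For the converse, $i^+$ additive also commutes with the formation of mapping cones and preserves boundedness, so $\cone(i^+f)=i^+\cone(f)$ is bounded; by Lemma~\ref{lem:elementary_facts}~\ref{lem:elementary_facts:chain_homotopy_equivalence_cone} it then suffices to prove the following claim: \emph{if $E$ is a bounded $\cala_\Phi[t]^\kappa$-chain complex with $i^+E$ contractible over $\cala^\kappa$, then $E$ is contractible over $\cala_\Phi[t]^\kappa$.}

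The key input is a descent property for split epimorphisms: \emph{if $e\colon P\to Q$ in $\cala_\Phi[t]^\kappa$ is such that $i^+(e)$ is a split epimorphism in $\cala^\kappa$, then $e$ is a split epimorphism in $\cala_\Phi[t]^\kappa$.} Here I would use that $(i_+,i^+)$ is an adjoint pair (Lemma~\ref{lem:Adjunction_between_induction_and_restriction}), with unit $\eta$ and counit $\epsilon$, and that --- since $\cala$ and $\cala_\Phi[t]$ have the same objects --- every object of $\cala_\Phi[t]^\kappa$ is, up to isomorphism, $i_+$ of an object of $\cala^\kappa$, say $Q\cong i_+B$. The triangle identity gives $\epsilon_{i_+B}\circ i_+(\eta_B)=\id$, so the counit $\epsilon_Q$ is a split epimorphism; in particular $\epsilon_Q$ is an epimorphism for every $Q$, whence $i^+$ is faithful, and hence $i^+$ also reflects isomorphisms. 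Now, if $\tau$ is a section of $i^+(e)$, then naturality of $\epsilon$ along $e$, functoriality of $i_+$, and the triangle identity combine to show that $s:=\epsilon_P\circ i_+(\tau)\circ i_+(\eta_B)$ satisfies $e\circ s=\id_Q$. I expect this step to be the main obstacle: over a ring one would argue that restriction of scalars is exact and conservative and that a bounded acyclic complex of projectives is contractible, but over an additive category there is no homology and a section living in $\cala^\kappa$ need not lie in the image of the merely faithful functor $i^+$; the adjunction argument is what replaces these ring-theoretic facts.

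With this in hand I would prove the claim by induction on the length $b-a$ of $E$ (taken concentrated in degrees $[a,b]$). If $b<a$ then $E=0$; if $b=a$, then $i^+E_a=0$ because a one-degree complex is contractible only if it is trivial, whence $E_a=0$ as $E_a$ is a direct summand of $i^+E_a$. If $b-a=1$, then $i^+E$ contractible forces $i^+(e_{a+1})$ to be an isomorphism, hence $e_{a+1}$ is an isomorphism (as $i^+$ reflects isomorphisms) and $E$ is contractible over $\cala_\Phi[t]^\kappa$. If $b-a\ge2$: a chain contraction of $i^+E$, together with $(i^+E)_{a-1}=0$, shows that $i^+(e_{a+1})$ is a split epimorphism, so by the key input $e_{a+1}$ is a split epimorphism over $\cala_\Phi[t]^\kappa$; Lemma~\ref{lem:elementary_facts}~\ref{lem:elementary_facts:induction_step_argument} then produces a chain isomorphism $E\oplus\el(E_a,a+1)\cong\el(E_a,a)\oplus D$ with $D$ a complex concentrated in degrees $[a+1,b]$, hence of smaller length. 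Applying $i^+$ and using that elementary complexes and $i^+E$ are contractible, one finds that $i^+D$ is a direct summand of a contractible $\cala^\kappa$-complex, hence contractible over $\cala^\kappa$; by the inductive hypothesis $D$ is contractible over $\cala_\Phi[t]^\kappa$, so $E\oplus\el(E_a,a+1)\cong\el(E_a,a)\oplus D$ is contractible over $\cala_\Phi[t]^\kappa$, and therefore so is its direct summand $E$. This proves the claim, and with it the lemma.
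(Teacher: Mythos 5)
Your proof follows essentially the same route as the paper's: the descent statement for split epimorphisms (your ``key input'') is exactly what the paper isolates as Lemma~\ref{lem:abstract_retraction_from_adjunction}, and your formula $s=\epsilon_P\circ i_+(\tau)\circ i_+(\eta_B)$ is the unwound form of the retraction $r$ constructed there; the reduction to contractibility of a single bounded complex and the induction on its length via Lemma~\ref{lem:elementary_facts}~\ref{lem:elementary_facts:induction_step_argument} are likewise the same as in the paper.

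One small slip: you write ``$i^+$ is faithful, and hence $i^+$ also reflects isomorphisms,'' but faithfulness does not in general imply reflection of isomorphisms. In the present setting the conclusion does hold --- a faithful functor reflects isomorphisms once it reflects split epimorphisms, which is precisely your key input --- but that extra step should be made explicit. In any case this affects only your length-$1$ base case, which is redundant, since the inductive step already covers length~$1$ from length~$0$.
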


The proof of this Lemma builds on the following result of category theory:

\begin{lemma}\label{lem:abstract_retraction_from_adjunction}
  Let
  \[
   i_+\colon \cala\leftrightarrows \calb\colon i^+
   \] 
   be an adjunction between categories,
  such that the right adjoint $i^+$ is faithful. Then, for any two objects $A$ of $\cala$
  and $B$ of $\calb$, the injection
  \[
  i^+\colon \calb(i_+A,B)\to\cala(i^+i_+A, i^+B)
   \] 
   has a splitting $r$ which is natural
  in $A$ and $B$.

  If $\cala$ and $\calb$ are additive and $i^+$ and $i_+$ are additive, then so is the
  splitting.
\end{lemma}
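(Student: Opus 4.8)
The plan is to build the splitting directly from the unit of the adjunction and to recognize that the resulting map is precisely an inverse to the restriction of the adjunction bijection. Write $\eta\colon\id_{\cala}\Rightarrow i^+i_+$ for the unit and $\epsilon\colon i_+i^+\Rightarrow\id_{\calb}$ for the counit, and recall the adjunction bijection
\[
\Phi\colon\calb(i_+A,B)\xrightarrow{\;\cong\;}\cala(A,i^+B),\qquad \Phi(g)=i^+(g)\circ\eta_A,
\]
which is natural in $A$ and $B$, with inverse $\Phi^{-1}(f)=\epsilon_B\circ i_+(f)$, and which is additive as soon as $\cala,\calb,i_+,i^+$ are additive. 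The key observation is that the composite of $i^+\colon\calb(i_+A,B)\to\cala(i^+i_+A,i^+B)$ with precomposition by $\eta_A$, i.e.\ the map $g\mapsto i^+(g)\circ\eta_A$, is \emph{exactly} $\Phi$.

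Accordingly I would define
\[
r\;=\;\Phi^{-1}\circ(-\circ\eta_A)\colon\cala(i^+i_+A,i^+B)\xrightarrow{\;-\circ\eta_A\;}\cala(A,i^+B)\xrightarrow{\;\Phi^{-1}\;}\calb(i_+A,B),
\]
so that concretely $r(h)=\epsilon_B\circ i_+(h)\circ i_+(\eta_A)$. With this definition $r\circ i^+=\Phi^{-1}\circ\Phi=\id$ by the observation above, so $r$ is the desired splitting. Note that faithfulness of $i^+$ is used only to guarantee that $i^+$ is injective on these hom-sets, so that the word ``splitting'' carries its intended meaning; the formula for $r$ makes sense for any adjunction, and conversely the identity $r\circ i^+=\id$ forces injectivity automatically.

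For naturality it suffices to observe that $\Phi^{-1}$ is natural in both variables (part of the adjunction data) and that $h\mapsto h\circ\eta_A$ is natural in $B$ trivially and natural in $A$ by naturality of $\eta$ (for $a\colon A\to A'$ one has $\eta_{A'}\circ a=i^+i_+(a)\circ\eta_A$); composing two natural transformations of functors of the appropriate variance then gives naturality of $r$. For the additive statement I would simply note that when all the data are additive, precomposition by a fixed morphism and $\Phi^{-1}$ are both homomorphisms of abelian groups, hence so is $r$. I do not anticipate a genuine obstacle: the whole argument is a formal manipulation of the unit, counit, and triangle identities. The only point demanding a little care is the bookkeeping of variance in the naturality statement — the hom-sets depend contravariantly on $A$ through $i_+A$ and $i^+i_+A$ — and keeping $r$ defined by the canonical formula above so that the naturality squares commute strictly rather than merely up to the adjunction isomorphism.
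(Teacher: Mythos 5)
Your proposal is correct, and it produces the same splitting as the paper: unpacking your formula $r(h)=\Phi^{-1}(h\circ\eta_A)$ gives $r(h)=\varepsilon_B\circ i_+(h)\circ i_+(\eta_A)$, which is exactly the map the paper writes down. What differs is the verification that $r\circ i^+=\mathrm{id}$. The paper first shows $i^+r(\id_{i^+i_+A})=\id_{i^+i_+A}$, invokes faithfulness of $i^+$ to conclude $r(\id)=\id$, and then uses naturality of $r$ to deduce $r(i^+g)=g$ for general $g$. You instead observe that $(-\circ\eta_A)\circ i^+$ \emph{is} the adjunction bijection $\Phi$, so $r\circ i^+=\Phi^{-1}\circ\Phi=\id$ in one line. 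This is arguably cleaner, and it correctly exposes that faithfulness plays no essential role: the splitting identity already forces $i^+$ to be injective on the hom-sets $\calb(i_+A,B)$ (which is all the lemma needs, though it is weaker than full faithfulness of $i^+$). Indeed even the paper's use of faithfulness is dispensable, since $r(\id_{i^+i_+A})=\varepsilon_{i_+A}\circ i_+(\eta_A)=\id_{i_+A}$ directly by the triangle identity. Your naturality and additivity remarks are also fine: $r$ is a composite of $\Phi^{-1}$ (natural and additive, being part of the adjunction data) with precomposition by $\eta_A$ (natural in $A$ by naturality of $\eta$, trivially natural in $B$, and an abelian-group homomorphism in the additive case), and the variance of the two arguments matches what the lemma asserts.
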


\begin{proof}[Proof of Lemma~\ref{lem:abstract_retraction_from_adjunction}]
  Denote by $\eta_A\colon A\to i^+i_+A$ and $\varepsilon_B\colon i_+i^+B\to B$ the unit
  and the co-unit of the adjunction. The retraction sends a morphism $f\colon i^+i_+A\to
  i^+B$ to the composite
  \[
  r(f)\colon i_+A\xrightarrow{i_+\eta_A} i_+i^+i_+A \xrightarrow{i_+ f} i_+ i^+ B
  \xrightarrow{\varepsilon_B} B.
  \] 
  This is clearly natural in $A$ and $B$. Moreover it is
  an elementary property of adjunctions that $i^+ r(\id_{i^+i_+ A})=\id_{i^+i_+A}$. As
  $i^+$ was assumed to be faithful, we conclude $r(\id)=\id$.

If $f$ is of the form $i^+g$, then by naturality we have
\[
r(i^+g) = r(g_*\id) = g_* r(\id) = g_*\id = g
\]
so $r$ is indeed a retraction.
\end{proof}

\begin{proof}[Proof of Lemma~\ref{lem:contractible_over_cala_Phi[t,t(-1)]_versus_over_cala}]
  We conclude from the fact that  $i^+(\cone(f)) = \cone(i^+f)$ and 
  Lemma~\ref{lem:elementary_facts}~\ref{lem:elementary_facts:chain_homotopy_equivalence_cone}
  that it suffices to show for a bounded $\cala_{\Phi}[t]$-chain complex $C$ that
  $C$ is contractible as $\cala_{\Phi}[t]$-chain complex if and only if $i^+C$ is
  contractible as $\cala^{\kappa}$-chain complex.

 We argue by induction on the length $d$ of $C$. The induction beginning $d =
  0$ is trivial; the induction step from $d-1 \ge 0$ to $d$ is done as follows.

  We assume for simplicity that $C$ is concentrated in degrees $[0,d]$.  Since $i^+C$ is
  contractible, there exists a morphism $s_0 \colon i^+ C_0 \to i^+C_1$ in
  $\cala^{\kappa}$ such that the composite $i^+ c_1 \circ s_0 \colon i^+ C_0 \to i^+C_0$
  is the identity. Let $\gamma_0:= r(s_0)$ for a splitting $r$ as in Lemma~\ref{lem:abstract_retraction_from_adjunction}. Then, by naturality,
  \[
  c_1\circ\gamma_0 = (c_1)_*r(s_0) = r((c_1)_* s_0) = r(i^+c_1\circ s_0)= r(\id) =
  \id.
  \]

By Lemma~\ref{lem:elementary_facts}~\ref{lem:elementary_facts:induction_step_argument} 
it follows that there are an $\cala_{\Phi}[t]^\kappa$-chain complex $D$ and elementary $\cala_{\Phi}[t]^\kappa$-chain complexes
$E$ and $E'$  such that 
\[
C \oplus E\cong D \oplus E'
\]
and $D$ is concentrated in degrees $[1,d]$.
Since $i^+C$ is contractible, $i^+D$  is contractible by 
Lemma~\ref{lem:elementary_facts}~\ref{lem:elementary_facts:chain_homotopy_equivalence_2_of_3}.
By the induction hypothesis  $D$ is a contractible
$\cala_{\Phi}[t]^\kappa$-chain complex. Therefore $C$ is a contractible $\cala_{\Phi}[t]^\kappa$-chain complex, again
by Lemma~\ref{lem:elementary_facts}~\ref{lem:elementary_facts:chain_homotopy_equivalence_2_of_3}.
\end{proof}

 
 \subsection{Finitely dominated chain complexes}
 \label{subsec:Finitely_dominated_chain_complexes}
 
 Let $C$ be an $\cala^{\kappa}$-chain complex. Recall that we view $\cala$ as a full
 additive subcategory of $\cala^{\kappa}$.  We call $C$ \emph{finitely dominated} if there
 exists a bounded $\cala$-chain complex $D$ and $\cala^{\kappa}$-chain maps $i \colon C \to D$ and
 $r \colon D \to C$ such that $r \circ i$ is $\cala^{\kappa}$-chain homotopic to the
 identity. A proof of the next result can be found in~\cite[Proposition~3.2~(ii)]{Ranicki(1985)}.
 
 \begin{lemma} \label{lem:criterion_for_finitely_dominated} Suppose that $\cala$ is
   idempotent complete. Then an $\cala^{\kappa}$-chain complex is finitely dominated if and
   only if it is $\cala^{\kappa}$-chain homotopy equivalent to a bounded $\cala$-chain
   complex.
 \end{lemma}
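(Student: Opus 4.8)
The plan is to prove both implications, the nontrivial one being that a finitely dominated $\cala^\kappa$-chain complex is $\cala^\kappa$-chain homotopy equivalent to a bounded $\cala$-chain complex when $\cala$ is idempotent complete. The ``if'' direction is essentially a triviality: if $C$ is $\cala^\kappa$-chain homotopy equivalent to a bounded $\cala$-chain complex $D$, then the equivalence $C\to D$ and its homotopy inverse $D\to C$ together with the homotopy displaying $D\to C\to D\simeq\id_D$ — wait, one wants $r\circ i\simeq\id_C$, but composing the equivalence $i\colon C\to D$ with a homotopy inverse $r\colon D\to C$ gives exactly that. So for the ``if'' direction nothing is needed beyond unwinding the definition of finite domination.

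For the ``only if'' direction I would follow the standard Ranicki-style argument (as indicated by the cited reference). Suppose $C$ is finitely dominated, so there is a bounded $\cala$-chain complex $D$ together with $\cala^\kappa$-chain maps $i\colon C\to D$ and $r\colon D\to C$ with $r\circ i\simeq\id_C$. First I would reduce to the case that $C$ itself is bounded: replacing $D$ by $\cyl(i)$ and $C$ by a shift of itself one may assume the domination maps are inclusions/retractions up to homotopy, but more cleanly, one observes that $C$ is $\cala^\kappa$-chain homotopy equivalent to a direct summand (up to homotopy) of the bounded $\cala^\kappa$-chain complex $D$. Concretely, the idempotent $e := i\circ r\colon D\to D$ is an $\cala^\kappa$-chain map which is chain homotopic to an idempotent in the strict sense only up to homotopy; the first real task is to rigidify $e$ to an honest idempotent. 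Since $r\circ i\simeq\id_C$ one has $e^2 = i\circ(r\circ i)\circ r\simeq i\circ r = e$, so $e$ is idempotent up to chain homotopy, and $C$ is a homotopy retract of $D$ via $e$.

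The main obstacle — and the heart of the argument — is to ``split'' this homotopy idempotent on the nose inside a bounded $\cala$-chain complex, which is exactly where idempotent completeness of $\cala$ is used. The plan is: first replace $e$ by a genuine idempotent chain map $\bar e\colon D'\to D'$ on a bounded $\cala$-chain complex $D'$ chain homotopy equivalent to $D$ — this can be done by the standard trick of passing to the telescope/mapping telescope of $e$, or more elementarily by the observation that a self-map of a bounded complex which is idempotent up to homotopy becomes a strict idempotent after adding on an elementary (hence contractible) complex and modifying by a homotopy; one checks degreewise that the obstruction to strictifying lives in a range where it can be absorbed. (Ranicki's Proposition~3.2 does precisely this bookkeeping.) Once $\bar e$ is a strict idempotent on the bounded $\cala$-chain complex $D'$, idempotent completeness of $\cala$ lets us split $\bar e$ degreewise: each $\bar e_n\colon D'_n\to D'_n$ is an idempotent in $\cala$, hence $D'_n\cong E_n\oplus F_n$ with $\bar e_n$ the projection onto $E_n$; since $\bar e$ is a chain map, the differentials respect this splitting, so $E := \im(\bar e)$ is a genuine bounded $\cala$-chain subcomplex of $D'$ which is a direct summand. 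Finally one identifies $E$ up to $\cala^\kappa$-chain homotopy equivalence with $C$: the maps $i,r$ and the homotopies assemble to show $C\simeq \operatorname{hocolim}(\text{telescope of }e)\simeq E$, using that in the derived category a homotopy idempotent has a unique splitting. I expect the fiddly part to be the strictification step — making the homotopy idempotent into an honest idempotent without leaving the world of \emph{bounded} $\cala$-chain complexes — rather than the final degreewise splitting, which is immediate from idempotent completeness.
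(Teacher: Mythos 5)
The paper gives no proof of this lemma; it simply cites Ranicki's Proposition~3.2(ii), which is also where your plan ultimately ends up. Your treatment of the trivial direction is correct, and so is the identification of the homotopy idempotent $e = i\circ r$ and of the role of idempotent completeness (splitting a genuine chain-level idempotent degreewise). The weak point is the strictification step. The assertion that a homotopy idempotent on a bounded complex ``becomes a strict idempotent after adding on an elementary complex and modifying by a homotopy'' is not itself an argument: homotopy idempotents need not split in a bounded homotopy category at all (this is the Freyd--Heller phenomenon), and when $\cala$ is idempotent complete the splitting is a nontrivial theorem whose proof is not a simple swindle. Moreover, your sketch uses only $e$ together with a single homotopy $H\colon e^2\simeq e$, whereas Ranicki's construction makes essential use of the full domination data $(i,r,h)$ --- in particular the homotopy $h\colon r\circ i\simeq \id_C$ witnessing that the homotopy idempotent is already split by $C$ itself --- to build explicit strict idempotents on finite direct sums of the chain objects of $D$ out of $e$, $1-e$, and $h$, and then takes images inside $\Idem(\cala)\simeq\cala$. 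Since both you and the paper ultimately send the reader to Ranicki for precisely this step, your proposal is not so much incorrect as it is overly optimistic about how the strictification goes; were you to try to fill in the details as sketched, you would find that the ``degreewise bookkeeping'' you gesture at is in fact the entire substance of Ranicki's Proposition~3.2, and your reduction to an abstract homotopy idempotent discards the data that makes it work.
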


 
 \subsection{Homotopy finite  chain complexes}
 \label{subsec:Homotopy_finite_chain_complexes}

Let $\calb$ be an additive category with a full additive  subcategory $\cala \subseteq \calb$.
We call a $\calb$-chain complex $C$ \emph{homotopy $\cala$-finite} if $C$ is
$\calb$-chain homotopy equivalent to a bounded $\cala$-chain complex.

\begin{lemma}\label{lem:homotopy_finite_two_of_three}

  Let $0 \to C \xrightarrow{i} D \xrightarrow{p} E \to 0$ be an exact sequence of
  $\calb$-chain complexes.  Suppose that two of the three $\calb$-chain complexes $C$,
  $D$, and $E$ are $\cala$-homotopy finite. Then all three are  homotopy $\cala$-finite.
\end{lemma}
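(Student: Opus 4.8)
The plan is to reduce the statement to Lemma~\ref{lem:elementary_facts}~\ref{lem:elementary_facts:chain_homotopy_equivalence_2_of_3} by replacing each chain complex in the short exact sequence with a homotopy-equivalent bounded $\cala$-chain complex and checking that the maps can be chosen compatibly. The first step is to handle the easy direction: if $C$ and $E$ are homotopy $\cala$-finite, then $D$ is. Indeed, fix $\calb$-chain homotopy equivalences $C \xrightarrow{\simeq} C'$ and $E \xrightarrow{\simeq} E'$ with $C', E'$ bounded $\cala$-chain complexes. Using that $0\to C\to D\to E\to 0$ is (in particular) a homotopy fiber sequence, $D$ is chain homotopy equivalent to the mapping cone $\cone(E[-1]\to C)$ of the connecting map, or more directly one pushes the extension along $C\to C'$ and pulls back along $E'\to E$ to obtain an exact sequence $0\to C'\to D'\to E'\to 0$ of bounded $\cala$-chain complexes together with a chain homotopy equivalence $D\xrightarrow{\simeq} D'$, by Lemma~\ref{lem:elementary_facts}~\ref{lem:elementary_facts:chain_homotopy_equivalence_2_of_3}. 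Since $D'$ is a bounded $\cala$-chain complex, $D$ is homotopy $\cala$-finite.

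The remaining two cases are symmetric in spirit. Suppose $D$ and $E$ are homotopy $\cala$-finite. The standard move is to observe that $C$ is chain homotopy equivalent to the homotopy fiber of $D\to E$, i.e.\ to $\Sigma^{-1}\cone(D\to E)$, equivalently to $\cone(p)$ shifted down by one after using the exact sequence $0\to D\to \cyl(p)\to\cone(p)\to 0$ of Lemma~\ref{lem:elementary_facts}~\ref{lem:elementary_facts:short_exact_sequences}. Concretely: replace $D$ and $E$ by bounded $\cala$-chain complexes $D'$, $E'$ via chain homotopy equivalences; the composite $D\to E\to E'$ need not factor literally through $D'$, but since $D\to D'$ is a chain homotopy equivalence we may choose a chain map $q\colon D'\to E'$ and a chain homotopy making the square commute up to homotopy; then $\cone(q)$ is a bounded $\cala$-chain complex and, by Lemma~\ref{lem:elementary_facts}~\ref{lem:elementary_facts:maps_between_mapping_cones} together with Lemma~\ref{lem:elementary_facts}~\ref{lem:elementary_facts:chain_homotopy_equivalence_cone}, the induced map $\cone(\text{id}_D\to q)$ of mapping cones witnesses $\cone(D\to E)\simeq\cone(q)$, hence $C\simeq\Sigma^{-1}\cone(q)$, which is bounded over $\cala$. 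The case in which $C$ and $D$ are homotopy $\cala$-finite is dealt with dually: $E$ is chain homotopy equivalent to $\cone(i)$ for $i\colon C\to D$, by the exact sequence $0\to\cone(C)\to\cone(i)\to E\to 0$ and Lemma~\ref{lem:elementary_facts}~\ref{lem:elementary_facts:chain_homotopy_equivalence_2_of_3}; replacing $C$ and $D$ by bounded $\cala$-chain complexes and replacing $i$ up to homotopy by a chain map between them, one identifies $E$ up to chain homotopy equivalence with the bounded $\cala$-chain complex given by this mapping cone.

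The one point requiring care — and the main (mild) obstacle — is that "homotopy $\cala$-finite" is a property witnessed by a chain homotopy equivalence to a bounded $\cala$-chain complex, not by a strict isomorphism, so none of the replacement maps can be taken to be the literal maps $i$, $p$; one always has to transport a chain map along a chain homotopy equivalence and keep track of the compensating chain homotopy. This is exactly the kind of bookkeeping that Lemma~\ref{lem:elementary_facts}~\ref{lem:elementary_facts:maps_between_mapping_cones} and~\ref{lem:elementary_facts:maps_between_mapping_cylinders} are designed to absorb: a chain map between mapping cones is the same data as a homotopy-commutative square, and the mapping cone construction is invariant under such. So the argument is really: (i) pass from the short exact sequence to a mapping-cone description of the third term, (ii) replace the two known terms by bounded $\cala$-complexes and the connecting map by a chain map between them up to homotopy, (iii) conclude that the mapping cone is a bounded $\cala$-complex chain homotopy equivalent to the third term. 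I would present the easy case first, then note that the other two cases follow by the same recipe after rewriting the exact sequence via Lemma~\ref{lem:elementary_facts}~\ref{lem:elementary_facts:short_exact_sequences}, so as to avoid repeating the bookkeeping three times.
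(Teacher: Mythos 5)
Your overall strategy---replace the terms by homotopy-equivalent bounded $\cala$-chain complexes, assemble a commutative diagram of short exact sequences, and invoke Lemma~\ref{lem:elementary_facts}~\ref{lem:elementary_facts:chain_homotopy_equivalence_2_of_3}---is the same as the paper's; the differences are in organization and in one missing step. You begin with $(C,E)\Rightarrow D$, argued by pushing the extension out along $C\to C'$ and pulling it back along $E'\to E$; this does produce strictly commutative diagrams with exact rows, so part~\ref{lem:elementary_facts:chain_homotopy_equivalence_2_of_3} applies and that case is fine. The paper instead proves $(C,D)\Rightarrow E$ first and derives the other two from it.

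The gap is in your $(D,E)\Rightarrow C$ and $(C,D)\Rightarrow E$ cases. There you produce a chain map $q\colon D'\to E'$ together with a chain homotopy making the square with $p\colon D\to E$ commute, take the induced map of mapping cones via Lemma~\ref{lem:elementary_facts}~\ref{lem:elementary_facts:maps_between_mapping_cones}, and assert it is a chain homotopy equivalence, citing also part~\ref{lem:elementary_facts:chain_homotopy_equivalence_cone}. Neither of those parts, alone or together, yields this conclusion: part~\ref{lem:elementary_facts:maps_between_mapping_cones} only \emph{constructs} the map, and part~\ref{lem:elementary_facts:chain_homotopy_equivalence_cone} only rephrases what would need to be shown (that its mapping cone is contractible). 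That mapping cones are invariant under homotopy-commutative squares with vertical equivalences is true, but has to be proved, and this is exactly the step your sketch skips. The paper fills it with the mapping cylinder: given the $\cala$-chain map $f\colon P\to Q$ between bounded replacements, equivalences $v\colon P\to C$, $w\colon Q\to D$, and a chain homotopy $w\circ f\simeq i\circ v$, Lemma~\ref{lem:elementary_facts}~\ref{lem:elementary_facts:maps_between_mapping_cylinders} produces a chain map $F\colon\cyl(f)\to D$ restricting to $i\circ v$ on $P$ and to $w$ on $Q$. This yields a \emph{strictly} commutative diagram with exact rows between $0\to P\to\cyl(f)\to\cone(f)\to 0$ and $0\to C\to D\to E\to 0$; the map $F$ is an equivalence because its restriction to $Q\subset\cyl(f)$ is $w$ and the inclusion of $Q$ is an equivalence, and now part~\ref{lem:elementary_facts:chain_homotopy_equivalence_2_of_3} gives that $\cone(f)\to E$ is an equivalence. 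Having established $(C,D)\Rightarrow E$ once by this cylinder argument, the paper gets the other two cases by applying it to $0\to D\to\cyl(p)\to\Sigma C\to 0$ (noting $\cyl(p)\simeq E$), which avoids both repeating the bookkeeping and the unproved mapping-cone-invariance claim. You should either insert the cylinder step explicitly into your two harder cases, or restructure along the paper's lines.
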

\begin{proof}
  We begin with the case where $C$ and $D$ are homotopy $\cala$-finite. We have to show
  that $E$ is homotopy $\cala$-finite.
  
  Choose bounded $\cala$-chain complexes $P$ and $Q$ together with $\calb$-chain homotopy
  equivalences $v \colon P \to C$ and $w \colon Q \to D$.  Then there is a $\cala$-chain
  map $f \colon P \to Q$ such that $w \circ f \simeq u \circ v$ holds as $\calb$-chain
  maps.  From
  Lemma~\ref{lem:elementary_facts}~\ref{lem:elementary_facts:maps_between_mapping_cylinders}
  we obtain a $\calb$-chain map $F \colon \cyl(f) \to D$ satisfying $F \circ i = u \circ
  v$ for the canonical inclusion $i \colon P \to \cyl(f)$. We obtain a commutative diagram
  of $\calb$-chain complexes whose rows are short exact sequences
\[
\xymatrix{0 \ar[r] 
&  C \ar[r]^{i}
& D \ar[r]^{p}
& E \ar[r]
& 0
\\
0 \ar[r] 
& P \ar[r]^{i} \ar[u]^{v}
& \cyl(f) \ar[r] \ar[u]^{F}
& \cone(f)  \ar[r] \ar[u]^{\overline{F}}
& 0
}
\]
Since $v$ and $F$ are $\calb$-chain homotopy equivalences,
$\overline{F}$ is a $\calb$-chain homotopy equivalence by
Lemma~\ref{lem:elementary_facts}~\ref{lem:elementary_facts:chain_homotopy_equivalence_2_of_3}.
Since $P$ and $Q$ are bounded $\cala$-chain complexes,
$\cone(f)$ is a bounded $\cala$-chain complexes. This proves that
$E$  is homotopy $\cala$-finite.

Next we deal with the second case, where $D$ and $E$ are homotopy $\cala$-finite.
We have to show that $C$ is homotopy $\cala$-finite.
The exact sequence $0 \to C \xrightarrow{i} D \xrightarrow{p} E \to 0$ induces an exact
sequence of $\calb$-chain complexes $0 \to D \to \cyl(p) \to \Sigma C \to 0$ and $\cyl(p)$
is $\calb$-chain homotopy equivalent to $E$. 
Since $D$ and $\cyl(p)$ are homotopy $\cala$-finite, the first case applied to 
$0 \to D \to \cyl(p) \to \Sigma C \to 0$ implies that $\Sigma C$ and hence $C$ are
homotopy $\cala$-finite.  

If $C$ and $E$ are homotopy $\cala$-finite, then $\cyl(p)$ and
$\Sigma C$ are homotopy $\cala$-finite and by the second case applied to 
$0 \to D \to \cyl(p) \to \Sigma C \to 0$ we conclude that $D$ is homotopy $\cala$-finite.
\end{proof}

 
 \subsection{Chain homotopy equivalences and cofibrations}
 \label{subsec:Chain_homotopy_equivalences_and_cofibrations}
 
 For the purpose of this paper, we define a cofibration of chain complexes in an additive category to be a chain
 map $i\colon C\to D$ which is level-wise split-injective.  The next lemma is well-known
 for cofibrations of spaces, see for instance~\cite[Proposition~5.2.5 on
 page~108]{Dieck(2008)}.
 
 \begin{lemma} \label{lem:Chain_homotopy_equivalences_and_cofibrations}
 Let $j(D) \colon C \to D$ and $j(E) \colon C \to E$ be  cofibrations of $\cala$-chain complexes.
 Suppose that there exists an $\cala$-chain homotopy equivalence $v \colon E \to D$
 such that $v \circ j(E) = j(D)$. 
 
 Then there exists an $\cala$-chain map $w \colon D \to E$ with $w \circ j(D) = j(E)$ 
 together with a chain homotopy $h \colon v \circ w \simeq \id_D $  satisfying $h \circ j(D) = 0$.
 \end{lemma}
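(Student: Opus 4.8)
I would adapt to chain complexes the classical argument for cofibrations of spaces cited after the statement. The plan is to first produce \emph{some} chain homotopy inverse $w_0$ of $v$, and then to correct it in two successive steps: the first step makes it strictly compatible with the cofibrations, and the second step removes the obstruction that prevents the compensating homotopy from being trivial on $C$. Throughout I use that, since $j(D)$ is level-wise split injective, one can choose morphisms $s_n\colon D_n\to C_n$ in $\cala$ with $s_n\circ j(D)_n=\id_{C_n}$; the resulting graded (not necessarily chain) map $s\colon D\to C$ then extends any graded morphism $f\colon C\to X$ of degree $m$ to the graded morphism $f\circ s\colon D\to X$ of degree $m$ with $(f\circ s)\circ j(D)=f$, which lets me extend homotopies and higher homotopies from $C$ to $D$.

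\emph{Step 1 (compatibility on $C$).} Since $v$ is a chain homotopy equivalence, choose a chain map $w_0\colon D\to E$ and a chain homotopy $h_0\colon v\circ w_0\simeq\id_D$. Restricting $h_0$ along $j(D)$ shows $v\circ(w_0\circ j(D))\simeq j(D)=v\circ j(E)$, hence $w_0\circ j(D)\simeq j(E)$ because $v$ is a chain homotopy equivalence; fix a degree one homotopy $k\colon C\to E$ realizing this, extend it via $s$ to a degree one map $\widetilde K\colon D\to E$ with $\widetilde K\circ j(D)=k$, and set $w_1:=w_0+(d\circ\widetilde K+\widetilde K\circ d)$. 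Then $w_1$ is a chain map chain homotopic to $w_0$, a short computation gives $w_1\circ j(D)=j(E)$, and (with appropriate signs) $h_1:=h_0-v\circ\widetilde K$ is a chain homotopy $v\circ w_1\simeq\id_D$. So the compatibility with the cofibrations now holds on the nose, at the cost of controlling $h_1$ on $C$.

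\emph{Step 2 (triviality of the homotopy on $C$).} Put $\ell:=h_1\circ j(D)\colon C\to D$. Using $v\circ w_1\circ j(D)=v\circ j(E)=j(D)$ one checks that $\ell$ is a degree one cycle, $d\circ\ell+\ell\circ d=0$. \textbf{This is the main obstacle:} modifying $h_1$ by a coboundary only changes $\ell$ within its cohomology class in the complex of graded morphisms, which can be nonzero, so $h_1$ alone will not suffice --- one must also move $w_1$. The key point is that $\eta:=w_1\circ\ell\colon C\to E$ is again a degree one cycle (as $w_1$ is a chain map and $\ell$ is a cycle), so I extend it via $s$ to $\Xi\colon D\to E$ and set $w:=w_1+(d\circ\Xi+\Xi\circ d)$; because $\eta$ is a cycle the new correction term vanishes after restriction along $j(D)$, so $w\circ j(D)=j(E)$ still, while $h_2:=h_1-v\circ\Xi$ is a chain homotopy $v\circ w\simeq\id_D$. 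A computation shows $h_2\circ j(D)=\ell-v\circ w_1\circ\ell=(\id_D-v\circ w_1)\circ\ell$, which --- since $\id_D-v\circ w_1$ is the coboundary of $h_1$ and $\ell$ is a cycle --- equals the coboundary of the degree two map $h_1\circ\ell\colon C\to D$. Extending that map via $s$ to a degree two map $Z\colon D\to D$ and replacing $h_2$ by $h:=h_2-(d\circ Z-Z\circ d)$ keeps $h$ a chain homotopy $v\circ w\simeq\id_D$ while now achieving $h\circ j(D)=0$. Then $w$ and $h$ are as required; I expect the only delicate part beyond the idea of Step 2 is the sign bookkeeping in the complex of graded morphisms.
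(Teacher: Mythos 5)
Your proof is correct, and its first half runs parallel to the paper's: both exploit a level-wise splitting of $j(D)$ to extend a graded map defined on $C$ to one defined on $D$, thereby adjusting a chain-homotopy inverse of $v$ to one satisfying $w\circ j(D)=j(E)$ on the nose. (The paper reaches this point slightly more directly, by extending the restriction $h'\circ j(E)$ of an already-chosen homotopy $h'\colon w'v\simeq\id_E$ along the splitting, whereas you invoke the fact that $v$ is a chain homotopy equivalence a second time to produce an intermediate homotopy $k\colon w_0 j(D)\simeq j(E)$ and then extend that. Either way works.)

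Where you genuinely diverge from the paper is in the second half. The paper passes to the sets $[D,E]_C$ and $[D,D]_C$ of relative chain homotopy classes, defines a self-map $h_\sharp$ of $[D,D]_C$ by adding the $1$-boundary $dh+hd$ for a fixed (not necessarily relative) homotopy $h\colon vw''\simeq\id_D$, verifies $h_\sharp\circ v_*\circ w''_*=\id$, and deduces from the bijectivity of $h_\sharp$ that $v_*$ is surjective; an element of $[D,E]_C$ mapping to $[\id_D]$ then provides the required pair $(w,h)$. This is an existence argument, quick once the relative-homotopy formalism is set up. Your argument instead constructs $w$ and $h$ explicitly inside the $\operatorname{Hom}$-complex: you observe that the obstruction $\ell=h_1\circ j(D)$ is a $1$-cycle in $\operatorname{Hom}(C,D)$, that $w_1\circ\ell$ is a $1$-cycle in $\operatorname{Hom}(C,E)$, and that extending these via the splitting produces corrections of $w_1$ and $h_1$ which first leave $w\circ j(D)=j(E)$ intact and then reduce $h\circ j(D)$ to an exact element $\partial(h_1\ell)$, which is killed by one more correction. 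I checked the cycle and coboundary identities you invoke — with the convention $\partial f = d f - (-1)^{|f|} f d$ they all hold, and in particular $(\id_D-vw_1)\circ\ell=\partial(h_1\ell)$ as you claim. Your version is more computational but also more transparent about \emph{why} the relative homotopy exists: the two cycles $\ell$ and $w_1\ell$ are the explicit data that make the correction possible, and the step where you "must also move $w_1$" is exactly the point that the paper's argument hides inside the surjectivity of $v_*$. Both routes are valid; yours trades the abstract surjectivity argument for explicit sign bookkeeping in the $\operatorname{Hom}$-complex, which you flag correctly as the only delicate point.
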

 \begin{proof} 
   Choose a chain map $w' \colon D \to E$ together with a chain homotopy
   $h' \colon w' \circ v \simeq \id_E$.  Since $j(E) \colon C \to E$ is a cofibration,
   we may choose for each $n$ a morphism $r_n \colon D_n \to C_n$ with $r_n \circ j(D)_n =
   \id_{C_n}$. Letting $H_n' \colon D_n \to E_{n+1}$ be the composite $h'_n\circ j(E)_n\circ r_n$, we see that
   \[
   H_n' \circ j(D)_n = h_n' \circ j(E)_n.
   \]
   Define a new chain map $w'' \colon D \to
   E$ by putting $w_n'' = w'_n + e_{n+1} \circ H_n' + H_{n-1}' \circ d_n$.  Then $w''$ is
   homotopic to $w'$ and hence still a chain homotopy inverse of $v$ and satisfies
 \begin{eqnarray*}
 w'' \circ j(D) 
 & = & 
 \bigl(w' + e \circ H' + H' \circ d\bigr) \circ j(D)
 \\
 & = & 
 w' \circ j(D) +  e \circ H'  \circ j(D) + H' \circ d \circ j(D)
 \\
 & = & 
 w' \circ j(D) +  e \circ H'  \circ j(D) + H' \circ  j(D) \circ c
 \\
 & = & 
 w' \circ j(D) +  e \circ h'  \circ j(E) + h' \circ  j(E) \circ c
 \\
 & = & 
 w' \circ j(D)  + e \circ h'  \circ j(E) + h' \circ  e \circ j(E) 
 \\
 & = & 
 w' \circ j(D) +  \bigl(e \circ h'  +  h' \circ  e \bigr) \circ j(E) 
 \\
 & = & 
 w' \circ j(D) +  \bigl(\id_D - w' \circ v \bigr) \circ j(E) 
 \\
 & = & 
 w' \circ j(D) +  j(E) -  w' \circ v \circ j(E) 
 \\
 & = & 
 w' \circ j(D) +  j(E) -  w' \circ j(D) 
 \\
 & = & 
 j(E).
 \end{eqnarray*}
 Let $[D,E]_C$ be the set of chain homotopy classes relative $C$ of chain maps 
 $f \colon D \to E$ satisfying $f \circ j(D) = j(C)$, where a chain homotopy $h$ relative $C$ is a chain
 homotopy satisfying $h \circ j(D) = 0$. Define $[D,D]_C$ analogously. We
 obtain maps $w''_* \colon [D,D]_C \to [D,E]_C$ and $v_* \colon [D,E]_C \to [D,D]_C$ by
 taking composites.  Fix a chain homotopy $h \colon v \circ w'' \simeq \id_D$. Define a map
 $h_{\sharp} \colon [D,D]_C \to [D,D]_C$ by sending the class of $f \colon D \to D$ to the
 class of $f + d \circ h + h \circ d$. This is well-defined since
 \begin{eqnarray*}
 (d \circ h + h \circ d) \circ j(D) 
 & = & 
 (v \circ w'' - \id) \circ j(D) 
 \\
 & = & 
 v \circ w'' \circ j(D) - j(D)
 \\
 &= &
 j(D) - j(D) 
 \\
 & = & 
 0.
 \end{eqnarray*}
 Next we prove 
 \begin{eqnarray}
 h_{\sharp} \circ v_* \circ w''_* 
 & = & 
 \id_{[D,D]_C}.
 \label{h_sharp_circ-v_circ_w}
 \end{eqnarray}
 Consider a chain map $f \colon D \to D$ with $f \circ j(D) = j(D)$.
 Then $h_{\sharp} \circ v_*\circ  w''_* ([f])$ is the chain homotopy class relative $C$ of
 $v \circ w'' \circ f +  d \circ h + h \circ d$.
 We compute
 \begin{eqnarray*}
 \lefteqn{v \circ w'' \circ f +  d \circ h + h \circ d - f}
 & & 
 \\
 & = &
 v \circ w'' \circ f +  d \circ h \circ f + h \circ d \circ f - d \circ h \circ f - h \circ d \circ f +  d \circ h + h \circ d -f 
 \\
 & = &
 \bigl(v \circ w''  +  d \circ h + h \circ d\bigr) \circ f - d \circ h \circ f - h \circ f \circ d +  d \circ h + h \circ d - f
 \\
 & = &
 \id_D \circ f + d \circ (h - h \circ f) + (h - h \circ f)\circ d- f
 \\
 & = &
 d \circ (h - h \circ f) + (h - h \circ f)\circ d.
 \end{eqnarray*}
 This implies~\eqref{h_sharp_circ-v_circ_w} since 
 $(h - h \circ f) \circ j(D) = h  \circ j(D) - h \circ f  \circ j(D) =h \circ j(D) - h \circ j(D) = 0$ 
 holds.  Obviously $h_{\sharp}$
 is a bijection, an inverse is given by $(-h)_{\sharp}$.  We conclude
 from~\eqref{h_sharp_circ-v_circ_w} that $v_* \colon [D,E]_C \to [D,D]_C$ is surjective.
 Let $w \colon D \to E$ be any chain map with $w \circ j(E) = j(D)$ such that the class of
 $[w]$ is mapped under $v_*$ to the class of the identity.
 \end{proof}


\section{Some basic tools for connective $K$-theory}
\label{sec:Some_basic_tools_for_connective_K-theory}

We collect some basic tools about connective $K$-theory of exact categories and
Waldhausen categories.


\subsection{The Gillet-Waldhausen Theorem}
\label{sec:The_Gillet_Waldhausen_theorem}

Throughout this subsection, let $\cale$ be an exact category. The Gillet-Waldhausen
theorem compares the $K$-theory of $\cale$ with the $K$-theory of the category
$\Ch(\cale)$ of bounded chain complexes over $\cale$. This is a slightly subtle problem
since on $\Ch(\cale)$ there might be different notions of weak equivalences which give
potentially different $K$-theories.

In this section we define a notion of a \emph{canonical Waldhausen structure} 
on $\Ch(\cale)$ such that the following holds:

\begin{theorem}[Gillet-Waldhausen Theorem]\label{the:Gillet_Waldhausen}
  The inclusion functor $\cale\to\Ch(\cale)$ which considers an object as a 0-dimensional
  chain complex induces a homotopy equivalence
  \[
   \bfK(\cale)\xrightarrow{\simeq} \bfK(\Ch(\cale)),
   \]
  provided $\Ch(\cale)$ carries the canonical Waldhausen structure, 
  see Definition~\ref{def:canonical_Waldhausen_structure}.
\end{theorem}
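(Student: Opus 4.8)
The plan is to deduce the theorem from the classical Gillet--Waldhausen theorem, so the first task is to fix the \emph{canonical Waldhausen structure} (Definition~\ref{def:canonical_Waldhausen_structure}) and check that it is the right one. On the category $\Ch(\cale)$ of bounded complexes I would take as cofibrations the chain maps that are degreewise admissible monomorphisms whose cokernel again lies in $\Ch(\cale)$, and as weak equivalences the \emph{quasi-isomorphisms}, i.e.\ the chain maps whose mapping cone is acyclic, where a bounded complex is acyclic if its differentials assemble into admissible short exact sequences. When $\cale$ carries the split exact structure this recovers the chain homotopy equivalences, but for a general exact structure --- which is the case we really need, e.g.\ for $\Nil(\cala,\Phi)$ --- the quasi-isomorphisms form a strictly larger class, and this is exactly the point of the word ``canonical''. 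Using the mapping cylinder and mapping cone of Section~\ref{subsec:mapping_cylinders_and_mapping_cones} together with Lemma~\ref{lem:elementary_facts} --- in particular the two-out-of-three statement~\ref{lem:elementary_facts:chain_homotopy_equivalence_2_of_3} applied to cones of comparison maps --- I would verify that $\Ch(\cale)$ with this structure is a Waldhausen category, that $f\mapsto\cyl(f)$ is a cylinder functor satisfying the cylinder axiom, and that the weak equivalences are saturated and satisfy the extension axiom; compare \cite[1.11.2]{Thomason-Trobaugh(1990)}. The inclusion $\iota\colon\cale\to\Ch(\cale)$, $A\mapsto A[0]$, is then an exact functor, and the assertion to prove is that $\bfK(\iota)$ is a weak equivalence.

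The proof of the latter is the classical argument of Gillet and of Thomason--Trobaugh \cite[1.11.7]{Thomason-Trobaugh(1990)}, which I would run as follows. Writing $\Ch^{[a,b]}(\cale)\subseteq\Ch(\cale)$ for the full Waldhausen subcategory of complexes concentrated in degrees $[a,b]$, one has $\Ch(\cale)=\bigcup_n\Ch^{[-n,n]}(\cale)$, and connective $K$-theory of Waldhausen categories commutes with such countable filtered unions along exact inclusions. Since the suspension $\Sigma$ is an exact self-equivalence of $\Ch(\cale)$ carrying $\Ch^{[a,b]}(\cale)$ onto $\Ch^{[a+1,b+1]}(\cale)$, this reduces the claim to showing that $\bfK(\cale)\to\bfK(\Ch^{[0,n]}(\cale))$ is a weak equivalence for every $n\ge 0$, which I would prove by induction on $n$; the case $n=0$ is trivial because $\Ch^{[0,0]}(\cale)=\cale$ with its canonical structure and $\bfK(\iota)=\id$. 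One could instead quote \cite[1.11.7]{Thomason-Trobaugh(1990)} directly after the verification of the first paragraph, or invoke the derived invariance of $K$-theory \cite{Cisinki(2010inv)}; but since the paper aims to be self-contained it is natural to give the inductive argument.

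The inductive step --- comparing $\Ch^{[0,n-1]}(\cale)$ with $\Ch^{[0,n]}(\cale)$ by means of the Additivity Theorem --- is where the real work lies, and I expect it to be the main obstacle. The naive approach, feeding the cofibre sequence of endofunctors ``stupid truncation'' $\tau_{\le n-1}\rightarrowtail\id\twoheadrightarrow(-)_n[n]$ of $\Ch^{[0,n]}(\cale)$ into Additivity, \emph{fails}, because stupid truncation does not preserve quasi-isomorphisms (indeed not even chain homotopy equivalences), so these functors are not exact. The remedy, following Gillet and Thomason--Trobaugh, is to pass to the full subcategory $\calu\subseteq\Ch^{[0,n]}(\cale)$ of complexes whose top differential $c_n\colon C_n\to C_{n-1}$ is an admissible monomorphism: on $\calu$ the \emph{good} truncation is available and exact, Additivity applies to the resulting cofibre sequence of exact functors, and, with the inductive hypothesis, it yields that $\bfK(\cale)\to\bfK(\calu)$ is a weak equivalence. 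It then remains --- and this is the most delicate point --- to show that the inclusion $\calu\hookrightarrow\Ch^{[0,n]}(\cale)$ induces a $K$-equivalence; here I would invoke Waldhausen's Approximation Theorem \cite{Waldhausen(1985)}, whose hypotheses are supplied by the cylinder functor and the extension and saturation properties established in the first paragraph, together with the standard replacement of an arbitrary bounded complex by a quasi-isomorphic one whose differentials are admissible monomorphisms, obtained by iterated elementary-complex and mapping-cylinder constructions as in Lemma~\ref{lem:elementary_facts}~\ref{lem:elementary_facts:induction_step_argument}. Assembling the induction with the filtered-colimit reduction of the second paragraph then completes the proof.
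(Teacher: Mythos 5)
There is a genuine gap, and it occurs at the very first step: you have guessed the wrong canonical Waldhausen structure. You take the weak equivalences on $\Ch(\cale)$ to be the chain maps whose mapping cone is exact \emph{in $\cale$}. Definition~\ref{def:canonical_Waldhausen_structure} instead declares $f$ to be a weak equivalence if $I(f)$ has exact mapping cone in $\Ch(\calp\cale)$, where $\calp\cale\subset\Idem(\cale)$ is the full exact subcategory of objects stably in $\cale$. The two notions differ precisely when $\cale$ fails property (P) of Definition~\ref{def:property_(P)}, and that failure is the whole point of this section. In particular your assertion that for the split exact structure your weak equivalences are exactly the chain homotopy equivalences is false: the example in Section~\ref{sec:The_Gillet_Waldhausen_theorem}, built from a stably free non-free module, is a contractible bounded complex in $\calr$ that is not exact in $\calr$, so the chain homotopy equivalence $C\to 0$ is not a weak equivalence for your structure. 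Since the later sections rely on the characterization ``canonical weak equivalence $=$ chain homotopy equivalence'' for split-exact categories which are typically not idempotent complete (such as $\cala_{\Phi}[t^{\pm 1}]$, $\calx$, $\Nil(\cala,\Phi)$), proving the theorem for your structure would not prove the stated theorem; moreover, with exactness tested only in $\cale$, even the verification of the saturation and extension axioms is in doubt, because direct summands of exact complexes need not be exact.

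What your plan is missing is exactly the content of the paper's proof: the reduction of the general case to an exact category with property (P). One shows that $\calp\cale$ satisfies (P) (Lemma~\ref{lem:property_P_completion}~\ref{lem:property_P_completion:endo}), that the inclusions $\cale\subset\calp\cale$ and $\Ch(\cale)\subset\Ch(\calp\cale)$ are strictly cofinal and hence $K$-equivalences by Waldhausen's Cofinality Theorem \cite[1.5.9]{Waldhausen(1985)} (Lemma~\ref{lem:property_P_completion}~\ref{lem:property_P_completion:K-theory} and Lemma~\ref{lem:canonical_Waldhausen_structure_is_Waldhausen_structure}~\ref{lem:canonical_Waldhausen_structure_is_Waldhausen_structure:K)}), and then quotes the classical Gillet--Waldhausen theorem \cite[1.11]{Thomason-Trobaugh(1990)} for $\calp\cale$; the commutative square relating $\bfK(\cale)$, $\bfK(\Ch(\cale))$, $\bfK(\calp\cale)$, $\bfK(\Ch(\calp\cale))$ then forces the remaining arrow to be an equivalence. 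By contrast, the bulk of your proposal (filtration by length, Additivity on the subcategory of complexes with monic top differential, the Approximation Theorem) is a re-derivation of the Thomason--Trobaugh result which the paper simply cites; it is only valid under a hypothesis of property (P) type, so even if carried out in detail it would not close the actual gap, and the alternative appeal to derived invariance \cite{Cisinki(2010inv)} likewise does not address the cofinality issue that the $\calp$-construction is designed to handle.
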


We denote admissible monomorphisms and admissible epimorphisms in $\cale$ 
by the symbols `$\rightarrowtail$' and `$\twoheadrightarrow$', respectively. 

\begin{definition}[Exact structure on chain complexes]
\label{def:Exact_structure_on_chain_complexes}
A chain complex $C$ in $\cale$ is called \emph{exact} if each differential factors as
\[
c_n\colon C_n \overset{p_n}{\twoheadrightarrow} Z_{n-1} \overset{i_{n-1}}{\rightarrowtail} C_{n-1}
\]
such that all the sequences
\[
Z_{n} \overset{i_{n}}{\rightarrowtail} C_n\overset{p_n}{\twoheadrightarrow} Z_{n-1}
\]
are exact.
\end{definition}

Notice that if $\cale$ is abelian then this is just the usual notion of exactness in the
sense that the chain complex has no homology.

\begin{definition}[Property (P)]
  \label{def:property_(P)}
  We say that $\cale$ \emph{satisfies property (P)} if any split surjection $p\colon
    A\to B$ in $\cale$ is an admissible epimorphism, i.e., is part of an exact sequence
    $K\rightarrowtail A\twoheadrightarrow B$.
\end{definition}

In the case where $\cale$ satisfies property (P)
the canonical Waldhausen structure agrees with the one naturally defined by the exact structure.
In this case the Gillet-Waldhausen Theorem is well-known and can be found for instance 
in~\cite[1.11.7]{Thomason-Trobaugh(1990)}. 

We first discuss the choice of Waldhausen structure provided that property (P) may not hold.
It is not a good idea to declare a chain map to be a weak equivalence by demanding
that its mapping cone is exact. Namely, the following example shows that a 
contractible chain complex $C$ may not be  exact and
that a direct summand of an exact chain complex may not be exact either.

\begin{example}
  Let $M$ be a module over some ring $R$ such that $M$ is not free but stably free, i.e.,
  such that $M\oplus R^m$ is \change{finitely generate free} for some $m$. Then, the chain complex
  \[
  R^m \xrightarrow{\begin{pmatrix} 0 & 1 \end{pmatrix}} M\oplus R^m
  \xrightarrow{\begin{pmatrix} 1 & 0 \\ 0 & 0 \end{pmatrix}} M\oplus R^m
  \xrightarrow{\begin{pmatrix} 0 & 1 \end{pmatrix}} R^m
  \]
  is a chain complex in the exact category $\calr$ of finitely generated free
  $R$-modules. As such it is not exact, for the last map has no kernel in $\calr$. On the
  other hand, it is chain contractible and acyclic as a chain complex of $R$-modules.

  Moreover, if we take direct sum with the exact chain complex $R^m\xrightarrow{\id} R^m$
  to the middle degrees, then the resulting chain complex is exact in the category of \change{finitely generated free}
  $R$-modules. 
\end{example}

First we explain how property (P) ensures that this pathology does not arise.

\begin{definition}[Waldhausen structure for an exact category with Property (P)]
  \label{def:waldhausen_structure_on_category_with_P}
  If $\cale$ satisfies property (P), a chain map $f\colon C\to D$ is called
    \begin{enumerate}
    \item a \emph{cofibration} if it is degree-wise an admissible monomorphism;
    \item a \emph{weak equivalence} if its mapping cone is an exact chain complex.
    \end{enumerate}
\end{definition}

For the following, we say that an exact functor $F\colon \cale\to \cale'$ between exact
categories \emph{reflects exactness} provided
\[
E_0\to E_1\to E_2 \mathrm{~is~exact~in~}\cale \: \Longleftrightarrow \: F(E_0)\to
F(E_1)\to F(E_2) \mathrm{~is~exact~in~}\cale'.
\] 
We say that $F$ \emph{reflects admissible epimorphisms} provided
\begin{multline*}
E_1\to E_2 \mathrm{~admissible~epimorphism~in~}\cale \: \Longleftrightarrow \\
 F(E_1)\to F(E_2)
\mathrm{~admissible~epimorphism~in~}\cale'.
\end{multline*}

\begin{lemma}\label{lem:functor_reflecting_exactness_and_admissible_epis}
  If an exact functor $F\colon \cale\to \cale'$ reflects exactness and admissible
  epimorphisms between categories with property (P), then $\Ch(F)\colon
  \Ch(\cale)\to\Ch(\cale')$ reflects weak equivalences.
\end{lemma}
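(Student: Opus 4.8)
The plan is to reduce the statement to the following claim about individual chain complexes: if $C$ is a chain complex in $\cale$ such that $\Ch(F)(C)$ is exact in $\cale'$, then $C$ is exact in $\cale$. Granting this, the lemma follows quickly: a chain map $f\colon C\to D$ in $\Ch(\cale)$ is a weak equivalence in the Waldhausen structure of Definition~\ref{def:waldhausen_structure_on_category_with_P} if and only if its mapping cone $\cone(f)$ is exact; since $\Ch(F)$ commutes with the formation of mapping cones (mapping cones are built out of direct sums and the differentials, all of which are preserved by the additive exact functor $F$), we have $\Ch(F)(\cone(f)) = \cone(\Ch(F)(f))$, and so $\Ch(F)(f)$ being a weak equivalence means exactly that $\cone(\Ch(F)(f)) = \Ch(F)(\cone(f))$ is exact; by the claim this forces $\cone(f)$ to be exact, i.e.\ $f$ is a weak equivalence. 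Hence $\Ch(F)$ reflects weak equivalences.

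So the work is in proving the claim. First I would unpack Definition~\ref{def:Exact_structure_on_chain_complexes}: $C$ is exact precisely when each differential $c_n$ admits a factorization $c_n = i_{n-1}\circ p_n$ with $p_n\colon C_n\twoheadrightarrow Z_{n-1}$ an admissible epimorphism, $i_{n-1}\colon Z_{n-1}\rightarrowtail C_{n-1}$ an admissible monomorphism, and the two-term sequences $Z_n\rightarrowtail C_n\twoheadrightarrow Z_{n-1}$ exact. The strategy is to build the cycle objects $Z_{n-1}$ in $\cale$ one differential at a time, using that $F$ reflects admissible epimorphisms and reflects exactness. Concretely: working in $\cale'$, exactness of $F(C)$ gives cycle objects $Z'_{n-1}$ and factorizations $F(c_n) = i'_{n-1}\circ p'_n$. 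The issue is that $Z'_{n-1}$ need not be in the image of $F$. Here one should instead argue directly in $\cale$: since $F$ reflects admissible epimorphisms, and the composite $F(C_{n+1})\to F(C_n)\to \operatorname{coker}$-type statement about $F(c_{n+1})$ being an admissible epi onto its image, one wants to conclude that $c_{n+1}\colon C_{n+1}\to C_n$ has an image object $Z_n$ in $\cale$ making $C_{n+1}\twoheadrightarrow Z_n$ admissible. This is where reflection of admissible epimorphisms is meant to be used — but it requires knowing the target of the epimorphism, i.e.\ already having $Z_n$ as an object of $\cale$.

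The honest route, and the one I expect the paper intends, is a downward (or upward) induction producing the $Z_n$ inside $\cale$: because $\cale$ is exact and $F$ reflects exactness, one shows that the relevant subobject lattice statements transfer. The key step I anticipate being the main obstacle is exactly this: verifying that one can choose the factorizations $c_n = i_{n-1}\circ p_n$ \emph{inside $\cale$} rather than merely in $\cale'$. The way to handle it is to use reflection of admissible epimorphisms to get, for each $n$, that $c_n$ regarded as a map onto a candidate cycle object is an admissible epimorphism, where the candidate cycle object is constructed as the admissible image of $c_{n+1}$; then reflection of exactness upgrades the two-term complexes $Z_n\to C_n\to Z_{n-1}$ (which become exact in $\cale'$ after applying $F$, because $F(C)$ is exact there) to exact sequences in $\cale$. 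One must be a little careful about the order of construction so that each $Z_n$ is available as an object of $\cale$ before it is used as the target of $p_{n+1}$; since $C$ is a \emph{bounded} complex one can start the induction at the bottom nonzero degree, where $Z$ is $0$, and proceed upward, at each stage forming the admissible image of the next differential in $\cale$ and checking via $F$ that the pieces fit together exactly. I would write this as a clean induction on the length of $C$, mirroring the inductive arguments already used in Lemma~\ref{lem:elementary_facts}~\ref{lem:elementary_facts:induction_step_argument} and in the proof of Lemma~\ref{lem:contractible_over_cala_Phi[t,t(-1)]_versus_over_cala}, and then conclude the lemma as in the first paragraph.
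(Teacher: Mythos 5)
Your proposal is correct and takes essentially the same route as the paper: reduce to the claim that $F(C)$ exact implies $C$ exact, then induct on the length of $C$, using reflection of admissible epimorphisms at the bottom degree (where the target of $c_1$ is simply $C_0$) to peel off the cycle objects one at a time. The only point where the paper is tighter is that $Z_1$ is produced as the \emph{kernel} of the admissible epimorphism $c_1$ (kernels of admissible epis exist by the axioms of an exact category), which resolves the circularity you were worrying about in forming an ``admissible image'' of $c_2$ before knowing it exists.
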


\begin{proof}
  It suffices to show that if $F(C)$ is exact in $\Ch(\cale')$ then so is $C$ in
  $\Ch(\cale)$. The argument is by induction on the length of $C$. If is has length at
  most 2, then the claim holds by assumption.

  For the inductive step, note that a chain complex $C$ concentrated in $[0,n]$ is exact
  if and only if $c_1\colon C_1\to C_0$ is an admissible epimorphism with kernel $Z_1$ so
  that the induced chain complex
\begin{equation}\label{eq:induced_chain_complex}
  \dots C_2\xrightarrow{c_3} C_2\xrightarrow{c_2} Z_1 \to 0 
\end{equation}
is exact. Thus if $F(C)$ is exact in $\Ch(\cale')$, then $F(c_1)$ is an admissible
epimorphism in $\cale'$. Hence by assumption, $c_1$ is an
admissible epimorphism in $\cale$, with a kernel $Z_1$. Applying the inductive hypothesis
to the chain complex~\eqref{eq:induced_chain_complex} concludes the proof.
\end{proof}

\begin{lemma}\label{lem:waldhausen_structure_under_property_P}
  Suppose that $\cale$ satisfies property (P). Then:
  \begin{enumerate}
  \item \label{lem:waldhausen_structure_under_property_P:(1)}
  With the above choice of cofibration and weak equivalence, $\Ch(\cale)$ is a
    Waldhausen category that satisfies the saturation, extension, and cylinder axioms;
  \item \label{lem:waldhausen_structure_under_property_P:(2)}
  If the exact structure on $\cale$ is the split-exact structure of the underlying
    additive category, then a chain map is a weak equivalence if and only if it is a chain
    homotopy equivalence.
  \end{enumerate}
\end{lemma}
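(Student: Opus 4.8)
The plan is to reduce everything to two facts. The first is the only place where property~(P) enters: for every $C\in\Ch(\cale)$ the mapping cone $\cone(\id_C)$ is an exact chain complex in the sense of Definition~\ref{def:Exact_structure_on_chain_complexes}. The second is a ``two out of three'' statement for exactness, which I will call Lemma~A: \emph{if $0\to A\to B\to C\to 0$ is a short exact sequence of bounded chain complexes in $\cale$ (i.e.\ degree-wise an admissible exact sequence) and two of $A,B,C$ are exact chain complexes, then so is the third.} I would prove Lemma~A by induction on the maximum of the lengths, reducing to the case of length $\le 1$ exactly as in the arguments for Lemma~\ref{lem:elementary_facts}~\ref{lem:elementary_facts:chain_homotopy_equivalence_2_of_3} and~\ref{lem:elementary_facts:induction_step_argument} and finishing the base case with the $3\times3$-lemma in an exact category. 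I expect \textbf{Lemma~A to be the main obstacle}: it is the only genuinely new piece of homological algebra, and every other step is a formal consequence of it and of the cone and cylinder short exact sequences recorded in Lemma~\ref{lem:elementary_facts}.

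For the first fact, the $n$-th differential of $\cone(\id_C)$ is $\left(\begin{smallmatrix}-c_{n-1}&0\\ \id&c_n\end{smallmatrix}\right)$, which factors as the split epimorphism $\left(\begin{smallmatrix}\id&c_n\end{smallmatrix}\right)\colon C_{n-1}\oplus C_n\to C_{n-1}$ followed by the split monomorphism $\left(\begin{smallmatrix}-c_{n-1}\\ \id\end{smallmatrix}\right)\colon C_{n-1}\to C_{n-2}\oplus C_{n-1}$; by property~(P) the former is an admissible epimorphism, the latter is an admissible monomorphism in any exact structure, and an evident coordinate change exhibits the associated short sequences as split exact, so $\cone(\id_C)$ is exact. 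Since $\cone(f)$ is chain isomorphic to $\cone(\id)$ whenever $f$ is an isomorphism, isomorphisms are weak equivalences. Dually, elementary chain complexes are exact in any exact structure, so Lemma~\ref{lem:elementary_facts}~\ref{lem:elementary_facts:elementary} shows that the mapping cone of any chain homotopy equivalence is exact, i.e.\ chain homotopy equivalences are weak equivalences. Combining these two observations with Lemma~A applied to the sequences $0\to C\to\cyl(g)\to\cone(g)\to 0$ and $0\to D\to\cyl(g)\to\cone(\id_C)\to 0$ of Lemma~\ref{lem:elementary_facts}~\ref{lem:elementary_facts:short_exact_sequences} shows that exactness of a bounded complex in $\cale$ is invariant under chain homotopy equivalence.

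Given this, the remaining Waldhausen axioms are formal. The category-with-cofibrations axioms hold degree-wise, since admissible monomorphisms in an exact category contain the isomorphisms and are closed under composition and cobase change, with the relevant pushouts existing in $\cale$ and hence, computed degree-wise, in $\Ch(\cale)$. For closure of weak equivalences under composition, the saturation axiom, and the extension axiom I would use two short exact sequences. First, for composable $f,g$ the composite $\cone(f)\to\cone(gf)\to\cone(g)$ is canonically null-homotopic and the induced map $\cone(\cone(f)\to\cone(gf))\to\cone(g)$ is a chain homotopy equivalence (the octahedral computation), so Lemma~\ref{lem:elementary_facts}~\ref{lem:elementary_facts:short_exact_sequences} provides a short exact sequence $0\to\cone(gf)\to\cone(\cone(f)\to\cone(gf))\to\Sigma\cone(f)\to 0$; applying Lemma~A together with the homotopy invariance of exactness (and the fact that $\Sigma$ preserves exactness) to this sequence yields all three assertions at once. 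Second, a map of cofiber sequences in $\Ch(\cale)$ induces a strict short exact sequence $0\to\cone(a)\to\cone(b)\to\cone(c)\to 0$ of cones, whence Lemma~A gives the extension axiom; the gluing axiom is the same argument applied to the Mayer--Vietoris short exact sequence $0\to\cone(a)\to\cone(b)\oplus\cone(c)\to\cone(p)\to 0$ attached to a map of pushout squares along cofibrations. For the cylinder axiom I take the mapping cylinder of Subsection~\ref{subsec:mapping_cylinders_and_mapping_cones} as cylinder functor: its front and back inclusions are degree-wise split, hence cofibrations, and the projection $\cyl(f)\to D$ is a chain homotopy equivalence, hence a weak equivalence.

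For part~(ii), when the exact structure on $\cale$ is the split-exact one, an exact chain complex $C$ has $C_n\cong Z_n\oplus Z_{n-1}$ with differential the obvious shift, so $C\cong\bigoplus_n\el(Z_{n-1},n-1)$ and is in particular chain contractible; conversely a contractible mapping cone is chain isomorphic to a sum of elementary complexes by Lemma~\ref{lem:elementary_facts}~\ref{lem:elementary_facts:elementary}, hence exact. Therefore a chain map $f$ is a weak equivalence $\iff$ $\cone(f)$ is exact $\iff$ $\cone(f)$ is contractible $\iff$ $f$ is a chain homotopy equivalence, where the last equivalence is Lemma~\ref{lem:elementary_facts}~\ref{lem:elementary_facts:chain_homotopy_equivalence_cone}.
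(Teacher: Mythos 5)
Your proposal takes a genuinely different, ``internal'' route from the paper. The paper reduces everything to the well-known abelian case via the Gabriel--Quillen embedding $i\colon\cale\to\cale'$: since $i$ reflects exactness and admissible epimorphisms (by \cite[A.7.16]{Thomason-Trobaugh(1990)}), Lemma~\ref{lem:functor_reflecting_exactness_and_admissible_epis} exhibits $\Ch(\cale)$ as a full Waldhausen subcategory of $\Ch(\cale')$ closed under mapping cylinders, and the axioms are inherited; for part~(ii) the embedding becomes Yoneda with projective image, so exactness of a cone equals contractibility. You instead isolate two facts---property~(P) makes $\cone(\id_C)$ exact, and the two-out-of-three Lemma~A for exactness---and derive all the Waldhausen axioms formally from the cone and cylinder short exact sequences. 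This is a legitimate and arguably more transparent organization, since it makes visible exactly where property~(P) enters and reduces everything else to one homological statement. You are right to flag Lemma~A as the main obstacle: the paper in effect gets it for free from the embedding, and any ``elementary'' proof of it (via the $3\times3$-lemma for exact categories, say) is a re-packaging of roughly the same content.

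Two concrete issues deserve attention. First, the proposed reduction of Lemma~A to length~$\le 1$ ``exactly as in the arguments for Lemma~\ref{lem:elementary_facts}~\ref{lem:elementary_facts:chain_homotopy_equivalence_2_of_3} and~\ref{lem:elementary_facts:induction_step_argument}'' does not transfer directly: part~\ref{lem:elementary_facts:induction_step_argument} strips off an elementary summand using a \emph{splitting} of the bottom differential, whereas the definition of an exact chain complex only provides an admissible epimorphism, which in a general exact category (even one satisfying property~(P)) need not split. The right analogue is the reduction in the proof of Lemma~\ref{lem:functor_reflecting_exactness_and_admissible_epis}: strip off the bottom admissible epi together with its kernel. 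But then the inductive step requires showing that the induced maps on the kernels fit into another degreewise short exact sequence and that admissibility of the two outer $c_1$'s forces it for the third---this is precisely a $3\times 3$-type statement, so you have not really escaped the need for it, and, as you note, that is where the difficulty lives. Second, both in your deduction that chain homotopy equivalences are weak equivalences and in part~(ii), Lemma~\ref{lem:elementary_facts}~\ref{lem:elementary_facts:elementary} only gives that a contractible bounded complex $C$ satisfies $C\oplus E\cong E'$ with $E$, $E'$ elementary---a \emph{direct summand} of an elementary complex, not a direct sum of elementary complexes---and, as the example after Definition~\ref{def:property_(P)} illustrates, exactness does not automatically pass to direct summands. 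So ``hence exact'' is not immediate; one must apply Lemma~A to the split sequence $0\to C\to E'\to E\to 0$. This is easily patched with machinery you already have in hand, but as written it hides a second invocation of Lemma~A.
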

\begin{proof} 
  In the case where $\cale$ is an abelian category (so that weak equivalences are
  homology equivalences by the long exact homology sequence), the conclusion is
  well-known.

  In the general case, denote by $i\colon \cale \to \cale'$ the Gabriel-Quillen
  embedding~\cite[A.7.1]{Thomason-Trobaugh(1990)}, where $\cale'$ is the abelian category
  of contravariant left exact functors $\cale\to \Ab$ to the abelian category $\Ab$ of abelian groups. 
  (Note that surjections in $\cale'$
  are not the objectwise surjective transformations.)
  By~\cite[A.7.16]{Thomason-Trobaugh(1990)}, the image if $i$ is a full subcategory,
  and $i$ reflects exactness and admissible
  epimorphisms. Hence Lemma~\ref{lem:functor_reflecting_exactness_and_admissible_epis}
  implies that $\Ch(\cale)$ is a full Waldhausen subcategory of $\Ch(\cale')$ which is
  closed under taking mapping cylinders; in particular it is a Waldhausen category
  satisfying the three extra axioms.
  \\[2mm]~\ref{lem:waldhausen_structure_under_property_P:(2)}
  If the exact structure is the split exact one, then any additive contravariant
  functor $\cale\to \Ab$ is automatically left exact. This implies that $\cale'$ coincides
  with the abelian category
  of contravariant  functors $\cale\to \Ab$ where the abelian structure on $\cale'$ 
  is given objectwise by the one on $\Ab$,
  and the Gabriel-Quillen embedding $i$ 
  is just the additive Yoneda embedding. Therefore the image of any object of $\cale$ under 
 $i$  is projective. So for a chain
  map $f$ in $\cale$ its image $i(f)$ is exact (in other words, a homology equivalence) if and only
  if $i(f)$ is a chain homotopy equivalence in $\cale'$. But this is equivalent to $f$
  being a chain homotopy equivalence as the embedding $i$ is full.
\end{proof}

By~\cite[A.9.1]{Thomason-Trobaugh(1990)}, the category $\Idem(\cale)$ becomes an exact 
category if we call a sequence exact if it is a direct summand of an exact sequence of $\cale$; 
moreover the image of the inclusion $\eta\colon \cale\to\Idem(\cale)$ is an exact subcategory.

\begin{definition}
Let $\calp\cale\subset\Idem(\cale)$ be the full exact subcategory of all objects $A$
that are stably in $\cale$, i.e., for which $A \oplus A'$ is isomorphic to an object of
$\cale$, for some $A'\in\cale$. 
\end{definition}

This clearly define an endofunctor $\calp$ on the category of exact categories; 
moreover the full embedding $\eta\colon \cale\to\Idem(\cale)$ factors through a 
full embedding $I\colon \cale\to \calp\cale$ which reflects exactness as $\eta$ does.

\begin{lemma}\label{lem:property_P_completion}
  \begin{enumerate}
  \item \label{lem:property_P_completion:endo} 
  For any exact category $\cale$, the exact category $\calp\cale$ satisfies property (P).

   \item \label{lem:property_P_completion:embedding} 
   The embedding $I\colon \cale\to\calp\cale$ is an equivalence if $\cale$
    already satisfies property \change{(P).}
  \item \label{lem:property_P_completion:K-theory}
  The embedding $I$ induces a homotopy equivalence on $K$-theory.
  \end{enumerate}
\end{lemma}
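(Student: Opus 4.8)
The plan is to dispatch the three parts in turn, using only elementary manipulations with direct sums, the exact structure that $\calp\cale$ inherits as a full exact subcategory of $\Idem(\cale)$, and the classical comparison between the $K$-theory of an exact category and that of its idempotent completion.

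For~\ref{lem:property_P_completion:endo}, let $p\colon A\to B$ be a split surjection in $\calp\cale$ with section $s$, so that $q:=s\circ p\colon A\to A$ is idempotent. Splitting $q$ in the idempotent complete category $\Idem(\cale)$ produces an isomorphism $A\cong B\oplus K$, where $K:=\ker q$, under which $p$ becomes the projection onto $B$. I would first check that $K\in\calp\cale$: choosing $A',B'\in\cale$ with $A\oplus A'$ and $B\oplus B'$ isomorphic to objects of $\cale$, one has $K\oplus\bigl((B\oplus B')\oplus A'\bigr)\cong(A\oplus A')\oplus B'$, which is isomorphic to an object of $\cale$, while $(B\oplus B')\oplus A'$ lies in $\cale$. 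Next, choosing in addition $K'\in\cale$ with $K\oplus K'$ isomorphic to an object of $\cale$, the split sequence $K\oplus K'\rightarrowtail(K\oplus K')\oplus(B\oplus B')\twoheadrightarrow B\oplus B'$ is a conflation in $\cale$, and after permuting the summands in the middle term it becomes the direct sum of the split sequence $K\rightarrowtail A\twoheadrightarrow B$ and the split sequence $K'\rightarrowtail K'\oplus B'\twoheadrightarrow B'$. Hence $K\rightarrowtail A\twoheadrightarrow B$ is a direct summand of a conflation in $\cale$, so it is a conflation in $\Idem(\cale)$; since $\calp\cale$ is a full exact subcategory of $\Idem(\cale)$ and all three terms lie in $\calp\cale$, it is a conflation in $\calp\cale$. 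Thus $p$ is an admissible epimorphism, which is property (P).

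For~\ref{lem:property_P_completion:embedding}, since $I$ is a full embedding it suffices to prove that it is essentially surjective. Given $A\in\calp\cale$, pick $A'\in\cale$ and an isomorphism $A\oplus A'\cong C$ with $C\in\cale$. The resulting projection $r\colon C\to A'$ is a split surjection between objects of $\cale$, hence an admissible epimorphism by property (P) for $\cale$; its kernel $A_0$ therefore exists in $\cale$, and since $r$ splits we obtain $C\cong A_0\oplus A'$ compatibly with $r$. As the inclusion $A\hookrightarrow A\oplus A'\cong C$ is likewise a kernel of $r$ in $\Idem(\cale)$, uniqueness of kernels gives $A\cong A_0\in\cale$. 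Since moreover $I$ is exact and reflects exactness, it is an equivalence of exact categories. For~\ref{lem:property_P_completion:K-theory}, observe that $\cale\subseteq\calp\cale\subseteq\Idem(\cale)$ forces $\Idem(\calp\cale)\simeq\Idem(\cale)$. By the classical fact that the canonical map from the $K$-theory of an exact category to that of its idempotent completion is an isomorphism on $\pi_n$ for $n\ge 1$ and a monomorphism on $\pi_0$ (see~\cite[A.9.1]{Thomason-Trobaugh(1990)}), both $\bfK(\cale)\to\bfK(\Idem(\cale))$ and $\bfK(\calp\cale)\to\bfK(\Idem(\cale))$ have this property; since the first factors through $\bfK(I)$ followed by the second, $\bfK(I)$ is an isomorphism on $\pi_n$ for $n\ge 1$ and a monomorphism on $\pi_0$. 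For surjectivity on $\pi_0$ I argue directly: any $A\in\calp\cale$ satisfies $A\oplus A'\cong C$ with $A',C\in\cale$, so $[A]=[C]-[A']$ lies in the image of $K_0(\cale)\to K_0(\calp\cale)$. Hence $\bfK(I)$ is an isomorphism on all homotopy groups of connective spectra, and therefore a weak homotopy equivalence.

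The routine bookkeeping in~\ref{lem:property_P_completion:endo} and~\ref{lem:property_P_completion:embedding} should cause no real trouble; the step that needs the most care is~\ref{lem:property_P_completion:K-theory}, namely invoking the idempotent-completion statement in exactly the right form for a possibly non-split exact structure and checking that the factorization of $\bfK(\cale)\to\bfK(\Idem(\cale))$ through $\bfK(\calp\cale)$ genuinely reduces everything to the $\pi_0$-surjectivity computed above. One should also verify along the way that $\calp\cale$ is closed under all the operations used — in particular under passage to the kernel summand $K$ appearing in~\ref{lem:property_P_completion:endo} — which is the place where a reader is most likely to want the details written out.
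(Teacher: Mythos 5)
Your argument is correct. Parts~\ref{lem:property_P_completion:endo} and~\ref{lem:property_P_completion:embedding} follow essentially the same line as the paper: for~\ref{lem:property_P_completion:endo} the paper likewise identifies the kernel inside $\Idem(\cale)$ and exhibits it as stably an object of $\cale$ by adding on $A'$ and $B'$ (the paper phrases this by comparing kernels of $p$ and $p\oplus\id_{A'\oplus B'}$ rather than by rearranging summands, but the bookkeeping is equivalent; you also spell out why the resulting conflation is a direct summand of one in $\cale$, which the paper asserts without proof); for~\ref{lem:property_P_completion:embedding} the paper simply declares the statement clear, and your essential-surjectivity argument via the kernel of the split projection $C\to A'$ is exactly what is wanted. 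For~\ref{lem:property_P_completion:K-theory} your route is genuinely different from the paper's: the paper invokes Waldhausen's Cofinality Theorem~\cite[1.5.9]{Waldhausen(1985)}, noting that $\cale\subset\calp\cale$ is strictly cofinal, which gives the weak equivalence of connective $K$-theory spectra in one step. You instead factor $\bfK(\cale)\to\bfK(\Idem(\cale))$ through $\bfK(\calp\cale)$, apply the idempotent-completion comparison~\cite[A.9.1]{Thomason-Trobaugh(1990)} to both legs (using $\Idem(\calp\cale)\simeq\Idem(\cale)$), and then close the gap on $\pi_0$ by a direct Grothendieck-group calculation. Both proofs are valid; the Cofinality Theorem is the more economical citation, while your version trades it for a small explicit $\pi_0$ computation on top of a result the paper already uses elsewhere (Theorem~\ref{the:Passage_to_the_idempotent_completion}).
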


\begin{proof}
To show~\ref{lem:property_P_completion:endo}, suppose that 
$p\colon A\to B$ is a split surjection in $\calp\cale$, with split $s$. Then $K=(A, 1-s\circ p)$ is
a kernel of $p$ in $\Idem(\cale)$ and $K\rightarrowtail A\twoheadrightarrow B$ is
exact. We need to show that $K\in \calp\cale$.

To do that, let $A', B'\in\cale$ such that $A\oplus A'$ and $B\oplus B'$ are isomorphic to
objects in $\cale$. Let
\[
 p':=p\oplus\id\colon A\oplus A'\oplus B' \to B\oplus A'\oplus B'.
 \] 
Obviously $p$ and $p'$ obviously have isomorphic kernels. As $p'$ is isomorphic to a split surjection in
$\cale$, its kernel lies in $\calp\cale$.  

Part~\ref{lem:property_P_completion:embedding} is
clear. Part~\ref{lem:property_P_completion:K-theory} follows from Waldhausen's Cofinality
Theorem, see~\cite[1.5.9]{Waldhausen(1985)}, as $\cale\subset\calp\cale$ is strictly
cofinal.
\end{proof}

Now we proceed to define the canonical Waldhausen structure on an arbitrary exact category
$\cale$.

\begin{definition}[Canonical Waldhausen
  structure]\label{def:canonical_Waldhausen_structure}
  The \emph{canonical Waldhausen structure} on $\Ch(\cale)$ is defined as follows: A
  morphism $f \colon C \to D$ in $\Ch(\cale)$ is a canonical cofibration if and only if it
  is an admissible monomorphism in each degree. A morphism $f$ is a canonical weak
  equivalence if and only if $I(f)$ has an exact mapping cone in $\Ch(\calp\cale)$.
\end{definition}

\begin{lemma}\label{lem:canonical_Waldhausen_structure_is_Waldhausen_structure}
  For any exact category $\cale$, the category $\Ch(\cale)$, when endowed with its
  canonical Waldhausen structure, we have:

  \begin{enumerate}
  
 \item \label{lem:canonical_Waldhausen_structure_is_Waldhausen_structure:axioms}
  With the above choice of cofibrations and weak equivalences, $\Ch(\cale)$ is a
    Waldhausen category that satisfies the saturation, extension, and cylinder axioms;
  
 \item \label{lem:canonical_Waldhausen_structure_is_Waldhausen_structure:split}
  If the exact structure on $\cale$ is the split-exact structure of the underlying
    additive category, then a chain map is a weak equivalence if and only if it is a chain
    homotopy equivalence;
 
   \item \label{lem:canonical_Waldhausen_structure_is_Waldhausen_structure:K)}
  The inclusion
  \[
  \Ch(\cale)\to \Ch(\calp\cale)
   \]
  induces a homotopy equivalence on $K$-theory.
\end{enumerate}
\end{lemma}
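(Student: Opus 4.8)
The plan is to reduce all three assertions to the case of an exact category satisfying property~(P), applied to the completion $\calp\cale$, via the full, exactness-reflecting embedding $I\colon\cale\to\calp\cale$. First I would record that the canonical Waldhausen structure on $\Ch(\calp\cale)$ coincides with the property-(P) structure of Definition~\ref{def:waldhausen_structure_on_category_with_P}: by Lemma~\ref{lem:property_P_completion}~\ref{lem:property_P_completion:endo} the category $\calp\cale$ satisfies property~(P), so by Lemma~\ref{lem:property_P_completion}~\ref{lem:property_P_completion:embedding} the embedding $\calp\cale\to\calp(\calp\cale)$ is an equivalence, and hence the defining condition ``$I(f)$ has an exact mapping cone in $\Ch(\calp(\calp\cale))$'' simply reads ``$f$ has an exact mapping cone in $\Ch(\calp\cale)$''. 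In particular Lemma~\ref{lem:waldhausen_structure_under_property_P} applies to $\Ch(\calp\cale)$. Since $I$ reflects exactness, and therefore also admissible monomorphisms and admissible epimorphisms, the induced full embedding $\Ch(\cale)\hookrightarrow\Ch(\calp\cale)$ has the property that a morphism of $\Ch(\cale)$ is a canonical cofibration, resp.\ a canonical weak equivalence, if and only if its image in $\Ch(\calp\cale)$ is one.

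For~\ref{lem:canonical_Waldhausen_structure_is_Waldhausen_structure:axioms} and~\ref{lem:canonical_Waldhausen_structure_is_Waldhausen_structure:split} I would argue as follows. The subcategory $\Ch(\cale)$ is closed in $\Ch(\calp\cale)$ under the zero object, cokernels of cofibrations, pushouts along cofibrations, and the mapping cylinder functor, because all of these are formed degreewise from the exact structure and from finite direct sums, which exist in~$\cale$. A full Waldhausen subcategory closed under these operations and reflecting weak equivalences inherits from the ambient category the saturation, extension, and cylinder axioms, since each of these is a statement about (maps of) cofiber sequences, mapping cylinders, and weak equivalences, all of which are detected in $\Ch(\calp\cale)$; this proves~\ref{lem:canonical_Waldhausen_structure_is_Waldhausen_structure:axioms} using Lemma~\ref{lem:waldhausen_structure_under_property_P}~\ref{lem:waldhausen_structure_under_property_P:(1)}. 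For~\ref{lem:canonical_Waldhausen_structure_is_Waldhausen_structure:split}, if the exact structure on $\cale$ is split-exact, then so is that on $\Idem(\cale)$ and hence on $\calp\cale$, as a direct summand of a split exact sequence is split exact; by Lemma~\ref{lem:waldhausen_structure_under_property_P}~\ref{lem:waldhausen_structure_under_property_P:(2)} the weak equivalences in $\Ch(\calp\cale)$ are exactly the chain homotopy equivalences. For a morphism $f$ of $\Ch(\cale)$ this gives, via the reflection of weak equivalences and Lemma~\ref{lem:elementary_facts}~\ref{lem:elementary_facts:chain_homotopy_equivalence_cone}, that $f$ is a canonical weak equivalence if and only if $\cone(f)$ is contractible in $\Ch(\calp\cale)$; but $\cone(f)$ has all of its objects in $\cale$ and $\cale\to\calp\cale$ is full, so any chain contraction over $\calp\cale$ already lives over $\cale$, whence $f$ is a chain homotopy equivalence in $\Ch(\cale)$. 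The converse implication is trivial.

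For~\ref{lem:canonical_Waldhausen_structure_is_Waldhausen_structure:K)} I would invoke Waldhausen's Cofinality Theorem~\cite[1.5.9]{Waldhausen(1985)}. The exact functor $\Ch(\cale)\hookrightarrow\Ch(\calp\cale)$ preserves and reflects cofibrations and weak equivalences by the first paragraph, and it is strictly cofinal: given a bounded chain complex $D$ over $\calp\cale$, choose for each $n$ an object $D_n'\in\cale$ with $D_n\oplus D_n'$ isomorphic to an object of $\cale$; then the bounded, contractible complex $\bigoplus_n\el(D_n',n)$ lies in $\Ch(\cale)$ and $D\oplus\bigoplus_n\el(D_n',n)$ is isomorphic to a chain complex over $\cale$, since its objects are isomorphic to objects of $\cale$ and, by fullness of $I$, its differentials are morphisms of $\cale$. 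Cofinality then yields that $\bfK(\Ch(\cale))\to\bfK(\Ch(\calp\cale))$ is a homotopy equivalence, strictness of the cofinality guaranteeing the equivalence also in degree zero.

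\textbf{Expected obstacle.}
The delicate part is~\ref{lem:canonical_Waldhausen_structure_is_Waldhausen_structure:K)}: I will need to check carefully that $\Ch(\cale)$ satisfies all the hypotheses of the Cofinality Theorem as a Waldhausen subcategory of $\Ch(\calp\cale)$ — notably that it is closed under extensions there and that the induced map is surjective on $\pi_0$ — and to keep track of the fact that the relevant $K$-theory throughout is that of the canonical Waldhausen structures, not of $\Ch(\cale)$ and $\Ch(\calp\cale)$ viewed as exact categories; in particular one cannot invoke the Gillet--Waldhausen Theorem~\ref{the:Gillet_Waldhausen} here, since it is proven afterwards using this lemma. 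By contrast, once the canonical structure on $\Ch(\calp\cale)$ has been identified with the property-(P) structure, parts~\ref{lem:canonical_Waldhausen_structure_is_Waldhausen_structure:axioms} and~\ref{lem:canonical_Waldhausen_structure_is_Waldhausen_structure:split} are essentially formal consequences of $I$ being a full, exactness-reflecting embedding.
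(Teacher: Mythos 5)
Your proposal takes essentially the same route as the paper: reduce all three assertions to $\calp\cale$, which satisfies property~(P), and exploit that $\Ch(\cale)$ is a full Waldhausen subcategory, closed under mapping cylinders and strictly cofinal, of $\Ch(\calp\cale)$ with the property-(P) structure. You merely spell out a few steps the paper leaves implicit — the identification of the canonical structure on $\Ch(\calp\cale)$ with the property-(P) structure (using $\calp(\calp\cale)\simeq\calp\cale$), the verification of strict cofinality via the elementary complexes $\bigoplus_n\el(D_n',n)$ with $D_n'\in\cale$, and the cone-contraction argument for part~(ii) via fullness of $I$ — all of which are correct and reasonable elaborations rather than a different approach.
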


\begin{proof}\ref{lem:canonical_Waldhausen_structure_is_Waldhausen_structure:axioms}
  This follows from 
  Lemma~\ref{lem:waldhausen_structure_under_property_P}~\ref{lem:waldhausen_structure_under_property_P:(1)} 
  applied to $\calp\cale$
  and Lemma~\ref{lem:property_P_completion}~\ref{lem:property_P_completion:endo}  
  since $\Ch(\cale)$ is a full Waldhausen subcategory, closed under
  taking mapping cylinders, of $\Ch(\calp\cale)$ with the Waldhausen structure defined in
  Definition~\ref{def:waldhausen_structure_on_category_with_P}. 
  \\[2mm]~\ref{lem:canonical_Waldhausen_structure_is_Waldhausen_structure:split}
  If the exact structure on
  $\cale$ is the split exact one, then the same is true for $\Idem(\cale)$ and hence for
  $\calp\cale$. Hence assertion~\ref{lem:waldhausen_structure_under_property_P:(2)} 
   follows from  
  Lemma~\ref{lem:waldhausen_structure_under_property_P}~\ref{lem:waldhausen_structure_under_property_P:(2)} 
  applied to $\calp\cale$.
  \\[2mm]~\ref{lem:canonical_Waldhausen_structure_is_Waldhausen_structure:K)}
  This follows from Waldhausen's Cofinality Theorem, 
  see~\cite[1.5.9]{Waldhausen(1985)}, as
  $\Ch(\cale)$ is strictly cofinal in $\Ch(\calp\cale)$.
\end{proof}

\begin{proof}[Proof of the Gillet-Waldhausen Theorem~\ref{the:Gillet_Waldhausen}]
  If $\cale$ satisfies property (P), then the Gillet-Waldhausen Theorem is proved
  in~\cite[1.11]{Thomason-Trobaugh(1990)}. In the general case there is a commutative
  diagram
  \[
   \xymatrix{\bfK(\cale) \ar[rr] \ar[d]^\simeq && \bfK(\Ch(\cale)) \ar[d]^\simeq\\
    \bfK(\calp\cale) \ar[rr]^\simeq && \bfK(\Ch(\calp\cale)) }
   \]
  where the lower
  horizontal map is a homotopy equivalence since $\calp\cale$ satisfies property (P)
  by Lemma~\ref{lem:property_P_completion}~\ref{lem:property_P_completion:endo},
  and the vertical maps are homotopy equivalences by
  Lemma~\ref{lem:property_P_completion}~\ref{lem:property_P_completion:K-theory} and
  Lemma~\ref{lem:canonical_Waldhausen_structure_is_Waldhausen_structure}~%
\ref{lem:canonical_Waldhausen_structure_is_Waldhausen_structure:K)}. 
\end{proof}

To identify the canonical weak equivalences on the exact categories we need to consider,
we use the following generalization of
Lemma~\ref{lem:functor_reflecting_exactness_and_admissible_epis}:

\begin{lemma}\label{lem:F_reflects_then_CH(F)_reflects}
  Let $F\colon \cale\to \cale'$ be an exact functor between exact categories which
  reflects exactness and admissible epimorphisms. Then
  $\Ch(f)\colon\Ch(\cale)\to\Ch(\cale')$ reflects canonical weak equivalences.
\end{lemma}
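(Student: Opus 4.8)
The plan is to reduce to Lemma~\ref{lem:functor_reflecting_exactness_and_admissible_epis} by passing to property-(P) completions. Since $F$ is exact it induces an exact functor $\Idem(F)\colon\Idem(\cale)\to\Idem(\cale')$; if an object of $\Idem(\cale)$ is stably in $\cale$ then its image is stably in $\cale'$, so $\Idem(F)$ restricts to an exact functor $\calp F\colon\calp\cale\to\calp\cale'$ with $\calp F\circ I=I'\circ F$ (all four maps being restrictions of $\Idem(F)$, resp.\ $F$, along the canonical embeddings). The crux of the proof is the claim
\[
(\ast)\qquad \calp F\text{ reflects exactness and admissible epimorphisms.}
\]

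Granting $(\ast)$, the proof concludes as follows. Both $\calp\cale$ and $\calp\cale'$ satisfy property~(P) by Lemma~\ref{lem:property_P_completion}~\ref{lem:property_P_completion:endo}, so Lemma~\ref{lem:functor_reflecting_exactness_and_admissible_epis} applied to $\calp F$ shows that $\Ch(\calp F)$ reflects the weak equivalences of Definition~\ref{def:waldhausen_structure_on_category_with_P}, i.e.\ reflects the property of having an exact mapping cone. Now let $f\colon C\to D$ be a chain map in $\Ch(\cale)$ for which $\Ch(F)(f)$ is a canonical weak equivalence; by Definition~\ref{def:canonical_Waldhausen_structure} this means that $I'(F(f))$ has an exact mapping cone in $\Ch(\calp\cale')$. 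Mapping cones are built from finite direct sums and the differentials and are therefore preserved by every additive functor, so $\Ch(\calp F)(I(f))=I'(F(f))$ has an exact mapping cone; by the previous sentence $I(f)$ has an exact mapping cone in $\Ch(\calp\cale)$, which is exactly the condition for $f$ to be a canonical weak equivalence. This proves the lemma, modulo $(\ast)$.

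The reflection of exactness in $(\ast)$ is handled by complementation. Given a three-term sequence $E_\bullet=(E_0\to E_1\to E_2)$ in $\calp\cale$, choose $D_0,D_2\in\cale$ large enough that $E_0\oplus D_0$, $E_1\oplus D_0\oplus D_2$ and $E_2\oplus D_2$ are each isomorphic to an object of $\cale$ (possible since each $E_i$ is stably in $\cale$), and add the split exact sequence $D_0\to D_0\oplus D_2\to D_2$; the sum is isomorphic, as a sequence, to one $E_\bullet^0$ over $\cale$. Since adding a split exact sequence neither creates nor destroys exactness, and since $I,I'$ reflect exactness while $F$ does by hypothesis, one obtains: $E_\bullet$ exact in $\calp\cale$ $\iff$ $E_\bullet^0$ exact in $\calp\cale$ $\iff$ $E_\bullet^0$ exact in $\cale$ $\iff$ $F(E_\bullet^0)$ exact in $\cale'$ $\iff$ $I'(F(E_\bullet^0))$ exact in $\calp\cale'$ $\iff$ $\calp F(E_\bullet)$ exact in $\calp\cale'$ (the last step using $\calp F\circ I=I'\circ F$ and that $\calp F$ sends split exact sequences to split exact sequences). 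Reflection of exactness for arbitrary bounded chain complexes then follows, once reflection of admissible epimorphisms is available, by the same induction on the length of the complex as in the proof of Lemma~\ref{lem:functor_reflecting_exactness_and_admissible_epis}.

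I expect the reflection of admissible epimorphisms by $\calp F$ to be the main obstacle. The naive approach — replace $p\colon A\to B$ by $p\oplus\id_W$ so that source and target land in $\cale$, observe that this presents $p$ as a retract, in the arrow category of $\calp\cale$, of (the image under $I$ of) a morphism $\bar p$ of $\cale$ whose $\calp F$-image is still an admissible epimorphism, and then invoke the hypothesis on $F$ — \emph{fails}, because the embeddings $I,I'$ do not reflect admissible epimorphisms: a split surjection of finitely generated free modules with stably-free-but-not-free kernel becomes an admissible epimorphism only after idempotent completion. What makes the argument go through is the extra structure of the completions: $\calp\cale$ and $\calp\cale'$ are weakly idempotent complete (a split surjection in $\calp\cale$ has a kernel, again stably in $\cale$), so that admissible epimorphisms in them are closed under retracts in the arrow category; and to descend an admissible epimorphism along $F$ one works inside $\Idem(\cale),\Idem(\cale')$, where the relevant idempotents split, using Thomason's description of the exact structure on an idempotent completion~\cite[A.9.1]{Thomason-Trobaugh(1990)} to realise the situation, after adjusting by a direct summand, as coming from an admissible epimorphism of $\cale$, to which the hypothesis on $F$ applies. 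Carrying out this descent rigorously is the one point that genuinely requires care; everything else is formal bookkeeping.
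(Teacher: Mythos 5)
Your proposal follows exactly the same route as the paper's proof: pass to the property-(P) completions, show that $\calp F$ reflects exactness and admissible epimorphisms (your claim $(\ast)$), and invoke Lemma~\ref{lem:functor_reflecting_exactness_and_admissible_epis}. Your reflection-of-exactness argument (pad by a split exact sequence so that the sequence lands in $\cale$, apply the hypothesis on $F$, using that $I$ and $I'$ reflect exactness) coincides with the paper's.

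The gap, which you yourself flag, is the second half of $(\ast)$: you do not prove that $\calp F$ reflects admissible epimorphisms. Your diagnosis of why the naive transcription of the exactness argument fails is correct: after padding, one knows only that $I'(F(p\oplus\id_W))$ is an admissible epimorphism in $\calp\cale'$, and $I'$ does not reflect admissible epimorphisms since the kernel lies a priori only in $\calp\cale'$, not in $\cale'$ (your stably-free-kernel example is precisely the point). Your proposed remedy---$\calp\cale$ has property~(P), hence is weakly idempotent complete, so admissible epimorphisms are closed under retracts in the arrow category by the obscure axiom, and one should then descend through $\Idem$---is a reasonable direction and the retract-closure observation is correct; but you leave the descent unexecuted, and the same kernel obstruction appears to recur there: to invoke the hypothesis on $F$ one still needs an admissible epimorphism of $\cale'$, i.e.\ with kernel in $\cale'$, and it is not clear how to force that by adding a summand of the form $F(X)$ with $X\in\cale$ unless the missing complement happens to lie in the essential image of $F$. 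To be fair, the paper itself dismisses this step with the single phrase ``a similar argument'', so you have correctly identified a subtlety that the paper leaves implicit; nevertheless, as submitted the admissible-epimorphism half of $(\ast)$ is a genuine gap in your proof.
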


\begin{proof} Obviously $\calp F$ is exact.
  Let $E_0\to E_1\to E_2$ be a sequence in $\calp\cale$ which becomes exact after applying
  $\calp F$. For suitable $Y,Z\in\cale$, the direct sum
  \begin{equation}\label{eq:direct_sum_of_exact_sequences}
    (E_0\to E_1\to E_2) \oplus (Y\rightarrowtail Y\oplus Z\twoheadrightarrow Z)  
  \end{equation}
  is a sequence in $\cale$ which becomes exact after applying $F$. As $F$ reflects
  exactness,~\eqref{eq:direct_sum_of_exact_sequences} is an exact sequence in
  $\cale$. This implies that the first summand on~\eqref{eq:direct_sum_of_exact_sequences}
  is an exact sequence in $\calp\cale$.

  This argument shows that $\calp F$ reflects exactness. A similar argument shows that
  $\calp F$ reflects admissible epimorphisms. Now apply
  Lemma~\ref{lem:functor_reflecting_exactness_and_admissible_epis}.
\end{proof}

\begin{example}[Twisted Nil category] \label{exa:twisted_Nil_category}
  Given an additive category $\cala$ with automorphism $\Phi$, consider the twisted Nil
  category $\Nil(\cala,\Phi)$ from
  Section~\ref{sec:Strategy_of_proof_for_the_connective_version_NIL}. It comes with an
  additive functor $F\colon \Nil(\cala,\Phi) \to\cala$, sending $(A,f)$ to its underlying
  object $A$. This functor reflects exactness by definition, and it is not hard to see
  that is reflects admissible epimorphisms. Hence a chain map $\varphi$ in
  $\Nil(\cala,\Phi)$ is a canonical weak equivalence if and only if $F(\varphi)$ is
  one. By conclusion (ii) of
  Lemma~\ref{lem:canonical_Waldhausen_structure_is_Waldhausen_structure} the latter
  statement is equivalent to $F(\varphi)$ being a chain homotopy equivalence.
\end{example}

\begin{example}[Projective line]\label{exa:projective_line}
  A similar statement holds for the twisted projective line category $\calx$ from
  Section~\ref{subsec:The_twisted_projective_line}: By definition, the functor
  \[
   F=(k^+, k^-)\colon \calx\to\cala_\Phi[t]\times \cala_\Phi[t\inv]
  \] 
  reflects  exactness. It is not hard either to see that it reflects admissible epimorphisms.


  Hence a chain map $f$ in $\calx$ is a canonical weak equivalence if and only if both
  $k^+(f)$ and $k^-(f)$ are chain homotopy equivalences.
\end{example}

Unless specified otherwise, all the chain categories in the sequel will carry the canonical Waldhausen structure
and we often use Lemma~\ref{lem:canonical_Waldhausen_structure_is_Waldhausen_structure}~%
\ref{lem:canonical_Waldhausen_structure_is_Waldhausen_structure:split}
without mentioning this again.


\subsection{The Fibration Theorem}
\label{sec:The_Fibration_Theorem}

In the sequel we use the definitions and notation of Waldhausen~\cite{Waldhausen(1985)}.
Suppose that $\calc$ is a category with cofibrations
and that $\calc$  is equipped with two categories of weak equivalences, one finer than
the other, $v\calc \subseteq w \calc$. Thus $\calc$ becomes a  Waldhausen category in two ways.
Let $\calc^w$ denote the subcategory with cofibrations of $\calc$ given
by the objects $C$  in $\calc$ having the property that the map  $A \to \pt$ belongs to  $w\calc$. 
Then  $\calc^w$ inherits two Waldhausen structures
if we put $v\calc^w = \calc^w \cap v\calc $  and $w\calc^w = \calc^w \cap w\calc$.

\begin{theorem}[Fibration Theorem] \label{the:Fibration_Theorem}
Suppose that $\calc$ has a cylinder functor, and the 
category of weak equivalences $w\calc$ satisfies the cylinder axiom, saturation axiom,
and extension axiom. Then:

\begin{enumerate}

\item \label{the:Fibration_Theorem:spaces}
The square of path connected spaces
\xycomsquare{|vS.C^w|}{}{|wS.C^w| \simeq \pt}
{}{}
{|vS.C|}{}{|wS.C|}
is homotopy cartesian, and the upper right term is contractible;

\item \label{the:Fibration_Theorem:spectra}
We get a homotopy fibration of spectra
\[
\bfK(C^w,v) \to \bfK(\calc,v) \to \bfK(\calc,w).
\]
\end{enumerate}
\end{theorem}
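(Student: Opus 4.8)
The plan is to deduce the statement from Waldhausen's Fibration Theorem. Assertion~\ref{the:Fibration_Theorem:spaces} is exactly its content: the hypotheses imposed here — a cylinder functor on $\calc$ satisfying the cylinder axiom for the coarser weak equivalences $w\calc$, which in addition satisfy the saturation and extension axioms, together with the inclusion $v\calc\subseteq w\calc$ — are precisely those of \cite[Theorem~1.6.4]{Waldhausen(1985)}, and the conclusion there is that the displayed square becomes homotopy cartesian after realization and that $|wS_\bullet\calc^w|$ is contractible. So assertion~\ref{the:Fibration_Theorem:spaces} requires only a reference. For orientation, Waldhausen's proof has two ingredients: the contractibility of $|wS_\bullet\calc^w|$, obtained from the cylinder functor (this is where the cylinder and extension axioms are used), and the identification of $|vS_\bullet\calc^w|$ with the homotopy fibre of $|vS_\bullet\calc|\to|wS_\bullet\calc|$, which rests on Waldhausen's Additivity Theorem \cite[\S1.3--1.4]{Waldhausen(1985)}.

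For assertion~\ref{the:Fibration_Theorem:spectra} the plan is to deloop assertion~\ref{the:Fibration_Theorem:spaces}. Recall that $\bfK(\calc,v)$ is the spectrum whose $n$-th space is $|vS_\bullet^{(n)}\calc|$, where $S_\bullet^{(n)}$ denotes the $n$-fold iterated $S_\bullet$-construction, that it is an $\Omega$-spectrum above level one by the Additivity Theorem, and similarly for $w$ and for $\calc^w$. Two elementary facts feed into the argument. First, each iterated construction $S_{q_1}\cdots S_{q_n}\calc$ is again a Waldhausen category carrying a cylinder functor that satisfies the cylinder, saturation and extension axioms, obtained by applying the cylinder functor of $\calc$ in each simplicial degree, everything in sight being inherited degreewise. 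Second, the subcategory of $w$-acyclic objects is computed degreewise, i.e.
\[
\bigl(S_\bullet^{(n)}\calc\bigr)^{w} \;=\; S_\bullet^{(n)}(\calc^w),
\]
since a filtered object $A_0\rightarrowtail\cdots\rightarrowtail A_q$ of $S_q\calc$ admits a $w$-equivalence to $0$ precisely when all its subquotients lie in $\calc^w$, which is exactly the condition to lie in $S_q(\calc^w)$. Applying assertion~\ref{the:Fibration_Theorem:spaces} to the categories $S_{q_1}\cdots S_{q_n}\calc$ and realizing in the simplicial directions therefore produces, at each positive spectrum level, a homotopy fibration of pointed connected spaces
\[
\bigl|vS_\bullet^{(n)}\calc^w\bigr| \longrightarrow \bigl|vS_\bullet^{(n)}\calc\bigr| \longrightarrow \bigl|wS_\bullet^{(n)}\calc\bigr| \qquad (n\ge 1),
\]
compatibly with the structure maps of the three spectra. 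Since the homotopy fibre spectrum of $\bfK(\calc,v)\to\bfK(\calc,w)$ is, in positive degrees, modelled levelwise (the spectra being $\Omega$-spectra there), and since a map of spectra that is a weak equivalence on all spaces of index $\ge 1$ is a stable equivalence, this identifies $\bfK(\calc^w,v)$ with the homotopy fibre of $\bfK(\calc,v)\to\bfK(\calc,w)$, which is assertion~\ref{the:Fibration_Theorem:spectra}.

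Thus the substantive content lies entirely in Waldhausen's Fibration Theorem, which is quoted; the remaining points — the degreewise inheritance of the cylinder functor and the three axioms by the iterated $S_\bullet$-construction, the identification $\bigl(S_\bullet^{(n)}\calc\bigr)^{w}=S_\bullet^{(n)}(\calc^w)$, and the fact that a degreewise homotopy fibration of connected (multi)simplicial spaces realizes to a homotopy fibration of spaces — are routine. I therefore expect no essential obstacle; the theorem is designed as an application of Waldhausen's machinery, and the only task is to check that it applies in the present bookkeeping.
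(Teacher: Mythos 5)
Your treatment of part~\ref{the:Fibration_Theorem:spaces} is identical to the paper's: both are citations to Waldhausen's Theorem~1.6.4. For part~\ref{the:Fibration_Theorem:spectra}, however, you take a genuinely different route. The paper observes that the adjoint structure maps of the $K$-theory spectrum
\[
|w\calc| \to \Omega |wS.\calc| \to \Omega\Omega |wS.S.\calc| \to \cdots
\]
are all weak equivalences except possibly the first, so the three spectra $\bfK(\calc^w,v)$, $\bfK(\calc,v)$, $\bfK(\calc,w)$ are connective $\Omega$-spectra above level $0$; since $\Omega$ commutes with homotopy pullbacks and fibrations, the spectrum-level fibration sequence is already determined by its behaviour at level one, which is exactly the content of part~\ref{the:Fibration_Theorem:spaces}. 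Your route instead applies part~\ref{the:Fibration_Theorem:spaces} to every iterated $S_\bullet$-construction $S_{q_1}\cdots S_{q_{n-1}}\calc$ and then realizes, producing a levelwise fibration sequence at every spectrum level $n\ge 1$. Your two preparatory observations (that the iterated $S_\bullet$-construction inherits the cylinder functor and the three axioms degreewise, and that $(S_\bullet^{(n)}\calc)^w = S_\bullet^{(n)}(\calc^w)$) are correct. Where your account is too optimistic is the final step: the claim that ``a degreewise homotopy fibration of connected (multi)simplicial spaces realizes to a homotopy fibration'' is not a routine fact. It is a Bousfield--Friedlander type realization lemma, and one must check the $\pi_*$-Kan condition. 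It does hold here — all the simplicial spaces in sight are pointed and levelwise connected, so the relevant vertical homotopy-group simplicial objects are simplicial groups, hence fibrant — but this needs to be said rather than dismissed. The paper's proof is shorter precisely because, by reducing to a single spectrum level, it never has to realize a degreewise fibration at all.
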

\begin{proof}~\ref{the:Fibration_Theorem:spaces}
This is proved in~\cite[Theorem~1.6.4]{Waldhausen(1985)}.
\\[1mm]~\ref{the:Fibration_Theorem:spectra}
The functor loop space $\Omega$ commutes with homotopy pullbacks and homotopy fibrations. 
The $K$-theory spectrum $\bfK(\calc)$ is given by a sequence of maps
\[
|w\calc| \to \Omega |wS.\calc| \to \Omega \Omega  |wS.S.\calc| \to \Omega \Omega  \Omega |wS.S.S.\calc|\to \cdots
\]
where all structure maps are weak equivalences possibly except the first one,
see~\cite[page~330]{Waldhausen(1985)}. Hence 
assertion~\ref{the:Fibration_Theorem:spectra} follows from assertion~\ref{the:Fibration_Theorem:spaces}
\end{proof}


\subsection{The Approximation Theorem}
\label{sec:The_Approximation_Theorem}

 The following result is taken from~\cite[Theorem~1.6.7]{Waldhausen(1985)}.

 \begin{theorem}[Approximation Theorem] \label{the_Approximation_Theorem} Let $\calc_0$
   and $\calc_1$ be Waldhausen categories. Suppose that the weak equivalences in $\calc_0$ and
   $\calc_1$ satisfy the saturation axiom. Suppose further that $\calc_0$ has a cylinder
   functor and the weak equivalences in $\calc_0$ satisfy the cylinder axiom. Let $F
   \colon \calc_0 \to \calc_1$ be an exact functor. Suppose F has the approximation
   property, i.e., satisfies the following two conditions:

   \begin{enumerate}

   \item \label{the_Approximation_Theorem:A1} An arrow in $\calc_0$ is a weak equivalence
     in $\calc_0$ if and only if its image in $\calc_1$ is a weak equivalence in
     $\calc_1$;

   \item \label{the_Approximation_Theorem:A2} Given any object $C_0$ in $\calc_0$ and any
     map $f\colon F(C_0) \to C_1$ in $\calc_1$, there exist a cofibration $i \colon
     C_0 \to C_0'$ in $\calc_0$ and a weak equivalence $g \colon F(C_0') \to C_1$ in
     $\calc_1$ satisfying $f = g \circ F(i)$.

   \end{enumerate}
   
   Then the induced maps of spaces $|w\calc_0| \xrightarrow{\simeq} |w\calc_1|$ and
   $|wS.\calc_0| \xrightarrow{\simeq} |wS.\calc_1|$ and the map of spectra $\bfK(\calc_0)
   \xrightarrow{\simeq} \bfK(\calc_1)$ are homotopy equivalences.
 \end{theorem}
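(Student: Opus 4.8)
The plan is to reduce the spectrum-level statement to the assertion that $|wF|\colon|w\calc_0|\to|w\calc_1|$ is a weak homotopy equivalence whenever $F$ satisfies the hypotheses, and to prove the latter by an application of Quillen's Theorem~A to a comma category assembled from the approximation property.

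First I would set up the reduction. By the description of the $K$-theory spectrum recalled in the proof of the Fibration Theorem~\ref{the:Fibration_Theorem}, the spectrum $\bfK(\calc)$ of a Waldhausen category $\calc$ is built from the spaces obtained by iterating the $S_\bullet$-construction, with structure maps that are already weak equivalences in positive simplicial degree. Since the geometric realization of a map of simplicial spaces which is a degreewise weak equivalence is itself a weak equivalence, it is enough to show that $|wS_n\calc_0|\to|wS_n\calc_1|$ is a weak equivalence for every $n$ and then to iterate the $S_\bullet$-construction. Here one first checks that $S_nF\colon S_n\calc_0\to S_n\calc_1$ again satisfies all the hypotheses: a cylinder functor on $S_n\calc_0$ is obtained from the one on $\calc_0$ by applying it to the terms of a filtered object and passing compatibly to the subquotients; the saturation and cylinder axioms pass to $S_n$; and conditions~\ref{the_Approximation_Theorem:A1} and~\ref{the_Approximation_Theorem:A2} for $S_nF$ follow from those for $F$ by an induction on $n$, gluing the cofibrations and weak equivalences produced by condition~\ref{the_Approximation_Theorem:A2} for the successive subquotients along the part already constructed, using the extension and saturation axioms \cite{Waldhausen(1985)}. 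Thus everything reduces to proving that $|wF|\colon|w\calc_0|\to|w\calc_1|$ is a weak homotopy equivalence.

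For this I would apply Quillen's Theorem~A to the functor $wF\colon w\calc_0\to w\calc_1$ between the categories of weak equivalences. Fix an object $C_1$ of $\calc_1$ and let $wF/C_1$ be the comma category whose objects are pairs $(C_0,a)$ consisting of an object $C_0$ of $\calc_0$ and a weak equivalence $a\colon F(C_0)\xrightarrow{\simeq}C_1$ in $\calc_1$, with morphisms $(C_0,a)\to(C_0',a')$ the weak equivalences $u\colon C_0\to C_0'$ in $\calc_0$ satisfying $a'\circ F(u)=a$; it suffices to show that the nerve of $wF/C_1$ is weakly contractible for every $C_1$. It is nonempty: condition~\ref{the_Approximation_Theorem:A2} applied to the unique map $0=F(0)\to C_1$ produces a cofibration $0\rightarrowtail P$ and a weak equivalence $F(P)\xrightarrow{\simeq}C_1$, so $(P,F(P)\xrightarrow{\simeq}C_1)$ is an object of $wF/C_1$. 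Any two objects $(C_0,a)$ and $(C_0',a')$ have a common target: form the categorical coproduct $C_0\vee C_0'$ in $\calc_0$ (it exists and, $F$ being exact, $F(C_0\vee C_0')\cong F(C_0)\vee F(C_0')$), apply condition~\ref{the_Approximation_Theorem:A2} to the map $F(C_0)\vee F(C_0')\to C_1$ induced by $a$ and $a'$ to obtain a cofibration $C_0\vee C_0'\rightarrowtail C_0''$ and a weak equivalence $b\colon F(C_0'')\xrightarrow{\simeq}C_1$; the two maps $C_0\to C_0''$ and $C_0'\to C_0''$ become weak equivalences after applying $F$ by the saturation axiom, hence are weak equivalences in $\calc_0$ by condition~\ref{the_Approximation_Theorem:A1}, and they commute with the structure maps, so they define morphisms of $wF/C_1$ into $(C_0'',b)$. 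Upgrading this to genuine contractibility of $|wF/C_1|$ is the technical heart: using the functorial mapping cylinder supplied by the cylinder functor on $\calc_0$ one makes the factorizations of condition~\ref{the_Approximation_Theorem:A2} coherent enough to produce a functor $wF/C_1\times wF/C_1\to wF/C_1$ together with natural transformations from the two projections into it; such data forces the identity functor of $wF/C_1$ to be naturally homotopic to a constant functor, whence $|wF/C_1|$ is contractible. Theorem~A then yields that $|wF|$ is a weak homotopy equivalence, and together with the reduction above this gives all three conclusions of the theorem.

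I expect the genuine work to be concentrated in two steps, and both are exactly where the standing hypotheses on $\calc_0$ and $\calc_1$ enter. The first is the inheritance of the hypotheses by $S_nF$ --- transporting the cylinder functor to $S_n\calc_0$ and checking that conditions~\ref{the_Approximation_Theorem:A1} and~\ref{the_Approximation_Theorem:A2} survive --- which is an induction over the filtration length relying on the extension and saturation axioms and on gluing cofibrations. The second, and the harder one, is passing from the common-target observation to the full contractibility of $wF/C_1$: one has to coherently coequalize parallel morphisms, and this is precisely the point where the functoriality of the mapping cylinder, together with the cylinder axiom in $\calc_0$, is indispensable. Once these are in place, the remaining Theorem~A bookkeeping is formal.
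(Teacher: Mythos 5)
The paper does not itself prove this statement; it is imported directly from Waldhausen \cite[Theorem~1.6.7]{Waldhausen(1985)}. So there is no proof in the paper to compare against, and your proposal must be judged on its own merits.

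Your overall plan --- reduce to showing $|w\calc_0|\to|w\calc_1|$ is a weak equivalence by propagating the approximation hypotheses through the $S_\bullet$-construction, then apply Quillen's Theorem~A to $wF$ --- is in the same family as Waldhausen's argument, and the reduction step is plausible in outline. The genuine gap is at what you yourself flag as the technical heart: contractibility of $wF/C_1$. Nonemptiness and the common-target argument are correct, but they only give connectedness of the nerve (two objects with two parallel morphisms already have a common target yet produce a circle), and the category $wF/C_1$ is not filtered in general --- you cannot coequalize parallel arrows. Your proposed repair, a functor $\Theta\colon wF/C_1\times wF/C_1\to wF/C_1$ with natural transformations from both projections, would indeed force contractibility if it existed, but the construction you sketch does not go through: condition~\ref{the_Approximation_Theorem:A2} factors the map $F(C_0)\vee F(C_0')\to C_1$, and this is a map and a factorization in $\calc_1$, not in $\calc_0$, so the cylinder functor on $\calc_0$ cannot be applied to render the choice of factorization functorial. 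The approximation property is a pure existence statement with no functoriality, and nothing in the hypotheses supplies a coherent choice of these factorizations. Waldhausen's actual use of the cylinder functor is different: it enters a simplicial lifting argument (his Lemma~1.6.6) that, by induction on simplicial degree, lifts a string of weak equivalences in $\calc_1$ to a compatible string in $\calc_0$ together with a ladder of weak equivalences, and this is what establishes the needed contractibility/connectivity without ever asserting filteredness or a $\Theta$-functor. As it stands, your proof stops short of the key lemma and the proposed route to it would fail.
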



\subsection{Cisinski's version of the  Approximation Theorem}
\label{sec:Cisinskis_version_of_the_Approximation_Theorem}

The following result is a consequence of~\cite[Proposition~2.14]{Cisinki(2010inv)}.

\begin{theorem}[Cisinski's Approximation Theorem] \label{the:Cisinki_Approximation_Theorem} 
  Let $F \colon \calc_0 \to \calc_1$ be
  an exact functor of Waldhausen categories. Suppose for $k = 0,1$ that $\calc_k$ satisfy
  the saturation axiom and any morphism $f \colon C \to C''$ in $\calc_k$ factorizes as $C
  \xrightarrow{i} C' \xrightarrow{w} C''$ for a cofibration $i$ and a weak equivalence
  $w$.  Furthermore, we assume:

   \begin{enumerate}

   \item \label{the:Cisinki_Approximation_Theorem:A1} An arrow in $\calc_0$ is a weak equivalence
     in $\calc_0$ if and only if its image in $\calc_1$ is a weak equivalence in
     $\calc_1$;

   \item \label{the:Cisinki_Approximation_Theorem:A2} Given any object $C_0$ in $\calc_0$ and any
     map $f\colon F(C_0) \to C_1$ in $\calc_1$, there exists a commutative diagram in $\calc_1$
    \[
    \xymatrix{F(C_0) \ar[r]^-{f} \ar[d]_{F(u)}
    &
    C_1 \ar[d]^{v}_{\simeq}
    \\
    F(D_0) \ar[r]_-{w}^{\simeq}
    & 
    D_1
    }
    \]
    for a morphism $u \colon C_0 \to D_0$ in $\calc_0$ and weak equivalences $v \colon C_1 \to D_1$ and
   $w \colon F(D_0) \to D_1$ in $\calc_1$.
   \end{enumerate}
   
   Then  the map of spectra $\bfK(F) \colon \bfK(\calc_0)    \xrightarrow{\simeq} \bfK(\calc_1)$ 
    is a weak homotopy equivalence.
 \end{theorem}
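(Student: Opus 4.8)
The plan is to deduce the statement directly from Cisinski's Proposition~2.14 in~\cite{Cisinki(2010inv)}, which is a form of the Approximation Theorem~\ref{the_Approximation_Theorem} in which the hypothesis that $\calc_0$ carry a cylinder functor (and satisfy the cylinder axiom) is replaced by the weaker \emph{factorization axiom}: every morphism factors as a cofibration followed by a weak equivalence. The bulk of the argument is therefore bookkeeping: checking that the hypotheses we have imposed on $\calc_0$, $\calc_1$ and $F$ are exactly (or readily imply) those needed to invoke that proposition, and then upgrading the resulting equivalence of $K$-theory spaces to the asserted weak homotopy equivalence of $K$-theory spectra.

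First I would recall the precise form of~\cite[Proposition~2.14]{Cisinki(2010inv)}: it assumes that $\calc_0$ and $\calc_1$ are Waldhausen categories satisfying the saturation axiom in which every arrow factors as a cofibration followed by a weak equivalence, that $F$ detects weak equivalences in the sense of our condition~\ref{the:Cisinki_Approximation_Theorem:A1}, and that $F$ enjoys an approximation property phrased via homotopy-commutative squares; its conclusion is that $F$ induces a weak equivalence $|wS.\calc_0| \xrightarrow{\simeq} |wS.\calc_1|$ (and likewise on the iterated $S_\bullet$-constructions, which inherit the same structure). Our standing hypotheses on $\calc_0$ and $\calc_1$ are precisely the saturation and factorization axioms, and~\ref{the:Cisinki_Approximation_Theorem:A1} is verbatim Cisinski's detection hypothesis, so here there is nothing further to verify.

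The one point needing a small argument is that our condition~\ref{the:Cisinki_Approximation_Theorem:A2} agrees with Cisinski's approximation property. Our square --- a map $u\colon C_0\to D_0$ in $\calc_0$ together with weak equivalences $v\colon C_1\to D_1$ and $w\colon F(D_0)\to D_1$ making the square commute --- is literally of the homotopy-commutative shape Cisinski uses. If instead one needs the strict-factorization form ``$f = g\circ F(i)$ with $i$ a cofibration and $g$ a weak equivalence'', one first factors $f\colon F(C_0)\to C_1$ via the factorization axiom in $\calc_1$ as $F(C_0)\xrightarrow{i_1} C_1' \xrightarrow{w_1} C_1$ with $i_1$ a cofibration and $w_1$ a weak equivalence, applies~\ref{the:Cisinki_Approximation_Theorem:A2} to $i_1$, and pastes the two squares together (using the saturation axiom to combine the weak equivalences); conversely the homotopy-commutative form follows trivially from a strict factorization. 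Thus the two formulations are interchangeable in the presence of the factorization axiom. I expect this translation between the ``homotopy-commutative square'' and the ``strict factorization'' versions of the approximation property --- together with the routine check that no ancillary hypothesis of Cisinski's framework (a zero object, closure properties of cofibrations, etc.) is being used without being available here --- to be the only genuine obstacle; everything else is a direct citation.

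Finally, to pass from the space-level equivalence to the claimed weak homotopy equivalence $\bfK(F)\colon\bfK(\calc_0)\xrightarrow{\simeq}\bfK(\calc_1)$ one argues exactly as in the proof of Theorem~\ref{the:Fibration_Theorem}~\ref{the:Fibration_Theorem:spectra}: the $K$-theory spectrum is assembled from the spaces $\Omega^n|wS.^{(n)}\calc|$ with structure maps that are weak equivalences except possibly the first, the loop functor $\Omega$ commutes with the relevant constructions, and Cisinski's proposition (applied to each $S_\bullet^{(n)}\calc$) shows $F$ induces a levelwise weak equivalence; hence $\bfK(F)$ is a weak homotopy equivalence of spectra.
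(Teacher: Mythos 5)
Your proposal is correct and follows the paper exactly: the paper gives no proof at all, only the remark ``The following result is a consequence of \cite[Proposition~2.14]{Cisinki(2010inv)}.'' Your fleshing-out of that citation is sound (and your caution about reconciling the homotopy-commutative-square form of the approximation property with a strict-factorization form is unnecessary, since Cisinski's Proposition~2.14 is stated directly in the square form appearing in the theorem).
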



\section{Proof of Theorem~\ref{the:homotopy_pull_back_of_calx}}
\label{sec:Proof_of_Theorem_ref(the:homotopy_pull_back_of_calx)}

This section is entirely  devoted to the proof of
Theorem~\ref{the:homotopy_pull_back_of_calx}.

In the first step of the proof of Theorem~\ref{the:homotopy_pull_back_of_calx}
we replace the additive category $\cala_\Phi[t]$ by a
larger exact category $\caly$ with equivalent $K$-theory. It is defined
as follows: An object of $\caly$ is a triple $(A, f, B)$ consisting of an object $A$ of $\cala_\Phi[t]$, an object $B$ of $\cala_\Phi[t,t\inv]$ (as opposed to $\cala_\Phi[t\inv]$ in the definition of $\calx$), and an isomorphism $f\colon j_+A\to B$ in $\cala_\Phi[t,t\inv]$. A
morphism from $(A, f, B)$ to $(C, g, D)$ is a morphism $\varphi^+\colon A\to C$ in $\cala_\Phi[t]$ and a commutative diagram
\[
\xymatrix{j_+A \ar[r]^f \ar[d]_{j_+\varphi^+} & B \ar[d]^{\varphi}\\
C \ar[r]^g & D  
}
\]
in $\cala_\Phi[t,t\inv]$. The category $\caly$ is exact in the same way as $\calx$ is.

\begin{lemma} \label{lem:caly_and_cala}
The functors
\[
u\colon \cala[t]\to \caly, \quad A\mapsto (A, \id, j_-A)
\]
and
\[
v\colon \caly\to\cala[t], \quad (A^+, f, A^-)\mapsto A^+
\]
are exact. The composite $v\circ u$ is the identity and the composite $u\circ v$ is naturally isomorphic to the identity functor.
In particular,  they induce homotopy equivalences on $K$-theory, homotopy inverse to
each other.
\end{lemma}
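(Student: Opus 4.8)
The plan is to establish all four assertions by direct inspection, the point being that the structural isomorphism $f$ of an object $(A,f,B)$ of $\caly$ rigidifies everything. Throughout I read $u(A)$ as the triple $(A,\id_{j_+A},j_+A)$. First I would make the exact structure on $\caly$ explicit. Exactly as for $\calx$, a sequence in $\caly$ is declared exact precisely when it becomes split exact after applying both coordinate projections, namely $v\colon (A,f,B)\mapsto A$ to $\cala_\Phi[t]$ and $w\colon (A,f,B)\mapsto B$ to $\cala_\Phi[t,t\inv]$. Since the structural map $f$ of every object of $\caly$ is an isomorphism and $j_+$ is additive (hence carries split exact sequences to split exact sequences), the image of a sequence under $w$ is isomorphic to the image under $j_+\circ v$ of the same sequence; so the two exactness conditions coincide, and a sequence in $\caly$ is exact if and only if $v$ carries it to a split exact sequence in $\cala_\Phi[t]$.

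Next I would check that $u$ and $v$ are exact additive functors with $v\circ u=\id$. Additivity of both and the identity $v\circ u=\id_{\cala_\Phi[t]}$ are immediate once one notes that $u$ of a morphism $\psi\colon A\to C$, namely the pair $(\psi,j_+\psi)$, is a morphism of $\caly$ because the relevant square trivially commutes. Exactness of $v$ is built into the description of the exact structure just recalled. Exactness of $u$ is then automatic: if $\sigma$ is a split exact sequence in $\cala_\Phi[t]$, then $v(u(\sigma))=\sigma$ is split exact, so $u(\sigma)$ is exact in $\caly$; the same reasoning with split monomorphisms in place of split exact sequences shows that $u$ preserves cofibrations.

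Then I would construct the natural isomorphism $\eta\colon\id_\caly\xrightarrow{\cong}u\circ v$. Since $u(v(A,f,B))=(A,\id_{j_+A},j_+A)$, I set $\eta_{(A,f,B)}:=(\id_A,f\inv)$. This is a morphism of $\caly$ from $(A,f,B)$ to $(A,\id,j_+A)$ because $\id_{j_+A}\circ j_+(\id_A)=f\inv\circ f$, and it is an isomorphism because both of its components are ($f$ being invertible by the hypothesis on objects of $\caly$). Naturality with respect to a morphism $(\varphi^+,\varphi)\colon(A,f,B)\to(C,g,D)$ reduces, in the $\cala_\Phi[t]$-component, to $\varphi^+\circ\id_A=\id_C\circ\varphi^+$, and in the $\cala_\Phi[t,t\inv]$-component to $j_+\varphi^+\circ f\inv=g\inv\circ\varphi$, which is exactly the commuting-square condition that makes $(\varphi^+,\varphi)$ a morphism of $\caly$.

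Finally, since $v\circ u=\id$ strictly and $u\circ v$ is naturally isomorphic to $\id_\caly$, and since naturally isomorphic exact functors induce homotopic maps on $K$-theory, we obtain $\bfK(v)\circ\bfK(u)=\id$ and $\bfK(u)\circ\bfK(v)\simeq\id$, so that $\bfK(u)$ and $\bfK(v)$ are mutually inverse homotopy equivalences. I expect no genuine obstacle; the only step deserving a moment's attention is the identification of the exact structure on $\caly$, where the key remark is that invertibility of the structural map collapses the two defining conditions into one, which is precisely what makes $v$ both preserve and reflect exactness and hence makes the exactness of $u$ a formality.
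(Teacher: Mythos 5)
Your proposal is correct and follows essentially the same route as the paper's proof: the paper likewise notes exactness, observes $v\circ u=\id$, and exhibits the natural isomorphism between $u\circ v$ and the identity via the structural isomorphism $f$ (the paper uses $(\id,f)\colon (A^+,\id,j_+A^+)\to(A^+,f,A^-)$, i.e.\ the inverse of your $\eta$), concluding $\bfK(u)\circ\bfK(v)\simeq\id$. Your additional verification that the exact structure on $\caly$ collapses to the condition on the $\cala_\Phi[t]$-component is a correct elaboration of what the paper dismisses as clear.
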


\begin{proof}
  It is clear that the functors are exact. Obviously $v\circ u$ is the identity. The composite
  $u\circ v$ is naturally isomorphic to the identity functor: the isomorphism in $\caly$ at the object $(A^+, f, A^-)$ is
  given by
  $(\id,f) \colon  (A^+,\id,j_+A^+)\xrightarrow{\cong}(A^+,f,A^-) $. This implies 
  $\bfK(u)\circ  \bfK(v)\simeq \id$.
\end{proof}

Denote by 
\[
k'\colon \calx\to\caly
\]
the inclusion functor, and define
\[
j'\colon \Chcat(\caly)\to \Chcat(\cala_\Phi[t,t^{-1}]), \quad (A^+,f,A^-) \mapsto A^-.
\]
 
Then  the square
\[
\xymatrix{{\Chcat(\calx)} \ar[rr]^(.4){\Chcat(k^-)} \ar[d]_{\Chcat(k')} &&
\Chcat(\cala_\Phi[t^{-1}]) \ar[d]^-{\Chcat(j^-)}\\
{\Chcat(\caly)} \ar[rr]_-{\Chcat(j')} &&   
\Chcat(\cala_\Phi[t,t^{-1}])
}
\]
is strictly commutative, and we are going to show that it induces a
homotopy pullback after applying $\bfK$. To show that the square is a
homotopy pullback on $K$-theory, we are going to show that the
horizontal homotopy fibers of $\bfK(\Chcat (k^-))$ and $\bfK(\Chcat(j'))$ and
agree.

Let $w\Chcat( \calx)$ be the subcategory of
$\Chcat(\calx)$ consisting of all chain maps which become weak equivalences
in  $\cala_\Phi[t^{-1}]$, after applying $\Chcat(k^-)$, and let $\Chcat(\calx)^w$ be the
full subcategory of $\Chcat(\calx)$ of all objects which are $w$-acyclic. In other words, an
object $(C^+, f, C^-)$ belongs to $\Chcat(\calx)^w$ if and only if $C^-$ is
contractible as an $\cala_\Phi[t,t\inv]$-chain complex. Similarly, denote
by $w\Chcat(\caly)$ the subcategory of all morphisms $f$ such that 
$\Chcat(j')(f)$ is a chain homotopy equivalence in $\Chcat(\cala_\Phi[t,t^{-1}])$, and adopt the
notation $\Chcat(\caly)^w$ for the $w$-acyclic objects.

\begin{lemma} \label{lem:certain_chain_homotopy_equivalences}
The maps
\begin{align*}
   \bfK(\Chcat(k^-))\colon & \bfK(\Chcat(\calx), w)\to \bfK(\Chcat(\cala_\Phi[t^{-1}]))\change{\quad \mathrm{and}}\\
   \bfK(\Chcat (j')) \colon & \bfK(\Chcat (\caly), w)\to \bfK(\Chcat (\cala_\Phi[t,t^{-1}]))
\end{align*}
are homotopy equivalences.
\end{lemma}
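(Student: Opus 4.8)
The plan is to deduce both statements from Cisinski's Approximation Theorem (Theorem~\ref{the:Cisinki_Approximation_Theorem}), applied to the two exact functors
\begin{align*}
\Chcat(k^-)&\colon (\Chcat(\calx),w)\to\Chcat(\cala_\Phi[t^{-1}]),\\
\Chcat(j')&\colon (\Chcat(\caly),w)\to\Chcat(\cala_\Phi[t,t^{-1}]),
\end{align*}
whose targets carry the canonical Waldhausen structure; by Lemma~\ref{lem:canonical_Waldhausen_structure_is_Waldhausen_structure}~\ref{lem:canonical_Waldhausen_structure_is_Waldhausen_structure:split} the weak equivalences in these targets are precisely the chain homotopy equivalences. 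The saturation axiom holds for all four Waldhausen categories, since chain homotopy equivalences are saturated and $w$ is by construction the preimage of this class under an exact functor. The factorization hypothesis of Theorem~\ref{the:Cisinki_Approximation_Theorem} holds everywhere via the mapping cylinder: any chain map $\varphi$ factors as $C\xrightarrow{i_C}\cyl(\varphi)\xrightarrow{p}C''$, where $i_C$ is level-wise split injective, hence a canonical cofibration, and $p$ is a chain homotopy equivalence; in the two source categories $p$ is moreover a $w$-equivalence because $\Chcat(k^-)$ and $\Chcat(j')$, being additive, preserve chain homotopy equivalences. Condition~\ref{the:Cisinki_Approximation_Theorem:A1} is in each case exactly the definition of the class $w$.

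The key tool for Condition~\ref{the:Cisinki_Approximation_Theorem:A2} is a \emph{clearing of denominators}. Let $C$ be a bounded chain complex over $\cala_\Phi[t^{-1}]$ or over $\cala_\Phi[t,t^{-1}]$, and let $(c_n)_n$ be a sequence of integers. Put $\overline{C}_n:=\Phi^{-c_n}(C_n)$, and let $f_n\colon\Phi^{-c_n}(C_n)\xrightarrow{\cong}C_n$ be the isomorphism $\id_{C_n}\cdot t^{c_n}$ of $\cala_\Phi[t,t^{-1}]$ (``multiplication by $t^{c_n}$''). Transporting the differentials of $C$ along the $f_n$ turns $\overline{C}$ into a chain complex over $\cala_\Phi[t,t^{-1}]$ and the $f_n$ into an isomorphism $f\colon j_+\overline{C}\xrightarrow{\cong}j_-C$ (respectively $\xrightarrow{\cong}C$). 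Writing the differentials of $C$ as finite Laurent polynomials in $t$, a direct computation shows that $\overline{C}$ in fact lies in $\Chcat(\cala_\Phi[t])$ as soon as every increment $c_n-c_{n-1}$ is at least as large as the largest order of a pole among the differentials of $C$. Only a lower bound on the increments is imposed, so $(c_n)$ may be chosen strictly increasing and, at the same time, as negative as we wish on any prescribed finite set of degrees.

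To check Condition~\ref{the:Cisinki_Approximation_Theorem:A2} for $\Chcat(k^-)$, let $X_\ast=(A^+,f_\ast,A^-)$ be an object of $\Chcat(\calx)$ and let $g\colon A^-\to D$ be a chain map in $\Chcat(\cala_\Phi[t^{-1}])$. Form $\cyl(g)$ over $\cala_\Phi[t^{-1}]$ with its canonical inclusion $i_D\colon D\to\cyl(g)$, which is a chain homotopy equivalence. Apply the clearing of denominators to $C:=\cyl(g)$ to obtain $\overline{A}{}^+\in\Chcat(\cala_\Phi[t])$ and an isomorphism $\overline{f}\colon j_+\overline{A}{}^+\xrightarrow{\cong}j_-\cyl(g)$, choosing the defining sequence $(c_n)$ so negative on the finitely many degrees in which $A^+$ is concentrated that the chain map $\overline{f}^{-1}\circ j_-(i_D\circ g)\circ f_\ast\colon j_+A^+\to j_+\overline{A}{}^+$ over $\cala_\Phi[t,t^{-1}]$ involves only non-negative powers of $t$; it is then $j_+$ of a chain map $u^+\colon A^+\to\overline{A}{}^+$ over $\cala_\Phi[t]$. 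Hence $u:=(u^+,\,i_D\circ g)$ is a morphism $X_\ast\to X'_\ast:=(\overline{A}{}^+,\overline{f},\cyl(g))$ in $\Chcat(\calx)$ with $k^-(u)=i_D\circ g$, and with $v:=i_D$ and $w:=\id_{\cyl(g)}$ one obtains the commutative square demanded in~\ref{the:Cisinki_Approximation_Theorem:A2}: both $v$ and $w$ are chain homotopy equivalences and $w\circ k^-(u)=i_D\circ g=v\circ g$. The argument for $\Chcat(j')$ is entirely analogous, with $\cala_\Phi[t^{-1}]$ replaced by $\cala_\Phi[t,t^{-1}]$ throughout, using that the morphisms of $\caly$ have the same shape as those of $\calx$ (a morphism of the $\cala_\Phi[t]$-components together with a morphism of the $\cala_\Phi[t,t^{-1}]$-components fitting into a commuting square with the structure isomorphisms), so that the analogous pair $(u^+,\,\iota\circ g)$, with $\iota$ the inclusion of the target of $g$ into $\cyl(g)$, again defines a morphism in $\Chcat(\caly)$; alternatively one may first replace $(\Chcat(\caly),w)$ by $(\Chcat(\cala_\Phi[t]),w')$ using Lemma~\ref{lem:caly_and_cala}, where $w'$ is the preimage of the chain homotopy equivalences under $\Chcat(j_+)$.

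The only genuine work is the bookkeeping just sketched: a single sequence $(c_n)$ must make $\overline{C}$ live over $\cala_\Phi[t]$ --- a lower bound on the increments, governed by the pole orders of the differentials of $\cyl(g)$ --- and simultaneously make the comparison map polynomial, which forces $c_n$ to be very negative on the finite range occupied by $A^+$. Since both requirements involve only finitely many degrees and bound the increments only from below, such a choice always exists; for instance one fixes $c_n$ to be very negative at the top degree of $A^+$ and extends it with constant slope equal to the relevant pole bound in both directions, keeping careful track of the $\Phi$-twists. Once Condition~\ref{the:Cisinki_Approximation_Theorem:A2} is verified, Theorem~\ref{the:Cisinki_Approximation_Theorem} shows that $\bfK(\Chcat(k^-))$ and $\bfK(\Chcat(j'))$ are weak homotopy equivalences, which is the assertion of the Lemma.
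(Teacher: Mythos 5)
Your proof is correct and rests on the same central construction as the paper's: twisting the chain objects of a target complex by powers of $\Phi$ and multiplying the differentials by powers of $t$ (your ``clearing of denominators''; the paper's explicit model $D^+_n=\Phi^{(m-n)K}(D^-_n)$ with differentials $t^K\cdot\Phi^{(m-n)K}(d_n)$ is the special case of your construction with the linear sequence $c_n=(n-m)K$). The only genuine deviation is in the choice of Approximation Theorem: the paper applies Waldhausen's version (Theorem~\ref{the_Approximation_Theorem}), whose condition~\ref{the_Approximation_Theorem:A2} asks for an honest factorization $f=g\circ F(i)$, and therefore finishes by factoring the lifted morphism through its mapping cylinder; you apply Cisinski's version (Theorem~\ref{the:Cisinki_Approximation_Theorem}), whose condition~\ref{the:Cisinki_Approximation_Theorem:A2} only asks for a commutative square, which lets you record the lift directly. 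You insert the mapping cylinder $\cyl(g)$ up front so that all three weak equivalences in your square are in plain sight, but in fact this is not needed: applying clearing of denominators to $D$ itself gives $(\overline{D}^+,\overline{g},D)$ and $u=(u^+,g)$ with $k^-(u)=g$, and the square with $v=w=\id_D$ already satisfies condition~\ref{the:Cisinki_Approximation_Theorem:A2}. So your argument, slightly streamlined, is a touch shorter than the paper's since Cisinski's hypothesis spares the final cylinder-factorization step. Both routes are of comparable difficulty and rely on the same technical point, namely that the increments $c_n-c_{n-1}$ and the absolute values $c_n$ on a fixed finite range of degrees can be chosen independently.
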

\begin{proof}
  We want to apply the Approximation Theorem~\ref{the_Approximation_Theorem}.  We give the
  details only for $\bfK(\Chcat(k^-))$, the analogous proof for $\bfK(\Chcat (j'))$ is
  left to the reader.

  It suffices to verify the assumptions appearing in the Approximation
  Theorem~\ref{the_Approximation_Theorem}.  The saturation and cylinder axioms are satisfied by 
  Lemma~\ref{lem:canonical_Waldhausen_structure_is_Waldhausen_structure}.
  The main task is to verify the
  conditions~\ref{the_Approximation_Theorem:A1} and~\ref{the_Approximation_Theorem:A2}
  appearing in the Approximation Theorem~\ref{the_Approximation_Theorem}.

  A morphism $f$ in $\Chcat(\calx)$ is by definition in $w\Chcat(\calx)$ if and only if
  $\Chcat (k^-)(f)$ is a chain homotopy equivalence in $\Chcat(\cala_\Phi[t^{-1}])$. This takes care of 
  condition~\ref{the_Approximation_Theorem:A1} for $\bfK(\Chcat(k^-))$.

  Finally we deal with condition~\ref{the_Approximation_Theorem:A2}.  Consider an object
  $(C^+, f, C^-)$ in $\Chcat(\calx)$ and a morphism $\varphi^-\colon C^-\to D^-$ in 
  $\Chcat(\cala_\Phi[t^{-1}])$. We will extend $\varphi^-$ to a morphism
  \[
   \varphi=(\varphi^+, \varphi^-)\colon (C^+, f, C^-)\to (D^+, g, D^-)
  \]
  in $\Chcat(\calx)$. 

Let $m\in \IZ$ such that $D^-_*=0$ for $*>m$. Choosing $K\gg 0$, let
$D^+$ be the following chain complex:
\[
\dots\to 0 \to D^-_m \xrightarrow{t^K\cdot \Phi^K(d_m)} \Phi^K(D^-_{m-1})
\xrightarrow{t^K\cdot \Phi^{2K}(d_{m-1})}\dots
\]
where $d_*$ is the differential of $D^-$. Notice that $D^+$ is a chain
complex  in $\cala_\Phi[t]$ provided $K$ was chosen big enough. 
Let $c_*$ be the differential of $C^+$. Enlarging
$K$ if necessary, the following diagram provides  a factorization of
$\varphi^-\circ f$ into an $\cala_\Phi[t]$-morphism $\varphi^+\colon C^+\to D^+$, 
followed by the $\cala_\Phi[t,t^{-1}]$-isomorphism $g\colon D^+\to D^-$
\[
\xymatrix{C_m^+ \ar[rr]^{t^K\cdot \varphi^-\circ f} \ar[d]^{c_m^+}
&& \Phi^K(D_m^-) \ar[rr]^{t^{-K}\cdot \id}  \ar[d]^{t^K\cdot \Phi^K(d_m^-)}
&& D_m^- \ar[d]^{d_m^-}
\\
C_{m-1}^+ \ar[rr]^{t^{2K}\cdot \varphi^-\circ f} \ar[d]^{c_{m-1}^+}
&& \Phi^{2K}(D_{m-1}^-) \ar[rr]^{t^{-2K}\cdot \id}  \ar[d]^{t^K\cdot \Phi^{2K}(d_{m-1}^-)}
&& D_{m-1}^- \ar[d]^{d_{m-1}^-}
\\
\vdots
&& \vdots
&& \vdots
}
\]
Hence $\varphi=(\varphi^+, \varphi^-)$ is a morphism in $\Chcat(\calx)$ projecting to $\varphi^-$ under $\Ch(k^-)$. Then, factoring $\varphi=\mu \circ\psi$ into a cofibration $\psi$ followed
  by a weak equivalence $\mu$ (using the mapping cylinder), we can write 
  $\varphi^-=\mu^-\circ \Chcat (k^-)(\psi)$ where $\psi$ is a cofibration 
  and $\mu^-$ is a weak equivalence, as required
  in condition~\ref{the_Approximation_Theorem:A2}. 
\end{proof}

\begin{theorem}\label{the:relative_fibration_sequences}
There are fibration sequences
\begin{align*}
   \bfK(\Chcat(\calx)^w) & \to \bfK(\calx)\to \bfK(\cala_\Phi[t^{-1}]);
   \\
   \bfK(\Chcat(\caly)^w) &\to \bfK(\caly)\to \bfK(\cala_\Phi[t,t^{-1}]).
\end{align*}
\end{theorem}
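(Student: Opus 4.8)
The plan is to realize both fibration sequences as instances of the Fibration Theorem~\ref{the:Fibration_Theorem} applied to the chain complex categories $\Chcat(\calx)$ and $\Chcat(\caly)$, each equipped with the pair of nested weak equivalence classes introduced before Lemma~\ref{lem:certain_chain_homotopy_equivalences}, and then to identify the three resulting terms by means of the Gillet-Waldhausen Theorem~\ref{the:Gillet_Waldhausen} and Lemma~\ref{lem:certain_chain_homotopy_equivalences}.

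Concretely, for the first sequence I would work with $\Chcat(\calx)$, let $v\Chcat(\calx)$ denote its canonical weak equivalences and $w\Chcat(\calx)$ the larger class of chain maps $f$ for which $\Chcat(k^-)(f)$ is a chain homotopy equivalence. The inclusion $v\Chcat(\calx)\subseteq w\Chcat(\calx)$ holds because, by Example~\ref{exa:projective_line}, a canonical weak equivalence in $\Chcat(\calx)$ is in particular sent by $\Chcat(k^-)$ to a chain homotopy equivalence. The class $v$ satisfies the saturation, extension and cylinder axioms and $\Chcat(\calx)$ carries the mapping cylinder functor by Lemma~\ref{lem:canonical_Waldhausen_structure_is_Waldhausen_structure}; the same axioms for the coarser class $w$ follow formally, since $\Chcat(k^-)$ is an exact functor which commutes with mapping cylinders and since chain homotopy equivalences in $\Chcat(\cala_\Phi[t^{-1}])$ enjoy these properties (the extension axiom being an instance of Lemma~\ref{lem:elementary_facts}~\ref{lem:elementary_facts:chain_homotopy_equivalence_2_of_3}). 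Hence Theorem~\ref{the:Fibration_Theorem}~\ref{the:Fibration_Theorem:spectra} yields a homotopy fibration of spectra
\[
\bfK\bigl(\Chcat(\calx)^w,v\bigr)\to \bfK\bigl(\Chcat(\calx),v\bigr)\to \bfK\bigl(\Chcat(\calx),w\bigr),
\]
in which $\Chcat(\calx)^w$ is precisely the full subcategory of $w$-acyclic objects appearing in the statement. It then remains to identify the terms: the middle spectrum is $\bfK(\calx)$ by the Gillet-Waldhausen Theorem~\ref{the:Gillet_Waldhausen}; the right-hand spectrum is $\bfK(\cala_\Phi[t^{-1}])$ by Lemma~\ref{lem:certain_chain_homotopy_equivalences}, which identifies $\bfK(\Chcat(\calx),w)$ with $\bfK(\Chcat(\cala_\Phi[t^{-1}]))$, followed by another use of the Gillet-Waldhausen Theorem; and on $\Chcat(\calx)^w$ the canonical weak equivalences are the chain homotopy equivalences by Lemma~\ref{lem:canonical_Waldhausen_structure_is_Waldhausen_structure}, so the left-hand spectrum is $\bfK(\Chcat(\calx)^w)$ as written. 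This gives the first fibration sequence. For the second sequence I would repeat the argument verbatim with $\calx$ replaced by $\caly$, the functor $k^-$ replaced by $j'\colon \Chcat(\caly)\to\Chcat(\cala_\Phi[t,t^{-1}])$, and $\cala_\Phi[t^{-1}]$ replaced by $\cala_\Phi[t,t^{-1}]$: take $w\Chcat(\caly)$ to be the chain maps sent by $\Chcat(j')$ to chain homotopy equivalences, note $v\Chcat(\caly)\subseteq w\Chcat(\caly)$ because a canonical weak equivalence of $\Chcat(\caly)$ is mapped by $\Chcat(j')$ to a chain homotopy equivalence, verify the axioms for $w$ exactly as above, apply Theorem~\ref{the:Fibration_Theorem}~\ref{the:Fibration_Theorem:spectra}, and identify the three terms using the Gillet-Waldhausen Theorem~\ref{the:Gillet_Waldhausen} and the second half of Lemma~\ref{lem:certain_chain_homotopy_equivalences}.

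I do not expect a serious obstacle here: the only genuinely non-formal ingredient, namely the approximation argument constructing the finite model $D^+$ over $\cala_\Phi[t]$, has already been dispatched inside Lemma~\ref{lem:certain_chain_homotopy_equivalences}, and what is left is the bookkeeping of feeding the pair $(v,w)$ into the Fibration Theorem. The one point that requires some attention is making sure that the coarse class $w$ really does satisfy the saturation, extension and cylinder axioms and that the Waldhausen structure inherited by the $w$-acyclic subcategory is the expected one, so that the leftmost terms $\bfK(\Chcat(\calx)^w)$ and $\bfK(\Chcat(\caly)^w)$ are unambiguously the $K$-theories with respect to chain homotopy equivalences.
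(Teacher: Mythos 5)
Your proposal follows the same strategy as the paper's proof: apply the Fibration Theorem to $\Ch(\calx)$ (resp.\ $\Ch(\caly)$) with the pair of weak equivalence classes $v\subseteq w$, identify the cofiber term with $\bfK(\cala_\Phi[t^{-1}])$ (resp.\ $\bfK(\cala_\Phi[t,t^{-1}])$) via Lemma~\ref{lem:certain_chain_homotopy_equivalences}, and then invoke Gillet-Waldhausen~\ref{the:Gillet_Waldhausen} to pass from chain complex categories back to $\calx$ and $\cala_\Phi[t^{\pm 1}]$. Your more explicit verification that the coarse class $w$ satisfies the saturation, extension and cylinder axioms (by pulling back from chain homotopy equivalences along the exact functor $\Ch(k^-)$ resp.\ $\Ch(j')$) is a useful amplification of a point the paper leaves implicit.

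One small caveat: your remark that ``on $\Chcat(\calx)^w$ the canonical weak equivalences are the chain homotopy equivalences by Lemma~\ref{lem:canonical_Waldhausen_structure_is_Waldhausen_structure}'' is not literally justified by that lemma, since part~\ref{lem:waldhausen_structure_under_property_P:(2)} requires the split-exact structure, whereas $\calx$ carries the coarser exact structure from Section~\ref{subsec:The_twisted_projective_line} in which a sequence is exact iff it becomes split exact after applying $k^+$ and $k^-$; by Example~\ref{exa:projective_line}, the canonical weak equivalences of $\Ch(\calx)$ are the maps sent by both $k^+$ and $k^-$ to chain homotopy equivalences, which is a priori a larger class than the chain homotopy equivalences of $\Ch(\calx)$. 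This observation is, however, superfluous to your argument: taking $v$ to be the canonical weak equivalences throughout, the Fibration Theorem already outputs $\bfK(\Ch(\calx)^w,v)$, which is precisely the $\bfK(\Ch(\calx)^w)$ of the statement, with no further identification needed. So the proof stands.
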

\begin{proof}
We give the
  details only for the first sequence, the analogous proof for the second one is
  left to the reader.

We apply the Fibration Theorem~\ref{the:Fibration_Theorem}~\ref{the:Fibration_Theorem:spectra} in the case
$\calc = \Ch(\calx)$, $w$ as described above and $v$ the structure of weak equivalences coming from
chain homotopy equivalences. Thus we obtain homotopy fibration  of spectra
\[
\bfK(\Ch(\calx)^w) \to \bfK(\Ch(\calx)) \to \change{\bfK(\Ch(\calx),w).}
\]
Because of Lemma~\ref{lem:certain_chain_homotopy_equivalences}
we obtain a homotopy fibration
\[
\bfK(\Ch(\calx)^w) \to \bfK(\Ch(\calx)) \to \change{\bfK(\Ch(\cala_\Phi[t^{-1}])).}
\]
Now the claim follows from Theorem~\ref{the:Gillet_Waldhausen}.
\end{proof}

\begin{lemma} \label{k'_is_homotopy_equivalence}
The functor $k'$ induces a homotopy equivalence
\[
\bfK(\Chcat(\calx)^w) \xrightarrow{\simeq} \bfK(\Chcat(\caly)^w).
\]
\end{lemma}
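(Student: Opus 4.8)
The plan is to apply the Approximation Theorem~\ref{the_Approximation_Theorem} to the functor induced by $k'$ between $\Chcat(\calx)^w$ and $\Chcat(\caly)^w$, each equipped with the Waldhausen structure inherited from the canonical one on $\Chcat(\calx)$, resp.\ $\Chcat(\caly)$ (i.e.\ the structure $v$ of weak equivalences coming from chain homotopy equivalences, as in Theorem~\ref{the:relative_fibration_sequences}). First one checks that $k'$ does restrict to an exact functor between these subcategories: $k'(C^+,f,C^-)=(C^+,f,j_-C^-)$, and $j_-C^-$ is contractible over $\cala_\Phi[t,t^{-1}]$ whenever $C^-$ is; moreover $k'$ preserves level-wise split injections and pushouts along them, and it preserves weak equivalences because the two components of $k'(\varphi)$ are $k^+(\varphi)$ and $j_-(k^-(\varphi))$.

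Next I would identify the weak equivalences. By Example~\ref{exa:projective_line} a chain map $\varphi$ in $\Chcat(\calx)$ is a weak equivalence iff $k^+(\varphi)$ and $k^-(\varphi)$ are chain homotopy equivalences; if source and target of $\varphi$ lie in $\Chcat(\calx)^w$, then $k^-(\varphi)$ is a map between contractible complexes, whose mapping cone is contractible by Lemma~\ref{lem:elementary_facts}~\ref{lem:elementary_facts:short_exact_sequences} and~\ref{lem:elementary_facts:chain_homotopy_equivalence_2_of_3}, hence $k^-(\varphi)$ is automatically a chain homotopy equivalence by Lemma~\ref{lem:elementary_facts}~\ref{lem:elementary_facts:chain_homotopy_equivalence_cone}. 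So $\varphi$ is a weak equivalence in $\Chcat(\calx)^w$ iff $k^+(\varphi)$ is a chain homotopy equivalence; by the evident analogue of Example~\ref{exa:projective_line} for $\caly$, a morphism in $\Chcat(\caly)^w$ is a weak equivalence iff its $\cala_\Phi[t]$-component is a chain homotopy equivalence. Since that component of $k'(\varphi)$ equals $k^+(\varphi)$, condition~\ref{the_Approximation_Theorem:A1} of the Approximation Theorem holds. The saturation and cylinder axioms are inherited via Lemma~\ref{lem:canonical_Waldhausen_structure_is_Waldhausen_structure}, using that the mapping cylinder of a morphism between $w$-acyclic objects is again $w$-acyclic (its $\cala_\Phi[t^{-1}]$-, resp.\ $\cala_\Phi[t,t^{-1}]$-component is chain homotopy equivalent to a contractible complex).

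The main work is condition~\ref{the_Approximation_Theorem:A2}. Given $X=(C^+,f,C^-)$ in $\Chcat(\calx)^w$ and a morphism $\psi=(\psi^+,\psi_B)\colon k'(X)\to(D,g,B)$ in $\Chcat(\caly)^w$, first factor $\psi^+$ as $C^+\xrightarrow{i_C}\cyl(\psi^+)\xrightarrow{p}D$ with $i_C$ a cofibration and $p$ the chain homotopy equivalence projection, so $\psi^+=p\circ i_C$ (Lemma~\ref{lem:elementary_facts}~\ref{lem:elementary_facts:projection_cyl_to_D}). The point is then to promote the bounded $\cala_\Phi[t]$-complex $\bar D:=\cyl(\psi^+)$ to an object of $\calx$ extending the data on $C^+$, using the twisting device from the proof of Lemma~\ref{lem:certain_chain_homotopy_equivalences}, now applied in the opposite direction. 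One chooses integers $a_j$ (one per degree $j$) and sets $\bar C^-_j:=\Phi^{-a_j}(\bar D_j)$, with differentials obtained from those of $\bar D$ by conjugating with the isomorphisms $\bar D_j\to\Phi^{-a_j}(\bar D_j)$ given by the appropriate power of $t$; if the $a_j$ increase fast enough as $j$ decreases — a downward induction subject to finitely many linear inequalities — the conjugated differentials lie in $\cala_\Phi[t^{-1}]$, so $\bar C^-$ is a bounded $\cala_\Phi[t^{-1}]$-complex and the conjugating isomorphisms assemble to a chain isomorphism $\theta\colon j_+\bar D\xrightarrow{\cong}j_-\bar C^-$. Taking in addition each $a_j$ larger than the top $t$-powers occurring in the components of $f^{-1}$, the composite $\theta\circ j_+i_C\circ f^{-1}\colon j_-C^-\to j_-\bar C^-$ involves only non-positive powers of $t$, hence equals $j_-(u^-)$ for a unique $\cala_\Phi[t^{-1}]$-chain map $u^-\colon C^-\to\bar C^-$. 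Put $X':=(\bar D,\theta,\bar C^-)$; it lies in $\Chcat(\calx)^w$ since $j_-\bar C^-\cong j_+\bar D$ is chain homotopy equivalent to $j_+D\cong B$, hence contractible over $\cala_\Phi[t,t^{-1}]$. Then $u:=(i_C,u^-)\colon X\to X'$ is a cofibration in $\Chcat(\calx)^w$ (both components are level-wise split injective), and $h:=(p,\ g\circ j_+p\circ\theta^{-1})\colon k'(X')\to(D,g,B)$ is a morphism in $\Chcat(\caly)$ whose $\cala_\Phi[t]$-component $p$ is a chain homotopy equivalence, so $h$ is a weak equivalence in $\Chcat(\caly)^w$. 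A direct computation using only $p\circ i_C=\psi^+$ and the identity $g\circ j_+\psi^+=\psi_B\circ f$ (which expresses that $\psi$ is a morphism of $\caly$-complexes) gives $h\circ k'(u)=\psi$. This verifies~\ref{the_Approximation_Theorem:A2}, and the Approximation Theorem then yields that $\bfK(k')\colon\bfK(\Chcat(\calx)^w)\to\bfK(\Chcat(\caly)^w)$ is a weak homotopy equivalence.

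The hard part is precisely the construction of $X'$ and $u$ in~\ref{the_Approximation_Theorem:A2}. The functor $k'$ is genuinely not full — a morphism of $\caly$-complexes between objects of $\calx$ need not be a morphism of $\calx$-complexes, because the structure isomorphisms $f,g$ may carry arbitrary powers of $t$ — so the comparison cannot be reduced to an equivalence of categories. One must choose the twisting exponents $a_j$ so that simultaneously the twisted differentials become polynomials in $t^{-1}$ \emph{and} the prescribed inclusion $C^-\to\bar C^-$ is defined over $\cala_\Phi[t^{-1}]$; checking that a single choice of exponents meets all these constraints is the technical core, after which the remaining points are routine diagram chases among $\cala_\Phi[t]$, $\cala_\Phi[t^{-1}]$ and $\cala_\Phi[t,t^{-1}]$.
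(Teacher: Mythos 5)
Your overall strategy (Approximation Theorem, and your treatment of condition~\ref{the_Approximation_Theorem:A1}) agrees with the paper's, but your verification of condition~\ref{the_Approximation_Theorem:A2} has a genuine gap: the intermediate object $X'=(\bar D,\theta,\bar C^-)$ must lie in $\Chcat(\calx)^w$, and since $w$ is defined by applying $\Chcat(k^-)$ and taking weak equivalences in $\Chcat(\cala_\Phi[t^{-1}])$, being $w$-acyclic means that the minus part is contractible \emph{over $\cala_\Phi[t^{-1}]$} (this is the reading forced by the Fibration Theorem in Theorem~\ref{the:relative_fibration_sequences}, and the one the rest of the argument needs; your own step~\ref{the_Approximation_Theorem:A1} also tacitly uses it when you conclude that $k^-(\varphi)$ is automatically an equivalence). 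You only check that $\bar C^-$ is contractible over $\cala_\Phi[t,t^{-1}]$, and in general it is not contractible over $\cala_\Phi[t^{-1}]$: your $\bar C^-$ is a degreewise $t$-twist of $\cyl(\psi^+)\simeq D$, i.e.\ of the \emph{target's} plus part, over which you have no control in $\cala_\Phi[t^{-1}]$. Concretely, take $\Phi=\id$, $X=0$ and the target $(D,\id,j_+D)$ with $D=\bigl(A\xrightarrow{t\cdot(\id_A-\phi\cdot t)}A\bigr)$ for a nonzero nilpotent $\phi\colon A\to A$; any admissible choice of twisting exponents turns $D$ into a complex of the form $A\xrightarrow{t^{-k-1}-\phi\cdot t^{-k}}A$, whose differential is invertible in $\cala_\Phi[t,t^{-1}]$ but never in $\cala_\Phi[t^{-1}]$. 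This is exactly the phenomenon the Nil-category measures, so it cannot be twisted away, and your recipe fails to produce a factorization as required in~\ref{the_Approximation_Theorem:A2}. (A smaller issue: level-wise split injectivity of $u^-$ in $\cala_\Phi[t^{-1}]$ is not justified, since the splitting you obtain lives in $\cala_\Phi[t,t^{-1}]$; that could be repaired by a mapping cylinder in $\Chcat(\calx)$, but the acyclicity problem cannot.)

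The paper's proof avoids this by keeping the minus part of the intermediate object equal to $C^-\oplus i_-E$ with $E$ elementary, which is automatically contractible over $\cala_\Phi[t^{-1}]$, and by modifying the \emph{plus} part instead: since $\varphi^-$ is an $\cala_\Phi[t,t^{-1}]$-chain homotopy equivalence between contractible complexes, Lemma~\ref{lem:elementary_facts}~\ref{lem:elementary_facts:elementary} yields a chain isomorphism $\left(\begin{smallmatrix}\varphi^-&y\\ x&z\end{smallmatrix}\right)\colon C^-\oplus E\xrightarrow{\cong} D^-\oplus E'$ with $E,E'$ elementary; multiplying the lower row by $t^K$ for $K\gg 0$ makes all comparison maps lie in $\cala_\Phi[t]$ respectively $\cala_\Phi[t^{-1}]$, giving an object of $\Chcat(\calx)^w$ with plus part $D^+\oplus\Phi^K(E')$ and minus part $C^-\oplus i_-E$, a map from $(C^+,f,C^-)$ into it (to be turned into a cofibration via the mapping cylinder), and a weak equivalence from its image in $\Chcat(\caly)^w$ to $(D^+,g,D^-)$. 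If you want to salvage your argument, you should follow this pattern — stabilize by elementary complexes and insert a single power $t^K$ — rather than re-twist the target's plus part degreewise; note also that the sentence in the paper describing $\Chcat(\calx)^w$ as the objects whose minus part is contractible over $\cala_\Phi[t,t^{-1}]$ is a slip and is likely what led you astray.
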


\begin{proof}
Again we will use the Approximation Theorem~\ref{the_Approximation_Theorem}. Let
\begin{equation}\label{diagram_defining_a_morphism_in_calx}
  \xymatrix{j_+C^+ \ar[r]^f \ar[d]_{j_+\varphi^+} & j_-C^- \ar[d]^{j_-\varphi^-}\\
 j_+D^+ \ar[r]^g & j_-D^- 
}
\end{equation}
represent a morphism in $\Chcat(\calx)^w$ which maps to a weak equivalence in
$\Chcat(\caly)^w$. Then $\varphi^+$ is a chain homotopy equivalence in
$\cala_\Phi[t]$ and $\varphi^-$ is a chain homotopy equivalence in
$\cala_\Phi[t,t^{-1}]$. By assumption, $C^-$ and $D^-$ are contractible in $\cala_\Phi[t^{-1}]$, so
$\varphi^-$ has to be an equivalence in $\cala_\Phi[t^{-1}]$. It follows that the
morphism given by~\eqref{diagram_defining_a_morphism_in_calx} is a weak equivalence
in~$\Chcat(\calx)^w$ already. This takes care of 
condition~\ref{the_Approximation_Theorem:A1}.

It remains to check condition~\ref{the_Approximation_Theorem:A2}.
Suppose now that
\begin{equation}\label{diagram_defining_a_morphism_in_caly}
  \xymatrix{j_+C^+ \ar[r]^f \ar[d]_{j_+\varphi^+} & C^- \ar[d]^{\varphi^-}\\
 j_+D^+ \ar[r]^g & D^- 
}
\end{equation}
represents a morphism in
$\Chcat(\caly)^w$, with $(C^+, f, C^-)$ in $\Chcat(\calx)^w$. We have to
factor this morphism through a map in $\Chcat(\calx)^w$ (which we may then
replace by a cofibration using the mapping cylinder) and a weak
equivalence in $\Chcat(\caly)^w$.

Notice that the morphism $\varphi^-$ is a chain homotopy equivalence in
$\cala_\Phi[t,t^{-1}]$, as both $C^-$ and $D^-$ are contractible in that
category, by assumption. We conclude from 
Lemma~\ref{lem:elementary_facts}~\ref{lem:elementary_facts:elementary} that
there is a chain isomorphism of the shape
\[
\begin{pmatrix}
     \varphi^- & y \\ x & z
\end{pmatrix}
\colon
C^- \oplus E \xrightarrow{\cong} D^- \oplus E'
\]
where $E$ and $E'$ are elementary chain complexes in
$\cala_\Phi[t,t^{-1}]$, or even in $\cala$ since both categories have the
same objects.

For large enough $K > 0$, the commutative diagram
\[
\xymatrix{j_+C^+ \ar[rrr]^f
\ar[d]_{\footnotesize\begin{pmatrix}j_+\varphi^+\\ t^K\cdot x\circ f
\end{pmatrix}}
&&& C^-
\ar[d]^{\footnotesize\begin{pmatrix}1 \\ 0 \end{pmatrix}}
\\
j_+D^+\oplus \Phi^K(i_0 E')
\ar[rrr]^{{\footnotesize\begin{pmatrix}g^{-1}\circ \varphi^- & g^{-1}\circ
y\\ t^K\cdot x & t^K\cdot z \end{pmatrix}}^{-1} }
\ar[d]_{\footnotesize\begin{pmatrix}1 & 0 \end{pmatrix}}
&&& C^-\oplus i_0 E
\ar[d]^{\footnotesize\begin{pmatrix}\varphi^- & y \end{pmatrix}}
\\
j_+D^+ \ar[rrr]^g
&&& D^-
}
\]
provides the desired  factorization of~\eqref{diagram_defining_a_morphism_in_caly}.
\end{proof}

\begin{proof}[Proof of Theorem~\ref{the:homotopy_pull_back_of_calx}]
Combine 
Lemma~\ref{lem:caly_and_cala}, Lemma~\ref{the:relative_fibration_sequences}, and Lemma~\ref{k'_is_homotopy_equivalence}.
\end{proof}


\section{Proof of Theorem~\ref{the:computing_K(calx)}}
\label{sec:Proof_of_Theorem_ref(computing_K(calx)}

\begin{notation}[Truncation for objects] \label{not:truncation_for_objects}
Let $A$ and $B$ be objects in $\cala$. Define for $a,b \in \IZ \amalg \{-\infty,\infty\}$
an object in $\cala^\kappa$ by
\[
A[a,b] = \bigoplus_{k = a}^b \Phi^{-k}(A)
\]
where $A[a,b]$ is defined to be zero if $a > b$ holds. 

Given a morphism $f \colon A \to B$ in $\cala_{\Phi}[t,t^{-1}]$ and $a_0,b_0,a_1,b_1$ 
in $\IZ \amalg \{-\infty,\infty\}$, define the $\cala^\kappa$  morphism $f\trun$
in $\cala$  to be the composite
\[
f\trun \colon A[a_0,b_0] \xrightarrow{i} A[-\infty,\infty] = i^0 A 
\xrightarrow{i^0f} i^0B  = B[-\infty,\infty] \xrightarrow{p} B[a_1,b_1],
\]
where $i$ is the obvious inclusion and $p$ the obvious projection.
\end{notation}

The morphism $f\trun \colon A[-\infty,\infty] \to B[-\infty,\infty]$ agrees with
$i^0f$ for a morphism $f \colon A \to B$ in $\cala_{\Phi}[t,t^{-1}]$.  If $f$ belongs to
$\cala_{\Phi}[t^{\pm 1}]$, we abbreviate $(j_{\pm} f)\trun$ by $f \trun$ again.

Notice that $(g \circ f)\trun$ is in general  \emph{not} equal to $g\trun \circ f\trun$
and $\id\trun$ is in general \emph {not} the identity. As a typical example, let $f\colon A\to \Phi(A)$ 
be the morphism $t=\id_{\Phi(A)}\cdot t$ and $g\colon \Phi(A)\to A$ be the morphism $t\inv=\id_A\cdot t\inv$. Then 
\[
(t\inv \circ t)\trun\colon A[-\infty,0]\to A[-\infty,0]
\]
is the identity while the map
\[
A[-\infty,0]=\bigoplus_{k= 0}^\infty \Phi^{k}(A) \xrightarrow{t\trun} (\Phi(A))[-\infty,0] = \bigoplus_{k= 1}^\infty \Phi^{k}(A)
\]
is the canonical projection and in particular is not a monomorphism. As another example,
\[
\id_A\trun\colon A[-\infty,\infty]=\bigoplus_{k=-\infty}^\infty A \to A[0,0]=A
\]
is just the projection map.

\begin{notation}[Truncation for chain complexes] \label{not:truncation_for_chain_complexes}
If $C^+$ is an $\cala_{\Phi}[t]$-chain complex and $a,b \in \IZ \amalg \{-\infty,\infty\}$,
then we obtain an $\cala^\kappa$-chain complex $C^+[a,b]$ by defining
the $n$-chain object to be $C^+_n[a,b]$ and the $n$-th differential 
to be  $c_n\trun \colon C_n^+[a,b] \to C_{n-1}^+[a,b]$ if $c_n$ is the differential of $C^+$.
(One has to check that $c_n\trun \circ c_{n+1} \trun = 0$.) 
A chain map $f \colon C^+ \to D^+$ of $\cala_{\Phi}[t]$-chain complexes induces a 
$\cala^\kappa$-chain map denoted by $f\trun \colon C^+_n[a,b] \to D^+_n[a',b']$
provided that $a'\leq a$ and $b'\leq b$.

If $C^-$ is an $\cala_{\Phi}[t^{-1}]$-chain complex and $a,b \in \IZ \amalg \{-\infty,\infty\}$,
define the $\cala^\kappa$-chain complex $C^-[a,b]$ analogously.
A chain map $f \colon C^- \to D^-$ of $\cala_{\Phi}[t^{-1}]$-chain complexes induces a 
$\cala^\kappa$-chain map denoted by $f\trun \colon C^-[a,b] \to D^-[a',b']$
provided that $a'\geq a$ and $b'\geq b$.
\end{notation}

Notice that Notation~\ref{not:truncation_for_chain_complexes}  
(in contrast to Notation~\ref{not:truncation_for_objects})
does in this generality \emph{not} make sense 
for chain complexes in $\cala_\Phi[t,t\inv]$, because of the lack of functoriality of truncation.

\begin{definition}[Global section functor]\label{def:global_section_functor}
  The \emph{global section functor} 
  \[
   \Gamma\colon \Chcat(\calx)\to\Chcat(\cala^\kappa)
   \] 
   sends an object $(C^+, f, C^-)$ to the $\cala^\kappa$-chain complex
  \[
   \Sigma^{-1}\cone\left(C^+[0,\infty] \oplus C^-[-\infty,0] \xrightarrow{(-f\trun,\id\trun)} C^-[-\infty,\infty]\right).
   \] 
   A morphism
  $(\varphi^+,\varphi^-) \colon (C^+, f, C^-)  \to (D^+, g, D^-)$ of $\Chcat(\calx)$ 
  is sent to the  morphism in $\Ch(\cala^\kappa)$ obtained by applying
  Lemma~\ref{lem:elementary_facts}~\ref{lem:elementary_facts:maps_between_mapping_cones} 
  to the commutative diagram (using the trivial homotopy)
  \[
  \xymatrix@!C= 11em{C^+[0,\infty] \oplus C^-[-\infty,0] \ar[r]^-{(-f\trun,\id\trun)} \ar[d]_{(\varphi^+\trun,\varphi^-\trun)}
   &
   C^-[-\infty,\infty] \ar[d]^{\varphi^-\trun}
   \\
   D^+[0,\infty] \oplus D^-[-\infty,0] \ar[r]_-{(-g\trun,\id\trun)} 
   &
   D^-[-\infty,\infty]
   }
   \]

\end{definition}

\begin{remark}[Comparison with global sections for modules]
  \label{rem:Comparison_with_global_sections_for_modules}
  Consider  the special case of modules over a ring $R$ with automorphism $\phi \colon R \to R$.
  The classical global section functor assigns to a triple $(M^+,f,M^-)$ consisting of a finitely
  generated free $R_{\phi}[t]$-module $M^-$, a finitely generated free
  $R_{\phi}[t^{-1}]$-module $M^-$ and an $R_{\phi}[t,t^{-1}]$-isomorphism 
  $f \colon j_+M^+   := R_{\phi}[t,t^{-1}] \otimes_{R_{\phi}[t]} M^+ 
  \xrightarrow{\cong} j_-M^- :=   R_{\phi}[t,t^{-1}] \otimes_{R_{\phi}[t^{-1}]} M^-$ 
  the finitely generated projective $R$-module
  \[
  \Gamma(f) := \{(a^+,a^-) \in i^+M^+\oplus i^-M^-) \mid f \circ j_+(a) = j_-(b)\}
  \]
  where $i^{\pm} M^{\pm}$ is the restriction to an $R$-module and 
  $j_{\pm} \colon M^{\pm}   \to R_{\phi}[t,t^{\pm 1}] \otimes_{R_{\phi}[t^{\pm 1}]} M^{\pm}$ is the obvious map. This
  can be rewritten as the kernel of the $R$-homomorphism
  \[
  i^+M^+ \oplus i^-M^-\xrightarrow{(-f \circ j_+, j_-)} i^0j_-M^-,
  \]
  where $i^0$ is again restriction to $R$.
  In the case of modules over  rings, global sections and its derived functors
  can be used to compute the $K$-theory of the projective 
  line~\cite[Theorem~3.1 in  Section~8.3 on page~59]{Quillen(1973)}. 
  In our situation, the above kernel might not
  exist since $\cala$ is not necessarily abelian, but we can replace it by the  mapping cone construction. 
  
  Such an idea and a similar strategy of proof has been used by
  \change{H\"uttemann}-Klein-Vogell-Waldhausen-Williams~\cite{Huettemann-Klein-Waldhausen-Williams(2001)}.
\end{remark}

Let $\Chcathf(\cala)\subset \Chcat(\cala^\kappa)$ be the full subcategory of homotopy
finite chain complexes, i.e., chain complexes over $\cala^{\kappa}$ which are homotopy
equivalent to a (bounded) chain complex over $\cala$. It follows from 
Lemma~\ref{lem:homotopy_finite_two_of_three} that this category is closed under pushouts along a
cofibration, so it is a Waldhausen subcategory of $\Chcat(\cala^\kappa)$. The
Approximation Theorem~\ref{the:Cisinki_Approximation_Theorem} shows that the inclusion
$\Ch(\cala)\to\Chhf(\cala)$ induces an equivalence on $K$-theory.

\begin{lemma} \label{lem:Gamma_values_in_CHhf}

\begin{enumerate}
   \item \label{lem:Gamma_values_in_CHhf:exact}
The functor $\Gamma$ is Waldhausen exact (for the canonical Waldhausen structures).

\item \label{lem:Gamma_values_in_CHhf:hf} Suppose that $\cala$ is idempotent complete. 
  Then for any object $(C^+, f, C^-)$ of
  $\Chcat(\calx)$, the chain complex $\Gamma(C^+,f,C^-)\in \Chcat(\cala^\kappa)$ is
  chain homotopy equivalent to an object in $\Chcat(\cala)$. Thus, $\Gamma$ defines a Waldhausen
  exact functor
\[
\Gamma\colon \Chcat(\calx)\to\Chcathf(\cala).
\]
\end{enumerate}
\end{lemma}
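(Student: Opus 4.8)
The plan is to prove the formal assertion~(i) by exhibiting $\Gamma$ as a composite of operations that manifestly respect the Waldhausen structures, and then to prove the substantial assertion~(ii) by a cancellation argument reducing $\Gamma(C^+,f,C^-)$ to the mapping cone of a map that, outside a bounded window of indices, is an isomorphism.

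\textbf{Assertion (i).} First identify $C^+[0,\infty]$, $C^-[-\infty,0]$ and $C^-[-\infty,\infty]$ with the restrictions $i^+C^+$, $i^-C^-$ and $i^0j_-C^-$; this is legitimate because the differential of $C^+$ (resp.\ $C^-$) only involves nonnegative (resp.\ nonpositive) powers of $t$, so the relevant truncations do not ``leak'' and truncation is functorial on them. Under this identification $\Gamma$ is the composite of (a) the functor from $\Chcat(\calx)$ to the morphism category of $\Chcat(\cala^\kappa)$ sending $(C^+,f,C^-)$ to $\alpha=(-f\trun,\id\trun)\colon i^+C^+\oplus i^-C^-\to i^0j_-C^-$, and (b) $\Sigma^{-1}\cone$ of such a morphism, with functoriality and the trivial homotopy as in Lemma~\ref{lem:elementary_facts}~\ref{lem:elementary_facts:maps_between_mapping_cones}. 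The restriction functors $i^+,i^-,i^0$ and $j_-$ are additive, hence preserve degree-wise split injections, chain homotopy equivalences, and pushouts along degree-wise split injections; by the canonical Waldhausen structure together with Example~\ref{exa:projective_line}, cofibrations resp.\ weak equivalences in $\Chcat(\calx)$ are exactly the morphisms whose $k^+$- and $k^-$-components are degree-wise split injective resp.\ chain homotopy equivalences. The cone construction and its functoriality visibly preserve degree-wise split injections and pushouts along them, send a square whose vertical maps are chain homotopy equivalences to a chain homotopy equivalence (apply Lemma~\ref{lem:elementary_facts}~\ref{lem:elementary_facts:chain_homotopy_equivalence_2_of_3} to $0\to D\to\cone\to\Sigma C\to0$), and clearly $\Gamma(0)=0$; since $\Sigma^{-1}$ preserves all of this, assertion~(i) follows.

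\textbf{Assertion (ii).} Write $g=f^{-1}$, so $i^0f\colon C^+[-\infty,\infty]\to C^-[-\infty,\infty]$ is a chain isomorphism. Composing $\alpha$ with $i^0g=(i^0f)^{-1}$ on the target identifies $\cone(\alpha)$ with $\cone(\alpha')$, where $\alpha'=(-\iota^+,g\trun)\colon C^+[0,\infty]\oplus C^-[-\infty,0]\to C^+[-\infty,\infty]$, $\iota^+$ the inclusion and $g\trun$ the restriction of $i^0g$ to the source range $[-\infty,0]$. Since the $C^+$-differential raises the index, $C^+[0,\infty]$ is a subcomplex of $C^+[-\infty,\infty]$ with quotient $C^+[-\infty,-1]$; feeding the short exact sequences $0\to C^+[0,\infty]\xrightarrow{(\id,0)}C^+[0,\infty]\oplus C^-[-\infty,0]\xrightarrow{\pr_2}C^-[-\infty,0]\to0$ and $0\to C^+[0,\infty]\xrightarrow{\iota^+}C^+[-\infty,\infty]\to C^+[-\infty,-1]\to0$ into the cone construction — with $\alpha'$ compatible via $-\id_{C^+[0,\infty]}$ on the left and $\bar g:=\bigl(C^+[-\infty,\infty]\to C^+[-\infty,-1]\bigr)\circ g\trun$ on the right — produces a short exact sequence $0\to\cone(-\id_{C^+[0,\infty]})\to\cone(\alpha')\to\cone(\bar g)\to0$ whose left term is contractible by Lemma~\ref{lem:elementary_facts}~\ref{lem:elementary_facts:chain_homotopy_equivalence_cone}. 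Hence $\Gamma(C^+,f,C^-)\simeq\Sigma^{-1}\cone\bigl(\bar g\colon C^-[-\infty,0]\to C^+[-\infty,-1]\bigr)$.

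It remains to show $\cone(\bar g)\in\Chcathf(\cala)$. Choose $L$ with $(f_n)_i=(g_n)_i=0$ for $|i|>L$ and all $n$, and fix $N\ge L$. The $C^-$-differential lowers the index, so $C^-[-\infty,-N-1]$ is a subcomplex of $C^-[-\infty,0]$ with bounded $\cala$-complex quotient $C^-[-N,0]$; restrict $\bar g$ to it to get $\bar g|\colon C^-[-\infty,-N-1]\to C^+[-\infty,-1]$. The band estimate for $i^0g$ (band $\le L$, $N\ge L$) shows $\bar g|$ agrees there with $i^0g$ and is degree-wise split injective onto a degree-wise summand; the band estimate for $i^0f$ shows $i^0g\bigl(C^-_n[-\infty,-N-1]\bigr)\supseteq C^+_n[-\infty,-N-1-L]$, so the cokernel $Q$ of $\bar g|$ is, in degree $n$, a direct summand of the finitely generated object $C^+_n[-N-L,-1]$ — this is precisely where idempotent completeness of $\cala$ enters, guaranteeing $Q_n\in\cala$, so $Q$ is a bounded $\cala$-complex. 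Thus $\cone(\bar g|)\simeq Q$ is homotopy $\cala$-finite, and the subcomplex inclusion $C^-[-\infty,-N-1]\hookrightarrow C^-[-\infty,0]$ yields a short exact sequence $0\to\cone(\bar g|)\to\cone(\bar g)\to\Sigma C^-[-N,0]\to0$ with homotopy $\cala$-finite outer terms; by Lemma~\ref{lem:homotopy_finite_two_of_three}, $\cone(\bar g)$, hence $\Gamma(C^+,f,C^-)$, is homotopy $\cala$-finite. Combined with~(i) and the fact (noted just before the lemma) that $\Chcathf(\cala)$ is a Waldhausen subcategory of $\Chcat(\cala^\kappa)$, this gives the Waldhausen exact functor $\Gamma\colon\Chcat(\calx)\to\Chcathf(\cala)$. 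I expect the main obstacle to be the bookkeeping in this last step: the truncations of $C^+$ and those of $C^-$ behave oppositely — one family consists of subcomplexes, the other of quotient complexes, since the differentials shift the index in opposite directions — and one must track the band estimates carefully to see that $Q$ is finitely generated.
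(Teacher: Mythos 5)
Your proof is correct, and the overall strategy in both parts is the same as the paper's: part (i) by presenting $\Gamma$ as a composite of restriction functors and the cone construction, part (ii) by first collapsing $\Gamma(C^+,f,C^-)$ (via a degree-wise split short exact sequence) to the desuspended cone of a single truncated map, and then using a band estimate together with idempotent completeness to identify a bounded $\cala$-piece. The only real deviation is in the collapsing step of (ii), where you take the mirror route: the paper collapses the $C^-[-\infty,0]$-summand against the target, arriving at $\Sigma^{-1}\cone\bigl(-f\trun\colon C^+[0,\infty]\to C^-[1,\infty]\bigr)$ and then splitting the target over $\Idem(\cala^\kappa)$ as $C^+[N,\infty]\oplus R$ with $R$ bounded, whereas you first twist by $(i^0f)^{-1}$ so as to collapse the $C^+[0,\infty]$-summand, arrive at $\Sigma^{-1}\cone\bigl(\bar g\colon C^-[-\infty,0]\to C^+[-\infty,-1]\bigr)$, and then cut down the source to a subcomplex with bounded cokernel $Q$, finishing with the two-out-of-three property for homotopy finiteness (Lemma~\ref{lem:homotopy_finite_two_of_three}). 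Both variants use the same band estimate and invoke idempotent completeness at exactly the same point, so this is an equally valid, dual presentation of the paper's argument.
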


\begin{proof}~\ref{lem:Gamma_values_in_CHhf:exact} We showed in
  Section~\ref{sec:The_Gillet_Waldhausen_theorem} that the functors 
  $k^\pm\colon  \Ch(\calx)\to\Ch(\cala_\Phi[t^{\pm1}])$ are Waldhausen exact. The restriction functors
  from $\Chcat(\cala_\Phi[t])$, $\Chcat(\cala_\Phi[t^{-1}])$ and
  $\Chcat(\cala_\Phi[t,t^{-1}])$ to $\Chcat(\cala^\kappa)$ are defined on the level of
  additive categories and hence are Waldhausen exact. Taking cones and suspensions is also
  Waldhausen exact.  
  \\[1mm]~\ref{lem:Gamma_values_in_CHhf:hf} The following diagram of $\cala^{\kappa}$-chain complexes has exact rows
 \[
  \xymatrix{0 \ar[r] 
 & 
 C^-[-\infty,0] \ar[d]^{\id}_{\cong} \ar[r]^-{\tiny \bigl(\begin{matrix} 0 \\ \id \end{matrix}\bigr)}
 & 
 C^+[0,\infty] \oplus C^-[0,\infty] \ar[d]^{(-f\trun,\id\trun)} \ar[r]^-{(\id,0)}
 & 
 C^+[0,\infty ]\ar[d]^{-f\trun} \ar[r] 
 & 0
 \\
 0 \ar[r] 
 & 
 C^-[-\infty,0] \ar[r]_-{\id\trun}
 & 
 C^-[-\infty,\infty] \ar[r]_-{\id\trun}
 & C^-[1,\infty] \ar[r] 
 & 0
 }
\]
 We conclude from Subsection~\ref{subsec:Homotopy_fiber_sequences}
 \begin{eqnarray}
 \Gamma(C^+,f,C^-) &\simeq &\Sigma^{-1}\cone\bigl(-f\trun\colon C^+[0,\infty]\to C^-[1,\infty]\bigr).
 \label{Gamma_simeq_cone(-ftrun)}
 \end{eqnarray}
 
 Write $f_n\inv = \sum_{k \in \IZ} a_{n,k} \cdot  t^k$. Now choose a natural number $N$ such that 
 we have $a_{n,k} = 0$ for all $\lvert k\rvert \ge N$ and all $n$. 
 Then $f\inv\trun$ factors through
\[
\xymatrix{C^-[-\infty,\infty] \ar[r]^{f\inv\trun} \ar[d]^{\id\trun} & C^+[N,\infty]\\
 C^-[1,\infty] \ar[ru]_{\overline{f\inv}}
}
\] 
 and the composite
 \[
 C^+[N,\infty] \xrightarrow{f\trun} C^-[1,\infty] \xrightarrow{\overline{f^{-1}}} C^+[N,\infty]
 \]
 is the identity map.
 
  Hence in $\Idem(\cala^{\kappa})$, the chain complex $C^-[1,\infty]$ splits as
 \[
 C^-[1,\infty] \cong C^+[N,\infty] \oplus R.
 \]
 We argue that $R$ is actually isomorphic to a chain complex in $\cala$.
 In fact, denote by $r\colon C^-[1,\infty]\to R$ the projection and by $i\colon R\to
 C^-[1,\infty]$ the inclusion. The composite
 \[
 C^-[2N,\infty] \xrightarrow{f^{-1}\trun} C^+[N,\infty] \xrightarrow{f\trun} C^-[2N,\infty]
 \]
 is the identity, which shows that the restriction of $r$ onto $C^-[2N,\infty]$ is zero. Hence 
 $r$ factors as
 \[
 C^-[1,\infty] \xrightarrow{p}  C^-[1,2N-1] \xrightarrow{r'} R.
 \]
 The $\Idem(\cala^{\kappa})$-isomorphism
 \[
 \bigl(C^-[1,\infty], ir\bigr) \xrightarrow{pir} \bigl(C^-[1,2N-1] , pir'\bigr)
 \] 
 (with inverse $ir'$) now shows that $R=(C^-[1,\infty], ir)$ is isomorphic to an
 object in $\Idem(\cala)$, hence in $\cala$, since $\cala$ is idempotent complete. So $R$
 is isomorphic to a $\cala$-chain complex, as we claimed.
 
 Since $\cala^{\kappa}$ is a full subcategory of $\Idem(\cala^{\kappa})$,
 we obtain a exact sequence of $\cala^{\kappa}$-chain complexes
 \[
  \xymatrix{0 \ar[r] 
 & 
 C^+[N,\infty] \ar[r] \ar[d]^{\id}
 &  C^+[0,\infty] \ar[d]^{-f\trun} \ar[r] 
 & 
 C^+[0,N-1]\ar[d]^{g} \ar[r] 
 & 0
 \\
 0 \ar[r] 
 & C^+[N,\infty] \ar[r]^{-f\trun} 
 & C^-[1,\infty] \ar[r]^r 
 & R  \ar[r] 
 & 0
 }\]
 where $g$ is the induced map on the quotients. It shows that $\Sigma^{-1}\cone(-f\trun) \simeq
 \Sigma^{-1}\cone(g)$ which is isomorphic to a chain complex in $\cala$. Hence $\Gamma(C^+,f,C^-)$ belongs to
 $\Chcathf(\cala)$ because of~\eqref{Gamma_simeq_cone(-ftrun)}.
 \end{proof}

Recall that the automorphism $\Phi \colon \cala \to \cala$ extends to an automorphism,
denoted by the same letter, $\Phi \colon \cala_{\Phi}[t,t^{-1}] \to \cala_{\Phi}[t,t^{-1}]
$ by sending a morphism $\sum_{k= -\infty}^{\infty} g_k \cdot t^k \colon A \to B$ to
$\sum_{k= -\infty}^{\infty} \Phi(g_k) \cdot t^k \colon \Phi(A) \to \Phi(B)$. It induces
automorphisms $\Phi \colon \cala_{\Phi}[t^{\pm 1}] \to \cala_{\Phi}[t^{\pm 1}]$. In particular
we get for an $\cala_{\Phi}[t]$-chain complex $C$ a new $\cala_{\Phi}[t]$-chain complex
$\Phi^{-1}(C)$, an $\cala_{\Phi}[t]$-chain map $t \colon \Phi^{-1}(C) \to C$ and a
$\cala_{\Phi}[t,t^{-1}]$-chain map $t \colon \Phi^{-1}(j_+C) = j_+ \Phi^{-1}(C) \to j_+C$.

Denote by $s \colon \calx \to \calx$  the additive functor which sends the object 
$(C^+, f, C^-)$ to $(\Phi^{-1}(C^+), f\circ t , C^-)$ and the morphism $(\varphi^+,\varphi^-)$ 
to $(\Phi^{-1}(\varphi^+), \varphi^-)$.  This is well-defined since 
$j_+\Phi^{-1}(\varphi^+) = \Phi\inv(j_+\varphi^+) = t^{-1} \circ j_+(\varphi^+) \circ t$ holds in
$\cala_{\Phi}[t,t^{-1}]$. Recall that $l_0\colon \cala\to \calx$ was defined in~\eqref{def_of_l_i} to send $A$ to $(A, \id, A)$.
Put
\begin{align*}
l_i := s^i \circ l_0\colon & \Chcat(\cala)\to\Chcat(\calx);\\
   \Gamma_i := \Gamma\circ s^{-i}\colon & \Chcat(\calx)\to \Chcathf(\Idem(\cala)).
\end{align*}
Denote by $(\Chcat(\calx),w_i)$ the Waldhausen category with
underlying category the category of bounded chain complexes over
$\calx$ and the usual cofibrations, but with a new category
of weak equivalences, namely, the one consisting of those chain maps that become a weak
equivalence after applying $\Gamma_i$.  Notice that $w_0 \cap w_1$
contains all chain homotopy equivalences so that $\Chcat(\calx)$ is a
Waldhausen subcategory of $(\Chcat(\calx),w_0 \cap w_1)$.

\begin{lemma}\label{lem:comparison_of_weak_equivalences}
The map induced by inclusion of Waldhausen categories
\[
 \bfK(\Chcat(\calx))\to \bfK(\Chcat(\calx), w_0\cap w_1)
\]
is a homotopy equivalence.
\end{lemma}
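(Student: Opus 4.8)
The plan is to apply the Approximation Theorem~\ref{the_Approximation_Theorem} to the inclusion functor $F\colon \Chcat(\calx)\to (\Chcat(\calx), w_0\cap w_1)$, where the source carries the canonical Waldhausen structure (whose weak equivalences are the chain homotopy equivalences, by Lemma~\ref{lem:canonical_Waldhausen_structure_is_Waldhausen_structure}) and the target has the coarser notion of weak equivalence. First I would check that the saturation and cylinder axioms hold for both structures; for $w_0\cap w_1$ these follow because $\Gamma_0$ and $\Gamma_1$ are Waldhausen exact (Lemma~\ref{lem:Gamma_values_in_CHhf}) and weak equivalences are detected after applying these functors, and the cylinder functor on $\Chcat(\calx)$ serves both. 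Condition~\ref{the_Approximation_Theorem:A1} is the statement that a chain map in $\Chcat(\calx)$ is a chain homotopy equivalence if and only if it becomes a weak equivalence after applying $\Gamma_0$ \emph{and} $\Gamma_1$; the ``only if'' direction is clear since $\Gamma_i$ is exact and preserves chain homotopy equivalences, so the real content is the ``if'' direction.

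For the ``if'' direction and for condition~\ref{the_Approximation_Theorem:A2} the key observation is that the two global section functors $\Gamma_0$ and $\Gamma_1$ together see enough of an object $(C^+,f,C^-)$ of $\calx$ to recover both $C^+$ (up to chain homotopy, hence up to truncation issues) and $C^-$. Concretely, using~\eqref{Gamma_simeq_cone(-ftrun)} one has $\Gamma_0(C^+,f,C^-)\simeq \Sigma^{-1}\cone\bigl(C^+[0,\infty]\xrightarrow{-f\trun} C^-[1,\infty]\bigr)$ and $\Gamma_1(C^+,f,C^-) = \Gamma(\Phi(C^+), f\circ t\inv, C^-)$, which by the same formula is $\simeq \Sigma^{-1}\cone\bigl(\Phi(C^+)[0,\infty]\xrightarrow{-(f\trun)\circ t} C^-[1,\infty]\bigr)$, i.e.\ it records a \emph{shifted} truncation $C^+[-1,\infty]$ of $C^+$. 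The difference between these two truncation ranges is exactly the single summand in degree $-1$, i.e.\ $C^+$ itself (recall $\cala_\Phi[t]$-morphisms only move indices up). So from $\Gamma_0$ and $\Gamma_1$ one can reconstruct $C^+$ as a chain complex over $\cala$ up to chain homotopy, and the map to $C^-[1,\infty]$ then pins down $C^-$ as well via $f$. Thus if $\Gamma_0(\varphi)$ and $\Gamma_1(\varphi)$ are both chain homotopy equivalences, then so are $\varphi^+$ over $\cala_\Phi[t]$ and $\varphi^-$ over $\cala_\Phi[t,t\inv]$; combined with Lemma~\ref{lem:contractible_over_cala_Phi[t,t(-1)]_versus_over_cala}-type detection (here used over $\cala_\Phi[t^{\pm1}]$) one upgrades this to $k^\pm(\varphi)$ being chain homotopy equivalences, which by Example~\ref{exa:projective_line} means $\varphi$ is a canonical weak equivalence. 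This gives~\ref{the_Approximation_Theorem:A1}.

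For condition~\ref{the_Approximation_Theorem:A2}, given $(C^+,f,C^-)$ in $\Chcat(\calx)$ and a map $\psi\colon F(C^+,f,C^-)\to (D^+,g,D^-)$ that is a $w_0\cap w_1$-equivalence in the target, I would factor it through an honest chain homotopy equivalence in $\Chcat(\calx)$ by the same mechanism as in the proof of Lemma~\ref{k'_is_homotopy_equivalence}: use the reconstruction above to build, out of $D^\pm$ and the truncation data seen by $\Gamma_0$ and $\Gamma_1$, an object $(E^+, h, E^-)$ of $\calx$ together with a factorization $(C^+,f,C^-)\xrightarrow{u}(E^+,h,E^-)\xrightarrow{v}(D^+,g,D^-)$ where $v$ is a chain homotopy equivalence and $u$ can then be replaced by a cofibration via the mapping cylinder. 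The elementary-chain-complex bookkeeping of Lemma~\ref{lem:elementary_facts}~\ref{lem:elementary_facts:elementary}, applied over $\cala_\Phi[t,t\inv]$ exactly as in Lemma~\ref{k'_is_homotopy_equivalence}, absorbs the discrepancy between $C^\pm$ and $D^\pm$ up to elementary summands; the twisting by $\Phi$ contributes only powers of $t$ which can be taken large enough to land in $\cala_\Phi[t]$.

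The main obstacle I anticipate is condition~\ref{the_Approximation_Theorem:A1}: making precise the claim that $\Gamma_0$ and $\Gamma_1$ \emph{jointly} detect chain homotopy equivalences in $\calx$. The subtlety is the non-functoriality of truncation (Notation~\ref{not:truncation_for_chain_complexes}) and the fact that $f\trun$ is only a one-sided inverse on suitable truncation ranges; one must be careful that passing between the ranges $[0,\infty]$, $[-1,\infty]$, and $[1,\infty]$ and comparing cones really does recover $C^+$ and $C^-$ up to chain homotopy, rather than only up to some coarser equivalence. I would handle this by the same idempotent-completion splitting argument used in the proof of Lemma~\ref{lem:Gamma_values_in_CHhf}~\ref{lem:Gamma_values_in_CHhf:hf}, which controls exactly these truncated mapping cones, together with the detection Lemma~\ref{lem:contractible_over_cala_Phi[t,t(-1)]_versus_over_cala} to pass from $\cala^\kappa$- or $\cala_\Phi[t,t\inv]$-statements back to $\cala_\Phi[t^{\pm1}]$-statements. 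Once~\ref{the_Approximation_Theorem:A1} is in place,~\ref{the_Approximation_Theorem:A2} is a routine adaptation of Lemma~\ref{k'_is_homotopy_equivalence}.
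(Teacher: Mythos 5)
The paper proves this via the Fibration Theorem, not the Approximation Theorem: one forms the fibration sequence $\bfK(\Chcat(\calx)^{w_0\cap w_1},v)\to\bfK(\Chcat(\calx),v)\to\bfK(\Chcat(\calx),w_0\cap w_1)$ and shows the fiber is contractible by proving that every $(w_0\cap w_1)$-acyclic object $(C^+,f,C^-)$ is already $v$-acyclic. Your choice of the Approximation Theorem is not wrong in principle: since $\Gamma_0$ and $\Gamma_1$ are Waldhausen exact, a morphism $\varphi$ is a $w_0\cap w_1$-equivalence if and only if $\cone(\varphi)$ is $(w_0\cap w_1)$-acyclic, so condition~\ref{the_Approximation_Theorem:A1} for the inclusion is logically equivalent to the paper's detection statement about acyclic objects, and~\ref{the_Approximation_Theorem:A2} is trivially satisfied via mapping cylinders. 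So the two routes converge on the same key content; the paper's framing is just cleaner because it reduces the whole problem to objects rather than morphisms. (Incidentally, once the detection holds, $v$ and $w_0\cap w_1$ coincide as morphism classes, so both theorems are a bit of overkill — the two Waldhausen categories are literally the same.)

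The genuine gap is in the detection argument itself, which you acknowledge is the crux but do not actually prove. Your ``reconstruction'' heuristic — that $\Gamma_0$ and $\Gamma_1$ ``see enough'' to recover $C^+$ and $C^-$ — does not constitute an argument, and the tool you propose for making it precise (the idempotent-completion splitting of Lemma~\ref{lem:Gamma_values_in_CHhf}~\ref{lem:Gamma_values_in_CHhf:hf}) addresses a different problem, namely homotopy finiteness of the values of $\Gamma$, not contractibility detection. What the paper actually does is delicate: from $(w_0\cap w_1)$-acyclicity one first deduces only that the truncation inclusion $\id\trun\colon C^+[0,\infty]\to C^+[-1,\infty]$ is an $\cala^\kappa$-chain equivalence. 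Applying $\Phi^n$ and iterating gives that each $C^+[-n+1,\infty]\to C^+[-n,\infty]$ is a chain equivalence. Passing to the limit — the key step, showing $C^+[0,\infty]\to C^+[-\infty,\infty]$ is a chain equivalence — requires choosing compatible chain contractions $\gamma_*[-n]$ on the quotients $C^+[-n,\infty]/C^+[0,\infty]$ (using Lemma~\ref{lem:elementary_facts}~\ref{lem:elementary_facts:exact_sequence_and_contraction} and~\ref{lem:elementary_facts:chain_homotopy_equivalence_2_of_3}), because chain equivalences do not in general pass to colimits. Only then does one conclude $C^-[-\infty,0]=i^-C^-$ is contractible, repeat the argument with $f^{-1}$ to get $C^+[0,\infty]=i^+C^+$ contractible, and finally invoke Lemma~\ref{lem:contractible_over_cala_Phi[t,t(-1)]_versus_over_cala} (in its $i^\pm$ forms) to upgrade to contractibility of $C^\pm$ over $\cala_\Phi[t^{\pm 1}]$. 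Note that this last lemma detects chain equivalences over $\cala_\Phi[t^{\pm 1}]^\kappa$ from their $\cala^\kappa$-restrictions via the adjunction — it is not, as your sketch suggests, a tool for descending from $\cala_\Phi[t,t\inv]$-statements to $\cala_\Phi[t^{\pm 1}]$-statements. This iterated shift-invariance plus colimit-of-contractions structure is entirely absent from your proposal, and without it the detection claim is unsupported.
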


\begin{proof}
  Let $v$ be the standard structure of weak equivalences in $\Ch(\calx)$.  We will show
  that if an object $(C^+,f,C^-)$ of $\Chcat(\calx)$ is $(w_0\cap w_1)$-acyclic, then $C^+$ is
  contractible in $\cala_\Phi[t]$ and $C^-$ is contractible in $\cala_\Phi[t^{-1}]$, so that
  $(C^+,f,C^-)$ is $v$-acyclic. This statement implies that $\bfK(\Chcat(\calx)^{w_0\cap w_1},v)$ 
  is contractible, from which the Lemma follows by the 
  Fibration  Theorem~\ref{the:Fibration_Theorem}.

  First we want to show that the $\cala^{\kappa}$-chain complex $C^-[-\infty,0]$ is
  contractible. The following diagram of $\cala^{\kappa}$-chain complexes commutes
  \[
  \xymatrix@!C=11em{
    C^+[0,\infty] \oplus C^-[-\infty,0] \ar[r]^-{(-f\trun,\id\trun )} \ar[d]_{\id\trun \oplus \id}
   &
   C^-[-\infty,\infty]
  \\
   C^+[-1,\infty]\oplus C^-[-\infty,0] \ar[ru]_-{\quad (-f\trun,\id\trun)}
   &
   }
   \]
   There is an obvious identification of $C[-1,\infty]$ with $\Phi(C)[0,\infty]$. Under
   this identification the map 
   $(-f\trun,\id\trun) \colon C[-1,\infty] \oplus C^-[-\infty,0] \to C^-[-\infty,\infty]$ becomes
   the map $(-f \circ t^{-1})\trun,\id\trun]) \colon \Phi(C)[0,\infty] \oplus C^-[-\infty,0] 
  \to C^-[-\infty,\infty]$. Hence the
   mapping cone of the lower horizontal arrow agrees with the mapping cone appearing in the
   definition of $\Gamma_1(C^+,f,C^-)$. The mapping cone of the  horizontal map is
   the mapping cone appearing in the definition of $\Gamma_0(C^+,f,C^-)$.  Since
   $(C^+,f,C^-)$ of $\Chcat(\calx)$ is $(w_0\cap w_1)$-acyclic by assumption, the mapping
   cone of both the upper horizontal and the lower horizontal arrow are contractible. We
   conclude from Lemma~\ref{lem:elementary_facts}~\ref{lem:elementary_facts:chain_homotopy_equivalence_cone}  
   and~\ref{lem:elementary_facts:chain_homotopy_equivalence_2_of_3}
   that the inclusion $\id\trun \colon C^+[0,\infty] \to C^+[-1,\infty]$ is an $\cala^{\kappa}$-chain equivalence.

   If we apply $\Phi^n$ for $n \in \IZ, n\ge 0$ to the inclusion above and use the obvious
   identifications $\Phi^n(C^+[0,\infty]) = C^+[-n,\infty]$ and $\Phi^n(C^+[-1,\infty]) =
   C^+[-n-1,\infty]$, we conclude that also the inclusion $\id\trun \colon
   C^+[-n+1,\infty] \to C^+[-n,\infty]$ is an $\cala^{\kappa}$-chain equivalence.  Hence
   the inclusion $\id\trun \colon C^+[0,\infty] \to C^+[-n,\infty]$ is a
   $\cala^\kappa$-chain homotopy equivalence for every $n \in \IZ, n \ge 0$.

   Next we want to show that $\id\trun \colon C^+[0,\infty] \to C^+[-\infty,\infty]$ is a
   $\cala^\kappa$-chain homotopy equivalence. Since the inclusion $\id\trun \colon
   C^+[0,\infty] \to C^+[-n,\infty]$ is levelwise split injective, the canonical
   projection from its mapping cone to $C^+[-n,\infty]/C^+[0,\infty]$ is a
   $\cala^\kappa$-chain homotopy equivalence by
   Lemma~\ref{lem:elementary_facts}~\ref{lem:elementary_facts:chain_homotopy_equivalence_2_of_3}.
   Since $\id\trun \colon C^+[0,\infty] \to C^+[-n,\infty]$ is a $\cala^\kappa$-chain
   homotopy equivalence,
   Lemma~\ref{lem:elementary_facts}~\ref{lem:elementary_facts:chain_homotopy_equivalence_cone}
   implies that the $\cala^\kappa$-chain complex $C^+[-n,\infty]/C^+[0,\infty]$ is
   contractible.

  Because of Lemma~\ref{lem:elementary_facts}~\ref{lem:elementary_facts:exact_sequence_and_contraction} 
  and~\ref{lem:elementary_facts:chain_homotopy_equivalence_2_of_3} 
  we can  find for $n \in \IZ, n \ge 0$ chain contractions $\gamma_*[-n]$ for $C^+[-n,\infty]/C^+[0,\infty]$,
  such that $\gamma_*[-n]$ and $\gamma_*[-n-1]$ are compatible with the inclusion
  $C^+[-n,\infty]/C^+[0,\infty] \to C^+[-n-1,\infty]/C^+[0,\infty]$. By inspecting the definitions of
  the various chain modules as direct sums, one sees that the $\cala$-chain complex
   $C^+[-\infty,\infty]/C^+[0,\infty]$ is the colimit  $\colim_{n \to \infty} C^+[-n,\infty]$ within the category of
   $\cala$-chain complexes. Hence $C^+[-\infty,\infty]/C^+[0,\infty]$
    inherits a chain contraction from the various chain contractions
   $\gamma_*[-n]$. We conclude from 
   Lemma~\ref{lem:elementary_facts}~\ref{lem:elementary_facts:chain_homotopy_equivalence_cone}
   and~\ref{lem:elementary_facts:chain_homotopy_equivalence_2_of_3} that
    $\id\trun \colon C^+[0,\infty] \to C^+[-\infty,\infty]$ is a $\cala^\kappa$-chain homotopy equivalence.

   The following diagram commutes
   \[
   \xymatrix@!C=11em{
    C^+[0,\infty] \oplus C^-[-\infty,0] \ar[r]^-{(-f\trun,\id\trun )}_\simeq \ar[d]_{\id\trun \oplus \id}
   &
   C^-[-\infty,\infty]
  \\
   C^+[-\infty,\infty]\oplus C^-[-\infty,0] \ar[ru]_-{\quad (-f\trun,\id\trun)}
   &
   }
   \]
   Since the vertical and the horizontal arrow are $\cala$-chain homotopy equivalences,
   $(-f,\id\trun) \colon C^+[-\infty,\infty]\oplus C^-[-\infty,0] \to C^-[-\infty,\infty]$ is a $\cala$-chain homotopy
  equivalence. Since $f \colon C^+[-\infty,\infty]  \to C^-[-\infty,\infty]$ is a $\cala$-chain homotopy equivalence,
  we conclude from 
  Lemma~\ref{lem:elementary_facts}~\ref{lem:elementary_facts:chain_homotopy_equivalence_2_of_3} 
  that $C^-[-\infty,0]$ is contractible.

   Analogously one proves that the $\cala^{\kappa}$-chain complex
   $C^+[0,\infty]$ is contractible, Namely,  choose a $\cala_{\Phi}[t,t^{-1}]$-chain homotopy
   inverse $f^{-1}$ of $f$ and consider the triple
   $(C^-,f^{-1},C^+)$ and conclude from the assumption
   that the object $(C^+,f,C^-)$ of $\Chcat(\calx)$ is $(w_0\cap w_1)$-acyclic
   that  the mapping cones of the  $\cala^{\kappa}$-chain maps
  \begin{eqnarray*}
  (-f^{-1} \trun,\id\trun]) \colon C^-[-\infty,0] \oplus C^+[0,\infty] 
  & \to & 
   C^+[-\infty,\infty]; 
   \\
   (-f^{-1}\trun,\id\trun) \colon C^-[-\infty,1] \oplus C^+[0,\infty] 
   & \to &
   C^+[-\infty,\infty],
 \end{eqnarray*}
   are contractible.   

  Finally we conclude from Lemma~\ref{lem:contractible_over_cala_Phi[t,t(-1)]_versus_over_cala} that
  $C^+$ is contractible in $\cala_\Phi[t]$ and $C^-$ is contractible in $\cala_\Phi[t^{-1}]$.
  This finishes the proof of Lemma~\ref{lem:comparison_of_weak_equivalences}.
 \end{proof}

 Observe that for all $i$ we have $\Gamma_{i-1}\circ l_i\simeq *$. In fact,
 $\Gamma_{i-1}\circ l_i(C)=\Gamma\circ s\circ l_0(C)$ is, up to a suspension, the cone of
 the chain isomorphism
\[
\id\trun \oplus \id\trun \colon C[1,\infty] \oplus C[-\infty,0] \xrightarrow{\cong} C[-\infty,\infty]
\] 
and therefore contractible.  In particular, $l_i$ induces a functor
\[
\bfK(l_i)\colon \bfK(\Chcat(\cala))\to \bfK(\Chcat(\calx^{w_{i-1}}, w_i)),
\]
where $\Chcat(\calx^{w_{i-1}}, w_i)$ is the full Waldhausen subcategory of $(\Chcat(\calx), w_i)$ of those
$\calx$-chain complexes which are $w_{i-1}$-acyclic.
\begin{lemma}\label{lem:comparison_of_K_theories}
  Suppose that $\cala$ is idempotent complete. For any $i$, the maps
\begin{eqnarray*}
\bfK(\Chcat(\cala)) 
&\to &
\change{\bfK(\Chcat(\calx^{w_{i-1}}), w_i) \quad \mathrm{and}}
\\
\bfK(\Chcat(\cala))
& \to & 
\bfK(\Chcat(\calx), w_i)
\end{eqnarray*}
 induced by $l_i$ are weak equivalences.
\end{lemma}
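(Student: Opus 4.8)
The plan is to reduce both statements, via a two-out-of-three argument, to the claim that the global section functor itself induces an equivalence on $K$-theory, and then to verify that claim with Cisinski's Approximation Theorem~\ref{the:Cisinki_Approximation_Theorem}. The starting point is that $\Gamma_i\circ l_i$ is naturally chain homotopy equivalent to the inclusion $\Chcat(\cala)\hookrightarrow\Chcathf(\cala)$. Indeed, $\Gamma_i\circ l_i=\Gamma\circ s^{-i}\circ s^i\circ l_0=\Gamma\circ l_0$, and for a bounded $\cala$-chain complex $C$ the complex $\Gamma\circ l_0(C)=\Gamma(C,\id,C)$ is by definition $\Sigma^{-1}\cone$ of the difference map $C[0,\infty]\oplus C[-\infty,0]\xrightarrow{(-\id\trun,\id\trun)}C[-\infty,\infty]$; this map is levelwise split surjective with kernel the chain complex $C[0,0]=C$, so $\Gamma\circ l_0(C)\simeq C$, naturally in $C$. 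Since a natural chain homotopy equivalence of Waldhausen-exact functors induces a homotopy of the maps on $K$-theory, and since the inclusion $\Chcat(\cala)\hookrightarrow\Chcathf(\cala)$ is a $K$-equivalence (recorded just before the Lemma), the composites
\[
\Chcat(\cala)\xrightarrow{l_i}(\Chcat(\calx^{w_{i-1}}),w_i)\xrightarrow{\Gamma_i}\Chcathf(\cala),\qquad\Chcat(\cala)\xrightarrow{l_i}(\Chcat(\calx),w_i)\xrightarrow{\Gamma_i}\Chcathf(\cala)
\]
show it suffices to prove that $\bfK(\Gamma_i)$ is a weak homotopy equivalence on each of the Waldhausen categories $(\Chcat(\calx^{w_{i-1}}),w_i)$ and $(\Chcat(\calx),w_i)$.

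For each of these categories we apply Theorem~\ref{the:Cisinki_Approximation_Theorem} to $\Gamma_i$. The ambient hypotheses hold: source and target satisfy the saturation axiom (the $w_i$-equivalences are pulled back from chain homotopy equivalences along the exact functor $\Gamma_i$), and every chain map factors as a cofibration followed by a chain homotopy equivalence, hence a weak equivalence, via the mapping cylinder. Condition~\ref{the:Cisinki_Approximation_Theorem:A1} is immediate: by the very definition of $w_i$, a chain map in $\Chcat(\calx)$ — hence also in the full Waldhausen subcategory $\Chcat(\calx^{w_{i-1}})$ — is a $w_i$-equivalence exactly when $\Gamma_i$ applied to it is a chain homotopy equivalence. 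So the content lies in condition~\ref{the:Cisinki_Approximation_Theorem:A2}.

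For condition~\ref{the:Cisinki_Approximation_Theorem:A2} one is given an object $(C^+,f,C^-)$ of $\Chcat(\calx)$ — in the first case one that is $w_{i-1}$-acyclic — together with a chain map $g\colon\Gamma_i(C^+,f,C^-)\to C_1$ to a homotopy finite $\cala$-chain complex, and one must produce a morphism $u\colon(C^+,f,C^-)\to(D^+,h,D^-)$ in $\Chcat(\calx)$ (remaining $w_{i-1}$-acyclic in the first case) together with chain homotopy equivalences $v\colon C_1\to D_1$ and $w\colon\Gamma_i(D^+,h,D^-)\to D_1$ fitting into the square required by the theorem. Following the strategy of the proofs of Lemma~\ref{lem:certain_chain_homotopy_equivalences} and Lemma~\ref{k'_is_homotopy_equivalence}, one takes $(D^+,h,D^-)$ to be a direct summand of $(C^+,f,C^-)\oplus l_i(P)$, where $P$ is an $\cala$-chain complex modelled on a suitable enlargement of $C_1$ but with differential multiplied by a large power $t^K$, chosen big enough that the $+$-part genuinely lies over $\cala_\Phi[t]$ and that the gluing isomorphism is defined over $\cala_\Phi[t,t^{-1}]$; the map $u$ is then assembled from $g$ together with these $t^K$-scaled structure maps. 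The identity $\Gamma_{i-1}\circ l_i\simeq\ast$, observed just before the Lemma, guarantees that adjoining any complex of the form $l_i(P)$ preserves $w_{i-1}$-acyclicity, which keeps the construction inside $\Chcat(\calx^{w_{i-1}})$ in the first case; and $\Gamma_i(l_i(P))\simeq P$ by the computation of the first paragraph, which is exactly what lets one identify $\Gamma_i(D^+,h,D^-)$ with $D_1$ compatibly with $g$.

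The hard part is precisely this explicit construction for condition~\ref{the:Cisinki_Approximation_Theorem:A2}: one must choose $K$ and $P$ so that all the truncation identities of Notation~\ref{not:truncation_for_chain_complexes} line up — so that the $t^K$-scaled complex really defines an object of $\calx$, that $u$ really is a chain map there, and that applying $\Gamma_i$ to the enlargement recovers $C_1$ up to a controlled chain homotopy equivalence through which $g$ factors. Once condition~\ref{the:Cisinki_Approximation_Theorem:A2} is verified, Theorem~\ref{the:Cisinki_Approximation_Theorem} shows that $\bfK(\Gamma_i)$ is a weak homotopy equivalence on both $(\Chcat(\calx^{w_{i-1}}),w_i)$ and $(\Chcat(\calx),w_i)$, and the Lemma follows from the two-out-of-three reduction above.
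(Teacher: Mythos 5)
The reduction in your first paragraph is fine: $\Gamma_i\circ l_i\simeq\Gamma\circ l_0$ is naturally chain homotopy equivalent to the inclusion $\Chcat(\cala)\hookrightarrow\Chcathf(\cala)$ (this is exactly the paper's natural transformation $T$), the inclusion is a $K$-equivalence, so it suffices that $\bfK(\Gamma_i)$ be an equivalence. The problem is the claim that condition~\ref{the:Cisinki_Approximation_Theorem:A2} for $\Gamma_i\colon(\Chcat(\calx),w_i)\to\Chcathf(\cala)$ can be checked by a ``$t^K$-scaling'' construction modelled on Lemmas~\ref{lem:certain_chain_homotopy_equivalences} and~\ref{k'_is_homotopy_equivalence}. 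Those two lemmas work because the target of the functor is a category ($\Ch(\cala_\Phi[t^{-1}])$ or $\Ch(\cala_\Phi[t,t^{-1}])$) in which there is literally an element $t$ one can rescale by, and because the objects of $\cala_\Phi[t]$ and $\cala_\Phi[t^{-1}]$ coincide so one can build a ``$+$-part'' for a ``$-$-part'' by twisting the differentials. Here the target is $\Chcathf(\cala)$, whose objects are $\cala^\kappa$-chain complexes: there is no $t$ to scale by, and the natural mechanism for pulling a map $g\colon\Gamma_i(C^+,f,C^-)\to C_1$ back to the source category is the induction--restriction adjunction of Lemma~\ref{lem:Adjunction_between_induction_and_restriction} applied to the two factors of $\Gamma_i(C^+,f,C^-)$. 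But applying that adjunction produces morphisms $i_\pm\Gamma(f)\to C^\pm$ whose domain $i_\pm\Gamma(f)$ is an $\cala_\Phi[t^{\pm 1}]^\kappa$-chain complex, not a bounded $\cala_\Phi[t^{\pm 1}]$-chain complex; so the candidate object $(D^+,h,D^-)$ does not live in $\Chcat(\calx)$, and your factorization for~\ref{the:Cisinki_Approximation_Theorem:A2} cannot be formed inside $\Chcat(\calx)$ as the theorem requires.

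This is precisely why the paper's proof does not try to apply the Approximation Theorem to $\Gamma_i$ on $\Chcat(\calx)$ directly. It introduces the enlarged Waldhausen category $\widehat\calx$, in which $C^\pm$ are allowed to be $\kappa$-chain complexes and the gluing morphism is allowed to be a chain equivalence rather than an isomorphism. On the homotopy-finite part $\hatxhf$ of $\widehat\calx$ the adjunction together with the mapping cylinder constructions yields an honest zigzag of natural weak equivalences between $l_0\circ\Gamma$ and the identity, showing $l_0$ and $\Gamma$ are mutually inverse on $K$-theory there. Getting from $\hatxhf$ back to $\Chcat(\calx)$ is then done by two separate applications of Cisinski's theorem, first to the intermediate category $\hatxf$ where $f$ is a homotopy equivalence but $C^\pm$ are still bounded $\cala_\Phi[t^{\pm1}]$-complexes, and here the $t^K$-scaling trick does apply because one is comparing a homotopy equivalence $f$ to an isomorphism of the same underlying objects. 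Your write-up collapses all of this into one unsubstantiated verification of~\ref{the:Cisinki_Approximation_Theorem:A2}. The ``hard part'' you flag is in fact the whole content of the lemma, and the construction you sketch for it does not work without the intermediate category $\widehat\calx$ and the adjunction; I would treat the proposal as incomplete at that step.
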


\begin{proof}
  Since $s^i \colon (\calx,w_0) \to (\calx,w_i)$ is an isomorphism of Waldhausen
  categories, it suffices to treat the case $i = 0$.  The proof consists in showing that
  the functors $l_0$ and $\Gamma$ are mutually inverse up to homotopy in a suitable
  sense. To make this precise, denote by $\widehat{\calx}$ the Waldhausen category 
where an object is a triple $(C^+, f, C^-)$ with $C^+$ and $C^-$ chain complexes in 
$\cala_\Phi[t]^\kappa$ and $\cala_\Phi[t\inv]^\kappa$, respectively, and $f$ is a  
$\cala_\Phi[t,t\inv]^\kappa$-chain equivalence $j_+(C^+)\to j_-(C^-)$. Morphisms and 
the Waldhausen structure structure are defined analogously as for $\Ch(\calx)$.
  
  Note that both functors $\Gamma$ and $l_0$ extend (by
  the same formula) to functors
  \[
  \Ch(\cala^\kappa) \overset{l_0}{\underset{\Gamma}{\rightleftarrows}}
  \widehat{\calx}.
  \]
  We provide the right-hand side with the category $w_0$ of weak equivalences, i.e., a
  morphism is a weak equivalence if it becomes one after applying $\Gamma$. Then both
  functors are Waldhausen exact.

  We now claim that these are mutually inverse weak equivalences of Waldhausen categories,
  i.e., both composites are related by a zigzag of natural weak equivalences to the
  respective identity functors.

  To verify our claim, we first define for $C\in\Ch(\cala^\kappa)$ a chain homotopy equivalence
  of $\cala^{\kappa}$-chain complexes, natural in $C$
\begin{multline}
T(C) \colon C \to \Gamma \circ l_0(C) 
= \Sigma^{-1}\cone\bigl((-\id\trun ,\id\trun) \colon C[0,\infty] \oplus C[-\infty,0] 
\\
\to C[-\infty,\infty]\bigr)
\label{T(C)}
\end{multline}
by  the short exact sequence of $\cala^{\kappa}$-chain complexes
\[
0 \to  C = C[0,0] \xrightarrow{\begin{pmatrix} \id\trun \\ \id\trun \end{pmatrix}}
C[0,\infty] \oplus C[-\infty,0] \xrightarrow{(-\id\trun,\id\trun)} C[-\infty,\infty] \to 0.
\]
using Lemma~\ref{lem:elementary_facts}~\ref{lem:elementary_facts:maps_between_mapping_cones}.
The chain map $T(C)$ is a chain homotopy equivalence by
Subsection~\ref{subsec:Homotopy_fiber_sequences}.  
This provides a natural weak equivalence $T\colon \id\to \Gamma\circ l_0(C)$.

Now consider an object $(C^+,f,C^-)$ in $\widehat{\calx}$. In the sequel  we abbreviate
$\Gamma(f) := \Gamma(C^+,f,C^-)$. Recalling that $\Gamma(f)$ is the
$\cala^{\kappa}$-chain complex
\[
\Gamma(f)  := \Sigma^{-1}\cone\bigl((-f\trun ,\id\trun) \colon C^+[0,\infty] \oplus C^-[-\infty,0] 
\to C^-[-\infty,\infty]\bigr),
\]
we obtain from 
Lemma~\ref{lem:elementary_facts}~\ref{lem:elementary_facts:maps_between_mapping_cones}
the following (not necessarily commutative) diagram of $\cala^{\kappa}$-chain
complexes which commutes up to a preferred chain homotopy 
$h \colon f\trun \circ \id\trun \circ \varphi^+ \simeq \id \trun \circ \varphi^- \circ \id$:
\[
\xymatrix@!C=8em{
\Gamma(f) \ar[d]_{\varphi^+}  \ar[r]^{\id}
&
\Gamma(f)    \ar[d]^{\varphi^-}
\\
C^+[0,\infty] \ar[d]_{\id\trun} 
&
C^-[-\infty,0] \ar[d]^{\id\trun} 
\\
C^+[-\infty,\infty] \ar[r]_{f\trun}^{\simeq}
&
C^-[-\infty,\infty]
}
\]

>From the adjunctions  in Lemma~\ref{lem:Adjunction_between_induction_and_restriction}
we obtain an $\cala_{\Phi}[t]^\kappa$-chain map 
\[
\psi^+ \colon i_+\Gamma(f) \to C^+
\] 
from $\varphi^+$, an $\cala_{\Phi}[t^{-1}]^\kappa$-chain map 
\[
\psi^- \colon i_-\Gamma(f) \to C^-
\]
from $\varphi^-$, and a homotopy of $\cala_{\Phi}[t,t^{-1}]^\kappa$-chain maps 
\[
H \colon f \circ j_+\psi^+ \simeq j_-\psi^-.
\]
from $h$.

Let $D$ be the mapping cylinder of the $\cala_{\Phi}[t]^\kappa$-chain map
$\psi^+ \colon i_+\Gamma(f) \to C^+$. Denote by $u \colon i_+\Gamma(f) \to D$ 
and $v \colon C^+ \to D$ the canonical inclusions and by $p \colon D \to C^+$ the
canonical projection. Notice that $j_+D$ can be identified with the mapping cylinder of
$j_+\psi^+ \colon i_0\Gamma(´f) \to j_+C^+$. Because of
Lemma~\ref{lem:elementary_facts}~\ref{lem:elementary_facts:maps_between_mapping_cylinders}  
we obtain from $H$ a
$\cala_{\Phi}[t,t^{-1}]^\kappa$-chain map $f' \colon j_+D \to j_-C^-$ so that the following
diagram of $\cala_{\Phi}[t,t^{-1}]^\kappa$-chain complexes commutes (strictly):
\[
\xymatrix{i_0\Gamma(f) \ar[r]^{\id} \ar[d]_{j_+u}
&
i_0\Gamma(f)  \ar[d]^{j_-\psi^-}
\\
j_+D \ar[r]^{f'}
&
j_- C^- 
\\
j_+C^+ \ar[u]^{j_+v} \ar[r]^{f}
&
j_-C^- \ar[u]_{\id}
}
\]
Thus  we get morphisms 
\begin{align}
(u,\psi^-) \colon \bigl(i_+\Gamma(f),\id, i_-\Gamma(f)\bigr) & \to  (D,f',C^-);
\label{(u,psi-)}
\\
(v,\id) \colon (C^+,f,C^-) & \to  (D,f',C^-),
\label{(v,id)}
\end{align}
in $\widehat{\calx}$.

The morphism~\eqref{(v,id)} is a $w_0$-equivalence as $(v,\id)$ is a weak equivalence in
$\widehat{\calx}$ and $\Gamma$ is Waldhausen exact. To show that \eqref{(u,psi-)}
is a also $w_0$-equivalence, note first that the following diagram in
$\cala_\Phi[t,t\inv]^\kappa$ commutes up to a canonical homotopy $K$, which comes from the
homotopy $\id_D\simeq v\circ p$, corresponding to the collapse of the mapping cylinder to
its bottom:
\[
\xymatrix{j_+D \ar[d]^{j^+p} \ar[r]^{f'} & j_-C^- \ar[d]^\id\\
j_+C^+ \ar[r]^f & j_-C^-
}
\]

Abbreviating $\Gamma(f') := \Gamma(D,f',C^-)$,
Lemma~\ref{lem:elementary_facts}~\ref{lem:elementary_facts:maps_between_mapping_cones}
provides us therefore a canonical map $\mu\colon \Gamma(f')\to \Gamma(f)$ which splits
$\Gamma(j_+v, \id)$ and hence is a weak equivalence. Then it follows from the definitions that the composite
\[
\Gamma(f) \underset{\simeq}{\xrightarrow{T(\Gamma(f))}} \Gamma(i_+\Gamma(f), \id, i_-\Gamma(f)) 
\xrightarrow{\Gamma(u,\psi^-)} \Gamma(f') \underset{\simeq}{\xrightarrow{\mu}} \Gamma(f)
\]
is just the identity map. Hence \eqref{(u,psi-)} is a $w_0$-equivalence. 

Thus the two maps
\eqref{(u,psi-)} and \eqref{(v,id)} provide a zigzag of natural weak equivalences between
the identity functor and $l_0\circ\Gamma$. This proves our claim that $l_0$ and $\Gamma$ are mutually inverse natural
weak equivalences $\Ch(\cala^\kappa) \overset{l_0}{\underset{\Gamma}{\rightleftarrows}}
  \widehat{\calx}$.

  Now denote by $\hatxhf\subset \widehat{\calx}$ the full Waldhausen subcategory on
  objects $(C^+, f, C^-)$ such that $C^+\in \Chhf(\cala_\Phi[t])$ and
  $C^-\in\Chhf(\cala_\Phi[t\inv])$. The functors $l_0$ and $\Gamma$ restrict to mutually
  inverse natural weak equivalences
  \[
  \Chhf(\cala) \overset{l_0}{\underset{\Gamma}{\rightleftarrows}} \hatxhf.
  \]

  It follows that there are mutually inverse equivalences of spectra
  \[
  \bfK(\Chhf(\cala)) \overset{\bfK(l_0)}{\underset{\bfK(\Gamma)}{\rightleftarrows}}
  \bfK(\hatxhf).
  \] 
   By the Approximation Theorem~\ref{the:Cisinki_Approximation_Theorem}
  the map $\bfK(\Ch(\cala))\to\bfK(\Chhf(\cala))$ induced by the inclusion is an
  equivalence. To show that similarly $\bfK(\Ch(\calx))\xrightarrow{\simeq}
  \bfK(\hatxhf)$, we apply Cisinski's version of Approximation Theorem twice:

  Denote by $\hatxf\subset \hatxhf$ the full Waldhausen subcategory on objects $(C^+, f,
  C^-)$ such that both $C^+$ and $C^-$ are finite chain complexes in $\cala_\Phi[t]$ and
  $\cala_\Phi[t\inv]$ respectively. (It differs from $\Ch(\calx)$ in that $f$ is required
  to be a chain homotopy equivalence, rather than an isomorphism.) Given such an object
  $(C^+, f, C^-)$, by Lemma~\ref{lem:elementary_facts}~\ref{lem:elementary_facts:elementary} there are elementary
  chain complexes $E^+$ and $E^-$ in $\cala$ and a chain isomorphism $F\colon j_+ C^+
  \oplus i_0 E^+ \xrightarrow{\cong} j_- C^-\oplus i_0 E^-$ in $\cala_\Phi[t,t\inv]$ such
  that
\[
f= p\circ F\circ i\colon j_+ C^+ \xrightarrow{i} j_+ C^+ \oplus i_0 E^+ \xrightarrow{F} j_- C^- \oplus i_0 E^- \xrightarrow{p} j_- C^-,
\]
where $i$ and $p$ are the inclusion and projection, respectively. (Note that any
elementary chain complex in $\cala_\Phi[t,t\inv]$ is of the form $i_0 E$.)

Now let $(D^+, g, D^-)$ be an object of $\Ch(\calx)$ and $a=(a^+, a^-)\colon (D^+, g,
D^-)\to (C^+, f, C_-)$ a morphism in $\hatxf$. Postcomposing $F$ by
\[
\id\oplus \id\cdot t^{-n}\colon C^-\oplus i_0 E^-\to C^-\oplus \Phi^{-n} i_0 E^-
\]
for large enough $n$ we may assume that in the composite map
\[
j_- D^- \xrightarrow{g\inv} j_+ D^+\xrightarrow{i\circ a^+}  j_+ C^+ \oplus i_0 E^+ \xrightarrow{F} j_- C^- \oplus i_0 E^-
\]
no positive powers of $t$ appear, i.e., is of the form $j_-(h)$. Then the commutative diagram
\[
\xymatrix{(D^+, g, D^-) \ar[rr]^{(a^+, a^-)} \ar[d]^{(i\circ a^+, h)}
 && (C^+, f, C^-) \ar[d]^{(i,\id)}_\simeq 
\\
(C^+\oplus i_+ E^+, F, C^-\oplus i_- E^-) \ar[rr]^{(\id,p)}_\simeq
 && (C^+\oplus i_+ E^+, p\circ F, C^-)
}
\]
shows that the assumptions of Cisinski's version of the Approximation 
Theorem~\ref{the:Cisinki_Approximation_Theorem}  are
satisfied. Hence $\bfK(\Ch(\calx))\xrightarrow{\simeq} \bfK(\hatxf)$.

Now let $(D^+, g, D^-)$ be an object of $\hatxf$ and $a=(a^+, a^-)\colon (D^+, g, D^-)\to
(C^+, f, C_-)$ a morphism in $\hatxhf$. By assumption there exist a
$\cala_\Phi[t\inv]^\kappa$-chain equivalence $\psi\colon D^-\to Z^-$ where $Z^-$ is in
$\Ch(\cala_\Phi[t\inv])$. Moreover the assumptions imply that there is a factorization of
$a^+$ into
\[
D^+ \xrightarrow{z^+} Z^+ \underset{\simeq}{\xrightarrow{\phi}} C^+
\]
where $\phi$ is a $\cala_\Phi[t]^\kappa$-chain equivalence and $Z^+$ is in $\Ch(\cala_\Phi[t])$. 
Then the commutative diagram
\[
\xymatrix{(D^+, g, D^-) \ar[rr]^{(a^+, a^-)} \ar[d]^{(z^+, \psi\circ a^-)}
 && (C^+, f, C^-) \ar[d]^{(\id,\psi)}_\simeq 
\\
(Z^+, j_-\psi \circ f\circ j_+\phi, Z^-) \ar[rr]^{(\phi,\id)}_\simeq
 && (C^+, j_-\psi\circ f, Z^+)
}
\]
shows that the assumptions of Cisinski's version of the Approximation 
Theorem~\ref{the:Cisinki_Approximation_Theorem}  are satisfied. 
Hence $\bfK(\hatxf)\xrightarrow{\simeq} \bfK(\hatxhf)$.

This concludes the proof that 
\[
\bfK(l_0)\colon \bfK(\Ch(\cala))\to \bfK(\Ch(\calx), w_0)
\]
is an equivalence of spectra. To show that
\[
\bfK(l_0) \colon \bfK\bigl(\Chcat(\cala)\bigr) \to \bfK\bigl(\Chcat(\calx^{w_{-1}}),w_0\bigr)
\]
is a weak equivalence as well, apply the same argument, but replacing $\hatxhf$ and
$\hatxf$ by their full Waldhausen subcategories of $w_{-1}$-acyclic objects.
\end{proof}

 Finally, we can finish the proof of Theorem~\ref{the:computing_K(calx)}.

\begin{proof}[Proof of Theorem~\ref{the:computing_K(calx)}]
  By the Fibration Theorem~\ref{the:Fibration_Theorem} there is a fibration sequence
  \[
  \bfK(\Chcat(\calx^{w_0}), w_1)\to \bfK(\Chcat(\calx), w_0\cap w_1) \to
  \bfK(\Chcat(\calx), w_0))
  \]
  where by Theorem~\ref{the:Gillet_Waldhausen} and
  Lemma~\ref{lem:comparison_of_weak_equivalences} the middle term agrees with
  $\bfK(\calx)$.  By Lemma~\ref{lem:comparison_of_K_theories}, $\bfK(\cala)$ is homotopy
  equivalent to both the left-hand and the right-hand side, using the functors $l_1$ and
  $l_0$ respectively. The fibration sequence splits as $l_0$ factors through the middle
  term.
\end{proof}


\section{Strategy of proof for 
Theorem~\ref{the:BHS_decomposition_for_connective_K-theory}%
~\textnormal{\ref{the:BHS_decomposition_for_connective_K-theory:Nil}}}
\label{sec:Strategy_of_proof_for_the_connective_version_NIL}

In this section we present the details of the formulation and then the basic strategy of
proof of Theorem~\ref{the:BHS_decomposition_for_connective_K-theory}%
~\ref{the:BHS_decomposition_for_connective_K-theory:Nil}.

\begin{definition}[Nilpotent morphisms and Nil-categories] 
\label{def:Nilpotent_morphisms_and_Nil-categories}
  Let $\cala$ be an additive category and $\Phi$ be an automorphism of
  $\cala$.
  \begin{enumerate}
  \item A morphism $f\colon \Phi(A)\to A$ of $\cala$ is called \emph{$\Phi$-nilpotent} if
    for some $n \ge 1 $, the $n$-fold composite
    \[
    f^{(n)}:=f \circ \Phi(f) \circ \cdots \circ \Phi^{n-1}(f) \colon \Phi^n(A)\to A.
    \]
    is trivial;
 
  \item The category $\Nil(\cala, \Phi)$ has as objects pairs $(A, \phi)$ where $\phi\colon
    \Phi(A)\to A$ is a $\Phi$-nilpotent morphism in $\cala$. A morphism from $(A, \phi)$ to
    $(B, \mu)$ is a morphism $u\colon A\to B$ in $\cala$ such that the following diagram is
    commutative:
    \[
    \xymatrix{{\Phi(A)} \ar[rr]^{\phi} \ar[d]^{\Phi(u)} && A \ar[d]^u\\
      {\Phi(B)} \ar[rr]^{\mu} && B }
     \]
  \end{enumerate}
\end{definition}

The category $\Nil(\cala, \Phi)$ inherits the structure of an exact category from $\cala$,
a sequence in $\Nil(\cala,\Phi)$ is declared to be exact if the underlying sequence in $\cala$ is (split) exact.

There is a functor
\[
\chi\colon \Nil(\cala,\Phi) \to \Ch(\cala_\Phi[t\inv])
\]
sending $\phi\colon \Phi(A)\to A$ to the 1-dimensional chain complex
$A\xrightarrow{t\inv-i_-\phi} \Phi(A)$. (See
section~\ref{sec:On_the_Nil_category_(Version} for more details.) Using the
Gillet-Waldhausen Theorem~\ref{the:Gillet_Waldhausen}, this leads to a map
\[
\bfK(\chi)\colon \bfK(\Nil(\cala,\Phi))\to \bfK(\cala_\Phi[t]).
\]

The key ingredient in the proof Theorem~\ref{the:BHS_decomposition_for_connective_K-theory}%
~\ref{the:BHS_decomposition_for_connective_K-theory:Nil} is the following theorem
whose proof is deferred to Section~\ref{sec:On_the_Nil_category_(Version}.

\begin{theorem}[Fiber sequence for the Nil]
\label{the:fiber_sequence}
Suppose that $\cala$ is idempotent complete. 
The following is a homotopy fiber sequence, natural in $(\cala,\Phi)$:
\[
\bfK(\Nil(\cala,\Phi)) \xrightarrow{\bfK(\chi)} \bfK(\cala_{\Phi}[t^{-1}])
\xrightarrow{\bfK(j_-)} \bfK(\cala_{\Phi}[t,t^{-1}]).
\]
\end{theorem}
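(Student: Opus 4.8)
The plan is to run the same argument that proves Theorem~\ref{the:homotopy_pull_back_of_calx}: pass to bounded chain complexes, split off the relative term with the Fibration Theorem, identify the base with $\bfK(\cala_\Phi[t,t\inv])$ by the Approximation Theorem, and then identify the fiber with $\bfK(\Nil(\cala,\Phi))$ by comparing the Nil-category with its chain-complex (``homotopy-nilpotent'') incarnation.

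For the first part I would proceed exactly as in the proof of Theorem~\ref{the:relative_fibration_sequences}. Equip $\Ch(\cala_\Phi[t\inv])$ with its canonical Waldhausen structure $v$ (so that $v$-equivalences are the chain homotopy equivalences, by Lemma~\ref{lem:canonical_Waldhausen_structure_is_Waldhausen_structure}~\ref{lem:canonical_Waldhausen_structure_is_Waldhausen_structure:split}) and with the coarser category $w$ of weak equivalences consisting of the chain maps $f$ for which $\Ch(j_-)(f)$ is a chain homotopy equivalence over $\cala_\Phi[t,t\inv]$; since $j_-$ is additive, $w$ again satisfies the saturation, extension and cylinder axioms. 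Applying the Fibration Theorem~\ref{the:Fibration_Theorem}~\ref{the:Fibration_Theorem:spectra}, then the Approximation Theorem~\ref{the_Approximation_Theorem} to $\Ch(j_-)\colon (\Ch(\cala_\Phi[t\inv]),w)\to\Ch(\cala_\Phi[t,t\inv])$ (condition~\ref{the_Approximation_Theorem:A1} is immediate from the definition of $w$, and condition~\ref{the_Approximation_Theorem:A2} is the ``clearing of denominators'' argument of Lemma~\ref{lem:certain_chain_homotopy_equivalences}: scale the differentials of the target by a large power of $t$), and then the Gillet-Waldhausen Theorem~\ref{the:Gillet_Waldhausen}, produces a homotopy fiber sequence
\[
\bfK\bigl(\Ch(\cala_\Phi[t\inv])^w\bigr)\to \bfK(\cala_\Phi[t\inv])\xrightarrow{\bfK(j_-)} \bfK(\cala_\Phi[t,t\inv]),
\]
where $\Ch(\cala_\Phi[t\inv])^w$ is the full Waldhausen subcategory of bounded $\cala_\Phi[t\inv]$-chain complexes $C$ for which $j_-C$ is contractible over $\cala_\Phi[t,t\inv]$. (For this part the idempotent-completeness of $\cala$ is not needed.)

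It remains to construct a weak equivalence $\bfK(\Nil(\cala,\Phi))\xrightarrow{\simeq}\bfK(\Ch(\cala_\Phi[t\inv])^w)$ compatible with $\chi$, and this is where the real work lies. I would factor it through the Waldhausen category $\Nil^{\mathrm h}(\cala,\Phi)$ of pairs $(C,\varphi)$, with $C$ a bounded $\cala$-chain complex and $\varphi\colon\Phi(C)\to C$ a chain map such that some iterate $\varphi^{(n)}=\varphi\circ\Phi(\varphi)\circ\cdots\circ\Phi^{n-1}(\varphi)$ is chain nullhomotopic, the weak equivalences being the maps that are chain homotopy equivalences on underlying $\cala$-complexes. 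The functor $\chi$ extends to $\chi^{\mathrm h}\colon\Nil^{\mathrm h}(\cala,\Phi)\to\Ch(\cala_\Phi[t\inv])$ sending $(C,\varphi)$ to the algebraic mapping torus of $i_-\varphi$, i.e.\ the total complex of $\Phi(C)\xrightarrow{\id\cdot t\inv-i_-\varphi}C$; it lands in $\Ch(\cala_\Phi[t\inv])^w$ since after inverting $t$ the map $\id\cdot t\inv-i_-\varphi$ is $(\id\cdot t\inv)\circ(\id-\psi)$ with $\psi$ a $\Phi$-nilpotent endomorphism, hence invertible. The two remaining steps are: (i) $\chi^{\mathrm h}$ induces an equivalence $\bfK(\Nil^{\mathrm h}(\cala,\Phi))\xrightarrow{\simeq}\bfK(\Ch(\cala_\Phi[t\inv])^w)$, by yet another application of the Approximation Theorem, the key input being that every bounded $\cala_\Phi[t\inv]$-complex which is contractible after inverting $t$ is, up to weak equivalence and after adding elementary complexes, of the form $\chi^{\mathrm h}(C,\varphi)$ --- roughly, one extracts from the $t\inv$-adic structure of the differentials an underlying $\cala$-complex $C$ and a self-map $\varphi$, whose homotopy nilpotence is forced by contractibility over $\cala_\Phi[t,t\inv]$; and (ii) the inclusion $\Nil(\cala,\Phi)\hookrightarrow\Nil^{\mathrm h}(\cala,\Phi)$, $(A,\varphi)\mapsto(A[0],\varphi)$, induces an equivalence on $\bfK$. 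Step~(ii) is the passage from the ``Waldhausen setting'' to the ``Quillen setting'' mentioned in the introduction; I would prove it by induction on the length of $C$, using idempotent-completeness of $\cala$ (through Lemma~\ref{lem:criterion_for_finitely_dominated} and the splitting statements of Lemma~\ref{lem:elementary_facts}) to peel off contractible summands, the base case being the observation that homotopy nilpotence of an endomorphism of a single object coincides with honest nilpotence.

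Composing~(ii),~(i), the identification of the base and the Gillet-Waldhausen Theorem, and observing that every construction above is functorial in $(\cala,\Phi)$, yields the asserted natural homotopy fiber sequence. The main obstacle is steps~(i)--(ii): turning a bounded complex over the ``polynomial'' category $\cala_\Phi[t\inv]$ into an object equipped with a homotopy-nilpotent endomorphism while keeping track of the nullhomotopies witnessing nilpotence, and handling the exact structures correctly --- $\Nil(\cala,\Phi)$ carries the exact structure inherited from $\cala$, not the split exact one, so its canonical Waldhausen structure must be matched with the $w$-acyclic complexes via Lemma~\ref{lem:canonical_Waldhausen_structure_is_Waldhausen_structure} and Example~\ref{exa:twisted_Nil_category}.
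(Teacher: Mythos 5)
Your overall strategy matches the paper's: split off the fiber via the Fibration Theorem, identify the base with $\bfK(\cala_\Phi[t,t\inv])$ by Approximation, and identify the fiber with $\bfK(\Nil(\cala,\Phi))$ via an intermediate ``homotopy-Nil'' category, with $\chi$ sending $(C,\varphi)$ to the cone of $t\inv - i_-\varphi$. The first half is fine. But both of your remaining steps~(i) and~(ii) have genuine problems, and in particular idempotent completeness is invoked in the wrong place.

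For step~(i), the phrase ``one extracts from the $t\inv$-adic structure of the differentials an underlying $\cala$-complex $C$ and a self-map $\varphi$'' suggests reading $(C,\varphi)$ off the differentials of a bounded $\cala_\Phi[t\inv]$-complex $D$ directly. That does not work: the inverse of $\chi$ is the restriction functor $D\mapsto (i^-D,\,i^-t\inv)$, which necessarily passes through the category $\cala^\kappa$ of infinite direct sums, since $i^-D_n = \bigoplus_{k\le 0}\Phi^{-k}(D_n)$. You then have to argue that $i^-D$ is nonetheless $\cala$-homotopy finite; this is where the assumption that $D$ is contractible over $\cala_\Phi[t,t\inv]$ enters, by truncating a nullhomotopy to exhibit $i^-D$ as a homotopy retract of a bounded $\cala$-complex, after which \emph{idempotent completeness} of $\cala$ (via Lemma~\ref{lem:criterion_for_finitely_dominated}) is what lets you conclude homotopy finiteness (this is the paper's Lemma~\ref{lem:comparing_finiteness_conditions}). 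So the idempotent-completeness hypothesis is consumed here, in step~(i), not in step~(ii), and the comparison needs an intermediate category $\HNil(\Chhf(\cala),\Phi)$ allowing $\cala^\kappa$-complexes, not just your $\Nil^{\mathrm h}(\cala,\Phi)$.

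For step~(ii), the proposed induction on length by ``peeling off contractible summands'' does not get off the ground: a general bounded $\cala$-complex $D$ carrying a homotopy-nilpotent $\psi$ is not contractible (its $K$-theory class is exactly what you want to compute), so the splitting statements of Lemma~\ref{lem:elementary_facts} do not let you shorten it, and no length reduction is available. The paper's argument (Lemma~\ref{lem:endos_and_strictly_nilpotent}) goes in the opposite direction: given a nullhomotopy of $\psi^{(n)}$, it \emph{enlarges} $D$ to an iterated mapping cylinder $D'=\Phi^{n-2}(\cyl(\psi))\cup_{\Phi^{n-2}(D)}\cdots\cup_{\Phi(D)}\cyl(\psi)$, on which one can write down a strictly nilpotent self-map $\psi'$ (essentially a shift), together with a zig-zag of weak equivalences back to $(D,\psi)$, and then verifies Cisinski's approximation property. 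None of this uses idempotent completeness. The base-case remark you make (homotopy nilpotence equals strict nilpotence in degree zero) is true but is already subsumed by the equivalence $\Nil(\Ch(\cala),\Phi)=\Ch(\Nil(\cala,\Phi))$ and Gillet--Waldhausen, which handles the passage from objects to strictly-nilpotent complexes; the substantive content is the homotopy-to-strict strictification, for which an induction on length is not a viable strategy.
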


In the remainder of this section we explain how Theorem~\ref{the:BHS_decomposition_for_connective_K-theory}%
~\ref{the:BHS_decomposition_for_connective_K-theory:Nil} follows from 
Theorem~\ref{the:fiber_sequence}.  Define spectra 
\begin{eqnarray*}
\bfE_0(\cala,\Phi) 
& := & 
\hofib\bigl(\bfK(i_+) \colon \bfK(\cala)  \to \bfK(\cala_{\Phi}[t]) \bigr);
\\
\bfE_1(\cala,\Phi) 
& := & 
\hofib\bigl(\bfK(i_+ \circ \Phi^{-1}) \vee \bfK(i_+) \colon \bfK(\cala) \vee \bfK(\cala)   \to \bfK(\cala_{\Phi}[t]) \bigr);
\\
\bfE_2(\cala,\Phi) 
& := & 
\hofib\bigl(\bfK(\cala_{\Phi}[t^{-1}])\xrightarrow{\bfK(j_-)} \bfK(\cala_{\Phi}[t,t^{-1}])\bigr).
\end{eqnarray*}
The inclusion to the second summand $\bfK(\cala)  \to  \bfK(\cala) \vee \bfK(\cala)$ induces a map of spectra
\[
\bfw \colon \bfE_0 \to \bfE_1
\]
and the projection onto the first summand  $\bfK(\cala) \vee \bfK(\cala)   \to \bfK(\cala)$ induces 
a map of spectra
\[
\bfx \colon \bfE_1 \to \bfK(\cala),
\]
such that the following is a fibration sequence of spectra
\begin{equation}\label{eq:fibration_sequence_E0_E1_KA}
\bfE_0(\cala,\Phi)  \xrightarrow{\bfw} \bfE_1(\cala,\Phi)  \xrightarrow{\bfx} \bfK(\cala).
\end{equation}

>From the diagram~\eqref{ho-pullback_auxiliary} 
 we obtain a weak equivalence of spectra, natural in $(\cala,\Phi)$.
\[
\bfy \colon \bfE_1(A,\Phi) \xrightarrow{\simeq} \bfE_2(A,\Phi).
\]
Let
\[\bfz \colon \bfK(\Nil(\cala,\Phi)) \to \bfE_2(\cala,\Phi)
\]
be  the in $(\cala,\Phi)$ natural weak homotopy equivalence associated to the 
homotopy fiber sequence of Theorem~\ref{the:fiber_sequence}. 
Define $\bfE(\cala,\Phi)$ to be the homotopy pullback
\[
\xymatrix{\bfE(\cala,\Phi)  \ar[r]^-{\overline{\bfy \circ \bfw}} \ar[d]_{\overline{\bfz}}^{\simeq}
& 
 \bfK(\Nil(\cala,\Phi)) \ar[d]^{\bfz}_{\simeq}
\\
\bfE_0(\cala,\Phi)   \ar[r]_-{\bfy \circ \bfw}
& 
\bfE_2(\cala,\Phi)
}
\]
It follows from the sequence \eqref{eq:fibration_sequence_E0_E1_KA} that
\[
\bfE(\cala,\Phi) \xrightarrow{\overline{\bfy\circ \bfw}} \bfK(\Nil(\cala,\Phi))
\xrightarrow{\bfx\circ \bfy\inv\circ \bfz} \bfK(\cala)
\] 
is a fibration sequence of spectra.

Now the inclusion
$i\colon \cala \to \Nil(\cala,\Phi)$ sending $A$ to $(A,0)$ induces a map of spectra
\[
\bfK(i) \colon \bfK(\cala) \to \bfK(\Nil(\cala,\Phi)).
\]

\begin{lemma}
In the homotopy category, the following composite agrees with $-\bfK(\Phi\inv)$: 
\[
\bfK(\cala) \xrightarrow{\bfK(i)} \bfK(\Nil(\cala,\Phi))\bigr)
\xrightarrow{\bfz} \bfE_2(A,\Phi)
\xrightarrow{\bfy^{-1}} 
\bfE_1(A,\Phi)
\xrightarrow{\bfx} \bfK(\cala).
\]
\end{lemma}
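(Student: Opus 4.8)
Throughout write $\pi^{\bfE_2}\colon\bfE_2\to\bfK(\cala_\Phi[t^{-1}])$ and $(\pi_1,\pi_2)\colon\bfE_1\to\bfK(\cala)\vee\bfK(\cala)$ for the canonical maps that forget the nullhomotopy, so that $\bfx=\pi_1$ and, by definition of $\bfz$ from the homotopy fibre sequence of Theorem~\ref{the:fiber_sequence}, $\pi^{\bfE_2}\circ\bfz=\bfK(\chi)$. The plan is to trace all four maps through their definitions and reduce the statement to the Waldhausen additivity theorem together with a comparison of the preferred homotopies that enter the construction of $\bfy$.

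First I would unwind $\bfz\circ\bfK(i)$: it is the lift of $\bfK(\chi\circ i)$ to $\bfE_2=\hofib(\bfK(j_-))$ along the canonical nullhomotopy of $\bfK(j_-)\circ\bfK(\chi)$. That nullhomotopy exists because the $\cala_\Phi[t,t^{-1}]$-chain complex $j_-\chi(A,\phi)$ is naturally contractible: its differential $t^{-1}-i_-\phi$ differs from the isomorphism $t^{-1}$ by a nilpotent endomorphism, hence is invertible, and a contraction can be written down functorially; for $\phi=0$ this contraction is, under the identifications $j_\pm\circ i_\pm=i_0$, the natural transformation $S$ of Subsection~\ref{subsec:The_NK-terms_and_the_maps_bfa_and_bfb} evaluated at $\Phi(A)$. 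To compute $\bfK(\chi\circ i)$ itself I would use the short exact sequence of functors $\cala\to\Ch(\cala_\Phi[t^{-1}])$ (level-wise split, hence a canonical cofibration with cokernel) whose sub- and quotient-functors are the two objects $i_-(A)$ and $i_-(\Phi A)$ of the one-dimensional complex $\chi(A,0)$ placed in their respective degrees. Waldhausen additivity together with $\bfK(\Sigma(-))\simeq-\bfK(-)$ then identifies $\bfK(\chi\circ i)$, after the Gillet--Waldhausen equivalence (Theorem~\ref{the:Gillet_Waldhausen}), with the difference of $\bfK(i_-)$ and $\bfK(i_-\circ\Phi)$; in particular $\bfK(\ev_0^-)\circ\bfK(\chi\circ i)$ is $\pm(\id_{\bfK(\cala)}-\bfK(\Phi))$.

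The second ingredient is a cheap constraint on the composite. Since $\Phi$ commutes with $i_+$ and $\ev_0^+$, the equivalence $\bfK(\Phi)$ of $\bfK(\cala_\Phi[t])$ respects the canonical splitting $\bfK(\cala_\Phi[t])\simeq\bfK(\cala)\vee\bfNK(\cala_\Phi[t])$ determined by $i_+$ and $\ev_0^+$; hence $\bfK(i_+\circ\Phi^{-1})$ has image in the $\bfK(\cala)$-summand, and the canonical nullhomotopy of $\bigl(\bfK(i_+\circ\Phi^{-1})\vee\bfK(i_+)\bigr)\circ(\pi_1,\pi_2)$ forces $\pi_2\simeq-\bfK(\Phi^{-1})\circ\pi_1$. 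As $\bfy$ is the fibre comparison of the homotopy cartesian square~\eqref{ho-pullback_auxiliary}, induced by its top map $\bfK(i_-)\vee\bfK(i_-)$, this yields $\bfK(\ev_0^-)\circ\pi^{\bfE_2}\circ\bfy\simeq\pi_1+\pi_2\simeq(\id_{\bfK(\cala)}-\bfK(\Phi^{-1}))\circ\bfx$. Composing with $\bfy^{-1}\circ\bfz\circ\bfK(i)$ and using the previous paragraph shows that $\id_{\bfK(\cala)}-\bfK(\Phi^{-1})$, applied after the composite in the Lemma, equals $\bfK(\ev_0^-)\circ\bfK(\chi\circ i)$. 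This already identifies the composite up to postcomposition with $\id-\bfK(\Phi^{-1})$.

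The final and hardest step is to remove that indeterminacy and arrive at the precise answer $-\bfK(\Phi^{-1})$. This is done by matching the preferred nullhomotopy of $\bfK(j_-)\circ\bfK(\chi\circ i)$ produced in the first paragraph — governed by $S$ at $\Phi(A)$ — against the preferred homotopy $\bfK(S)$ that is built into the square~\eqref{ho-pullback_auxiliary}, and hence into $\bfy$ and into the map $\bfa$; carrying this comparison through, keeping careful track of the shift by $\Phi$ relating $S$ at $A$ and at $\Phi(A)$ and of the sign conventions in the mapping torus $\bfT_{\bfK(\Phi^{-1})}$, pins the lift $\bfy^{-1}\circ\bfz\circ\bfK(i)$ down so that its $\bfx$-component is $-\bfK(\Phi^{-1})$ in the homotopy category. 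I expect this bookkeeping of preferred homotopies to be the real obstacle; everything else is formal, and the resulting statement is exactly what is needed so that $\bfK(i)$ splits off the fibration sequence used to prove Theorem~\ref{the:BHS_decomposition_for_connective_K-theory}~\ref{the:BHS_decomposition_for_connective_K-theory:Nil}.
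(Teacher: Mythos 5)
Your first two paragraphs give a correct but only \emph{partial} constraint on the composite: combining $\pi^{\bfE_2}\circ\bfz=\bfK(\chi)$, additivity, and the retractions of $\bfK(i_\pm)$ yields
\[
\bigl(\id_{\bfK(\cala)}-\bfK(\Phi^{-1})\bigr)\circ\bigl(\bfx\circ\bfy^{-1}\circ\bfz\circ\bfK(i)\bigr)\simeq\bfK(\ev_0^-)\circ\bfK(\chi\circ i).
\]
But $\id-\bfK(\Phi^{-1})$ is not a monomorphism in the homotopy category of spectra (for $\Phi=\id$ it is the zero map), so this relation determines the composite only modulo maps annihilated by post-composition with $\id-\bfK(\Phi^{-1})$. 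The ``matching of preferred nullhomotopies'' that you defer to your third paragraph is therefore not a bookkeeping afterthought but the entire substance of the lemma, and you do not carry it out; furthermore, removing the indeterminacy requires controlling the full lift $\bfy^{-1}\circ\bfz\circ\bfK(i)\colon\bfK(\cala)\to\bfE_1$ — including its $\bfNK$-component — and not merely its image under $\bfK(\ev_0^-)\circ\pi^{\bfE_2}\circ\bfy$. As written, your argument does not prove the statement.

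The paper sidesteps this entirely by working with explicit models. Swapping the roles of $t$ and $t^{-1}$ in the fibration sequences of Section~\ref{sec:Proof_of_Theorem_ref(the:homotopy_pull_back_of_calx)} identifies $\bfE_1\simeq\bfK(\Ch(\calx)^w)$ and $\bfE_2\simeq\bfK(\Ch(\cala_\Phi[t^{-1}])^w)$, under which $\bfy$ becomes the map induced by $k^-$ — so there is no abstract inverse of a fibre comparison to unwind. One then writes down an explicit functor $F_2\colon\cala\to\Ch(\calx)^w$, a mapping cone of a morphism from an object built from $l_1$ to one built from $l_0$, which lifts $\chi\circ i$ along $k^-$. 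Additivity computes $\bfK(F_2)$ in $\bfK(\calx)$ as a difference of $\bfK(l_0)$ and a $\Phi$-twist of $\bfK(l_1)$, and applying $(\bfK(l_1)\vee\bfK(l_0))^{-1}$ followed by the first projection gives the answer on the nose. That explicit lift to $\Ch(\calx)^w$ — which determines the composite exactly rather than up to post-composition — is precisely what your proposal is missing.
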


\begin{proof}
  As in section~\ref{sec:Proof_of_Theorem_ref(the:homotopy_pull_back_of_calx)} (but
  interchanging the roles of $t$ and $t\inv$) we denote by
  $\Ch(\cala_\Phi[t\inv])^w\subset \Ch(\cala_\Phi[t\inv])$ the full Waldhausen subcategory
  of chain complexes which are contractible over $\cala_\Phi[t,t\inv]$, and by
  $\Ch(\calx)^w\subset \Ch(\calx)$ the full Waldhausen subcategory or complexes whose
  plus-part is contractible. Then the results of that section imply
  $\bfE_2(\cala,\Phi)\simeq \bfK(\Ch(\cala_\Phi[t\inv])^w)$; moreover
  $\bfE_1(\cala,\Phi)\simeq \bfK(\Ch(\calx^w))$ if we use the equivalence
  \[
  \bfK(l_1)\vee \bfK(l_0)\colon \bfK(\cala) \vee \bfK(\cala) \xrightarrow{\simeq}
  \bfK(\calx)
  \] 
   from Theorem~\ref{the:computing_K(calx)}.
 
  Under these identifications, the composite $\bfz\circ \bfK(i)$ corresponds to the map
  induced by the functor
  \[
   F_1\colon \cala\to\Ch(\cala_\Phi[t\inv])^w, \quad A\mapsto
  \cone(\Phi(A)\xrightarrow{t\inv} A).
  \] 
   But this is the image of the functor
  \[
   F_2\colon \cala\to\Ch(\calx)^w, \quad A\mapsto
  \cone\bigl(l_1(\Phi\inv(A))\xrightarrow{(t\inv,\id)} l_0(A)\bigr)
  \] 
  under the projection   $\calx\to \cala_\Phi[t\inv]$. Applying the Additivity theorem to the
  cylinder-cone-sequence shows that in $\bfK(\calx)$, we have
\[
\bfK(F_2)\simeq \bfK(l_0)- \bfK(l_1\circ \Phi\inv),
\]
which is the image under $\bfK(l_1)\vee \bfK(l_0)$ of the map 
\[
(-\bfK(\Phi\inv),\id)\colon \bfK(\cala)\to \bfK(\cala)\vee \bfK(\cala).
\]
Now project to the first variable.
\end{proof}

Hence the fibration sequence splits and we obtain a weak equivalence
\begin{eqnarray}
\bfK(\bfi) \vee \bfK(\overline{\bfy \circ \bfw}) \colon 
\bfK(\cala) \vee \bfE(\cala,\Phi)\xrightarrow{\simeq} \bfK(\Nil(\cala,\Phi)).
\label{K(cala)_vee_K(E)_simeq_K(Nil)}
\end{eqnarray}
As $\bfE_0(\cala,\Phi)$ is the homotopy fiber of 
$\bfK(i_+) \colon \bfK(\cala)  \to \bfK(\cala_{\Phi}[t])$ and
$\bfNK(\cala_{\Phi}[t])$ is the homotopy fiber of $\bfK(\bfev_0^+) \colon \bfK(\cala_{\Phi}[t]) \to \bfK(\cala)$
and $\bfK(\bfev_0^+) \circ \bfK(i_+)$ is the identity, we obtain a weak equivalence
of spectra, natural in $(\cala,\phi)$
\[
\bfu \colon \bfE_0 \to \Omega \bfNK(\cala_{\Phi}[t]).
\]
Thus we obtain a weak equivalence of spectra, natural in $(\cala,\Phi)$,
\begin{eqnarray}
\bfu \circ \overline{\bfz} \colon \bfE 
& \xrightarrow{\simeq} &
\Omega \bfNK(\cala_{\Phi}[t]).
\label{K(cala)_vee_E_simeq_K(cala)_vee_OmegaNK}
\end{eqnarray}
Now Theorem~\ref{the:BHS_decomposition_for_connective_K-theory}%
~\ref{the:BHS_decomposition_for_connective_K-theory:Nil} follows 
from~\eqref{K(cala)_vee_K(E)_simeq_K(Nil)} and~\eqref{K(cala)_vee_E_simeq_K(cala)_vee_OmegaNK}, 
provided that Theorem~\ref{the:fiber_sequence} holds.

 
 \section{On the Nil-category}
 \label{sec:On_the_Nil_category_(Version}

 Lemma~\ref{lem:caly_and_cala}  and Theorem~\ref{the:relative_fibration_sequences} 
 imply that there is homotopy fiber sequence
 \begin{equation}\label{eq:fundamental_fiber_sequence}
 \bfK(\Ch(\cala_\Phi[t^{-1}])^w) \to \bfK(\cala_\Phi[t^{-1}])\to \bfK(\cala_\Phi[t,t^{-1}])
 \end{equation}
 where
 $\Ch(\cala_\Phi[t^{-1}])^w)$ denotes the category of bounded chain complexes over
 $\cala_\Phi[t^{-1}]$ which are contractible as chain complexes over
 $\cala_\Phi[t,t^{-1}]$. The main goal
 of this section is to see that the first term of this sequence can be described in terms
 of the $K$-theory of the  twisted $\Nil$-category $\Nil(\cala,\Phi)$.

 If $(A,\phi)$ is an object of $\Nil(\cala,\Phi)$, then there is an associated chain
 complex
 \[
 \Phi(A) \xrightarrow{t^{-1} -i_- \phi} A
 \]
over $\cala_\Phi[t^{-1}]$,  concentrated in dimension $0$ and $1$. It is contractible over $\cala_\Phi[t,t^{-1}]$ since
 \[
  t^{-1} -i_- \phi = (1-\phi \cdot t)\circ t^{-1}
  \] 
  and both $1-\phi \cdot t$ and  $t^{-1} $ are invertible in
 $\cala_\Phi[t,t^{-1}]$:  inverses are $\sum_{i=0}^{n-1} (\phi \cdot t)^i$ and $t$
 respectively, where $n$ is such that $\phi^{(n)}=0$. This induces a  functor of Waldhausen categories
 \[
 \chi\colon \Nil(\cala,\Phi)\to \Ch(\cala_\Phi[t^{-1}])^w.
 \]
 
 The goal of this section is to prove the following result:
 
 \begin{theorem}\label{the:relative_term_is_Nil}
  Suppose that $\cala$ is idempotent complete. 
 Then the  induced map on connective $K$-theory
 \[
 \bfK(\chi)\colon \bfK(\Nil(\cala,\Phi))\to \bfK(\Ch(\cala_\Phi[t^{-1}])^w)
\] 
 is a homotopy equivalence.
 \end{theorem}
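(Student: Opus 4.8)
The plan is to reduce, via the Gillet--Waldhausen Theorem, to a statement about categories of chain complexes, and then to apply the Approximation Theorem. By Theorem~\ref{the:Gillet_Waldhausen} applied to the exact category $\Nil(\cala,\Phi)$, the inclusion induces a homotopy equivalence $\bfK(\Nil(\cala,\Phi))\xrightarrow{\simeq}\bfK(\Ch(\Nil(\cala,\Phi)))$, where $\Ch(\Nil(\cala,\Phi))$ carries its canonical Waldhausen structure, whose weak equivalences are, by Example~\ref{exa:twisted_Nil_category}, the chain maps which become chain homotopy equivalences after applying the forgetful functor $F\colon\Nil(\cala,\Phi)\to\cala$. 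I would extend $\chi$ to a Waldhausen exact functor $\overline\chi\colon\Ch(\Nil(\cala,\Phi))\to\Ch(\cala_\Phi[t^{-1}])^w$ by totalisation: a complex $(C_n,\phi_n)$ is sent to the total complex of the double complex whose $n$-th column is $\Phi(C_n)\xrightarrow{t^{-1}-i_-\phi_n}C_n$ (placed so that the construction restricts to $\chi$ on complexes concentrated in degree $0$); equivalently $\overline\chi(D)=\cone\bigl(t^{-1}-i_-\phi_\bullet\colon i_-(\Phi F(D))\to i_-F(D)\bigr)$, where $\phi_\bullet$ is the levelwise $\Phi$-nilpotent $\cala$-chain map underlying $D$ and $i_-$ denotes also the levelwise functor $\Ch(\cala)\to\Ch(\cala_\Phi[t^{-1}])$. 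Each column is contractible over $\cala_\Phi[t,t^{-1}]$, so by Lemma~\ref{lem:elementary_facts}~\ref{lem:elementary_facts:chain_homotopy_equivalence_2_of_3} so is $\overline\chi(D)$; thus $\overline\chi$ does land in $\Ch(\cala_\Phi[t^{-1}])^w$. Since the composite $\Nil(\cala,\Phi)\to\Ch(\Nil(\cala,\Phi))\xrightarrow{\overline\chi}\Ch(\cala_\Phi[t^{-1}])^w$ is $\chi$ on the nose, it suffices to prove that $\bfK(\overline\chi)$ is a homotopy equivalence, which I would establish with the Approximation Theorem~\ref{the_Approximation_Theorem} (or its Cisinski variant, Theorem~\ref{the:Cisinki_Approximation_Theorem}); the axioms needed for the two Waldhausen structures are supplied by Lemma~\ref{lem:canonical_Waldhausen_structure_is_Waldhausen_structure} together with the existence of mapping cylinders.

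For condition~\ref{the_Approximation_Theorem:A1}, using that $\overline\chi$ commutes with mapping cones reduces the claim to: a bounded $\Nil(\cala,\Phi)$-complex $D$ has $F(D)$ contractible over $\cala$ if and only if $\overline\chi(D)$ is contractible over $\cala_\Phi[t^{-1}]$. The two-step horizontal filtration of the defining double complex yields a natural exact sequence of $\cala_\Phi[t^{-1}]$-chain complexes
\[
0\to i_-F(D)\to \overline\chi(D)\to \Sigma\, i_-\bigl(\Phi F(D)\bigr)\to 0,
\]
so contractibility of $F(D)$ over $\cala$ implies that of $\overline\chi(D)$ over $\cala_\Phi[t^{-1}]$ by Lemma~\ref{lem:elementary_facts}~\ref{lem:elementary_facts:chain_homotopy_equivalence_2_of_3}. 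For the converse I would restrict along $i^-\colon\cala_\Phi[t^{-1}]^\kappa\to\cala^\kappa$: a direct computation, using that the $\phi_n$ are $\Phi$-nilpotent so that $t^{-1}-i_-\phi_n$ becomes split injective with cokernel the underlying object (the splitting existing because $\cala$ is idempotent complete), identifies $i^-\overline\chi(D)$ up to $\cala^\kappa$-chain homotopy with $F(D)$; since restriction detects contractibility (the analogue of Lemma~\ref{lem:contractible_over_cala_Phi[t,t(-1)]_versus_over_cala} for $i^-$) and $\cala$ is a full subcategory of $\cala^\kappa$, contractibility of $\overline\chi(D)$ forces that of $F(D)$. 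This iff is also what makes $\overline\chi$ preserve weak equivalences.

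The main obstacle is condition~\ref{the_Approximation_Theorem:A2}, the approximation property. Given a bounded $\Nil(\cala,\Phi)$-complex $C_0$ and a map $f\colon\overline\chi(C_0)\to C_1$ in $\Ch(\cala_\Phi[t^{-1}])^w$, one must produce (in the Cisinski form) a commutative square exhibiting $f$, up to weak equivalence, as $\overline\chi$ of a map out of $C_0$; the crux is that every object $C_1$ of $\Ch(\cala_\Phi[t^{-1}])^w$ is $\cala_\Phi[t^{-1}]$-chain homotopy equivalent, compatibly with a prescribed map from the image of $\overline\chi$, to $\overline\chi(Q)$ for some bounded $\Nil(\cala,\Phi)$-complex $Q$. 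This is a categorified, spectrum-level enhancement of Ranicki's surjectivity argument \cite[Chapter~9]{Ranicki(1992a)}: starting from a bounded $\cala_\Phi[t^{-1}]$-complex that is acyclic over $\cala_\Phi[t,t^{-1}]$, one stabilises by elementary complexes --- using Lemma~\ref{lem:elementary_facts}~\ref{lem:elementary_facts:induction_step_argument} and~\ref{lem:elementary_facts:elementary} --- to make the differentials split, then ``folds'' repeatedly to spread out the powers of $t^{-1}$ into a chain homotopy equivalent complex whose differentials are affine-linear in $t^{-1}$, and finally, using acyclicity over $\cala_\Phi[t,t^{-1}]$ and idempotent completeness of $\cala$ (via Lemma~\ref{lem:criterion_for_finitely_dominated}), recognises the outcome as $\overline\chi(Q)$ for a bounded complex $Q$ of $\Phi$-nilpotent endomorphisms read off from the $t^{-1}$- and constant parts of the differentials. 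The given map $f$ is carried along by the standard mapping-cylinder factorisation, and all choices can be made naturally in $(\cala,\Phi)$. I expect the folding/linearisation step, together with the bookkeeping required to keep it functorial and to accommodate the prescribed map, to be the genuinely hard part; once~\ref{the_Approximation_Theorem:A1} and~\ref{the_Approximation_Theorem:A2} are in place, the Approximation Theorem gives that $\bfK(\overline\chi)$, and hence $\bfK(\chi)$, is a homotopy equivalence.
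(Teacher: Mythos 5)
Your global strategy — reduce via Gillet--Waldhausen to chain categories, extend $\chi$ levelwise to $\overline\chi$, and invoke an approximation theorem — shares its opening moves with the paper, and your verification of condition~\ref{the_Approximation_Theorem:A1} via the characteristic--sequence computation of $i^-\overline\chi(D)$ is sound. The genuine gap is exactly where you say it is: condition~\ref{the_Approximation_Theorem:A2} for the functor $\overline\chi$ itself. You describe it as a ``categorified enhancement of Ranicki's surjectivity argument'' involving stabilisation and repeated ``folding'' to linearise the differentials in $t^{-1}$, but this is a sketch, not a proof, and it is the entire difficulty of the theorem. Ranicki's argument in \cite[Chapter~9]{Ranicki(1992a)} works on the level of $K_0$/$K_1$; promoting it to the approximation property requires doing it naturally and compatibly with a prescribed map $f\colon\overline\chi(C_0)\to C_1$, and it is not clear that the folding procedure can be organized to respect such a map. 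You cannot first replace $C_1$ by $\overline\chi(Q)$ and then worry about $f$: the commutative square in Theorem~\ref{the:Cisinki_Approximation_Theorem} requires a morphism $C_0\to Q$ in the source, which must be produced jointly with the replacement.

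The paper's proof avoids this surjectivity problem entirely, and that is the insight your proposal is missing. Rather than applying the approximation theorem to $\chi$, the paper passes to the $\kappa$-enriched categories where the characteristic sequence (Lemma~\ref{lem:characteristic_sequence_over_cala}) does not merely compute $i^-\chi(C,\phi)$ but provides an honest inverse functor $N_{\cala^\kappa}$: by Lemma~\ref{lem:chi_and_N}, $\chi_{\cala^\kappa}$ and $N_{\cala^\kappa}$ are inverse equivalences of Waldhausen categories between $\End(\Ch(\cala^\kappa),\Phi)$ and $\Ch(\cala_\Phi[t^{-1}]^\kappa)$. After cutting down to the homotopy--finite subcategories on both sides, this restricts to an equivalence of Waldhausen categories $\HNil(\Chhf(\cala),\Phi)\simeq\Chhf(\cala_\Phi[t^{-1}])^w$ (Proposition~\ref{pro:reexpressing_nil}) --- no surjectivity-up-to-homotopy argument is needed because the inverse is written down explicitly. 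The approximation theorem is then applied only to the \emph{inclusion} functors $\Nil(\cala,\Phi)\to\HNil(\Chhf(\cala),\Phi)$ and $\Ch(\cala_\Phi[t^{-1}])^w\to\Chhf(\cala_\Phi[t^{-1}])^w$, where the factorisations demanded by condition~\ref{the_Approximation_Theorem:A2} can be produced by mapping-cylinder constructions together with the delicate ``stationary homotopy'' arguments of Lemmas~\ref{lem:endos_and_finiteness} and~\ref{lem:endos_and_strictly_nilpotent}. Those lemmas are the paper's substitute for your folding step, but they are formulated for inclusions of endomorphism categories rather than for $\chi$ directly, which is what makes them provable. To turn your proposal into a proof you would either have to supply the folding argument in full functorial detail --- a substantial piece of work not present in the literature --- or restructure along the lines of the paper by introducing $N_{\cala^\kappa}$ as an actual inverse.
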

 
 \begin{proof}[Proof of Theorem~\ref{the:fiber_sequence} using Theorem~\ref{the:relative_term_is_Nil}]
 Immediate from the fiber sequence \eqref{eq:fundamental_fiber_sequence}.
 \end{proof}


 \subsection{The characteristic sequence}
 \label{subsec:The_characteristic_sequence}

 The first step in the proof of Theorem~\ref{the:relative_term_is_Nil} is to relate chain complexes over
$\cala_\Phi[t\inv]$ with chain complexes over $\cala$ equipped with an endomorphism. 
This relation is a consequence of the characteristic sequence, which we recall now.

 Recall that $\cala^\kappa$ is obtained from $\cala$ by adjoining countable direct sums.  
 We have defined induction and restriction functors 
 $i_- \colon \cala^\kappa \to \cala_{\Phi}[t^{-1}]^\kappa$ and $i^- \colon \cala_{\Phi}[t^{-1}]^\kappa \to \cala^{\kappa}$ 
 in Subsections~\ref{subsec:induction}
 and~\ref{subsec:restriction}.  Consider an object $A \in \cala_{\Phi}[t^{-1}]^\kappa$. Let 
 \[
 e \colon i_-i^-A \to A
 \]
 be the morphism in $\cala_{\Phi}[t^{-1}]^\kappa$ which is the adjoint of 
 $\id\colon i^-A \to i^-A$ under the adjunction of 
 Lemma~\ref{lem:Adjunction_between_induction_and_restriction}.
 We have the  morphism $\id_A \cdot t^{-1} \colon \Phi (A) \to  A$ 
 in $\cala_{\Phi}[t^{-1}]^\kappa$. Applying the composite $i_-i^-$ yields a morphism 
 $i_-i^- (\id_A \cdot t^{-1})  \colon i_-i^-\Phi(A) \to i_-i^-A$  in 
 $\cala_{\Phi}[t^{-1}]^\kappa$. We also have the morphism 
 $\id_{i_-i^-A} \cdot t^{-1} \colon i_-i^-\Phi(A) \to i_-i^-A$. 
 We will abbreviate $\id_{i_-i^-A} \cdot t^{-1}$ and $\id_A \cdot t^{-1}$ by $t^{-1}$.
 The difference of the two morphisms above yields the  morphism in $\cala_{\Phi}[t^{-1}]^\kappa$
 \[
 t^{-1} - i_-i^-t^{-1} :=  \id_{i_-i^-A} \cdot t^{-1} - i_-i^- (\id_A \cdot t^{-1}) \colon 
  i_-i^- \Phi(A) \to i_-i^-A.
 \]
 
 \begin{lemma}\label{lem:characteristic_sequence}
 Let $A$ be an object in $\cala_{\Phi}[t^{-1}]^\kappa$. Then the so called characteristic sequence 
 \[
 0 \to  i_-i^-\Phi(A) \xrightarrow{t^{-1} - i_-i^-t^{-1}}  i_-i^-A \xrightarrow{e}  A \to  0
 \]
 in $\cala_\Phi[t^{-1}]^\kappa$ is (split) exact and natural in $A$.
 \end{lemma}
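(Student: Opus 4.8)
The plan is to prove split exactness directly, by producing an explicit contracting homotopy, after reducing to the case of a single object. For the reduction: all of the ingredients --- the functors $\Phi$, $i_-$, $i^-$, the counit $e$, and the morphism $t^{-1} - i_-i^- t^{-1}$ --- are additive and commute with the preferred $\kappa$-direct sums, and the two maps of the sequence are natural in $A$ (for $e$ by naturality of a counit; for $t^{-1} - i_-i^- t^{-1}$ because $t^{-1}\colon \Phi \Rightarrow \id$ is a natural transformation of endofunctors of $\cala_{\Phi}[t^{-1}]^{\kappa}$, which is precisely the defining relation of the twisted Laurent category). Hence for $A = \bigoplus_{i} A_i$ the characteristic sequence is the direct sum of the characteristic sequences of the $A_i$; since a direct sum of split short exact sequences is split short exact, it suffices to treat the case where $A$ is (the image under $i_-$ of) a single object of $\cala_{\Phi}[t^{-1}]$. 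This argument also yields naturality of the sequence.

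For a single object $A$, I would make everything explicit via the ``shift'' description of restriction --- the analogue for $\cala_{\Phi}[t^{-1}]$ of the second description of $i^0$ in Section~\ref{subsec:restriction}. It identifies $i^- A$ with $\bigoplus_{m \ge 0} \Phi^m(A)$ in $\cala^{\kappa}$, hence $i_-i^-A$ with $\bigoplus_{m\ge 0}\Phi^m(A)$ in $\cala_{\Phi}[t^{-1}]^{\kappa}$ (each summand carrying its own $t^{-1}$-structure). Unwinding the adjunction formula of Lemma~\ref{lem:Adjunction_between_induction_and_restriction} then shows that $e$ restricts on the summand $\Phi^m(A)$ to the morphism $\id_A \cdot t^{-m}\colon \Phi^m(A)\to A$ (the $m$-fold composite of $t^{-1}$); that $\id_{i_-i^-A}\cdot t^{-1}$ is the ``diagonal'' map whose $m$-th component is $\id_{\Phi^m(A)}\cdot t^{-1}\colon \Phi^{m+1}(A)\to\Phi^m(A)$; and that $i_-i^-(\id_A\cdot t^{-1})$ is the ``shift'' sending the $m$-th summand identically into the $(m+1)$-st. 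Thus $d := t^{-1}-i_-i^- t^{-1}$ is a ``telescope differential'', and $e\circ d = 0$ is immediate.

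I would then exhibit the contracting homotopy, exactly as for Bass's classical characteristic sequence. Take $s := \iota_0\colon A = \Phi^0(A) \hookrightarrow \bigoplus_{m\ge0}\Phi^m(A) = i_-i^-A$, so that $e\circ s = \id_A$; and let $r\colon \bigoplus_{m\ge0}\Phi^m(A)\to\bigoplus_{m\ge0}\Phi^{m+1}(A) = i_-i^-\Phi(A)$ have, on the summand $\Phi^m(A)$, the component $-\,t^{-(m-n-1)}\colon \Phi^m(A)\to\Phi^{n+1}(A)$ (the $(m-n-1)$-fold composite of $t^{-1}$) into the $n$-th summand for $0 \le n \le m-1$, and zero into the summands $n\ge m$. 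This is a finite sum in each component, hence a well-defined morphism of the direct sum (it is essential here that we work with $\bigoplus$, not $\prod$). A routine matrix computation then yields $r\circ d = \id$ and $s\circ e + d\circ r = \id$; together with $e\circ d = 0$ and $e\circ s = \id_A$, these identities force $e$ to be a split epimorphism with kernel $\im d$ and $d$ a split monomorphism, so the characteristic sequence is split short exact, which finishes the proof.

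The main difficulty is bookkeeping rather than conceptual --- the statement is the categorical incarnation of Bass's characteristic sequence. The care needed is in tracking the powers of $\Phi$ and the signs, in particular in verifying that the counit $e$ really is the iterated-$t^{-1}$ map (by unwinding the adjunction formula of Lemma~\ref{lem:Adjunction_between_induction_and_restriction}), and in confirming that $r$ is genuinely well-defined on the direct sum. One should also keep in mind that ``split exact and natural in $A$'' refers to naturality of the two structure maps of the sequence, which holds, and not of the splitting, which --- as already for modules --- it need not satisfy.
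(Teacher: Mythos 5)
Your proof is correct and takes essentially the same approach as the paper: you identify $i_-i^-A$ with $\bigoplus_{m\ge 0}\Phi^m(A)$, describe $d=t^{-1}-i_-i^-t^{-1}$ and $e$ explicitly, and produce the same contracting homotopy (your $s=\iota_0$ and $r$ with entries $-t^{-(m-n-1)}$ are exactly the matrices the paper writes down). Your additional remarks --- reducing to a single object, verifying naturality of $d$ and $e$ via the natural transformation $t^{-1}\colon\Phi\Rightarrow\id$ and the counit, and noting that the splitting itself is not natural --- are all sound and merely make explicit what the paper leaves implicit.
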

 \begin{proof}
 We have $i_-i^-\Phi (A)  = \bigoplus_{i = -\infty} ^{-1} \Phi^{-i} (A)$ and 
 $i_-i^-A  = \bigoplus_{i = -\infty} ^{0} \Phi^{-i} A$. Under
 this identification the short sequence under consideration becomes the following sequence in
 $\cala_{\Phi}[t]^{\kappa}$:
 \begin{multline*}
 0 
 \to 
 \bigoplus_{i = -\infty}^{-1} \Phi^{-i} (A) 
 \xrightarrow{\begin{pmatrix}
 t^{-1} & 0 & 0 & 0 &\cdots
 \\ 
 -\id  & t^{-1} & 0 & 0 & \cdots
 \\
 0 & -\id & t^{-1} & 0 & \cdots 
 \\
 0 & 0 & -\id & t^{-1} & \cdots
 \\
 \vdots & \vdots& \vdots & \vdots & \ddots & 
 \end{pmatrix}
 } 
 \bigoplus_{i = -\infty}^{0} \Phi^{-i} (A) 
 \\
  \xrightarrow{\begin{pmatrix} \id&  t^{-1} &  t^{-2} &  \cdots
 \end{pmatrix}}
 A 
 \to 
 0.
\end{multline*}
If we view this as a $2$-dimensional chain complex, we obtain  an $\cala_{\Phi}[t]^{\kappa}$-chain contraction by
 \[ 
 \bigoplus_{i = -\infty}^{-1} \Phi^{-i} (A) 
 \xleftarrow{
 \begin{pmatrix}
 0  & -\id  & -t^{-1}  & -t^{-2}  & -t^{-3} & \cdots
 \\ 
 0 & 0 & -\id  & -t^{-1} &  -t^{-2} & \cdots
 \\
 0 & 0 & 0 & -\id & -t^{-1} & \cdots 
 \\
 0 & 0 & 0 & 0 & -\id & \cdots
 \\
 0 & 0 & 0 & 0 & 0  & \cdots
 \\
 \vdots & \vdots& \vdots & \vdots & \vdots & \ddots & 
 \end{pmatrix}
 } 
 \bigoplus_{i = - \infty}^{0} \Phi^{-i} (A) 
 \xleftarrow{\begin{pmatrix} \id \\ 0 \\ 0 \\ 0 \\ 0 \\ \vdots
 \end{pmatrix}}
 A
 \]
 \end{proof}

 Given an $\cala_\Phi[t^{-1}]^\kappa$-chain complex $C$, we obtain from
 Lemma~\ref{lem:characteristic_sequence} that $C$ can be resolved by an in $C$ natural
 short  exact sequence
 \begin{eqnarray}
  &
 0 \to  i_-i^-\Phi (C) \xrightarrow{t^{-1} - i_-i^-t^{-1}}  i_-i^- C \xrightarrow{e}  C \to 0
 &
 \label{characteristic_resolution}
 \end{eqnarray}
 of chain complexes in the image of the co-unit $i_- i^-$ of the adjunction.
 
 \begin{lemma}\label{lem:characteristic_sequence_over_cala}
   Consider a morphism $\phi \colon \Phi(A) \to A$ in $\cala^{\kappa}$. Then we obtain a
   (split) exact and in $\Phi$-natural exact sequence of
   $\cala^{\kappa}$-modules 
\begin{eqnarray*}
  &
 0 \to  i^-i_-\Phi (A) \xrightarrow{i^-(t^{-1} - i_-\phi)}  i^-i_-A \xrightarrow{e'}  A \to 0.
 &
 \end{eqnarray*}
\end{lemma}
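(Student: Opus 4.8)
The plan is to imitate the proof of Lemma~\ref{lem:characteristic_sequence} almost verbatim, with the role of the ``multiplication by $t^{-1}$''--morphism now played by $\phi$ and its iterates $\phi^{(n)} := \phi\circ\Phi(\phi)\circ\cdots\circ\Phi^{n-1}(\phi)\colon\Phi^n(A)\to A$ from Definition~\ref{def:Nilpotent_morphisms_and_Nil-categories} (with the convention $\phi^{(0)} := \id_A$; note that no nilpotency of $\phi$ is needed, since all sums below are finite). First I would unwind the objects: since induction commutes with $\Phi$ one has $i_-\Phi(A) = \Phi(i_-A)$, so, using the description of restriction from Subsection~\ref{subsec:restriction}, the functor $i^-i_-$ sends $A$ to the object $\bigoplus_{k\ge 0}\Phi^k(A)$ of $\cala^\kappa$ and sends $\Phi(A)$ to $\bigoplus_{k\ge 0}\Phi^{k+1}(A)$. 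Under these identifications a componentwise computation shows that $i^-(t^{-1})$ is the shift which sends the $k$-th summand $\Phi^{k+1}(A)$ of the source identically onto the $(k+1)$-th summand of the target, whereas $i^-(i_-\phi)$ sends that same summand via $\Phi^k(\phi)$ onto the $k$-th summand $\Phi^k(A)$ of the target; hence $\partial := i^-(t^{-1}-i_-\phi)$ is given by the infinite matrix with $\id$ on the subdiagonal and $-\Phi^k(\phi)$ on the diagonal, of the same shape as the matrix occurring in the proof of Lemma~\ref{lem:characteristic_sequence}.

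I would then define $e'\colon i^-i_-A \to A$ to be the ``row vector'' $(\id_A,\phi,\phi^{(2)},\phi^{(3)},\dots)$, i.e.\ the morphism whose restriction to the $k$-th summand $\Phi^k(A)$ is $\phi^{(k)}$; this is determined by the universal property of the direct sum, and $e'\circ\partial = 0$ follows from the single relation $\phi^{(k+1)} = \phi^{(k)}\circ\Phi^k(\phi)$. As explicit splittings I would take $\gamma_1 := \iota_0\colon A \to i^-i_-A$, the inclusion of the zeroth summand (so that $e'\circ\gamma_1 = \phi^{(0)} = \id_A$), together with the morphism $\gamma_0\colon i^-i_-A \to i^-i_-\Phi(A)$ whose restriction to the $k$-th summand $\Phi^k(A)$ of $i^-i_-A$ is $\sum_{j=0}^{k-1}\iota_j\circ\Phi^{j+1}\bigl(\phi^{(k-1-j)}\bigr)$, where $\iota_j$ denotes the inclusion of the $j$-th summand $\Phi^{j+1}(A)$ of $i^-i_-\Phi(A)$ --- the exact analogue of the chain contraction displayed in the proof of Lemma~\ref{lem:characteristic_sequence}. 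It then remains to verify the identities $\gamma_0\circ\partial = \id$ and $\partial\circ\gamma_0 + \gamma_1\circ e' = \id$; each is a finite telescoping sum that collapses using only $\phi^{(n)} = \phi^{(n-1)}\circ\Phi^{n-1}(\phi) = \phi\circ\Phi(\phi^{(n-1)})$. Together with $e'\circ\partial = 0$ and $e'\circ\gamma_1 = \id_A$, these four relations say precisely that the sequence is split exact.

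The computations are routine; the one thing requiring care --- exactly as in the discussion following Notation~\ref{not:truncation_for_objects} --- is the bookkeeping, since the source and target of $\partial$ (and of $\gamma_0$) are indexed over the same set $\{0,1,2,\dots\}$ while the $k$-th summand of $i^-i_-\Phi(A)$ is $\Phi^{k+1}(A)$ and the $k$-th summand of $i^-i_-A$ is $\Phi^k(A)$, so one must track precisely which $\Phi$-powers each matrix entry lies between, and check that $\gamma_0$ --- given by an upper-triangular infinite matrix --- is indeed a morphism of $\cala^\kappa$, which holds because every summand of its source maps nontrivially only into summands of strictly smaller index. Finally, $\partial$, $e'$, $\gamma_0$ and $\gamma_1$ are all given by formulas involving nothing but $\phi$ and $\Phi$, so naturality --- in the pair $(A,\phi)$, hence in $(\cala,\Phi)$ --- is immediate from the construction.
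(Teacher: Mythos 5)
Your argument is correct and is essentially the paper's own proof: both unpack $i^-i_-$ as the direct sum $\bigoplus_{k\ge 0}\Phi^k(-)$, write $\partial=i^-(t^{-1}-i_-\phi)$ and $e'$ as explicit matrices built from the iterates $\phi^{(k)}$, and produce the same explicit chain contraction, exactly mirroring the proof of Lemma~\ref{lem:characteristic_sequence}. Your sign conventions happen to differ globally from the matrices the paper displays, but the telescoping verification of the chain-contraction identities goes through either way.
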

\begin{proof}
The proof is analogous to the one of Lemma~\ref{lem:characteristic_sequence}, the role of
$t^{-1}$ is now played by $\phi$. Namely, we have $i^-i_- \Phi (A)  = \bigoplus_{i = -\infty} ^{-1} \Phi^{-i} (A)$ and 
 $i^-i_-A  = \bigoplus_{i = -\infty} ^{0} \Phi^{-i} (A)$. Under
 this identification the short sequence under consideration becomes the following sequence in
 $\cala^{\kappa}$:
 \begin{multline*}
 0 
 \to 
 \bigoplus_{i = -\infty}^{-1} \Phi^{-i} (A) 
 \xrightarrow{\begin{pmatrix}
  \phi & 0 & 0 & 0 &\cdots
 \\ 
 -\id  & \Phi(\phi) & 0 & 0 & \cdots
 \\
 0 & -\id & \Phi^{2}(\phi) & 0 & \cdots 
 \\
 0 & 0 & -\id & \Phi^{3}(\phi)  & \cdots
 \\
 \vdots & \vdots& \vdots & \vdots & \ddots & 
 \end{pmatrix}
 } 
 \bigoplus_{i = -\infty}^{0} \Phi^{-i} (A) 
 \\
  \xrightarrow{\begin{pmatrix} \id&  \phi &  \phi^{(2)}  &  \cdots
 \end{pmatrix}}
 A 
 \to 
 0,
\end{multline*}
where $\phi^{(k)} := \phi \circ \Phi^{1}(\phi) \circ \cdots \circ \Phi^{(k-1)}(\phi) \colon \Phi^{k} (A) \to A$. 
 If we view this as a $2$-dimensional chain complex, we obtain  a $(\cala^{\kappa})^{\kappa}_{\Phi}$-chain contraction by
 \[ 
 \bigoplus_{i = -\infty}^{-1} \Phi^{-i} (A) 
 \xleftarrow{
 \begin{pmatrix}
 0  & -\id  & -\Phi(\phi)  & -\Phi(\phi^{(2)})  & -\Phi(\phi^{(3)}) & \cdots
 \\ 
 0 & 0 & -\id  & -\Phi^2 (\phi) &  -\Phi^2(\phi^{(2)}) & \cdots
 \\
 0 & 0 & 0 & -\id & -\Phi^3(\phi) & \cdots 
 \\
 0 & 0 & 0 & 0 & -\id & \cdots
 \\
 0 & 0 & 0 & 0 & 0  & \cdots
 \\
 \vdots & \vdots& \vdots & \vdots & \vdots & \ddots & 
 \end{pmatrix}
 } 
 \bigoplus_{i = - \infty}^{0} \Phi^{-i} (A) 
 \xleftarrow{\begin{pmatrix} \id \\ 0 \\ 0 \\ 0 \\ 0 \\ \vdots
 \end{pmatrix}}
 A 
\qedhere 
\]
\end{proof}

 \begin{notation}[$\End(\Ch(\cala),\Phi)$]
 \label{not:End(Ch(calakappa),Phi(-1))}\ \\
 Denote by $\End(\Ch(\cala),\Phi)$ the Waldhausen category of
 $\Phi$-twisted endomorphisms of $\cala$-chain complexes. An object is a
 pair $(C, \phi)$, where $C$ is a chain complex in $\cala$, and 
 $\phi \colon \Phi(C)\to C$ is a chain map. A morphism $u \colon (C, \phi)\to (D, \psi)$ is 
 an $\cala$-chain map  $u \colon C\to D$ such that $u \circ \phi = \psi \circ \Phi(u)$. 
 It is a cofibration or weak equivalence respectively if the underlying 
 $\cala$-chain map $u$ has this property.
 \end{notation}
 
 Define functors  of Waldhausen categories 
\begin{multline}
 \chi_{\cala^\kappa} \colon \End(\Ch(\cala^\kappa), \Phi)   \to   \Ch(\cala_\Phi[t^{-1}]^\kappa),   
  \\
 (C,\phi)   \mapsto \cone \bigl(i_-\Phi(C)\xrightarrow{t^{-1} - i_-\phi} i_-C\bigr); 
 \label{chi_on_End}
\end{multline}
\begin{eqnarray}
 N_{\cala^\kappa} \colon \Ch(\cala_\Phi[t^{-1}]^\kappa)   \to \End(\Ch(\cala^\kappa, \Phi)),  & & 
 D \mapsto (i^-D,i^-t^{-1}).
 \label{N_on_End}
\end{eqnarray}
 
 \begin{lemma}\label{lem:chi_and_N}
The functors $\chi_{\cala^\kappa}$ and  $N_{\cala^\kappa}$ are inverse equivalences of Waldhausen categories. 
\end{lemma}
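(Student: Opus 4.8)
The plan is to produce, straight from the two characteristic sequences, natural chain homotopy equivalences relating each of the composites $N_{\cala^\kappa}\circ\chi_{\cala^\kappa}$ and $\chi_{\cala^\kappa}\circ N_{\cala^\kappa}$ to the corresponding identity functor, and to observe that both $\chi_{\cala^\kappa}$ and $N_{\cala^\kappa}$ are Waldhausen exact; together this is exactly the assertion that they are inverse equivalences of Waldhausen categories. Exactness is routine bookkeeping: $i_-$, $i^-$ and $\Phi$ are additive functors and hence preserve level-wise split injections, chain homotopies, the zero object and pushouts along cofibrations, while forming a mapping cone is Waldhausen exact by Lemma~\ref{lem:elementary_facts} (using parts~\ref{lem:elementary_facts:maps_between_mapping_cones},~\ref{lem:elementary_facts:chain_homotopy_equivalence_cone} and~\ref{lem:elementary_facts:chain_homotopy_equivalence_2_of_3}); throughout one uses that on both sides the weak equivalences are precisely the chain homotopy equivalences of underlying chain complexes, by Lemma~\ref{lem:canonical_Waldhausen_structure_is_Waldhausen_structure}~\ref{lem:canonical_Waldhausen_structure_is_Waldhausen_structure:split} and Notation~\ref{not:End(Ch(calakappa),Phi(-1))}. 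Two preliminary observations: the explicit formulas for induction and restriction show that $\Phi$ commutes with both $i_-$ and $i^-$, and the additive functor $i^-$ commutes with forming mapping cones.

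For $\chi_{\cala^\kappa}\circ N_{\cala^\kappa}$: given a bounded $\cala_\Phi[t^{-1}]^\kappa$-chain complex $D$ we have $N_{\cala^\kappa}(D)=(i^-D,\,i^-t^{-1})$, and hence, using the two observations above, $\chi_{\cala^\kappa}(N_{\cala^\kappa}(D))=\cone\bigl(i_-i^-\Phi(D)\xrightarrow{\,t^{-1}-i_-i^-t^{-1}\,}i_-i^-D\bigr)$, the cone of exactly the map appearing in Lemma~\ref{lem:characteristic_sequence}. Applying that lemma degree-wise to $D$ and using its naturality to assemble it over the complex gives a short exact sequence of $\cala_\Phi[t^{-1}]^\kappa$-chain complexes $0\to i_-i^-\Phi(D)\to i_-i^-D\xrightarrow{e}D\to 0$; being short exact it is a homotopy fiber sequence in the sense of Subsection~\ref{subsec:Homotopy_fiber_sequences}, so the induced map $\chi_{\cala^\kappa}(N_{\cala^\kappa}(D))\to D$ (formed with the trivial homotopy, since $e$ annihilates the image of the characteristic map) is a chain homotopy equivalence, visibly natural in $D$.

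For $N_{\cala^\kappa}\circ\chi_{\cala^\kappa}$: given $(C,\phi)$ in $\End(\Ch(\cala^\kappa),\Phi)$, and using that $i^-$ commutes with $\cone$, we get $N_{\cala^\kappa}(\chi_{\cala^\kappa}(C,\phi))=\bigl(\cone(i^-i_-\Phi(C)\xrightarrow{\,i^-(t^{-1}-i_-\phi)\,}i^-i_-C),\,i^-t^{-1}\bigr)$. Lemma~\ref{lem:characteristic_sequence_over_cala}, applied degree-wise and assembled over $C$, yields the short exact sequence $0\to i^-i_-\Phi(C)\to i^-i_-C\xrightarrow{e'}C\to 0$, whence as above the induced map $\cone(i^-(t^{-1}-i_-\phi))\to C$ is a natural chain homotopy equivalence of $\cala^\kappa$-chain complexes. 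It then remains to verify that this map intertwines the $\Phi$-twisted endomorphism $i^-t^{-1}$ of the source with $\phi$ on the target, so that it is a morphism in $\End(\Ch(\cala^\kappa),\Phi)$; this is a direct computation comparing the explicit formula $e'=\bigl(\id\ \phi\ \phi^{(2)}\ \cdots\bigr)$ from the proof of Lemma~\ref{lem:characteristic_sequence_over_cala} with the shift map underlying $i^-t^{-1}$, via the identity $\phi^{(k+1)}=\phi\circ\Phi(\phi^{(k)})$, and can alternatively be extracted from the naturality of that characteristic sequence in the pair $(C,\phi)$.

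The genuinely delicate point is exactly this last compatibility with the twisted endomorphism, together with the (harmless but necessary) passage of Lemmas~\ref{lem:characteristic_sequence} and~\ref{lem:characteristic_sequence_over_cala} from single objects to chain complexes and the check that all comparison maps are also natural in $(\cala,\Phi)$. Once the two natural weak equivalences between the composites and the identities are in hand, the conclusion that $\chi_{\cala^\kappa}$ and $N_{\cala^\kappa}$ are inverse equivalences of Waldhausen categories — and in particular induce mutually inverse homotopy equivalences on connective $K$-theory — is formal.
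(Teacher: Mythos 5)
Your proposal is correct and follows essentially the same route as the paper: both directions are obtained by applying the characteristic sequences (Lemmas~\ref{lem:characteristic_sequence} and \ref{lem:characteristic_sequence_over_cala}) degree-wise, interpreting the resulting short exact sequences as homotopy fiber sequences, and, in the $N\circ\chi$ direction, checking compatibility with the twisted endomorphism via the explicit formula for the augmentation. The paper records this last compatibility as a small commutative diagram rather than referring to the formula $e'=(\id\ \phi\ \phi^{(2)}\ \cdots)$, but the content is the same.
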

 
 \begin{proof}
   We obtain from the sequence~\eqref{characteristic_resolution} 
 using Subsection~\ref{subsec:Homotopy_fiber_sequences}
 for any object $D$ in $\Ch(\cala_\Phi[t^{-1}]^\kappa)$ a weak equivalence in 
 $\Ch(\cala_\Phi[t^{-1}]^\kappa)$ 
 \[
 T(D) \colon \chi_{\cala^{\kappa}} \circ N_{\cala^{\kappa}}(D) = \cone\bigl(i_-i^-\Phi (D) 
 \xrightarrow{t^{-1} - i_-i^-t^{-1}}  i_-i^-D  \bigr) \xrightarrow{\simeq} D,
 \]
 and thus a natural weak equivalence 
 $T \colon \chi_{\cala^{\kappa}} \circ N_{\cala^{\kappa}} \xrightarrow{\simeq} \id$.
 
 Conversely, for an object
 $(C,\phi)$ in $\End(\Ch(\cala^\kappa))$, we obtain from Lemma~\ref{lem:characteristic_sequence_over_cala}
 the short exact sequence in $\Ch(\cala^{\kappa})$
 \begin{eqnarray*}
  &
 0 \to  i^-i_-\Phi(C) \xrightarrow{i^-(t^{-1} - i_-\phi)}  i^-i_-C \xrightarrow{e'}  C \to 0
 &
 \end{eqnarray*}
  such that the following diagram commutes:
 \[
 \xymatrix@!C=9em{
 i^-\Phi (i_-\Phi (C)) \ar[r]^--{i^-(t^{-1} - i_-\phi)} \ar[d]^{i^-t^{-1}}
 & 
 i^-\Phi(i_-C) \ar[r]^{e'} \ar[d]^{i^-t^{-1}}
 & \Phi(C) \ar[d]^{\phi}
 \\
 i^-i_-\Phi (C) \ar[r]^-{i^-(t^{-1} - i_-\phi)} 
 & 
 i^-i_-C \ar[r]^{e'}
 & C
 }
 \]
 Thus we obtain using Subsection~\ref{subsec:Homotopy_fiber_sequences} 
  a natural weak equivalence in $\End(\Ch(\cala^\kappa)$
 \[
 S(C,\phi)  \colon N_{\cala^{\kappa}} \circ \chi_{\cala^{\kappa}}(C,\phi)   \xrightarrow{\simeq} (C,\phi).
 \]
 This finishes the proof of Lemma~\ref{lem:chi_and_N}.
 \end{proof}


\subsection{Homotopy nilpotent endomorphisms}

The goal of this subsection is to restrict the equivalences from Lemma~\ref{lem:chi_and_N} to equivalences on suitable subcategories.
 
Recall that a $\cala^\kappa$-chain complex $C$ is called homotopy finite if it is chain
equivalent to a chain complex in $\cala$. Recall as well that a morphism $f\colon
\Phi(A)\to A$ of $\cala$ is called \emph{$\Phi$-nilpotent} if for some $n \ge 1 $ the
$n$-fold composite
   \[
   f^{(n)}:=f \circ \Phi(f) \circ \cdots \circ \Phi^{n-1}(f) \colon \Phi^n(A)\to A
   \]
   is trivial.

\begin{definition}[Homotopy $\Phi$-nilpotent]\label{def:nilpotent}
   A chain map  $f\colon \Phi(C)\to C$ of chain complexes in $\cala$ is called \emph{$\Phi$-homotopy nilpotent} if for
   some $n \ge 1 $ the $n$-fold composite $f^{(n)}$ is $\cala$-chain homotopic to the trivial chain map.
 \end{definition}

 \begin{lemma}\label{lem:endos_and_finiteness}
     Let $C$ be bounded $\cala$-chain
     complex. Let $D$ be an $\cala^{\kappa}$-chain complex which is homotopy equivalent to a
     bounded $\cala$-chain complex. Let $u\colon (C,\phi)\to (D, \psi)$ be a morphism in
     $\End(\Ch(\cala^\kappa), \Phi)$.
 
     Then there exists a commutative diagram in
     $\End(\Ch(\cala^\kappa), \Phi)$
     \[
     \xymatrix{(C,\phi) \ar[r]^u \ar@{>->}[d] & (D, \psi) \ar@{>->}[d]^\simeq\\
       (E, \mu) \ar[r]^\simeq & (F,\sigma) }
     \]
     where the arrows labelled by $\simeq$ are weak equivalences, 
    the vertical arrows are cofibrations, and $E$ is a bounded
     $\cala$-chain complex as well.
 \end{lemma}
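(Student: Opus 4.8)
The statement is exactly the input needed to verify the hypotheses of Cisinski's Approximation Theorem~\ref{the:Cisinki_Approximation_Theorem} for the inclusion of the subcategory of twisted endomorphisms of \emph{bounded $\cala$-chain complexes} into the homotopy-finite version; so the goal is to produce, inside $\End(\Ch(\cala^\kappa), \Phi)$, a commutative square comparing $u$ with a morphism whose source is a bounded $\cala$-chain complex. The plan is to replace $(D,\psi)$ by a mapping-cylinder model through which a bounded-$\cala$ complex maps, and then to run the usual mapping-cylinder factorization of $u$ against that model, using throughout that $\Phi$ is additive and hence commutes with forming mapping cylinders, so that $\End(\Ch(\cala^\kappa),\Phi)$ has a cylinder functor.

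First I would use the hypothesis on $D$ to fix a bounded $\cala$-chain complex $P$ together with mutually inverse chain homotopy equivalences $s\colon D\to P$ and $r\colon P\to D$. Transporting $\psi$ across $s$, set $\psi_P:=s\circ\psi\circ\Phi(r)\colon\Phi(P)\to P$; then $s$ and $r$ only intertwine the endomorphisms up to preferred chain homotopies obtained by inserting the homotopies $r\circ s\simeq\id_D$ and $s\circ r\simeq\id_P$ into the relevant composites. Next, set $F:=\cyl(s)$ and define $\sigma\colon\Phi(F)=\cyl(\Phi(s))\to\cyl(s)=F$ by applying Lemma~\ref{lem:elementary_facts}~\ref{lem:elementary_facts:maps_between_mapping_cylinders} to the two chain maps $i_D\circ\psi$ and $i_P\circ\psi_P$ into $\cyl(s)$ together with the homotopy between $i_P\circ\psi_P\circ\Phi(s)$ and $i_D\circ\psi$ coming from the transport homotopy and from $i_D\simeq i_P\circ s$. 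One checks directly from the construction that both $i_D\colon(D,\psi)\to(F,\sigma)$ and $i_P\colon(P,\psi_P)\to(F,\sigma)$ are morphisms of $\End(\Ch(\cala^\kappa),\Phi)$, that both are cofibrations (being levelwise split injective), and that both are weak equivalences (the first because $s$ is a chain homotopy equivalence, the second because the target inclusion into a mapping cylinder is always one).

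Second, I would take $(E,\mu):=\cyl(s\circ u)$, the mapping cylinder of the $\cala$-chain map $s\circ u\colon C\to P$ between bounded $\cala$-chain complexes, so that $E$ is again a bounded $\cala$-chain complex; here $\mu$ is assembled by the same Lemma~\ref{lem:elementary_facts}~\ref{lem:elementary_facts:maps_between_mapping_cylinders} recipe out of $\phi$, $\psi_P$ and the homotopy between $i_P\circ\psi_P\circ\Phi(s\circ u)$ and $i_C\circ\phi$, which makes $i_C\colon(C,\phi)\rightarrowtail(E,\mu)$ a cofibration. On underlying complexes, Lemma~\ref{lem:elementary_facts}~\ref{lem:elementary_facts:maps_between_mapping_cylinders} also produces a chain map $\bar u\colon E\to F$ with $\bar u\circ i_C=i_D\circ u$ out of the pair $i_D\circ u\colon C\to F$, $i_P\colon P\to F$ and a homotopy $i_P\circ(s\circ u)\simeq i_D\circ u$; a two-out-of-three argument through $P$ (both inclusions of $P$ into $E$ and into $F$ being chain homotopy equivalences) shows $\bar u$ is a chain homotopy equivalence, so the resulting square is the required one provided $\bar u$ is a morphism of $\End(\Ch(\cala^\kappa),\Phi)$.

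The main obstacle is precisely this last point. Checking $\bar u\circ\mu=\sigma\circ\Phi(\bar u)$ reduces, after comparing the maps summand by summand out of the cylinder $\Phi(E)$, to a single identity $\bar u\circ L=\sigma\circ\Phi(H)$ between two chain homotopies with the same endpoints (where $L$ and $H$ are the auxiliary homotopies entering the definitions of $\mu$ and of $\bar u$); this forces a coordinated choice of all the auxiliary homotopies introduced above. I expect the cleanest way to arrange this is to apply Lemma~\ref{lem:Chain_homotopy_equivalences_and_cofibrations} with $C$ as the common subcomplex to straighten the comparison map rel $C$; alternatively one can sidestep the bookkeeping by transporting the whole situation across the equivalence of Waldhausen categories $\chi_{\cala^\kappa}\colon\End(\Ch(\cala^\kappa),\Phi)\rightleftarrows\Ch(\cala_\Phi[t^{-1}]^\kappa)\colon N_{\cala^\kappa}$ of Lemma~\ref{lem:chi_and_N}, performing all the mapping-cylinder constructions over $\cala_\Phi[t^{-1}]^\kappa$ where Lemma~\ref{lem:elementary_facts} and Lemma~\ref{lem:Chain_homotopy_equivalences_and_cofibrations} apply verbatim; the only subtlety in that route is to track the translation of the condition ``$E$ is a bounded $\cala$-chain complex'', using that $\chi_{\cala^\kappa}$ sends a bounded $\cala$-chain complex with twisted endomorphism to a bounded $\cala_\Phi[t^{-1}]$-chain complex.
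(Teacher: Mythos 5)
Your proposal correctly identifies the shape of the problem (replace $(D,\psi)$ by a mapping-cylinder model, put $E$ on a mapping cylinder of a map of bounded $\cala$-chain complexes, use Lemma~\ref{lem:elementary_facts}~\ref{lem:elementary_facts:maps_between_mapping_cylinders} to assemble twisted endomorphisms on cylinders, and appeal to Lemma~\ref{lem:Chain_homotopy_equivalences_and_cofibrations} somewhere to control homotopies rel $C$), but it leaves exactly the central verification as an admitted gap. You yourself flag that making the comparison map $\bar u\colon \cyl(s\circ u)\to\cyl(s)$ into a morphism of $\End(\Ch(\cala^\kappa),\Phi)$ requires a ``coordinated choice of all the auxiliary homotopies,'' and then you offer two unverified routes. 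As it stands this is a sketch, not a proof: the block-by-block comparison of $\bar u\circ\mu$ and $\sigma\circ\Phi(\bar u)$ agrees on the $\Phi(C)_n$- and $\Phi(P)_n$-blocks by construction, but the $\Phi(C)_{n-1}$-block forces the equality $\bar u\circ h''=\sigma\circ\Phi(h''')$ of two specific homotopies, and nothing in your construction arranges this.

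Your Route 2 has a concrete defect beyond bookkeeping. The equivalence $\chi_{\cala^\kappa}\rightleftarrows N_{\cala^\kappa}$ does send a bounded $\cala$-chain complex with twisted endomorphism to a bounded $\cala_\Phi[t^{-1}]$-chain complex, but the converse direction is false and that is the direction you need: $N_{\cala^\kappa}(D')=(i^-D', i^-t^{-1})$ has underlying chain complex an infinite direct sum, hence is a chain complex over $\cala^\kappa$ and is only \emph{homotopy finite}, never bounded over $\cala$. This is precisely the distinction Lemma~\ref{lem:comparing_finiteness_conditions} is about; transporting through $\chi/N$ therefore cannot give you back the boundedness hypothesis on $E$, nor does it give you a square whose top-left corner is literally $(C,\phi)$ rather than something weakly equivalent to it.

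The paper's proof avoids the coordination problem by a different organization. It first reduces (via the cylinder of $u$ in $\End$) to the case where $u$ is a cofibration, and then reduces to producing a zig-zag $(E,\mu)\xleftarrow{j}(C,\phi)\xrightarrow{u}(D,\psi)$, a chain homotopy equivalence $v\colon E\to D$ with $v\circ j=u$, and a chain homotopy $H\colon \psi\circ\Phi(v)\simeq v\circ\mu$ that is \emph{stationary over $\Phi(C)$}. The point is then not to compare two separately built cylinders, but to apply Lemma~\ref{lem:elementary_facts}~\ref{lem:elementary_facts:maps_between_mapping_cylinders} once on $\cyl(v)$ to build $\rho$, and to pass to the pushout of $\cyl(v)$ along $\cyl(u)\to D$; the stationary condition on $H$ guarantees $\rho$ restricts to the canonical endomorphism of $\cyl(u)$, so the pushout square in $\End$ is strictly commutative. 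To produce the required data, the paper does not transport $\psi$ across arbitrary homotopy inverses $(s,r)$, but constructs $E=\cyl(f^{-1}\circ u)$, obtains $v\colon E\to D$ with $v\circ j(C)=u$, and crucially uses Lemma~\ref{lem:Chain_homotopy_equivalences_and_cofibrations} to get $w\colon D\to E$ with $w\circ u=j(C)$ together with $h\colon vw\simeq\id_D$ satisfying $h\circ u=0$. Setting $\mu:=w\circ\psi\circ\Phi(v)$ (so that $\mu\circ\Phi(j(C))=j(C)\circ\phi$ automatically) and $H:=h\circ\psi\circ\Phi(v)$ then produces exactly the stationary homotopy, with no free choices left to coordinate. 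Your instinct to reach for Lemma~\ref{lem:Chain_homotopy_equivalences_and_cofibrations} was right, but it is used to \emph{define} $\mu$ and $H$ with the right boundary behaviour, not to straighten an already-constructed comparison map between two cylinders.
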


 \begin{proof}
We begin with two reductions.

\emph{Reduction 1:} It is enough to consider the special case where $u\colon C\to D$ is a cofibration. 

In fact, put $D' = \cyl(u)$.
 Let $u' \colon C \to D'$ be the canonical inclusion and $p \colon D' \to D$ be the canonical projection. 
 Since $\psi \circ u = u \circ \phi$ holds, we conclude from 
 naturality of the mapping cylinder construction
 that there exists a chain map $\psi' \colon \Phi(D') \to D'$ such that the following 
is a sequence in $\End(\cala^\kappa,\Phi)$
 \[
 (C,\phi) \xrightarrow{u'} (D',\psi') \xrightarrow{p} (D,\psi).
\]

 Applying now the special case ($u'$ is a cofibration), we obtain the left square in the following diagram, 
where $E$ is a bounded $\cala$-chain complex:
  \[
 \xymatrix{(C,\phi) \ar[r]^{u'} \ar@{>->}[d]^{x}
 &
 (D',\psi') \ar[r]^{p}_{\simeq}  \ar@{>->}[d]^{y}_{\simeq} 
 & (D,\psi) \ar@{>->}[d]^{\overline{y}}_{\simeq} 
 \\
 (E,\mu) \ar[r]^{z}_{\simeq} 
 &
 (F',\sigma'') \ar[r]^{\overline{p}}_{\simeq} 
 & (G,\tau)
 }
 \]
  The right square is obtained by applying the pushout construction to the pair $(p,y)$.
 All vertical maps are cofibrations and all maps marked with $\simeq$ are chain homotopy equivalences.
 Now the outer square is the desired diagram in $\End(\Ch(\cala^{\kappa}))$.

\emph{Reduction 2:} It is enough to construct 
\begin{enumerate} 
\item  a zig-zag in $\End(\cala^\kappa,\Phi)$
\[
\xymatrix{(E,\mu) & (C,\phi) \ar@{>->}[l]_j \ar@{>->}^u[r] & (D,\psi), }
\]
where $E$ is a bounded $\cala$-chain complex and $j$ is a cofibration;
\item a chain homotopy equivalence $v\colon E\to D$ such that $v\circ j=u$;

\item  a chain homotopy
\[
H\colon \psi\circ v \simeq v\circ \Phi(\mu)\colon \Phi(E)\to D
\]
which is stationary over $\Phi(C)$ (i.e., $H\circ \Phi(j)=0$).
\end{enumerate}

In fact, suppose we are given this data. By 
Lemma~\ref{lem:elementary_facts}~\ref{lem:elementary_facts:maps_between_mapping_cylinders},
 there exists a chain map 
\[
\rho=F_{(\mu,\psi, H)}\colon \Phi(\cyl(v))\to \cyl(v)
\]
such that the inclusions of the top and bottom end into the mapping cylinder give rise to a zig-zag
\[
\xymatrix{(E,\mu) \ar@{>->}[r]^{i(E)} &(\cyl(v),\rho) & (D,\psi) \ar@{>->}[l]_{i(D)}.
}
\]
Explicitly, 
\[
\rho_n  \colon
  \Phi(E)_{n-1} \oplus \Phi(E)_n \oplus \Phi(D)_n  \xrightarrow{{\footnotesize\begin{pmatrix} \mu_{n-1} & 0 & 0 
 \\ 
 0 & \mu_n & 0 \\ H_n & 0 & \psi_n\end{pmatrix}}} E_{n-1} \oplus E_n \oplus D_n.
 \]

Thus we get a (non-commutative)
diagram in $\End(\Ch(\cala^\kappa))$:
\begin{eqnarray}
 &
 \xymatrix{(C,\phi) \ar[r]^-{u} \ar@{>->}[d]_{j} 
 & 
 (D,\psi)  \ar@{>->}[d]^{i(D)}_{\simeq}
 \\
 (E,\mu) \ar[r]_-{i(E)}^-{\simeq}
 & (\cyl(v),\rho)
 }
 &
 \label{non-comm_diagram_in_End}
 \end{eqnarray}

 In order to obtain a commutative diagram, we have to pass to a quotient of $\cyl(v)$ as
 follows.  Let $k \colon \cyl(u) \to \cyl(v)$ be the obvious cofibration induced
 by $j(C) \colon C \to E$.  Let $i(C)\colon C\to \cyl(u)$ and $\pr \colon \cyl(u) \to D$ be the canonical
 inclusion and projection, respectively.
 Define an $\cala^{\kappa}$-chain complex $F$ by the pushout of
 $\cala^{\kappa}$-chain complexes
 \[
 \xymatrix{\cyl(u) \ar[r]^-{\pr}_-{\simeq} \ar@{>->}[d]_{k}^\simeq 
 & D \ar@{>->}[d]_{\overline{k}}^\simeq
 \\
 \cyl(v) \ar[r]_-{\overline{\pr}}^{\simeq} 
 & 
 F
 }
 \]
 All arrows are chain homotopy equivalences: This is always true for the projection 
 $\pr$ in the mapping cylinder and follows for $\overline{\pr}$ from
 Subsection~\ref{subsec:Homotopy_fiber_sequences} since $k$ is a cofibration. The morphism
$k$ is a chain homotopy equivalence, as both domain and target are canonically homotopy equivalent
to $D$.

As $u\colon (C,\phi)\to (D,\psi)$ is a morphism in $\End(\Ch(\cala^\kappa),\Phi)$,
we obtain by naturality an induced map $\rho'\colon \Phi(\cyl(u))\to \cyl(u)$ for
which the projection $\pr$ becomes a morphism in the endomorphism category. Moreover
$\rho$ restricts to $\rho'$ under $k$. This follows from the explicit form of $\rho$
as displayed above, together with the fact that $\mu$ restricts to $\phi$ and $H$
restricts to 0 by assumption.

Hence we may define $(F,\sigma)$ to be the pushout in $\End(\Ch(\cala^\kappa),\Phi)$
in the right square of the following diagram:
 \[
 \xymatrix{(C,\phi) \ar@{>->}[d]^j \ar[r]^{i(C)} 
  & (\cyl(u),\rho')\ar[r]^-{\pr}_-{\simeq} \ar@{>->}[d]_{k}^\simeq 
 & (D,\psi) \ar@{>->}[d]_{\overline{k}}^\simeq
 \\
 (E,\mu) \ar[r]^\simeq_{i(E)}
 & (\cyl(v),\rho) \ar[r]_-{\overline{\pr}}^-{\simeq}
 &  (F,\sigma)
 }
 \]
 The outer square provides then the conclusion of the Lemma. 

This finishes the proof of Reduction 2. We are left to show that the hypotheses of Reduction 2 are satisfied.

 Choose a bounded $\cala$-chain
 complex $D'$ together with an $\cala^{\kappa}$-chain homotopy equivalence $f \colon D' \to D$.  
 Choose a homotopy inverse $f^{-1} \colon D \to D'$. Consider $f^{-1} \circ u \colon C \to D'$. 
 Then we can choose a homotopy $f \circ (f^{-1} \circ u) \simeq u$. 
 Let $E = \cyl(f^{-1} \circ u)$ and write $e$ for its
 differential. Notice that $E$ is a bounded $\cala$-chain complex. We obtain from
 Lemma~\ref{lem:elementary_facts}~\ref{lem:elementary_facts:maps_between_mapping_cylinders}
 an $\cala^{\kappa}$-chain map $v \colon E \to D$ such that 
 $v \circ j(C) = u$ and $v \circ j(D') = f$, where $j:=j(C) \colon C \to E$ and 
 $j(D') \colon D' \to \cyl(f^{-1} \circ u)$ denote the canonical inclusions.

 Since $j(D')$ and $f$ are chain homotopy equivalences, the same is true for $v$.
 From Lemma~\ref{lem:Chain_homotopy_equivalences_and_cofibrations} we obtain a chain map
 $w \colon D \to E$ with $w \circ u = j(C)$ and a chain homotopy $h \colon v \circ w \simeq \id_D$
 satisfying 
 \begin{eqnarray}
 h \circ u & = & 0.
 \label{h_circ_u_is_0}
 \end{eqnarray}
 Define $\mu \colon \Phi(E) \to E$ to be $w \circ \psi \circ\Phi(v)$. 
 Since 
 \begin{eqnarray*}
 \mu \circ \Phi(j(C)) 
 & = & 
 w \circ \psi \circ \Phi(v) \circ \Phi(j(C)) 
 \\
 & = & 
 w \circ \psi \circ \Phi(u)
 \\
 & = & 
 w \circ u  \circ \phi 
 \\
 & = & 
 j(C)  \circ \phi,
 \end{eqnarray*}
 we obtain a morphism $j(C) \colon (C,\phi) \to (E,\mu)$ in $\End(\Ch(\cala),\Phi)$.
 
 Consider the (not necessarily commutative) diagram of $\cala^{\kappa}$-chain complexes
 \[
 \xymatrix{\Phi(E) \ar[r]^-{\mu} \ar[d]_{\Phi(v)}^{\simeq} &  E\ar[d]^{v}_{\simeq}
       \\
       \Phi(D) \ar[r]_-{\psi} & D}
 \]
 It commutes  up to the chain homotopy
 \[
 H := h \circ \psi \circ \Phi(v).
 \]
 Then $H$ is stationary over $\Phi(C)$: In fact, we compute
 \begin{eqnarray}
 \label{hprime_circ_j(C)_is_0}
 H \circ \Phi(j(C))
 & = &
 h \circ \psi \circ \Phi(v) \circ \Phi(j(C))
 \\
 & = & 
 h \circ \psi \circ \Phi(u)
 \nonumber
 \\
 & = & 
 h \circ u \circ \phi
 \nonumber
 \\
 & \stackrel{\eqref{h_circ_u_is_0}}{=} &
 0 
 \nonumber
 \end{eqnarray}

This concludes the proof that the hypotheses of Reduction 2 are satisfied, and thus the proof of the Lemma.
 \end{proof}

 \begin{lemma}\label{lem:comparing_finiteness_conditions} 
 For $(C, \phi)\in \End(\Ch(\cala^{\kappa}, \Phi))$, the following are equivalent:
 
 \begin{enumerate}
 
 \item \label{lem:comparing_finiteness_conditions:(C,phi)}
   $C$ is homotopy finite  and $\phi$ is homotopy nilpotent;
 
 \item \label{lem:comparing_finiteness_conditions:chi(C,phi)}
  $\chi_{\cala^{\kappa}}(C, \phi)$ is homotopy finite and contractible in
  $\cala_{\Phi}[t,t^{-1}]^\kappa$.

 \end{enumerate}
 \end{lemma}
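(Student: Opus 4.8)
The plan is to transport the whole statement, via the equivalence of Waldhausen categories $\chi_{\cala^{\kappa}}\colon \End(\Ch(\cala^{\kappa}),\Phi)\rightleftarrows\Ch(\cala_{\Phi}[t^{-1}]^{\kappa})\colon N_{\cala^{\kappa}}$ of Lemma~\ref{lem:chi_and_N}, to a statement about the restriction functor $i^-$. Every $(C,\phi)$ is weakly equivalent to $N_{\cala^{\kappa}}(D)$ with $D:=\chi_{\cala^{\kappa}}(C,\phi)$, and both conditions are invariant under weak equivalences in the respective Waldhausen categories: for condition~\ref{lem:comparing_finiteness_conditions:(C,phi)}, homotopy finiteness is plainly a homotopy invariant, and homotopy nilpotence of $\phi$ is preserved along a weak equivalence $u\colon(C,\phi)\to(C',\psi)$ because $u\circ\phi^{(n)}=\psi^{(n)}\circ\Phi^{n}(u)$ with $u$ and $\Phi^{n}(u)$ chain homotopy equivalences; condition~\ref{lem:comparing_finiteness_conditions:chi(C,phi)} is visibly homotopy invariant. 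Since condition~\ref{lem:comparing_finiteness_conditions:chi(C,phi)} for $(C,\phi)$ is by definition ``$D$ is homotopy $\cala_{\Phi}[t^{-1}]$-finite and contractible over $\cala_{\Phi}[t,t^{-1}]^{\kappa}$'', it is enough to prove: for every $D\in\Ch(\cala_{\Phi}[t^{-1}]^{\kappa})$, the complex $D$ is homotopy $\cala_{\Phi}[t^{-1}]$-finite and contractible over $\cala_{\Phi}[t,t^{-1}]^{\kappa}$ if and only if $N_{\cala^{\kappa}}(D)=(i^{-}D,i^{-}t^{-1})$ has $i^{-}D$ homotopy $\cala$-finite and $i^{-}t^{-1}$ homotopy nilpotent.

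For the easy half I would write $D\simeq\chi_{\cala^{\kappa}}(C,\phi)=\cone\bigl(i_-\Phi(C)\xrightarrow{t^{-1}-i_-\phi}i_-C\bigr)$ with $(C,\phi)=N_{\cala^{\kappa}}(D)$ and check this cone when $C$ is homotopy $\cala$-finite and $\phi$ is homotopy nilpotent. Homotopy finiteness: $i_-$, being additive, carries a bounded $\cala$-chain complex chain homotopy equivalent to $C$ (resp. to $\Phi(C)$) to a bounded $\cala_{\Phi}[t^{-1}]$-chain complex chain homotopy equivalent to $i_-C$ (resp. $i_-\Phi(C)$), so both are homotopy $\cala_{\Phi}[t^{-1}]$-finite and then so is their mapping cone, by Lemma~\ref{lem:homotopy_finite_two_of_three} applied to $0\to i_-C\to\cone(t^{-1}-i_-\phi)\to\Sigma i_-\Phi(C)\to 0$. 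Contractibility: after applying $j_-$ the differential of the cone is built from $t^{-1}-i_0\phi=(\id_{i_0C}-\phi\cdot t)\circ t^{-1}$, where $t^{-1}$ is an isomorphism and $\id_{i_0C}-\phi\cdot t$ has homotopy inverse $\sum_{k=0}^{n-1}(\phi\cdot t)^{k}$, since $(\phi\cdot t)^{n}=\phi^{(n)}\cdot t^{n}$ is null-homotopic over $\cala_{\Phi}[t,t^{-1}]^{\kappa}$ once $\phi^{(n)}\simeq 0$; hence the cone is contractible by Lemma~\ref{lem:elementary_facts}~\ref{lem:elementary_facts:chain_homotopy_equivalence_cone}.

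For the converse, assume $D$ is homotopy $\cala_{\Phi}[t^{-1}]$-finite and contractible over $\cala_{\Phi}[t,t^{-1}]^{\kappa}$; since $i^-$ preserves chain homotopy equivalences we may assume $D$ is a bounded $\cala_{\Phi}[t^{-1}]$-chain complex with $j_-D$ contractible over $\cala_{\Phi}[t,t^{-1}]^{\kappa}$. The first task is to show that the a priori infinite $\cala^{\kappa}$-chain complex $i^-D$ is chain homotopy equivalent over $\cala^{\kappa}$ to a bounded $\cala$-chain complex. I would argue by induction on the length of $D$: the top differential of $j_-D$ is split surjective over $\cala_{\Phi}[t,t^{-1}]^{\kappa}$ because $j_-D$ is contractible, and, combining this with Lemma~\ref{lem:elementary_facts}~\ref{lem:elementary_facts:induction_step_argument} and the idempotent completeness of $\cala$, one splits off after restriction a bounded $\cala$-summand from an otherwise contractible complex and proceeds with a shorter $D$; the base case (a length-one complex contractible over $\cala_{\Phi}[t,t^{-1}]^{\kappa}$ is zero) is trivial. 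This is the same bookkeeping as in the proof of Lemma~\ref{lem:Gamma_values_in_CHhf}~\ref{lem:Gamma_values_in_CHhf:hf}, where the truncations $C^{-}[1,2N-1]$ carried it out. Once $i^-D$ is known to be homotopy $\cala$-finite, choose a chain homotopy equivalence $i^-D\xrightarrow{\simeq}C'$ with $C'$ bounded over $\cala$ and lift it to a weak equivalence $(i^-D,i^-t^{-1})\xrightarrow{\simeq}(C',\phi')$ in $\End(\Ch(\cala^{\kappa}),\Phi)$ using Lemma~\ref{lem:endos_and_finiteness}.

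It remains to see that $\phi'$, hence $i^-t^{-1}$, is homotopy nilpotent. By Lemma~\ref{lem:chi_and_N}, $\chi_{\cala^{\kappa}}(i^-D,i^-t^{-1})\simeq D$, so after applying $j_-$ the chain map $t^{-1}-i_0(i^-t^{-1})=(\id_{i_0i^-D}-(i^-t^{-1})\cdot t)\circ t^{-1}$ is a chain homotopy equivalence over $\cala_{\Phi}[t,t^{-1}]^{\kappa}$; transporting along the equivalence above, $u':=\id_{i_0C'}-\phi'\cdot t$ is a chain homotopy equivalence over $\cala_{\Phi}[t,t^{-1}]^{\kappa}$. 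Because $C'$ is bounded and each of its degrees is a single object of $\cala$, a homotopy inverse $v=\sum_i v_it^{i}$ of $u'$ over $\cala_{\Phi}[t,t^{-1}]^{\kappa}$ — together with the homotopies realizing $v\circ u'\simeq\id$ — involves only finitely many powers of $t$; expanding $v-(\phi'\cdot t)v\simeq\id_{i_0C'}$ coefficientwise in $t$, one first homotopes away the coefficients in negative $t$-degree, then reads off $v_0\simeq\id$ and inductively $v_k\simeq\phi'^{(k)}$, so that the vanishing of $v_k$ for $k$ above the top $t$-degree of $v$ forces $\phi'^{(n)}\simeq 0$ for $n$ large. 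The main obstacle is the homotopy finiteness of $i^-D$ in this converse direction: a finite $\cala$-model has to be extracted from the infinite complex $i^-D$, the only leverage being the contractibility of $j_-D$ over $\cala_{\Phi}[t,t^{-1}]^{\kappa}$, which must be fed through an induction with careful control of powers of $t$ — and this is exactly where the homotopy nilpotence is secretly being produced.
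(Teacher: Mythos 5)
Your proof of~\ref{lem:comparing_finiteness_conditions:(C,phi)}$\implies$\ref{lem:comparing_finiteness_conditions:chi(C,phi)} matches the paper's: replace $(C,\phi)$ up to weak equivalence by an object with finite $\cala$-underlying complex using Lemma~\ref{lem:endos_and_finiteness}, and factor $t^{-1}-i_0\phi=(\id-\phi\cdot t)\circ t^{-1}$ with homotopy inverse $\sum_k(\phi\cdot t)^k$.

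For the converse you depart from the paper, and this is where you run into trouble. The paper's proof is a single truncation argument: set $D=\chi_{\cala^\kappa}(C,\phi)$, assume $D$ is a bounded $\cala_\Phi[t^{-1}]$-chain complex, pick a null-homotopy $H$ of $j_-D$ over $\cala_\Phi[t,t^{-1}]^\kappa$, and choose $M$ large enough that $H$ has no coefficient of $t^k$ with $\lvert k\rvert\geq M$. Then the truncations $H_n\trun\colon D_n[-\infty,-M]\to D_{n+1}[-\infty,0]$ give an explicit null-homotopy for the inclusion $D[-\infty,-M]\hookrightarrow D[-\infty,0]$, which is precisely $(i^-t^{-1})^{(M)}$. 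This single observation yields homotopy nilpotence of $i^-t^{-1}$ outright, and also the cone decomposition $D[-\infty,0]/D[-\infty,-M]\simeq D[-\infty,0]\oplus\Sigma D[-\infty,-M]$, exhibiting $i^-D=D[-\infty,0]$ as a homotopy retract of the finite $\cala$-chain complex $D[-\infty,0]/D[-\infty,-M]$; idempotent completeness of $\cala$ and Lemma~\ref{lem:criterion_for_finitely_dominated} then give homotopy finiteness. In other words, both properties are read off from one truncated null-homotopy — no induction, no transport to a bounded model, no coefficient recurrence.

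Your induction-on-length argument for homotopy finiteness of $i^-D$ is a genuine gap as written. The base case is not trivial: a complex concentrated in two degrees whose $\cala_\Phi[t,t^{-1}]$-differential is an isomorphism still has $i^-D$ a two-term complex of infinite direct sums, and seeing that $i^-D$ is $\cala$-homotopy finite already requires a truncated-inverse argument of exactly the kind carried out in the proof of Lemma~\ref{lem:Gamma_values_in_CHhf}~\ref{lem:Gamma_values_in_CHhf:hf}. Your inductive step ``splits off after restriction a bounded $\cala$-summand from an otherwise contractible complex'' without saying what gets split off or why that summand is $\cala$-finite; Lemma~\ref{lem:elementary_facts}~\ref{lem:elementary_facts:induction_step_argument} operates within a single additive category, and there is no clear bridge from a splitting of the top differential of $j_-D$ over $\cala_\Phi[t,t^{-1}]^\kappa$ to the kind of $\cala$-finite decomposition of $i^-D$ you need. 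Your coefficient-expansion argument for homotopy nilpotence of $\phi'$ is plausible in spirit but not rigorous as stated: the twisting enters the recurrence (coefficient of $t^j$ in $v\circ u'-\id$ is $v_j-v_{j-1}\circ\Phi^{j-1}(\phi')$, not $v_j-\phi'^{(j)}$), so ``$v_k\simeq\phi'^{(k)}$'' needs an argument, and the preliminary step of ``homotoping away the coefficients in negative $t$-degree'' is itself a nontrivial claim that you would need to justify. Both issues dissolve if you instead take the paper's route of truncating a single null-homotopy of $j_-D$.
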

 \begin{proof}~\ref{lem:comparing_finiteness_conditions:(C,phi)}~%
 $\implies$~\ref{lem:comparing_finiteness_conditions:chi(C,phi)}
    If we apply Lemma~\ref{lem:endos_and_finiteness} to the morphism $(0,0) \to (C, \phi)$, 
  we obtain  a zigzag of weak equivalences 
   $(E,\mu)\xrightarrow{\simeq} (F,\sigma) \xleftarrow{\simeq} (C,\phi)$
   in  $\End(\Ch(\cala^\kappa), \Phi)$
   such that $E$ is a $\cala$-chain complex. This implies by Subsection~\ref{subsec:Homotopy_fiber_sequences}
   that   $\chi(C,\phi)$ is $\cala_{\Phi}[t^{-1}]^\kappa$-chain homotopy equivalent to
   $\chi(E,\sigma)$.  Since $E$ is a $\cala$-chain complex, $\chi_{\cala}(E,\sigma)$ 
   is a $\cala_{\Phi}[t^{-1}]$-chain complex.

   Over $\cala_\Phi[t,t^{-1}]^\kappa$ we may split $i_-\phi-t^{-1}=(1-\phi \cdot t)\circ
   t^{-1}$ where $t^{-1}$ is an isomorphism and $1-\phi \cdot t$ is a homotopy equivalence
   if $\phi $ is homotopy nilpotent (with homotopy inverse $\sum_{i\ge 0} (\phi \cdot
   t)^i$). In this case $i_-\phi-t^{-1}$ is a chain homotopy equivalence and hence its
   cone $\chi_{\cala^{\kappa}}(C, \phi)$ is $\cala_\Phi[t,t^{-1}]^\kappa$-contractible by
   Lemma~\ref{lem:elementary_facts}~\ref{lem:elementary_facts:chain_homotopy_equivalence_cone}.
   \\[2mm]~\ref{lem:comparing_finiteness_conditions:chi(C,phi)}~$\implies$~%
\ref{lem:comparing_finiteness_conditions:(C,phi)} Let $D=\chi(C,
   \phi)$. Lemma~\ref{lem:chi_and_N} implies that $N_{\cala^{\kappa}}(D) \simeq (C, \phi)$
   holds in $\End(\Ch(\cala^\kappa), \Phi)$.  So it suffices to show that the
   $\cala^{\kappa}$-chain complex $i^-D$ is homotopy finite and that $i^-t^{-1} \colon i^-
   \Phi(D) \to i^-D$ is homotopy nilpotent.  By assumption $D$ is homotopy finite; by
   homotopy invariance we may assume that $D$ is actually finite, i.e., in
   $\Ch(\cala_\Phi[t\inv])$.
   
   Let $H$ be a null-homotopy for the $\cala_{\Phi}[t,t^{-1}]$-chain complex $j_-D$.  Let
   $M$ be a large enough natural number so that all coefficients in $H$ for $t^k$ are
   zero, provided $\lvert k\rvert \geq M$.  Then the collection of maps
\[
H_n\trun\colon D_n[-\infty, -M]\to D_{n+1}[-\infty,0]
\] 
(introduced in Notation~\ref{not:truncation_for_objects})  
   is a null-homotopy for   the inclusion $D[-\infty,-M] \to D[-\infty,0] $. 
   This inclusion agrees with the $\cala^{\kappa}$-chain map 
   $(i^-t^{-1})^{(M)} \colon  i^- \Phi^M(D) = (\Phi^M(D))[-\infty,0] \to i^- D = D[-\infty,0]$. 
   We conclude that $i^-t\inv$ is $\Phi$-homotopy nilpotent. Moreover
   the exact sequence
   \[
   0\to D[-\infty,-M]  \to D[-\infty,0] \to D[-\infty,0]/D[-\infty,-M]\to 0
   \] 
   shows using Subsection~\ref{subsec:Homotopy_fiber_sequences} 
   that we obtain $\cala^{\kappa}$-chain homotopy equivalences
   \begin{multline*}
   D[-\infty,0]/D[-\infty,-M] \simeq \cone\bigl(D[-\infty,-M]  \to D[-\infty,0]\bigr) 
   \\
   \simeq D[-\infty,0] \oplus \Sigma D[-\infty,-M]. 
   \end{multline*}
   We conclude  that $D[-\infty,0]$ is a homotopy retract of the (finite) $\cala$-chain complex $D[-\infty,0]/D[-\infty,-M]$.
   As $\cala$ was assumed to be idempotent complete, we conclude from
   Lemma~\ref{lem:criterion_for_finitely_dominated} that $i^-D = D[-\infty,0]$ is $\cala$-homotopy finite.
 \end{proof}

\begin{notation}[$\HNil(\Chhf(\cala), \Phi)$ and ${\Chhf(\cala_{\Phi}[t^{-1}])^w}$]
 \label{not:HNil(Chhf(cala))_and_Chhf(cala_Phi[t])w)}\ \\
 Let $\HNil(\Chhf(\cala), \Phi)$ be the full subcategory of $\End(\Ch(\cala^{\kappa},\Phi))$
 consisting of objects $(C,\phi)$ for which the $\cala^{\kappa}$-chain complex $C$ 
 is homotopy finite and $\phi$ is homotopy $\Phi$-nilpotent. We let a cofibration (weak equivalence) 
 in $\HNil(\Chhf(\cala),\Phi)$ be a morphism $f\colon (C,\phi)\to(D,\psi)$ 
  whose underlying map is a cofibration (or weak equivalence, respectively). 
 
 Let $\Chhf(\cala_{\Phi}[t^{-1}])^w$ be the full subcategory of $\Ch(\cala_{\Phi}[t^{-1}]^\kappa)$
 consisting of those objects $C$ which are homotopy finite
 and become contractible in $\cala_{\Phi}[t,t^{-1}]^\kappa$. A morphism in this category is a 
 cofibration or weak equivalence if it is such in $\Ch(\cala_\Phi[t\inv]^\kappa)$.
\end{notation}

\begin{lemma} \label{lem:waldhausen_structure_on_HNil_and_End} This notion of cofibration
  and weak equivalence defines a structure of Waldhausen category on both
  $\HNil(\Chhf(\cala),\Phi)$ and $\Chhf(\cala_\Phi[t,t\inv])^w$.
\end{lemma}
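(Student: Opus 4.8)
The plan is to verify the Waldhausen axioms for both categories by exhibiting them as full subcategories, closed under the relevant constructions, of ambient Waldhausen categories whose structure has already been established. For $\Chhf(\cala_\Phi[t^{-1}])^w$ the ambient category is $\Ch(\cala_\Phi[t^{-1}]^\kappa)$ with the canonical Waldhausen structure, and for $\HNil(\Chhf(\cala),\Phi)$ it is $\End(\Ch(\cala^\kappa),\Phi)$; recall that the latter inherits its cofibrations and weak equivalences from $\Ch(\cala^\kappa)$ by forgetting the endomorphism, so the axioms (zero object, pushout along a cofibration, gluing, saturation, extension, cylinder) all follow once we check they are preserved. The key point common to both halves is that the defining conditions — being homotopy finite and being ($\Phi$-)contractible/($\Phi$-)homotopy nilpotent — are closed under the operations in question, and for this the main input is Lemma~\ref{lem:homotopy_finite_two_of_three} (the two-out-of-three property for homotopy $\cala$-finiteness along a short exact sequence) together with Lemma~\ref{lem:comparing_finiteness_conditions}, which identifies the two finiteness/nilpotency conditions under the equivalence $\chi_{\cala^\kappa}$.

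First I would treat $\Chhf(\cala_\Phi[t^{-1}])^w$. The conditions "homotopy finite over $\cala_\Phi[t^{-1}]$" and "contractible over $\cala_\Phi[t,t^{-1}]^\kappa$" are clearly closed under chain homotopy equivalence, so the saturation axiom and the cylinder axiom (the cylinder functor of $\Ch(\cala_\Phi[t^{-1}]^\kappa)$ produces objects homotopy equivalent to the original ones) are immediate. For closure under pushout along a cofibration $i\colon C\rightarrowtail D$: a pushout $D\cup_C E$ sits in a short exact sequence $0\to E\to D\cup_C E\to \cone(i)\to 0$ up to homotopy, and $\cone(i)\simeq \cone(i')$ for the chain map, so one applies Lemma~\ref{lem:homotopy_finite_two_of_three} to conclude $D\cup_C E$ is homotopy finite, while contractibility over $\cala_\Phi[t,t^{-1}]^\kappa$ follows because $-\otimes$ of the exact sequence by $j_-$ stays exact and $j_-C$, $j_-E$, $j_-D$ are all contractible (using Lemma~\ref{lem:elementary_facts}~\ref{lem:elementary_facts:chain_homotopy_equivalence_2_of_3}). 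The extension axiom likewise reduces, via Lemma~\ref{lem:homotopy_finite_two_of_three} and Lemma~\ref{lem:elementary_facts}~\ref{lem:elementary_facts:chain_homotopy_equivalence_2_of_3}, to the two-out-of-three statements. Thus $\Chhf(\cala_\Phi[t^{-1}])^w$ is a full Waldhausen subcategory of $\Ch(\cala_\Phi[t^{-1}]^\kappa)$, closed under mapping cylinders, and hence inherits all the axioms.

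Next I would treat $\HNil(\Chhf(\cala),\Phi)$. Here the cleanest route is to transport the structure: Lemma~\ref{lem:chi_and_N} gives mutually inverse equivalences of Waldhausen categories between $\End(\Ch(\cala^\kappa),\Phi)$ and $\Ch(\cala_\Phi[t^{-1}]^\kappa)$, and Lemma~\ref{lem:comparing_finiteness_conditions} says precisely that under $\chi_{\cala^\kappa}$ an object $(C,\phi)$ lies in $\HNil(\Chhf(\cala),\Phi)$ if and only if $\chi_{\cala^\kappa}(C,\phi)$ lies in $\Chhf(\cala_\Phi[t^{-1}])^w$ (after noting the latter contains precisely the homotopy-finite objects that are $\cala_\Phi[t,t^{-1}]^\kappa$-contractible). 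Since $\chi_{\cala^\kappa}$ and $N_{\cala^\kappa}$ are exact and send cofibrations to cofibrations and weak equivalences to weak equivalences (and preserve the relevant subcategories), the Waldhausen structure on $\Chhf(\cala_\Phi[t^{-1}])^w$ established in the previous paragraph pulls back to one on $\HNil(\Chhf(\cala),\Phi)$. Alternatively, and perhaps more transparently, one checks closure directly inside $\End(\Ch(\cala^\kappa),\Phi)$: a pushout of $(C,\phi)\rightarrowtail(E,\mu)$, $(C,\phi)\to(D,\psi)$ has underlying complex the pushout of the underlying complexes, which is homotopy finite by Lemma~\ref{lem:homotopy_finite_two_of_three}; and the induced endomorphism $\sigma$ on the pushout is homotopy $\Phi$-nilpotent because the short exact sequence $0\to(D,\psi)\to(\text{pushout})\to(\cone,\ast)\to 0$ of endomorphism-objects lets one bound the nilpotency exponent of $\sigma$ in terms of those of $\psi$ and of the induced endomorphism on $\cone$ (a standard filtration argument). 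The extension, saturation, and cylinder axioms follow in the same manner.

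The main obstacle I anticipate is the bookkeeping around homotopy $\Phi$-nilpotency under pushouts and extensions: unlike contractibility, which behaves formally two-out-of-three along short exact sequences, the nilpotency exponent is not a priori controlled, and one must argue that $\psi^{(m)}\simeq 0$ and $\bar\psi^{(n)}\simeq 0$ force $\sigma^{(m+n)}\simeq 0$ for the endomorphism on an extension — with the chain homotopies themselves needing to be assembled compatibly. For this reason I would lean on the transport-of-structure argument via Lemma~\ref{lem:chi_and_N} and Lemma~\ref{lem:comparing_finiteness_conditions} as the primary proof, since it sidesteps the nilpotency bookkeeping entirely by reducing everything to the $t^{-1}$-side where the condition is plain contractibility, and mention the direct argument only as a remark. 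Everything else — the zero object, closure of cofibrations under composition and pushout, the two-out-of-three and cancellation statements — is routine given Lemma~\ref{lem:homotopy_finite_two_of_three} and the elementary facts in Lemma~\ref{lem:elementary_facts}.
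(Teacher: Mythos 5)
Your proposal is correct and takes essentially the same route as the paper: for $\Chhf(\cala_\Phi[t^{-1}])^w$ you check closure under pushouts along cofibrations using Lemma~\ref{lem:homotopy_finite_two_of_three} together with the gluing lemma for $\cala_\Phi[t,t^{-1}]^\kappa$-equivalences, and for $\HNil(\Chhf(\cala),\Phi)$ you transport the question through $\chi_{\cala^\kappa}$ (which commutes with pushouts) and invoke Lemma~\ref{lem:comparing_finiteness_conditions} to characterize membership, exactly as the paper does. Your instinct to prefer the transport argument over the direct nilpotency-exponent bookkeeping is also the paper's choice; the direct route is not pursued there.
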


\begin{proof}
  As both categories contain the zero object, we are left to check that they are closed
  under pushouts along a cofibration. For $\Chhf(\cala_\Phi[t,t\inv])^w$ this follows (as
  usual) from Lemma~\ref{lem:homotopy_finite_two_of_three} and the fact that the class of
  weak equivalences in $\cala_\Phi[t,t\inv]^\kappa$ satisfies the glueing lemma.

  Given a pushout diagram $(C_1,\phi_1)\rightarrow (C_0,\phi_0) \leftarrow (C_2,\phi_2)$
  in $\HNil(\Chhf(\cala),\Phi)$, denote by $(C,\phi)$ its pushout in
  $\End(\Ch(\cala^\kappa),\Phi)$. By Lemma~\ref{lem:comparing_finiteness_conditions} each
  $\chi_{\cala^\kappa}(C_i,\phi_i)$ (where $i=0,1,2$) is homotopy finite and contractible
  in $\cala_\Phi[t,t\inv]^w$. The functor $\chi_{\cala^\kappa}$ commutes with pushouts, so
  the first part of the proof implies that $\chi_{\cala^\kappa}(C,\phi)$ is also homotopy
  finite and contractible in $\cala_\Phi[t,t\inv]^w$. Hence, again by
  Lemma~\ref{lem:comparing_finiteness_conditions}, $(C,\phi)$ is an object of
  $\HNil(\Chhf(\cala),\Phi)$.
\end{proof}

\begin{proposition}\label{pro:reexpressing_nil}
  The functor $\chi_{\cala^{\kappa}}$ appearing in Lemma~\ref{lem:chi_and_N} induces an
  equivalence of Waldhausen categories
  \[
  \chi(\cala) \colon \HNil(\Chhf(\cala), \Phi) \xrightarrow{\simeq}
  \Chhf(\cala_\Phi[t^{-1}])^w,
  \]
  In particular we obtain a homotopy equivalence
  \[
  \bfK\bigl(\chi(\cala)\bigr) \colon \bfK\bigl(\HNil(\Chhf(\cala), \Phi)\bigr) \xrightarrow{\simeq}
  \bfK\bigl(\Chhf(\cala_\Phi[t^{-1}])^w\bigr).
  \]
\end{proposition}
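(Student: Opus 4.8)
The plan is to obtain the statement by restricting the mutually inverse exact equivalences $\chi_{\cala^{\kappa}}$ and $N_{\cala^{\kappa}}$ of Lemma~\ref{lem:chi_and_N} to the relevant full Waldhausen subcategories. Recall that Lemma~\ref{lem:chi_and_N} supplies, besides the two functors between $\End(\Ch(\cala^{\kappa}),\Phi)$ and $\Ch(\cala_{\Phi}[t^{-1}]^{\kappa})$, natural weak equivalences $T\colon \chi_{\cala^{\kappa}}\circ N_{\cala^{\kappa}}\xrightarrow{\simeq}\id$ and $S\colon N_{\cala^{\kappa}}\circ\chi_{\cala^{\kappa}}\xrightarrow{\simeq}\id$. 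Since $\HNil(\Chhf(\cala),\Phi)$ and $\Chhf(\cala_{\Phi}[t^{-1}])^{w}$ are full subcategories whose cofibrations and weak equivalences are created in the ambient categories, and since each carries a Waldhausen structure by Lemma~\ref{lem:waldhausen_structure_on_HNil_and_End}, it will be enough to check two inclusions of objects.

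First I would check that $\chi_{\cala^{\kappa}}$ sends $\HNil(\Chhf(\cala),\Phi)$ into $\Chhf(\cala_{\Phi}[t^{-1}])^{w}$; this is exactly the implication (i)$\Rightarrow$(ii) of Lemma~\ref{lem:comparing_finiteness_conditions}. Next I would check that $N_{\cala^{\kappa}}$ sends $\Chhf(\cala_{\Phi}[t^{-1}])^{w}$ into $\HNil(\Chhf(\cala),\Phi)$: for $D$ in $\Chhf(\cala_{\Phi}[t^{-1}])^{w}$, the weak equivalence $T(D)\colon \chi_{\cala^{\kappa}}(N_{\cala^{\kappa}}(D))\xrightarrow{\simeq} D$ shows that $\chi_{\cala^{\kappa}}(N_{\cala^{\kappa}}(D))$ is again homotopy finite and contractible over $\cala_{\Phi}[t,t^{-1}]^{\kappa}$ (both conditions being homotopy invariant), so the implication (ii)$\Rightarrow$(i) of Lemma~\ref{lem:comparing_finiteness_conditions}, applied to $N_{\cala^{\kappa}}(D)$, places it in $\HNil(\Chhf(\cala),\Phi)$. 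With these two inclusions in hand, $\chi(\cala)$ and the restriction of $N_{\cala^{\kappa}}$ are exact functors between the two Waldhausen subcategories, and the restrictions of $T$ and $S$ are natural weak equivalences exhibiting them as mutually inverse equivalences of Waldhausen categories.

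Finally I would invoke the fact that an exact functor of Waldhausen categories admitting an exact homotopy inverse in this sense induces a homotopy equivalence on $\bfK$ --- concretely, $\bfK$ turns a natural transformation through weak equivalences into a homotopy of maps of spectra, so $\bfK(T)$ and $\bfK(S)$ witness that $\bfK(\chi(\cala))$ and $\bfK$ of the restriction of $N_{\cala^{\kappa}}$ are inverse homotopy equivalences. I do not anticipate a genuine obstacle: all the real work has been done in Lemmas~\ref{lem:chi_and_N} and~\ref{lem:comparing_finiteness_conditions}, and the only slightly delicate point is the well-definedness of the restricted functor $N_{\cala^{\kappa}}$, which rests on the homotopy invariance of the finiteness and contractibility conditions cutting out the subcategories.
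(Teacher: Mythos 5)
Your proposal is correct and follows the paper's argument exactly: one checks that $\chi_{\cala^{\kappa}}$ and $N_{\cala^{\kappa}}$ restrict to the relevant Waldhausen subcategories using Lemma~\ref{lem:comparing_finiteness_conditions}, then observes that the restrictions of the natural weak equivalences $T$ and $S$ from Lemma~\ref{lem:chi_and_N} exhibit these restricted functors as mutually inverse equivalences. The only difference is that you spell out the homotopy-invariance argument for the well-definedness of the restricted $N_{\cala^{\kappa}}$ more explicitly than the paper does.
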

\begin{proof}
  We conclude from Lemma~\ref{lem:comparing_finiteness_conditions} that the functor $\chi_{\cala}$
  defined in~\eqref{chi_on_End} restricts to a functor of Waldhausen categories 
  \[
   \chi(\cala) \colon \HNil(\Chhf(\cala), \Phi) \xrightarrow{\simeq} \Chhf(\cala_\Phi[t^{-1}])^w.
  \]
  Because of Lemma~\ref{lem:chi_and_N} and again by Lemma~\ref{lem:comparing_finiteness_conditions}, 
an inverse up to natural equivalence of Waldhausen
  categories is given by the restriction of the functor $N_{\cala^\kappa}$.
\end{proof}

 \subsection{Homotopy nilpotence vs.~strict nilpotence}

 \begin{lemma}\label{lem:endos_and_strictly_nilpotent}
   Let $C$ and $D$ be a bounded $\cala$-chain complexes. Consider a morphism 
   $u\colon   (C,\phi)\to (D, \psi)$ in $\End(\Ch(\cala), \Phi)$.  Suppose that $\phi$ is
   $\Phi$-nilpotent and $\psi$ is homotopy $\Phi$-nilpotent.
 
   Then there exists a commutative diagram in $\End(\Ch(\cala), \Phi)$
   \[
    \xymatrix{(C,\phi) \ar[r]^u \ar@{>->}[d] &  (D, \psi) \ar@{>->}[d]^\simeq\\
     (E, \mu) \ar[r]^\simeq & (F,\sigma) 
   }
   \]
   where the arrows labelled by $\simeq$ are weak equivalences, the vertical arrows are
   cofibrations, and $\mu$ is $\Phi$-nilpotent.
 \end{lemma}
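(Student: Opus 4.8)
The plan is to imitate the two reductions from the proof of Lemma~\ref{lem:endos_and_finiteness} and then to carry out a finite mapping telescope construction in the last step. First, replacing $(D,\psi)$ by the mapping cylinder $(\cyl(u),\psi')$ of $u$ equipped with its natural twisted endomorphism, one reduces to the case in which $u\colon C\to D$ is a cofibration; here $\cyl(u)$ is still a bounded $\cala$-chain complex, is $\cala$-chain homotopy equivalent to $D$ in $\End(\Ch(\cala),\Phi)$ so that $\psi'$ is again homotopy $\Phi$-nilpotent, and $(C,\phi)$ is unaffected. Secondly, exactly the mapping-cylinder-of-$v$ argument at the end of the proof of Lemma~\ref{lem:endos_and_finiteness} (using Lemma~\ref{lem:elementary_facts}~\ref{lem:elementary_facts:maps_between_mapping_cylinders} and one pushout, and not requiring nilpotence of the endomorphism $\sigma$ on the auxiliary complex $F$) reduces the statement to producing the following data: a bounded $\cala$-chain complex $E$ with a \emph{strictly} $\Phi$-nilpotent twisted endomorphism $\mu$, a cofibration $j\colon(C,\phi)\to(E,\mu)$ in $\End(\Ch(\cala),\Phi)$, a chain homotopy equivalence $v\colon E\to D$ with $v\circ j=u$, and a chain homotopy $H\colon v\circ\mu\simeq\psi\circ\Phi(v)$ which is stationary over $\Phi(C)$, i.e.\ $H\circ\Phi(j)=0$.

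For the construction fix $n\ge 1$ with $\phi^{(n)}=0$ and with $\psi^{(n)}$ null-homotopic, say via a chain homotopy $G$. Since $u$ strictly intertwines $\phi$ and $\psi$, the maps $\Phi^i(u)$ form a strictly commuting ladder between the towers $\Phi^n(C)\xrightarrow{\Phi^{n-1}(\phi)}\cdots\xrightarrow{\phi}C$ and $\Phi^n(D)\xrightarrow{\Phi^{n-1}(\psi)}\cdots\xrightarrow{\psi}D$. Let $E$ be a suitable finite mapping telescope attached to this ladder, modelled so that: its underlying $\cala$-chain complex is bounded; it carries a twisted ``shift'' endomorphism $\mu$; the canonical collapse $v\colon E\to D$ onto the last $D$-stage is a chain homotopy equivalence; and the inclusion $j$ of $C$ into $E$ (through the last stage of the $C$-part of the telescope, made level-wise split by the mapping-cylinder model) satisfies $v\circ j=u$ and is a strict morphism $(C,\phi)\to(E,\mu)$. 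Because the shift moves each stage one step toward the last stage and then off the top, $\mu$ is strictly $\Phi$-nilpotent of bounded order. The required homotopy $H\colon v\circ\mu\simeq\psi\circ\Phi(v)$ is assembled from the homotopies built into the interpolating cylinders of the telescope together with a contribution built from $G$ that kills the ``overflow'' term $\psi^{(n+1)}=\psi\circ\Phi(\psi^{(n)})$, which is null-homotopic because $\psi^{(n)}$ is; this contribution from $G$ is what makes $H$ stationary over $\Phi(C)$, in the same bookkeeping as the identity $H\circ\Phi(j)=h\circ\psi\circ\Phi(u)=h\circ u\circ\phi=0$ in Lemma~\ref{lem:endos_and_finiteness}.

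The essential difficulty -- and the reason one must work with a ladder of two towers rather than a single telescope -- is to obtain \emph{strict} (not merely homotopy) $\Phi$-nilpotence of $\mu$ at the same time as: the collapse $v$ being a genuine chain homotopy equivalence onto $D$ (which forces the $D$-stages to be present, and there $\psi$ is only homotopy nilpotent, so the shift must genuinely fall off the top rather than fold back, the overflow being absorbed using $G$ and $\psi^{(n)}\simeq 0$); and $j$ being an honest morphism in $\End(\Ch(\cala),\Phi)$ (which pins $\mu$ down on $\Phi(\mathrm{im}\,j)$ to equal $j\circ\phi$, forcing the telescope to be organised so that the shift is ``position-preserving'' on the $C$-stages, where it realises $\phi$ itself and is strictly nilpotent precisely because $\phi^{(n)}=0$). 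Producing one telescope model in which all of these hold simultaneously, and checking that $\mu$ is a chain map with $\mu\circ\Phi(j)=j\circ\phi$ on the nose and that $v$ is a chain homotopy equivalence (its cone being chain homotopy equivalent to $\cone(u)$), is the main technical work; once $(E,\mu,j,v,H)$ is in hand, the second reduction completes the proof.
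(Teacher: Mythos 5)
Your high-level structure matches the paper's: the same two reductions (first to the case where $u$ is a cofibration via the mapping cylinder, then to constructing the data $(E,\mu,j,v,H)$), and then an iterated-mapping-cylinder ("finite telescope") construction over the ladder of towers $\Phi^i(C)\to\Phi^{i-1}(C)\to\cdots$ and $\Phi^i(D)\to\cdots$, with a strictly nilpotent shift endomorphism. The direction of the equivalence is flipped (you use $v\colon E\to D$; the paper uses $v'\colon D\to E$), but either direction can be made to work with the reduction machinery, so that is not in itself a problem.

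There is, however, a genuine gap that your proposal does not address: you take "any" null-homotopy $G$ of $\psi^{(n)}$ and assert that the stationarity of $H$ will follow "by the same bookkeeping" as in Lemma~\ref{lem:endos_and_finiteness}. It will not. In Lemma~\ref{lem:endos_and_finiteness} the stationarity of $H=h\circ\psi\circ\Phi(v)$ over $\Phi(C)$ is a formal consequence of $h\circ u=0$, which is supplied by the cofibration replacement Lemma~\ref{lem:Chain_homotopy_equivalences_and_cofibrations} and has nothing to do with nilpotence. In the present lemma, by contrast, the null-homotopy $G$ of $\psi^{(n)}$ enters the definition of the shift endomorphism on the top stage of the $D'$-telescope, and for $u'\colon(C',\phi')\to(D',\psi')$ to be a morphism in $\End(\Ch(\cala),\Phi)$ one needs the identity $k(D)\circ\Phi^n(u)=u'\circ k(C)$, which forces $G\circ\Phi^n(u)=0$. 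A generic null-homotopy of $\psi^{(n)}$ does \emph{not} have this vanishing property; it must be engineered. The paper does this by passing to the cokernel $D/C$: because $u$ is a cofibration and $\HNil$ is closed under pushouts along cofibrations, the induced endomorphism $\overline\psi$ on $D/C$ is again homotopy nilpotent; a null-homotopy $\overline H$ of $\overline\psi^{(m)}$ lifts to a chain homotopy $\psi^{(m)}\simeq\omega$ with $\omega$ supported in the $(D/C)\to C$ block, whence $\omega\circ\Phi^m(\omega)=0$, and the resulting null-homotopy $h(\psi)$ of $\psi^{(2m)}$ (one must double $m$, another point your proposal misses) is stationary over $\Phi^{2m}(C)$ by construction. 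Without this step, the strictness of $j\colon(C,\phi)\to(E,\mu)$ and the stationarity of $H$ cannot both be achieved, and the argument does not close.
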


 \begin{proof}
The same argument as in the proof of Lemma~\ref{lem:endos_and_finiteness} shows that we can make the following two reductions.

\emph{Reduction 1:} It is enough to consider the special case where $u\colon\Phi(D)\to D$ is a cofibration.

\emph{Reduction 2:} It is enough to construct (i) a zig-zag in $\End(\cala,\Phi)$
\[
\xymatrix{(E,\mu) & (C,\phi) \ar@{>->}[l]_j \ar@{>->}^u[r] & (D,\psi), }
\]
where $\mu$ is $\Phi$-nilpotent and $j$ is a cofibration, (ii) a chain homotopy equivalence $v'\colon D\to E$ 
satisfying $v'\circ u=j$, and (iii) a chain homotopy
\[
H'\colon \mu\circ \Phi(v') \simeq v'\circ \psi\colon \Phi(E)\to D
\]
which is stationary over $\Phi(C)$ (i.e., $H'\circ \Phi(u)=0$).

We now proceed to show that the assumptions of Reduction 2 are fulfilled. As a first step we prove that we can choose  an integer $n$ satisfying
\begin{eqnarray}
\phi^{(n)} & = & 0,
\label{phi(n)_is_zero}
\end{eqnarray}
and a chain homotopy
 \[
h(\psi) \colon \psi^{(n)} \simeq 0
 \]
 satisfying 
 \begin{eqnarray}
 h(\psi) \circ \Phi^n(u) & = & 0.
 \label{h(psi)_circ_Phi(u)_is_0}
 \end{eqnarray}

Let $E$ be the cokernel of the cofibration $u \colon C \to D$.
Let $\overline{\psi} \colon E \to E$ be the $\cala$-chain map induced by
$\psi$. Because of Lemma~\ref{lem:waldhausen_structure_on_HNil_and_End}
we can choose an integer $m$ such that $\phi^{(m)} = 0$ and there exists a nullhomotopy 
$\overline{H} \colon \overline{\psi}^{(m)} \simeq 0$. Since $u \colon C \to D$ is a cofibration, 
we can assume $D_k = C_k \oplus E_k$ and that the differential of $D$ looks like
\[
d_k = \begin{pmatrix} c_k & x_k \\ 0 & e_k \end{pmatrix} \colon 
D_k = C_k \oplus E_k \to D_{k-1} = C_{k-1} \oplus E_{k-1},
\]
if $c$ and $e$ denote the differentials of $C$ and $E$, and $\psi^{(m)}$ looks like
\[
\psi^{(m)}_k = \begin{pmatrix} 0 & y_k \\ 0 & \overline{\psi}_k^m \end{pmatrix} \colon 
\Phi^m(D_k) = \Phi^m(C_k) \oplus \Phi^m(E_k) \to D_k = C_k \oplus E_k.
\]
Define a homotopy 
\[
H_k = \begin{pmatrix} 0 &  0 \\ 0 & \overline{H}_k\end{pmatrix} \colon 
\Phi^m(D_k) = \Phi^m(C_k) \oplus \Phi^m(E_k) \to D_{k+1} = C_{k+1} \oplus E_{k+1}.
\]
We have
\begin{eqnarray*}
\lefteqn{d_{k+1} \circ H_k + H_{k-1} \circ \Phi^m(d_k)}
\\
& = & 
\begin{pmatrix} c_{k+1} & x_{k+1} \\ 0 & e_{k+1} \end{pmatrix}  
\circ \begin{pmatrix} 0 &  0 \\ 0 & \overline{H}_k \end{pmatrix} +
\begin{pmatrix} 0 &  0 \\ 0 & \overline{H}_{k-1} \end{pmatrix} 
\circ \begin{pmatrix} \Phi^m(c_k) & \Phi^m(x_k) \\ 0 & \Phi^m(e_k) \end{pmatrix} 
\\
& = & 
\begin{pmatrix} 0  & x_{k+1} \circ \overline{H}_k 
\\ 
0 & e_{k+1} \circ \overline{H}_k + \overline{H}_{k-1} \circ \Phi^m(e_k)\end{pmatrix} 
\\
& = & 
\begin{pmatrix} 0  & x_{k+1} \circ \overline{H}_k \\ 0 & \overline{\psi}^{(m)}_k\end{pmatrix} 
\end{eqnarray*}
Hence, if we put $z_k = y_k - x_{k+1} \circ \overline{H}_k$ and define $\omega \colon \Phi^m(D) \to D$ by
\[
\omega_k  = \begin{pmatrix} 0 & z_k \\ 0 & 0 \end{pmatrix} \colon 
\Phi^m(D_k) = \Phi^m(C_k) \oplus \Phi^m(E_k) \to D_k = C_k \oplus E_k,
\]
then $\omega$ is a chain map  and $H$ is a chain homotopy $\psi^{(m)} \simeq \omega$. 

It is easy to verify that if $f\simeq f'\colon C\to D$ are two chain maps which are
homotopic via a chain homotopy $H$, and $g\simeq g'\colon D\to E$ are homotopic via $K$,
then $g\circ f\simeq g'\circ f'$ via the chain homotopy $g\circ H + K\circ f'$.

In our situation 
\[
\psi^{(2m)}=\psi^{(m)}\circ \Phi^m\psi^{(m)}\simeq \omega \circ \Phi^m(\omega) = 0
\]
via the homotopy $h(\psi):=\psi^{(m)} \circ \Phi^m(H) + H \circ \Phi^m(\omega)$. Since both 
$\Phi^m(H)$ and $\Phi^m(\omega)$ are zero when restricted along $u\colon C\to D$, the same 
is true for $h(\psi)$. This establishes~\eqref{h(psi)_circ_Phi(u)_is_0},
with $n=2m$.

We get from 
 Lemma~\ref{lem:elementary_facts}~\ref{lem:elementary_facts:projection_cyl_to_D} 
 $\cala$-chain homotopies
 \begin{eqnarray*}
 h(C) \colon \id_{\cyl(\phi)} & \simeq & l(C) \circ \pr(C);
 \\
 h(D) \colon \id_{\cyl(\psi)} & \simeq & l(D) \circ \pr(D).
 \end{eqnarray*}
 satisfying
 \begin{eqnarray*}
 \pr(C) \circ h(C) & = & 0;
 \\
 \pr(D) \circ h(D) & = & 0;
 \\
 h(D) \circ \overline{u} & = & \overline{u} \circ h(C),
 \end{eqnarray*}
 where $l(C) \colon C \to \cyl(\phi)$ and $l(D) \colon C \to \cyl(\psi)$ are the canonical inclusions,
 $\pr(C) \colon \cyl(\phi) \to C$ and $\pr(D) \colon \cyl(\psi) \to D$ the canonical projections,
 and $\overline{u}$ is the chain map $\cyl(\phi) \to \cyl(\psi)$ given by
 $\overline{u}_n = \Phi(u_{n-1}) \oplus \Phi(u_n) \oplus u_n \colon 
\Phi(C_{n-1}) \oplus \Phi(C_n) \oplus C_n \to \Phi(D_{n-1}) \oplus \Phi(D_n) \oplus D_n$.
Denote by $C'$ and $D'$ the iterated mapping cylinders. 
 \begin{eqnarray*}
 C' & := & \Phi^{n-2}(\cyl(\phi))\cup_{\Phi^{n-2}(C)} \Phi^{n-3}(\cyl(\phi)) \cup_{\Phi^{n-3}(C)} \dots \cup_{\Phi(C)} \cyl(\phi);
 \\
 D' & := & \Phi^{n-2}(\cyl(\psi))\cup_{\Phi^{n-2}(D)} \Phi^{n-3}(\cyl(\psi)) \cup_{\Phi^{n-3}(D)} \dots \cup_{\Phi(D)} \cyl(\psi).
 \end{eqnarray*}
 Denote by
 \begin{eqnarray*}
 i(C) \colon C & \to & C';
 \\
 i(D) \colon D & \to & D';
 \\
 i(\phi^{n-1}(C)) \colon \Phi^{n-1}(C) & \to & C';
 \\
 i(\phi^{n-1}(D)) \colon \Phi^{n-1}(D) & \to & D',
 \end{eqnarray*}
 the obvious inclusions. The various chain maps $\Phi^i(\pr(C))$ and $\Phi^i(\pr(D)$ fit together to projections
 \begin{eqnarray*}
 p(C) \colon C' & \to & C;
 \\
 p(D) \colon D' & \to & D.
 \end{eqnarray*}
 We have
 \begin{eqnarray*}
 p(C) \circ i(C) & = & \id_C;
 \\
 p(D) \circ i(D) & = & \id_D;
 \\
 p(C) \circ i(\Phi^{n-1}(C)) & = & \phi^{(n-1)};
 \\
 p(D) \circ i(\Phi^{n-1}(D)) & = & \psi^{(n-1)}.
 \end{eqnarray*}
 Since $\psi \circ \Phi(u) = u \circ \phi$, the various maps $\Phi^i(\overline{u})$ fit together to a cofibration
 \[
 u' \colon C' \to D'
 \]
satisfying
\begin{eqnarray*}
u' \circ i(C) & = & i(D) \circ u;
\\
p(D) \circ u' & = & u \circ p(C).
\end{eqnarray*}
The various chain homotopies  $\Phi^i(h(C))$ and $\Phi^i(h(D)$ fit together to chain homotopies
 \begin{eqnarray*}
 g(C) \colon  \id_{C'} & \simeq & i(C) \circ p(C);
 \\
 g(D) \colon  \id_{D'} & \simeq & i(D) \circ p(D),
 \end{eqnarray*}
 satisfying
 \begin{eqnarray*}
 p(C) \circ g(C) & = & 0;
 \\
 p(D) \circ g(D) & = & 0;
 \\
 u' \circ g(C) & = & g(D) \circ u.
 \end{eqnarray*}
 
 Next we define a chain maps $\phi' \colon \Phi(C') \to C'$ and $\psi' \colon \Phi(D') \to D'$. 
 The morphism $\psi'$ is constructed as follows: 
For $i<n-2$, on $\Phi\bigl(\Phi^i(\cyl(\psi))\bigr)$ it is given by the inclusion
 \[
 \Phi(\Phi^i(\cyl(H))) = \Phi^{i+1}(\cyl(H)) \to D'.
 \] 
 It remains to define $\psi'$ on $\Phi\bigl(\Phi^{n-2}(\cyl(\psi))\bigr)$.
 Consider the following (not necessarily commutative) diagram of $\cala$-chain complexes
 \[
 \xymatrix{\Phi^n(C) \ar[d]_{\Phi^{n-1}(\phi)} \ar[rrd]^0  \\
  \Phi^{n-1}(C) \ar[rr]_{i(\Phi^{n-1}(D))} && C'
 }
 \hfill \text{and} \hfill
 \xymatrix{\Phi^n(D) \ar[d]_{\Phi^{n-1}(\psi)} \ar[rrd]^0  \\
  \Phi^{n-1}(D) \ar[rr]_{i(\Phi^{n-1}(D))} && D'
 }
 \]
 where $i(\Phi^{n-1}(C))$ and $i(\Phi^{n-1}(D))$ are the obvious inclusions. Using~\eqref{phi(n)_is_zero},
we obtain  explicit chain homotopies
  of chain maps $\Phi^n(C) \to C'$ and $\Phi(D) \to D'$
 \begin{eqnarray*}
 k(C) = g(C) \circ   i(\Phi^{n-1}(C)) \circ \Phi^{n-1}(\phi)   \colon i(\Phi^{n-1}(C)) \circ \Phi^{n-1}(\phi) &\!\! \simeq 0;
 \\
 k(D) = g(D) \circ i(\Phi^{n-1}(D)) \circ \Phi^{n-1}(\psi)   + i(D) \circ h(\psi) \colon i(\Phi^{n-1}(D)) \circ \Phi^{n-1}(\psi) &\!\! \simeq  0,
 \end{eqnarray*}
 satisfying because of~\eqref{h(psi)_circ_Phi(u)_is_0}
 \begin{eqnarray*}
 k(D) \circ \Phi^n(u) & = & u' \circ k(C);
 \\
 p(C) \circ k(C) & = & 0.
 \end{eqnarray*}
 We obtain from
 Lemma~\ref{lem:elementary_facts}~\ref{lem:elementary_facts:maps_between_mapping_cones}
 chain maps $\Phi^{n-1}(\cyl(\phi)) \to C'$ and $\Phi^{n-1}(\cyl(\psi)) \to D'$ which will
 be declared to be  the restrictions of $\phi'$ and $\psi'$ to $\Phi^{n-1}(\cyl(\phi))$
 and $\Phi^{n-1}(\cyl(\psi)) \to D'$. This finishes the construction of the chain maps
 \begin{eqnarray*}
 \phi' \colon \Phi(C') & \to & C';
 \\
 \psi' \colon \Phi(D') & \to & D'.
 \end{eqnarray*}
 One easily checks
 \begin{eqnarray*}
 (\phi')^{(n)} & = & 0;
 \\
 (\psi')^{(n)} & = & 0;
 \\
 u' \circ \phi' & = & \psi' \circ \Phi(u');
 \\
 p(C) \circ \phi'  & = & \phi \circ \Phi(p(C));
 \\
 p(D) \circ \psi'  \circ \Phi(i(D)) & = & \psi.
 \end{eqnarray*}
 We define $(E,\mu)$ by the pushout in $\End(\Ch(\cala),\Phi)$
 \[
 \xymatrix{(C',\phi') \ar@{>->}[r]^{u'} \ar[d]_{p(C)}^{\simeq} 
 & (D',\psi') \ar[d]^{\overline{p(C)}}_{\simeq}
 \\
 (C,\phi) \ar@{>->}[r]_{j} & (E,\mu)
 }
 \]
 Since $\psi'$ is $\Phi$-nilpotent and $p(C)$ and hence $\overline{p(C)}$ are split surjective,
 also $\mu$ is $\Phi$-nilpotent.

 Letting $v':=\overline{p(C)}\circ i(D)$, we obtain an explicit chain homotopy of chain maps $\Phi(D) \to E$
 \[
 H' := \overline{p(C)} \circ g(D) \circ \psi' \circ \Phi(i(D))   
 \colon \mu  \circ \Phi(v') 
 \simeq v' \circ \psi.
 \]
 The following computation shows that $H'$ is stationary over $\Phi(C)$:
 \begin{eqnarray*}
 H' \circ \Phi(u) 
 & = & 
 \overline{p(C)} \circ g(D) \circ \psi' \circ \Phi(i(D)) \circ \Phi(u)
 \\
 & = & 
 \overline{p(C)} \circ g(D) \circ \psi' \circ \Phi(u') \circ \Phi(i(C)) 
 \\
 & = & 
 \overline{p(C)} \circ g(D) \circ u' \circ \phi'  \circ \Phi(i(C)) 
 \\
 & = & 
 \overline{p(C)} \circ   u' \circ g(C ) \circ \phi'  \circ \Phi(i(C)) 
 \\
 & = & 
 j \circ   p(C)  \circ g(C ) \circ \phi'  \circ \Phi(i(C)) 
 \\
 & = & 
 j \circ   0\circ \phi'  \circ \Phi(i(C)) 
 \\
 & = & 
 0,
 \end{eqnarray*}
This completes the verification of the assumptions of Reduction 2, and therefore completes the proof of the Lemma.
 \end{proof}

Now we have all the results available to conclude the proof.

\begin{proof}[Proof of Theorem~\ref{the:relative_term_is_Nil}]
By Lemma~\ref{lem:canonical_Waldhausen_structure_is_Waldhausen_structure}~%
\ref{lem:canonical_Waldhausen_structure_is_Waldhausen_structure:axioms}, the Waldhausen categories 
$\Ch(\cala_\Phi[t^{-1}])$ and $\Ch(\cala_\Phi[t\inv]^{\kappa})$ and hence also $\Ch(\cala_\Phi[t^{-1}])^w$  and $\Chhf(\cala_\Phi[t^{-1}])^w$ satisfy
the saturation and the cylinder axiom.
Cisinski's  Approximation Theorem~\ref{the:Cisinki_Approximation_Theorem}   implies that the inclusion
 \[
 \Ch(\cala_\Phi[t^{-1}])^w\to \Chhf(\cala_\Phi[t^{-1}])^w
 \]
induces an equivalence on $K$-theory. 

We claim that also the inclusion
 \[
 \Nil(\cala, \Phi)\to \HNil(\Chhf(\cala), \Phi).
 \]
 induces an equivalence on $K$-theory. In fact, $\Nil(\cala, \Phi)\to \HNil(\Chhf(\cala), \Phi)$ can be split into a
   sequence of inclusions
 \[ 
 \Nil(\cala, \Phi)\xrightarrow{I_1} \Nil(\Ch(\cala), \Phi) \xrightarrow{I_2}  \HNil(\Ch(\cala), \Phi) 
 \xrightarrow{I_3} \HNil(\Chhf(\cala), \Phi).
 \]
 We will show that each of the three inclusions $I_1$, $I_2$ and $I_3$ induce homotopy
 equivalence on $K$-theory.
 
The morphism $I_1$ induces a homotopy equivalence on $K$-theory
because of the Gillet-Waldhausen
Theorem~\ref{the:Gillet_Waldhausen} and Example~\ref{exa:twisted_Nil_category}
using the identity of Waldhausen categories $\Nil(\Ch(\cala), \Phi)  = \Ch(\Nil(\cala, \Phi))$.
 
 The maps $I_2$ and $I_3$ induce equivalences on $K$-theory by Cisinski's 
 Approximation Theorem~\ref{the:Cisinki_Approximation_Theorem}. We have to check the various assumptions
appearing in Theorem~\ref{the:Cisinki_Approximation_Theorem}.
The categories $\Ch(\cala^{\kappa},\Phi))$, $\Ch(\cala_{\Phi^{-1}}[t^{-1}]^{\kappa})$
 and $\Nil(\Ch(\cala) , \Phi)  = \Ch(\Nil(\cala, \Phi))$ satisfy the saturation axiom and the cylinder axiom
because of Lemma~\ref{lem:canonical_Waldhausen_structure_is_Waldhausen_structure}~%
\ref{lem:canonical_Waldhausen_structure_is_Waldhausen_structure:axioms}. 
We conclude 
that $\End(\Ch(\cala^{\kappa},\Phi))$ 
and the full Waldhausen subcategories
$\HNil(\Ch(\cala), \Phi)$ and $\HNil(\Chhf(\cala), \Phi)$
satisfy the saturation axiom and the cylinder axiom.
The inclusion functors $I_2$ and $I_3$  reflect weak equivalences by 
Lemma~\ref{lem:F_reflects_then_CH(F)_reflects}. 
The second approximation property appearing in 
Theorem~\ref{the:Cisinki_Approximation_Theorem} was shown to hold in
 Lemma~\ref{lem:endos_and_finiteness} and  Lemma~\ref{lem:endos_and_strictly_nilpotent}.

 Hence, $\bfK(\Nil(\cala, \Phi))\xrightarrow{\simeq} \bfK( \HNil(\Chhf(\cala), \Phi))$ as we claimed. 
Now Proposition~\ref{pro:reexpressing_nil} concludes the proof.
\end{proof}


\section{Passing to non-connective algebraic $K$-theory}
\label{sec:Passing_to_non-connective_algebraic_K-theory}

Given the Bass-Heller-Swan decomposition for connective $K$-theory, one may adapt Bass's
contracting functor approach to the setting of spectra. This yields a definition of  non-connective $K$-theory and Nil-spectra such that the Bass-Heller-Swan decomposition
automatically extends to the non-connective setting. This definition of nonconnective
$K$-theory agrees with the other definitions in the literature.

The details of the definitions of the non-connective versions of the $K$-groups
appearing in Theorem~\ref{the:BHS_decomposition_for_non-connective_K-theory}
and the argument  how Theorem~\ref{the:BHS_decomposition_for_non-connective_K-theory}  
can be deduced from the connective version,
are presented in~\cite[Section~6]{Lueck-Steimle(2013delooping)}.

\typeout{-----------------------  References ------------------------}



\end{document}